\title{Threshold estimation for jump-diffusions \\ under small noise asymptotics}
\author{ \href{https://orcid.org/0000-0003-0083-3187}{\includegraphics[scale=0.06]{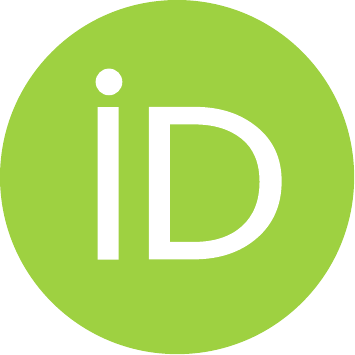}\hspace{1mm}Mitsuki~Kobayashi} \\
  Department of Pure and Applied Mathematics \\
  Waseda University, \\
	\texttt{mitsuki@fuji.waseda.jp} \\
	\And
	\href{https://orcid.org/0000-0003-3479-1149}{\includegraphics[scale=0.06]{orcid.pdf}\hspace{1mm}Yasutaka~Shimizu} \\
  Department of Pure and Applied Mathematics \\
  Waseda University, \\
	\texttt{shimizu@waseda.jp} \\
}
\date{}
\theoremstyle{plain}
\newtheorem{theorem}{Theorem}[section]
\newtheorem{lemma}[theorem]{Lemma}
\theoremstyle{remark}
\newtheorem{remark}{Remark}[section]
\newtheorem{example}{Example}[section]
\newtheorem{assumption}{Assumption}[section]
\newtheorem*{notation}{Notation}
\DeclareMathOperator*{\argmax}{argmax}
\DeclareMathOperator{\dist}{dist}
\DeclareMathOperator{\Image}{Image}
\DeclareMathOperator{\Id}{Id}
\begin{document}

\maketitle

\begin{abstract}
  We consider parameter estimation of stochastic differential equations driven by a Wiener process and a compound Poisson process as small noises.
  The goal is to give a threshold-type quasi-likelihood estimator and show its consistency and asymptotic normality under new asymptotics.
  One of the novelties of the paper is that we give a new localization argument, which enables us to avoid truncation in the contrast function that has been used in earlier works
  and to deal with a wider class of jumps in threshold estimation than ever before.
\end{abstract}

\msc{Primary 62M20 \and Secondary 62F12, 60J74}

\keywords{small noise asymptotics \and asymptotic distribution \and discrete observations \and threshold estimation \and localization argument}

\section{Introduction}
  This paper is concerned with the following stochastic differential equation (SDE):
  \begin{equation}\label{eq: main SDE}
    dX^\varepsilon_t = a(X^\varepsilon_t,\mu_0) \, dt + \varepsilon b(X^\varepsilon_t, \sigma_0) dW_t + \varepsilon c(X^\varepsilon_{t-}, \alpha_0) dZ_t^{\lambda_\varepsilon},
    \quad X_0^{\varepsilon} = x_0 \in \mathbb{R},
  \end{equation}
  where $\varepsilon>0$, and $\Theta_i$ ($i=1,2,3$) are smooth bounded open convex sets in $\mathbb{R}^{d_i}$ with $d_i\in\mathbb{N}$ ($i=1,2,3$), respectively,
  and $\theta_0=(\mu_0,\sigma_0,\alpha_0)\in\Theta_0:=\Theta_1\times\Theta_2\times\Theta_3\subset\mathbb{R}^d$ with $d:=d_1+d_2+d_3$ with $\Theta:=\bar{\Theta}_0$,
  and each domain of $a,b,c$ is $\mathbb{R}\times\bar{\Theta}_i$ ($i=1,2,3$), respectively.
  Also, $Z^{\lambda_\varepsilon}=(Z_t^{\lambda_\varepsilon})_{t\geq0}$ is a compound Poisson process given by
  \begin{equation*}
    Z^{\lambda_\varepsilon}_t = \sum_{i=1}^{N_t^{\lambda_\varepsilon}} V_i,
    \quad
    Z^{\lambda_\varepsilon}_0 = 0,
  \end{equation*}
  where $N^{\lambda_\varepsilon}=(N^{\lambda_\varepsilon}_t)_{t\geq0}$ is a Poisson process with intensity $\lambda_\varepsilon>0$,
  and $V_i$'s are i.i.d. random variables with common probability density function $f_{\alpha_0}$,
  and are independent of $N^{\lambda_\varepsilon}$ (cf. Example 1.3.10 in Applebaum \cite{applebaum2009levy}).
  $W=(W_t)_{t\geq0}$ is a Wiener process.
  Here, we denote the probability space by $(\Omega,\mathcal{F}, P)$.

  Suppose that we have discrete data $X^\varepsilon_{t_0}, \dots, X^\varepsilon_{t_n}$ from \eqref{eq: main SDE}
  for $0=t_0<\dots<t_n=1$ with $t_i - t_{i-1} = 1/n$.
  We consider the problem of estimating the true $\theta_0\in\Theta_0$ under $n\to\infty$ and $\varepsilon\to0$.
  We also define $x_t$ as the solution of the corresponding deterministic differential equation
  \begin{equation*}
    \frac{dx_t}{dt} = a(x_t, \mu_0)
  \end{equation*}
  with the initial condition $x_0$.

  There are a lot of estimators for SDEs with small noise except for estimation of jump size density (see, e.g., \cite{gloter2009estimation,kobayashi2022least,long2013least,sorensen2003small}, and references are given in \cite{long2017least}).
  One may immediately notice that if the background SDEs are driven by Wiener process and a compound Poisson process, then the number of large jumps never go infinity in probability.
  This means that we would never establish a consistent estimator of jump size density.
  To avoid this difficulty, we propose to assume that the intensity $\lambda_\varepsilon$ of jumps goes to infinity as $\varepsilon\downarrow0$ ($\lambda_\varepsilon$ is not necessary to depend on $\varepsilon$ as in Remark \ref{rmk:net}),
  and this type of asymptotics would be the first in many works of literature.
  Intuitively, this assumption seems natural when we deal with data obtained in the long term by shortening the pitch of observations that
  is familiar in both cases of ergodic and small noise, so one may agree with our proposal.
  In practice, the intesity $\lambda_\varepsilon$ of jumps should be estimated, and it is possible by Lemma \ref{lem:4.8}:
  \begin{equation*}
    \lambda_\varepsilon \overset{p}{\sim} \sum_{k=1}^n 1_{D^{n,\varepsilon,\rho}_{k}}
    \quad \text{as}~\varepsilon\downarrow0.
  \end{equation*}

  Another attempt in this paper is to give a proof with localization argument (as in, e.g., Remark 1 in S{\o}rensen and Uchida \cite{sorensen2003small}) in the entire context.
  The argument is usually omitted, or instead, Propostion 1 in Gloter and S{\o}rensen \cite{gloter2009estimation} is just refered.
  However, in this paper, we shall give a proof under localization assumptions \ref{asmp:A9} to \ref{asmp:A12},  which are more complicated but includes more examples than ever, together with usual localization assumptions \ref{asmp:A6} and \ref{asmp:A7}.
  Indeed, we assume that the sign of $c$ does not change on a compact set, and $\psi$ defined in \ref{asmp:A12} may be not continuous on the whole space $\mathbb{R}\times\mathbb{R}\times\Theta$.
  Thus, we show our main results under the localization argument in the entire of our proof,
  which is one of the novelties of our paper.

  In the ergodic case, threshold estimation for SDEs with L\'evy noise is proposed in Shimizu and Yoshida \cite{shimizu2006estimation}, and has been considered so far by various researchers \cite{gloter2018jump,ogihara2011quasi,shimizu2017threshold}, and other references are given in Amorino and Gloter \cite{amorino2021joint}.
  The examples of jump size density in Shimizu and Yoshida \cite{shimizu2006estimation} they cannot take gamma distributions as an example of jump size densities,
  and then Ogihara and Yoshida \cite{ogihara2011quasi} establish assumptions for examples of jump size density to include gamma distributions.
  However, the contrast functions in both papers \cite{shimizu2006estimation,ogihara2011quasi} are established using truncation and are of complicated form.
  Our research begins with the creed that we would be able to remove the truncations from our contrast functions when we deal with SDE with small L\'evy noise, which is another novelty.
  Then, we establish our estimator which has the consistency and the asymptotic normality, and includes gamma distributions as an example of jump size densities.
  As we shall see in Remark \ref{rmk:rangeofrho}, it is interesting that the range of the rate $\rho$ of our filters $D^{n,\varepsilon,\rho}_k$ may be disjoint from the range of the rate $\rho$ of the filters $\{\Delta X_i^n>Dh^\rho\}$ in Ogihara and Yoshida \cite{ogihara2011quasi}.
  One can see some numerical examples, e.g., in Shimizu and Yoshida \cite{shimizu2006estimation}, and techniques how to choose the threshold (i.e., $v_{nk}/n^\rho$ in this paper) of the filter in our estimators are found in Shimizu \cite{shimizu2008practical,shimizu2010thredhold} and Masuda and Uehara \cite{masuda2021estimating}.

  In Section \ref{sec:assumptionnotation}, we set up some assumptions and notations.
  In Section \ref{sec:mainresults}, we state our main results, i.e., the consistency and the asymptotic normality of our estimator.
  In Section \ref{sec:proofs}, we give a proof of our main results.
  In Section \ref{sec:examples}, we give some examples of the jump size density for compound Poisson processes in our model.
  In the \hyperref[appn]{Appendix}, we state and prove some slightly different versions of well-known results.

\section{Assumptions and notations}\label{sec:assumptionnotation}
  \begin{notation}
    We set the following notations:
    \begin{enumerate}[label={\rm (N\arabic*)}]
      \item \label{ntt:Range}
            Let $I_{x_0}$ be the image of $t\mapsto x_t$ on $[0,1]$, and set
            \begin{equation*}
              I_{x_0}^{\delta} := \{ y\in\mathbb{R} | \dist(y,I_{x_0}) < \delta \}.
            \end{equation*}
      \item \label{ntt:psi}
            A function $\psi$ on $\mathbb{R}\times\mathbb{R}\times\Theta_3$ is of the form
            \begin{equation*}
              \psi(x,y,\alpha)
              := \left\{ \begin{aligned}
                  & \log \left| \frac{1}{c(x,\alpha)} f_\alpha \left( \frac{y}{c(x,\alpha)} \right) \right|
                        & & \text{if } c(x,\alpha) \neq 0 \text{ and } f_{\alpha}\left(\frac{y}{c(x,\alpha)}\right)>0, \\
                  & 0   & & \text{otherwise}.
                \end{aligned} \right.
            \end{equation*}
    \end{enumerate}
  \end{notation}

  We prepare the following assumptions:

  \begin{assumption}[Identifiability]\label{asmp:A1}
    If $\mu\neq\mu_0$, $\sigma\neq\sigma_0$ or $\alpha\neq\alpha_0$,
    then
    \begin{equation*}
      a(y,\mu)\not\equiv a(y,\mu_0),                ~
      b(y,\sigma)\not\equiv b(y,\sigma_0)           ~\text{or}~
      \psi(y,c(y,z,\alpha) \neq \psi(y,z,\alpha_0), ~\text{respectively},
    \end{equation*}
    for some $y\in I_{x_0}^{\delta}$ with some $\delta>0$, and for some $z\in\mathbb{R}$.
  \end{assumption}

  \begin{assumption}[Lipschitz continuity of coefficient functions]\label{asmp:A2}
    $a(\cdot,\mu_0)$, $b(\cdot,\sigma_0)$ and $c(\cdot,\alpha_0)$ are Lipschitz continuous on $\mathbb{R}$.
  \end{assumption}

  \begin{assumption}[$L^p$-summability of jump size density]\label{asmp:A3}
    For any $p\geq0$, $f_{\alpha_0}$ satisfies
    \begin{equation*}
      \int_{\mathbb{R}} |z|^p f_{\alpha_0}(z) \, dz < \infty,
    \end{equation*}
  \end{assumption}

  \begin{assumption}[Domain of jump size density]\label{asmp:A4}
    The family $\{f_{\alpha}\}_{\alpha\in\bar{\Theta}_3}$ satisfies either of the following condtions:
    \begin{enumerate}[label=(\roman*)]
      \item \label{asmp:A4(i)}
            $f_{\alpha}$, $\alpha\in\bar{\Theta}_3$ are positive and continuous on $\mathbb{R}$.
      \item \label{asmp:A4(ii)}
            $f_{\alpha}$, $\alpha\in\bar{\Theta}_3$ are positive and continuous on $\mathbb{R}_+(=(0,\infty))$, and are zero on $(-\infty,0]$.
    \end{enumerate}
  \end{assumption}

  \begin{assumption}[Boundedness of jump filters]\label{asmp:A5}
    Random variables $v_{n1},\dots,v_{nn}$ satisfy
    \begin{equation*}
      0 < v_1 \leq v_{nk} \leq v_2
    \end{equation*}
    for some constants $v_1$ and $v_2$.
  \end{assumption}

  \begin{assumption}\label{asmp:A6}
    The familiy $\{c(\cdot,\alpha)\}_{\sigma\in\bar{\Theta}_3}$ satisfies
    \begin{equation*}
      0 < c_1 \leq |c(x, \alpha)| \leq c_2 \quad \text{for } (x,\alpha) \in I_{x_0}\times\Theta_3
    \end{equation*}
    with some positve constants $c_1$ and $c_2$.
    In this paper, without loss of generality, we may assume
    \begin{equation*}
      c(x_t, \alpha) > c_1
      \quad \text{for } (x,\alpha) \in I_{x_0}\times\Theta_3.
    \end{equation*}
  \end{assumption}

  \begin{assumption}\label{asmp:A7}
    The family $\{b(\cdot,\sigma)\}_{\sigma\in\bar{\Theta}_2}$ satisfies
    \begin{equation*}
      \inf_{(x,\sigma)\in I_{x_0}\times\Theta_2} |b(x_t, \sigma)| > 0.
    \end{equation*}
  \end{assumption}

  \begin{assumption}\label{asmp:A8}
    The functions $a,b,c$ are differentiable with respect to $\mu,\sigma,\alpha$, respectively, on $I_{x_0}^\delta\times\Theta$ for some $\delta$,
    and the families $\{ \frac{\partial a}{\partial \mu_{j_1}} (\cdot,\mu) \}_{\mu\in\Theta_1}$,
    $ \{ \frac{\partial b}{\partial \sigma_{j_2}} (\cdot,\sigma) \}_{\sigma\in\Theta_2}$,
    $\{ \frac{\partial c}{\partial \alpha_{j_3}} (\cdot,\alpha) \}_{\sigma\in\Theta_2}$, $(j_\ell=1,\dots,d_\ell,~\ell=1,\dots,3)$ are equi-Lipschitz continuous on $I_{x_0}^\delta$.
  \end{assumption}

  \begin{assumption}\label{asmp:A9}
    \begin{enumerate}[label=(\roman*)]
      \item \label{asmp:A9(i)}
            Under Assumption \ref{asmp:A4} \ref{asmp:A4(i)},
            there exist constants $C>0$, $q\geq1$ and $\delta>0$ such that
            \begin{equation*}
              \sup_{(x,\alpha) \in I_{x_0}^\delta\times\Theta_3}
              \left| \frac{\partial \psi}{\partial y} (x,y,\alpha) \right|
              \leq C (1+|y|^q)
              \quad (y \in \mathbb{R}).
            \end{equation*}
      \item \label{asmp:A9(ii)}
            Under Assumption \ref{asmp:A4} \ref{asmp:A4(ii)},
            we assume the following four conditions:
            \begin{enumerate}[label=(ii.\alph*)]
              \item There exists $\delta>0$ and $L>0$ such that if $0<y_1\leq y \leq y_2$, then
                    \begin{equation*}
                      \left| \frac{\partial \psi}{\partial y} (x,y,\alpha) \right|
                      \leq \left| \frac{\partial \psi}{\partial y} (x,y_1,\alpha) \right|
                      + \left| \frac{\partial \psi}{\partial y} (x,y_2,\alpha) \right|
                      + L
                      \quad
                      \text{for all }
                      (x,\alpha)\in I_{x_0}^\delta\times\Theta_3.
                    \end{equation*}
              \item \label{asmp:A9(iib)}
                    There exist constants $q\geq0$ and $\delta>0$ such that
                    \begin{equation*}
                      \sup_{(x,\alpha) \in I_{x_0}^\delta\times\Theta_3}
                      \left| \frac{\partial \psi}{\partial y} (x,y,\alpha) \right|
                      \leq O \left( \frac{1}{|y|^q} \right)
                      \quad \text{as } |y| \to 0.
                    \end{equation*}
              \item There exists $\delta>0$ such that
                    for any $C_1>0$ and $C_2\geq0$
                    the map
                    \begin{equation*}
                      x \mapsto \int \sup_{\alpha\in\Theta_3} \left| \frac{\partial \psi}{\partial y}(x, C_1 y + C_2,\alpha) \right| f_{\alpha_0}(y) \, dy
                    \end{equation*}
                    takes values in $\mathbb{R}$ from $I_{x_0}^\delta$, and is continous on $I_{x_0}^\delta$.
            \end{enumerate}
    \end{enumerate}
  \end{assumption}

  \begin{assumption}\label{asmp:A10}
    There exists $\delta>0$ such that for $(x,y,\alpha)\in I_{x_0}^\delta\times\mathbb{R}\times\Theta$ with $\psi\neq0$, $\psi$ is differentiable with respect to $\alpha_i$ ($i=1,\dots,d_3$).
    For $\alpha\in\Theta_3$
    \begin{equation*}
      x \mapsto \int \psi(x,c(x,\alpha_0)z,\alpha) f_{\alpha_0}(z) \, dz, \quad
      x \mapsto \int |\psi(x,c(x,\alpha_0)z,\alpha)|^2 f_{\alpha_0}(z) \, dz
    \end{equation*}
    are continuous at every points in $I_{x_0}$,
    and there exist $\delta>0$ and $C>0$ such that
    \begin{align*}
      \int \left\{
      \sup_{(x,\alpha)\in I_{x_0}^\delta \times \Theta_3}
      \left| \psi (x,c(x,\alpha_0)z,\alpha) \right|
      + \sum_{j=1}^{d_3} \sup_{(x,\alpha)\in I_{x_0}^\delta \times \Theta_3}
      \left| \frac{\partial \psi}{\partial \alpha_j} (x,c(x,\alpha_0)z,\alpha) \right|
      \right\} f_{\alpha_0} (z) \, dz
      < \infty.
    \end{align*}
  \end{assumption}

  \begin{assumption}\label{asmp:A11}
    For $(x,y,\alpha)\in I_{x_0}^\delta\times\mathbb{R}\times\Theta$ with $\psi\neq0$, $\psi$ is differentiable with respect to $\alpha_i$ ($i=1,\dots,d_3$).
    For $i,j=1,\dots,d_3$
    \begin{equation*}
      x \mapsto \int \frac{\partial\psi}{\partial\alpha_i}
      \frac{\partial\psi}{\partial\alpha_j} \left( x, c(x,\alpha_0) z, \alpha_0 \right) f_{\alpha_0}(z) \, dz
    \end{equation*}
    is continuous at every point $x\in I_{x_0}$.
  \end{assumption}

  \begin{assumption}\label{asmp:A12}
    The functions $a,b,c$ are twice differentiable with respect to $\mu,\sigma,\alpha$, respectively, on $I_{x_0}^\delta\times\Theta$ for some $\delta$,
    and the families $\{ \frac{\partial^2 a}{\partial \mu_{i_1} \partial \mu_{j_1}} (\cdot,\mu) \}_{\mu\in\Theta_1}$,
    $\{ \frac{\partial^2 b}{\partial \sigma_{i_2} \partial \sigma_{j_2}} (\cdot,\sigma) \}_{\sigma\in\Theta_2}$,
    $\{ \frac{\partial^2 c}{\partial \alpha_{i_3} \partial \alpha_{j_3}}  (\cdot,\alpha) \}_{\sigma\in\Theta_2}$, $(i_\ell,j_\ell=1,\dots,d_\ell,~\ell=1,\dots,3)$ are equi-Lipschitz continuous on $I_{x_0}^\delta$.
    There exists $\delta>0$ such that for $(x,y,\alpha)\in I_{x_0}^\delta\times\mathbb{R}\times\Theta$ with $\psi\neq0$, $\psi$ is twice differentiable with respect to $\alpha_i$ ($i=1,\dots,d_3$).
    For $\alpha\in\Theta$, $i=1,\dots,d_3$
    \begin{equation*}
      x \mapsto \int \frac{\partial \psi}{\partial\alpha_i} (x,c(x,\alpha_0)z,\alpha) f_{\alpha_0}(z) \, dz, \quad
      x \mapsto \int \left| \frac{\partial \psi}{\partial \alpha_i} (x,c(x,\alpha_0)z,\alpha)\right|^2 f_{\alpha_0}(z) \, dz
    \end{equation*}
    are continuous at every points $x\in I_{x_0}$,
    and there exist $\delta>0$ such that
    \begin{equation*}
      \int
      \sum_{i,j=1}^{d_3} \sup_{(x,\theta)\in I_{x_0}^\delta \times \Theta}
      \left| \frac{\partial^2 \psi}{\partial \alpha_i \partial \alpha_j} (x,c(x,\alpha_0)z,\theta) \right|
      f_{\alpha_0} (z) \, dz
      < \infty.
    \end{equation*}
    We assume either of the following conditions \ref{asmp:A12(i)}
    or \ref{asmp:A12(ii)}:
    \begin{enumerate}[label=(\roman*)]
      \item \label{asmp:A12(i)}
            Under Assumption \ref{asmp:A4} \ref{asmp:A4(i)},
            there exist constants $C>0$, $q\geq1$ and $\delta>0$ such that
            \begin{equation*}
              \sup_{(x,\alpha) \in I_{x_0}^\delta\times\Theta_3}
              \left| \frac{\partial^2 \psi}{\partial y\partial\alpha_i} (x,y,\alpha) \right|
              \leq C (1+|y|^q)
              \quad (y \in \mathbb{R}).
            \end{equation*}
      \item \label{asmp:A12(ii)}
            Under Assumption \ref{asmp:A4} \ref{asmp:A4(ii)},
            we assume the following four conditions:
            \begin{enumerate}[label=(ii.\alph*)]
              \item There exists $\delta>0$ and $L>0$ such that if $0<y_1\leq y \leq y_2$, then
                    \begin{equation*}
                      \left| \frac{\partial^2 \psi}{\partial y\partial\alpha_i} (x,y,\alpha) \right|
                      \leq \left| \frac{\partial^2 \psi}{\partial y\partial\alpha_i} (x,y_1,\alpha) \right|
                      + \left| \frac{\partial^2 \psi}{\partial y\partial\alpha_i} (x,y_2,\alpha) \right|+L
                    \end{equation*}
                    for all $(x,\alpha)\in I_{x_0}^\delta\times\Theta_3$.
              \item \label{asmp:A12(iib)}
                    There exist constants $q\geq0$ and $\delta>0$ such that
                    \begin{equation*}
                      \sup_{(x,\alpha) \in I_{x_0}^\delta\times\Theta_3}
                      \left| \frac{\partial^2 \psi}{\partial y\partial\alpha_i} (x,y,\alpha) \right|
                      \leq O \left( \frac{1}{|y|^q} \right)
                      \quad \text{as } |y| \to 0.
                    \end{equation*}
              \item There exists $\delta>0$ such that
                    for any $C_1>0$ and $C_2\geq0$
                    the map
                    \begin{equation*}
                      x \mapsto \int \sup_{\alpha\in\Theta_3} \left| \frac{\partial^2 \psi}{\partial y\partial\alpha_i} (x, C_1 y + C_2,\alpha) \right| f_{\alpha_0}(y) \, dy
                    \end{equation*}
                    takes values in $\mathbb{R}$ from $I_{x_0}^\delta$, and is continous on $I_{x_0}^\delta$.
            \end{enumerate}
          \end{enumerate}
  \end{assumption}

  \begin{remark}
    Instead of Assumptions \ref{asmp:A6} and \ref{asmp:A7},
    the following stronger assumptions are often used:
    \begin{equation*}
      \inf_{(x,\sigma)\in\mathbb{R}\times\bar{\Theta}_2} |b(x, \sigma)| > 0, \quad
      \inf_{(x,\alpha)\in\mathbb{R} \times \bar{\Theta}_3} |c(x, \alpha)| > 0.
    \end{equation*}
    (see, e.g., Remark 1 in S{\o}rensen and Uchida \cite{sorensen2003small}).
    However, the `classical' localization argument mentioned in \cite{sorensen2003small} is hard to apply for our purpose.
    Thus, we employ our milder assumptions and show how it works well.
  \end{remark}

  \begin{remark}
    Under Assumption \ref{asmp:A10},
    \begin{equation*}
      \int \frac{\partial\psi}{\partial\alpha_i} \left( x, c(x,\alpha_0) z, \alpha_0 \right) f_{\alpha_0}(z) \, dz
      = \frac{\partial}{\partial\alpha_i} \left( \int \psi \left( x, c(x,\alpha_0) z, \alpha \right) f_{\alpha_0}(z) \, dz \right)_{\alpha=\alpha_0},
    \end{equation*}
    at every $x\in I_{x_0}^\delta$.
  \end{remark}

  \begin{remark}
    Assumption \ref{asmp:A12} is given by replacing $a,b,c,\psi$ with $\frac{\partial a}{\partial \mu_i}$,
    $\frac{\partial b}{\partial \sigma_i}$, $\frac{\partial c}{\partial \alpha_i}$, $\frac{\partial\psi}{\partial \alpha_i}$, respectively, in Assumptions \ref{asmp:A8} to \ref{asmp:A10}.
    We will use this assumption only for obtaining \eqref{eq:matrixCtoI0inp}.
  \end{remark}

  \begin{notation}
    We further introduce the following notations:
    \begin{enumerate}[label={\rm (N\arabic*)}]
      \setcounter{enumi}{3}
      \item \label{ntt: jump operator}
            Denote
            \begin{equation*}
              \Delta X^\varepsilon_t := X^\varepsilon_t - X^\varepsilon_{t-} \quad
              \text{for}~t>0,
            \end{equation*}
            where $\varepsilon>0$.
      \item \label{ntt:difference}
            Denote
            \begin{equation*}
              \Delta^n_k X^{\varepsilon} := X^\varepsilon_{t_k} - X^\varepsilon_{t_{k-1}}, \quad
              \Delta^n_k N^{\lambda_\varepsilon} := N^{\lambda_\varepsilon}_{t_k} - N^{\lambda_\varepsilon}_{t_{k-1}} \quad
              \text{for}~k=1,\dots,n,
            \end{equation*}
            where $n\in\mathbb{N}$, $\varepsilon>0$.
      \item \label{ntt:stoppingTime}
            Define random times $\tau_k$ and $\eta_k$ $(k=1,\dots,n)$ by
            \begin{align*}
              \tau_k&:=\inf\{t\in[t_{k-1},t_k]\,|\,\Delta X^\varepsilon_t\neq0~\text{or}~t=t_k\},\\
              \eta_k&:=\sup\{t\in[t_{k-1},t_k]\,|\,\Delta X^\varepsilon_t\neq0~\text{or}~t=t_{k-1}\},
            \end{align*}
            where $n\in\mathbb{N}$, $\varepsilon>0$.
      \item \label{ntt:J}
            Define events $J^{n,\varepsilon}_{k,i}$ $(k=1,\dots,n,~i=0,1,2)$ by
            \begin{equation*}
              J^{n,\varepsilon}_{k,0} := \left\{ \Delta^n_k N^{\lambda_\varepsilon}= 0 \right\}, ~
              J^{n,\varepsilon}_{k,1} := \left\{ \Delta^n_k N^{\lambda_\varepsilon}= 1 \right\} ~\text{and}~
              J^{n,\varepsilon}_{k,2} := \left\{ \Delta^n_k N^{\lambda_\varepsilon}\geq 2 \right\},
            \end{equation*}
            where $n\in\mathbb{N}$, $\varepsilon>0$.
      \item \label{ntt:CkDk}
            Under Assumptions \ref{asmp:A4} \ref{asmp:A4(i)} and \ref{asmp:A5},
            set events $C^{n,\varepsilon,\rho}_k$ and $D^{n,\varepsilon,\rho}_k$ $(k=1,\dots,n)$ by
            \begin{equation*}
              C^{n,\varepsilon,\rho}_k := \left\{ \left| \Delta^n_k X^{\varepsilon} \right| \leq \frac{v_{nk}}{n^\rho} \right\} \quad \text{and} \quad
              D^{n,\varepsilon,\rho}_k := \left\{ \left| \Delta^n_k X^{\varepsilon} \right| > \frac{v_{nk}}{n^\rho} \right\},
            \end{equation*}
            where $n\in\mathbb{N}$, $\varepsilon>0$, $\rho\in(0,1/2)$.

            Under Assumptions \ref{asmp:A4} \ref{asmp:A4(ii)} and \ref{asmp:A5},
            set events $C^{n,\varepsilon,\rho}_k$ and $D^{n,\varepsilon,\rho}_k$ $(k=1,\dots,n)$ by
            \begin{equation*}
              C^{n,\varepsilon,\rho}_k := \left\{ \Delta^n_k X^{\varepsilon} \leq \frac{v_{nk}}{n^\rho} \right\} \quad \text{and} \quad
              D^{n,\varepsilon,\rho}_k := \left\{ \Delta^n_k X^{\varepsilon} > \frac{v_{nk}}{n^\rho} \right\},
            \end{equation*}
            where $n\in\mathbb{N}$, $\varepsilon>0$, $\rho\in(0,1/2)$.

            Then, put
            \begin{equation*}
              C^{n,\varepsilon,\rho}_{k,i} := C^{n,\varepsilon,\rho}_k \cap J^{n,\varepsilon}_{k,i}, \quad
              D^{n,\varepsilon,\rho}_{k,i} := D^{n,\varepsilon,\rho}_k \cap J^{n,\varepsilon}_{k,i} \quad
              \text{for}~k=1,\dots,n,~i=0,1,2,
            \end{equation*}
            where $n\in\mathbb{N}$, $\varepsilon>0$, $\rho\in(0,1/2)$.
            Furthermore, for sufficiently small $\delta>0$, we may put
            \begin{equation*}
              \begin{aligned}
                \tilde{C}^{n,\varepsilon,\rho}_{k,i}
                &:= C^{n,\varepsilon,\rho}_{k,i} \cap \{ X^\varepsilon_t \in I_{x_0}^\delta \text{ for all } t\in[0,1] \}, \\
                \tilde{D}^{n,\varepsilon,\rho}_{k,i}
                &:= D^{n,\varepsilon,\rho}_{k,i} \cap \{ X^\varepsilon_t \in I_{x_0}^\delta \text{ for all } t\in[0,1] \}
              \end{aligned}
            \end{equation*}
            for $k=1,\dots,n$, $i=0,1,2$.
      \item \label{ntt:N9}
            Let $\kappa:=4v_2/c_1$, where the constatns $v_2$ and $c_1$ are given in Assumptions \ref{asmp:A5} and \ref{asmp:A6}, respectively.
    \end{enumerate}
  \end{notation}

  \begin{remark}\label{rmk:net}
    We treat $(n,\varepsilon)$ as a directed set with a suitable order according to a convergence.
    For examples,
    when we say that $n\to\infty$, $\varepsilon\to0$ and $\lambda_\varepsilon\to\infty$,
    we mean that the index set $(n,\varepsilon)$ is a directed set in $\mathbb{N}\times(0,\infty)$ with order $\prec_1$ defined by
    \begin{equation*}
      (n_1,\varepsilon_1) \prec_1 (n_2,\varepsilon)
      \quad \text{if} ~
      n_1 < n_2, ~
      \varepsilon_1 > \varepsilon_2 ~
      \text{and} ~
      \lambda_{\varepsilon_1} < \lambda_{\varepsilon_2},
    \end{equation*}
    and when we say that $n\to\infty$, $\varepsilon\to0$, $\lambda_\varepsilon\to\infty$ and $\lambda_\varepsilon \int_{|z|\leq \kappa/n^\rho} f_{\alpha_0}(z) \, dz\to0$ with some $\rho>0$ and $\kappa$ given in \ref{ntt:N9},
    we mean that the index set $(n,\varepsilon)$ is a directed set in $\mathbb{N}\times(0,\infty)$ with order $\prec_2$ defined by
    \begin{align*}
      (n_1,\varepsilon_1) \prec_2 (n_2,\varepsilon)
      \quad \text{if} ~
       n_1 < n_2, ~
      \varepsilon_1 > \varepsilon_2, ~
      \lambda_{\varepsilon_1} < \lambda_{\varepsilon_2}~
       \text{and} ~
      \lambda_{\varepsilon_1} \int_{|z|\leq\kappa/n_1^\rho} f_{\alpha_0}(z) \, dz
      > \lambda_{\varepsilon_2} \int_{|z|\leq\kappa/n_2^\rho} f_{\alpha_0}(z) \, dz.
    \end{align*}
    Needless to say, the identity map $\Id$ from $(\{(n,\varepsilon)\},\prec_2)$ to $(\{(n,\varepsilon)\},\prec_1)$ is monotone, and $\Id(\{(n,\varepsilon)\})$ is cofinal in $(\{(n,\varepsilon)\},\prec_1)$.
  \end{remark}

  \begin{remark}
    In this paper, we can assume $\lambda_\varepsilon$ does not depend on $\varepsilon$.
    In this case,
    we treat $\{(n,\varepsilon,\lambda)\}$ instead of $\{(n,\varepsilon)\}$ as a driected set,
    and we must write $X^{\varepsilon,\lambda}$, $Z^{\lambda}$, $\Psi_{n,\varepsilon,\lambda}$, etc., instead of $X^\varepsilon$, $Z^{\lambda_\varepsilon}$, $\Psi_{n,\varepsilon}$, etc., respectively.
    But, for simplicity, we assume $\lambda_\varepsilon$ depends on $\varepsilon$.
  \end{remark}

\section{Main results}\label{sec:mainresults}

  We define the following contrast function $\Psi_{n,\varepsilon}(\theta)$ after the quasi-log likelihood proposed in Shimizu and Yoshida \cite{shimizu2006estimation}:
  \begin{equation*}
    \Psi_{n,\varepsilon}(\theta)
    := \Psi_{n,\varepsilon}^{(1)}(\mu,\sigma) + \Psi_{n,\varepsilon}^{(2)}(\alpha)
    \quad \text{for } \theta=(\mu,\sigma,\alpha)\in\Theta,
  \end{equation*}
  where
  for $\rho\in(0,1/2)$, $\Psi_{n,\varepsilon}^{(1)}(\mu,\sigma)$ and $\Psi_{n,\varepsilon}^{(2)}(\alpha)$ are given by using the notations \ref{ntt:difference} and \ref{ntt:CkDk} as the following:
  \begin{align*}
    \Psi_{n,\varepsilon}^{(1)}(\mu,\sigma)
    &:= - \frac1n \sum_{k=1}^{n} \left \{ \frac{ \left | \Delta^n_k X^{\varepsilon} - a(X^\varepsilon_{t_{k-1}}, \mu) / n \right |^2 }{2\left | \varepsilon b(X^\varepsilon_{t_{k-1}},\sigma) \right |^2 / n}
    + \frac{1}{2} \log |b(X^\varepsilon_{t_{k-1}},\sigma)|^2 \right \}
    1_{C^{n,\varepsilon,\rho}_k}, \\
    \Psi_{n,\varepsilon}^{(2)}(\alpha)
    &:= \frac{1}{\lambda_\varepsilon} \sum_{k=1}^{n}
      \psi \left( X^\varepsilon_{t_{k-1}}, \frac{\Delta^n_k X^{\varepsilon}}\varepsilon, \alpha \right)
      1_{D^{n,\varepsilon,\rho}_k}
  \end{align*}
  with
  \begin{equation*}
    \psi(x,y,\alpha)
    := \left\{ \begin{aligned}
        & \log \left| \frac{1}{c(x,\alpha)} f_\alpha \left( \frac{y}{c(x,\alpha)} \right) \right|
              & & \text{if } c(x,\alpha) \neq 0 \text{ and } f_{\alpha}\left(\frac{y}{c(x,\alpha)}\right)>0, \\
        & 0   & & \text{otherwise}.
      \end{aligned} \right.
  \end{equation*}
  Then, the quasi-maximum likelihood estimator is given by
  \begin{equation*}
    \hat{\theta}_{n,\varepsilon} := \argmax_{\theta\in\Theta} \Psi_{n,\varepsilon}(\theta).
  \end{equation*}
  Note that the truncation adopted in Shimizu and Yoshida \cite{shimizu2006estimation} is not required here.

  The goal is to show the asymptotic normality of $\hat{\theta}_{n,\varepsilon}$ when $n\to\infty$ and $\varepsilon\to0$ at the sametime.
  In the sequel, we will also assume that $\lambda_\varepsilon\to\infty$ as $\varepsilon\downarrow0$ for consistency of $\hat{\theta}_{n,\varepsilon}$. Our interest is in a situation where the number of jumps is large and the L\'evy noise is small. In practice, $\lambda_\varepsilon$, the intensity of jumps, should be estimated, and it is possible by Lemma \ref{lem:4.8}:
  \begin{equation*}
    \lambda_\varepsilon \overset{p}{\sim} \sum_{k=1}^n 1_{D^{n,\varepsilon,\rho}_{k}}
    \quad \text{as}~\varepsilon\downarrow0.
  \end{equation*}

  \begin{theorem}\label{thm:3.1}
    Under Assumptions \ref{asmp:A1} to \ref{asmp:A10},
    take $\rho$ as either of the followings:
   \begin{enumerate}[label=(\roman*)]
     \item Under Assumption \ref{asmp:A4} \ref{asmp:A4(i)},
                 take $\rho\in(0,1/2)$.
     \item Under Assumtpion \ref{asmp:A4} \ref{asmp:A4(ii)},
                 take $\rho\in(0,\min\{1/2,1/4q\})$, where $q$ is the constant in Assumption \ref{asmp:A9} \ref{asmp:A9(iib)}.
   \end{enumerate}
   Then,
    \begin{equation*}
      \hat\theta_{n,\varepsilon} \overset{p}\longrightarrow \theta_0
    \end{equation*}
    as $n\to\infty$, $\varepsilon\to0$, $\lambda_\varepsilon\to\infty$, $\lambda_\varepsilon^2/n \to 0$, $\varepsilon\lambda_\varepsilon\to0$ and $\lambda_\varepsilon \int_{|z|\leq\kappa/n^\rho} f_{\alpha_0}(z) \, dz\to0$ with $\lim (\varepsilon^2 n)^{-1}<\infty$.
    Here, the constant $\kappa$ is given in \ref{ntt:N9}.
  \end{theorem}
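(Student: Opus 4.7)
The plan is to exploit the additive decomposition
$\Psi_{n,\varepsilon}(\theta) = \Psi_{n,\varepsilon}^{(1)}(\mu,\sigma) + \Psi_{n,\varepsilon}^{(2)}(\alpha)$
so that the diffusion parameters $(\mu,\sigma)$ and the jump parameter $\alpha$ can be handled separately, and then to invoke the standard argmax continuous mapping argument. Throughout, I localize to the event $\{X^\varepsilon_t \in I_{x_0}^\delta \text{ for all } t\in[0,1]\}$, replacing each $C^{n,\varepsilon,\rho}_{k,i}$, $D^{n,\varepsilon,\rho}_{k,i}$ by its tilde counterpart from \ref{ntt:CkDk}. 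Since $X^\varepsilon_\cdot\to x_\cdot$ uniformly on $[0,1]$ in probability under the small-noise limits combined with $\varepsilon\lambda_\varepsilon\to 0$, the complementary event has vanishing probability and may be discarded. This is the localization argument advertised in the introduction, and it is what allows us to get away with \ref{asmp:A6}--\ref{asmp:A7} instead of their global versions.

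For $\Psi^{(1)}_{n,\varepsilon}$, I partition each $C^{n,\varepsilon,\rho}_k$ according to the number of Poisson jumps in $(t_{k-1},t_k]$. On $\tilde{C}^{n,\varepsilon,\rho}_{k,0}$ the increment $\Delta^n_k X^\varepsilon$ is purely drift plus Brownian, and a Taylor expansion using $(\Delta^n_k W)^2 \approx 1/n$ yields
\begin{equation*}
  \varepsilon^2 \bigl(\Psi^{(1)}_{n,\varepsilon}(\mu,\sigma)-\Psi^{(1)}_{n,\varepsilon}(\mu_0,\sigma_0)\bigr)
  \overset{p}{\longrightarrow}
  -\int_0^1 \frac{|a(x_t,\mu_0)-a(x_t,\mu)|^2}{2\, b^2(x_t,\sigma)}\, dt,
\end{equation*}
which identifies $\mu_0$ via \ref{asmp:A1} and \ref{asmp:A7}. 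The contributions from $\tilde{C}^{n,\varepsilon,\rho}_{k,1}\cup\tilde{C}^{n,\varepsilon,\rho}_{k,2}$ (small jumps that sneak past the filter) are dominated by $\lambda_\varepsilon \int_{|z|\leq \kappa/n^\rho} f_{\alpha_0}(z)\, dz$, which vanishes by hypothesis. A second pass at the coarser scale shows that the limit of $\Psi^{(1)}_{n,\varepsilon}(\mu_0,\sigma)$ is $-\int_0^1 \bigl(\tfrac12 b^2(x_t,\sigma_0)/b^2(x_t,\sigma) + \tfrac12 \log b^2(x_t,\sigma)\bigr) dt$ up to a constant, uniquely maximized at $\sigma_0$ by strict concavity of $r\mapsto -1/(2r) - \tfrac12\log r$; this yields $\hat\sigma_{n,\varepsilon}\to\sigma_0$ in probability.

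For $\Psi^{(2)}_{n,\varepsilon}$ I use Lemma \ref{lem:4.8} to identify $\sum_k 1_{D^{n,\varepsilon,\rho}_k}$ with $\lambda_\varepsilon$ up to $o_p(\lambda_\varepsilon)$, and then show that on $\tilde{D}^{n,\varepsilon,\rho}_{k,1}$ one has $\Delta^n_k X^\varepsilon/\varepsilon = c(X^\varepsilon_{t_{k-1}},\alpha_0)\, V_{k^\ast} + o_p(1)$ uniformly in $k$, where $V_{k^\ast}$ is the single jump mark occurring in $(t_{k-1},t_k]$. The hypothesis $\lambda_\varepsilon \int_{|z|\leq \kappa/n^\rho} f_{\alpha_0}(z)\, dz\to 0$ guarantees that almost every jump is correctly detected, while $\lambda_\varepsilon^2/n\to 0$ makes the probability of a doubly-jumping interval so small that $\tilde{D}^{n,\varepsilon,\rho}_{k,2}$ contributes $o_p(\lambda_\varepsilon)$. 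A conditional law-of-large-numbers argument over the jump times and marks, together with the continuity in \ref{asmp:A10}, then gives
\begin{equation*}
  \Psi^{(2)}_{n,\varepsilon}(\alpha) \overset{p}{\longrightarrow} \int_0^1 \int \psi\bigl(x_t,\, c(x_t,\alpha_0) z,\, \alpha\bigr) f_{\alpha_0}(z)\, dz\, dt,
\end{equation*}
and Jensen's inequality applied to $\log$ shows the right-hand side is uniquely maximized at $\alpha_0$ under \ref{asmp:A1}.

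To upgrade these pointwise convergences to uniform-on-$\Theta$ I invoke the growth and Lipschitz bounds of \ref{asmp:A8}--\ref{asmp:A10} in a standard stochastic equicontinuity argument; the polynomial-rate condition \ref{asmp:A9} \ref{asmp:A9(iib)}, combined with the sharper range $\rho<1/(4q)$ in case \ref{asmp:A4} \ref{asmp:A4(ii)}, is what tames the singularity of $\partial\psi/\partial y$ near $y=0$ for one-sided densities. The argmax continuous-mapping theorem then delivers $\hat\theta_{n,\varepsilon}\overset{p}{\longrightarrow}\theta_0$. The main obstacle throughout is the bookkeeping of the four filter regimes $\tilde{C}^{n,\varepsilon,\rho}_{k,0},\tilde{C}^{n,\varepsilon,\rho}_{k,1},\tilde{D}^{n,\varepsilon,\rho}_{k,1},\tilde{D}^{n,\varepsilon,\rho}_{k,2}$ together with the leakage outside $I_{x_0}^\delta$, and showing that only the correctly-classified pieces survive in the limit; the condition $\lambda_\varepsilon \int_{|z|\leq \kappa/n^\rho} f_{\alpha_0}(z)\, dz \to 0$ is precisely what is needed to kill the misclassified contributions.
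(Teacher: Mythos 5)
Your proposal is correct and follows essentially the same route as the paper: the same additive decomposition of $\Psi_{n,\varepsilon}$, the same localization to $\{X^\varepsilon_t\in I_{x_0}^\delta\}$ and partition of the filters by jump count (the paper's Lemmas \ref{lem:4.8}--\ref{lem:4.13} and \ref{lem:4.14}--\ref{lem:4.15}), the same identification of the three deterministic limits, and the same concluding argmax/identifiability step via the argument of S{\o}rensen and Uchida. The only cosmetic difference is that the paper centers $\Psi^{(1)}_{n,\varepsilon}$ at $(\mu_0,\sigma)$ rather than $(\mu_0,\sigma_0)$ and attributes the $\tilde{C}^{n,\varepsilon,\rho}_{k,2}$ remainder to $\lambda_\varepsilon^2/n$ rather than to the small-jump integral, neither of which affects the argument.
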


  \begin{theorem}\label{thm:3.2}
    Under Assumptions \ref{asmp:A1} to \ref{asmp:A12},
    take $\rho$ as either of the followings:
   \begin{enumerate}
     \item[(i)]  Under Assumption \ref{asmp:A4} \ref{asmp:A4(i)},
                 take $\rho\in(0,1/2)$.
     \item[(ii)] Under Assumption \ref{asmp:A4} \ref{asmp:A4(ii)},
                 take $\rho\in(0,\min\{1/2,1/4q\})$, where $q$ is the constant
                 in Assumptions \ref{asmp:A9} \ref{asmp:A9(iib)}
                 and \ref{asmp:A12} \ref{asmp:A12(iib)}.
   \end{enumerate}
    If $\theta_0\in\Theta$ and $I_{\theta_0}$ is positive definite, then
    \begin{equation*}
      \left( \begin{aligned}
        \varepsilon^{-1} (\hat{\mu}_{n,\varepsilon}-\mu_0) \\
        \sqrt{n} (\hat{\sigma}_{n,\varepsilon}-\sigma_0) \\
        \sqrt{\lambda_\varepsilon} (\hat{\alpha}_{n,\varepsilon}-\alpha_0)
      \end{aligned} \right)
      \overset{d}\longrightarrow
      \mathcal{N} \left( 0, I_{\theta_0}^{-1} \right)
    \end{equation*}
    as $n\to\infty$, $\varepsilon\to0$, $\lambda_\varepsilon\to\infty$, $\lambda_\varepsilon^2/n \to 0$, $\varepsilon\lambda_\varepsilon\to0$ and $\lambda_\varepsilon \int_{|z|\leq\kappa/n^\rho} f_{\alpha_0}(z) \, dz\to0$ with $\lim (\varepsilon^2 n)^{-1}<\infty$,
    where
    \begin{equation*}
      I_{\theta_0} := \left(
      \begin{matrix}
        I_1 & 0 & 0 \\
        0 & I_2 & 0 \\
        0 & 0 & I_3
      \end{matrix} \right)
    \end{equation*}
    and
    \begin{align*}
      I^{i_1i_2}_1 &:= \int_0^1 \frac{\frac{\partial a}{\partial \mu_{i_1}}(x_t,\mu_0)\frac{\partial a}{\partial \mu_{i_2}}(x_t,\mu_0)}{|b(x_t,\mu_0)|^2} dt, \quad
      I^{i_1i_2}_2 := 2 \int_0^1 \frac{\frac{\partial b}{\partial \sigma_{i_1}}(x_t,\sigma_0)\frac{\partial b}{\partial \sigma_{i_2}}(x_t,\sigma_0)}{|b(x_t,\sigma_0)|^2} dt, \\
      I^{i_1i_2}_3 &:= \int_0^1 \int \frac{\partial \psi}{\partial \alpha_{i_1}}(x_t,c(x_t,\alpha_0) z,\alpha_0)\frac{\partial \psi}{\partial \alpha_{i_2}}(x_t,c(x_t,\alpha_0) z,\alpha_0) f_{\alpha_0}(z) \, dz \, dt.
    \end{align*}
  \end{theorem}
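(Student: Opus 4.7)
\textbf{Proof plan for Theorem \ref{thm:3.2}.}
The plan is to carry out a standard M-estimator argument with a block-diagonal scaling matrix adapted to the three different rates in the statement. Set
\begin{equation*}
A_{n,\varepsilon} := \mathrm{diag}\bigl(\varepsilon^{-1} \mathrm{Id}_{d_1},\, \sqrt{n}\,\mathrm{Id}_{d_2},\, \sqrt{\lambda_\varepsilon}\,\mathrm{Id}_{d_3}\bigr),
\end{equation*}
so that the target statement reads $A_{n,\varepsilon}(\hat\theta_{n,\varepsilon}-\theta_0) \overset{d}\longrightarrow \mathcal{N}(0, I_{\theta_0}^{-1})$. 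By Theorem \ref{thm:3.1}, $\hat\theta_{n,\varepsilon} \to \theta_0$ in probability, so with probability tending to $1$ the estimator lies in the open set $\Theta_0$ and satisfies the first-order condition $\nabla \Psi_{n,\varepsilon}(\hat\theta_{n,\varepsilon}) = 0$. A second-order Taylor expansion at $\theta_0$ then gives
\begin{equation*}
A_{n,\varepsilon}(\hat\theta_{n,\varepsilon} - \theta_0)
= -\bigl[\, A_{n,\varepsilon}^{-1} \nabla^2 \Psi_{n,\varepsilon}(\tilde\theta_{n,\varepsilon})\, A_{n,\varepsilon}^{-1} \,\bigr]^{-1}\, A_{n,\varepsilon}^{-1} \nabla \Psi_{n,\varepsilon}(\theta_0)
\end{equation*}
for some intermediate $\tilde\theta_{n,\varepsilon}$ between $\hat\theta_{n,\varepsilon}$ and $\theta_0$. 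The proof then splits into a convergence statement for the scaled Hessian and a central limit theorem for the scaled score, combined by Slutsky.

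\textbf{Hessian convergence.} I would show that
\begin{equation*}
A_{n,\varepsilon}^{-1}\, \nabla^2 \Psi_{n,\varepsilon}(\tilde\theta_{n,\varepsilon})\, A_{n,\varepsilon}^{-1} \overset{p}\longrightarrow -\,I_{\theta_0}.
\end{equation*}
The diagonal blocks are handled as follows. On the event $\tilde C^{n,\varepsilon,\rho}_{k,0}$ (which, by Lemma \ref{lem:4.8}-type estimates, covers all but $o(n)$ indices) the increment $\Delta^n_k X^\varepsilon$ behaves like the diffusion increment $a(x_{t_{k-1}},\mu_0)/n + \varepsilon b(X^\varepsilon_{t_{k-1}},\sigma_0)\Delta^n_k W$, so Riemann-sum approximations using Assumption \ref{asmp:A8} and \ref{asmp:A12}, together with the deterministic limit $X^\varepsilon_{t} \to x_t$ uniformly on $[0,1]$, give the $I_1$ and $I_2$ blocks. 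On the event $\tilde D^{n,\varepsilon,\rho}_{k,1}$, conditional on one jump in the interval the scaled increment $\Delta^n_k X^\varepsilon/\varepsilon$ is asymptotically $c(X^\varepsilon_{t_{k-1}},\alpha_0) V_k$ plus negligible Brownian and drift pieces (using $\varepsilon\lambda_\varepsilon\to 0$), and $\lambda_\varepsilon^{-1}\sum_k \partial^2_{\alpha\alpha}\psi\cdot 1_{\tilde D_{k,1}}$ converges to the $\alpha$-block $-I_3$ via a law of large numbers justified by Assumption \ref{asmp:A12} and the localization in \ref{ntt:CkDk}. The off-diagonal blocks vanish after scaling because the mixed partials of $\Psi^{(1)}$ and $\Psi^{(2)}$ never connect $(\mu,\sigma)$ with $\alpha$, and within $\Psi^{(1)}$ the $(\mu,\sigma)$ cross term carries an extra factor of the residual, which is $O_p(\varepsilon/\sqrt{n})$.

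\textbf{Score CLT.} For the score, I would decompose
\begin{equation*}
A_{n,\varepsilon}^{-1} \nabla \Psi_{n,\varepsilon}(\theta_0) = S^{\mu}_{n,\varepsilon} + S^{\sigma}_{n,\varepsilon} + S^{\alpha}_{n,\varepsilon}
\end{equation*}
along the three parameter blocks. For $S^{\mu}_{n,\varepsilon} = \varepsilon\, \partial_\mu \Psi^{(1)}_{n,\varepsilon}(\theta_0)$, conditioning on the no-jump event and using It\^o isometry together with the continuity of $\partial_\mu a/|b|^2$ along $x_t$ identifies the limit as $\int_0^1 \partial_\mu a(x_t,\mu_0)/b(x_t,\sigma_0)\, dW_t$, a centered Gaussian with covariance $I_1$. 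For $S^{\sigma}_{n,\varepsilon} = n^{-1/2}\partial_\sigma\Psi^{(1)}_{n,\varepsilon}(\theta_0)$, the normalized residuals $(\sqrt{n}\,\Delta^n_k W)^2 - 1$ form a martingale-difference array to which Lindeberg's CLT applies, giving the $I_2$ block. For $S^{\alpha}_{n,\varepsilon}=\lambda_\varepsilon^{-1/2}\sum_k \partial_\alpha \psi(\cdot)\, 1_{D_k^{n,\varepsilon,\rho}}$, I would condition on $J^{n,\varepsilon}_{k,1}$ and use the representation $\Delta^n_k X^\varepsilon/\varepsilon = c(X^\varepsilon_{t_{k-1}},\alpha_0) V_k + \text{(small)}$ to replace the summand by $\partial_\alpha\psi(X^\varepsilon_{t_{k-1}},c(X^\varepsilon_{t_{k-1}},\alpha_0)V_k,\alpha_0)$; the assumption $\lambda_\varepsilon\int_{|z|\leq\kappa/n^\rho}f_{\alpha_0}\to0$ (with a growth/integrability bound from Assumption \ref{asmp:A9}\ref{asmp:A9(ii)} or \ref{asmp:A9(i)}) controls both the bias from the filter and the contribution of $J^{n,\varepsilon}_{k,2}$. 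The remaining IID-like sum, after dividing by $\sqrt{\lambda_\varepsilon}$, satisfies a CLT with limiting covariance $I_3$ by the Lindeberg criterion. Joint asymptotic normality then follows because $S^\mu$ is measurable with respect to the Brownian filtration while $S^\alpha$ depends on jump marks, and the three components are asymptotically orthogonal (already mirrored in the block structure of $I_{\theta_0}$).

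\textbf{Main obstacle.} The delicate step is the CLT for $S^\alpha_{n,\varepsilon}$: the summand is neither iid nor a standard martingale-difference, because both the event $D^{n,\varepsilon,\rho}_k$ and the factor $\psi$ depend on $X^\varepsilon_{t_{k-1}}$ and on the Brownian contribution inside the interval. One must prove that, within the localized events $\tilde D^{n,\varepsilon,\rho}_{k,1}$, the argument $\Delta^n_k X^\varepsilon/\varepsilon$ may be replaced by $c(X^\varepsilon_{t_{k-1}},\alpha_0) V_k$ with a remainder negligible after multiplication by $\lambda_\varepsilon^{-1/2}$, using Assumption \ref{asmp:A12}\ref{asmp:A12(i)}--\ref{asmp:A12(ii)} to control $\partial_y\partial_\alpha\psi$ near and away from $0$; this is precisely where the range restriction $\rho<\min\{1/2,1/(4q)\}$ enters. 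Throughout, every pointwise limit has to be justified on the localized event $\{X^\varepsilon_t\in I_{x_0}^\delta \text{ for all } t\in[0,1]\}$, whose complement has vanishing probability by standard small-noise estimates, so that the mild Assumptions \ref{asmp:A6}, \ref{asmp:A7} suffice.
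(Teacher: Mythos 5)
Your plan follows essentially the same route as the paper: the same block-diagonal scaling, a Taylor expansion of the score around $\theta_0$, convergence of the scaled Hessian to $-I_{\theta_0}$ via the uniform limit theorems, and a CLT for the score after replacing $\Delta^n_k X^\varepsilon/\varepsilon$ by $c(X^\varepsilon_{t_{k-1}},\alpha_0)V_{N^{\lambda_\varepsilon}_{\tau_k}}$ on the localized one-jump events, with the range of $\rho$ entering exactly where you say it does. The one step you should tighten is the joint normality of the three score blocks: marginal CLTs plus asymptotic orthogonality do not by themselves give joint convergence. The paper instead applies a single vector-valued martingale CLT (Theorem A.3 in Shimizu \cite{shimizu2007asymptotic}) to the modified triangular array $\tilde\xi^i_{\ell k}$, verifying that the summed conditional means vanish, that the summed conditional covariances converge to $I_{\theta_0}$ \emph{including} the vanishing of the cross-block terms (Lemmas \ref{lem:4.17}--\ref{lem:4.19}), and a conditional fourth-moment Lyapunov condition (Lemma \ref{lem:4.20}); this is the rigorous form of the orthogonality you gesture at and is the standard fix for your plan.
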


  \begin{remark}\label{rmk:rangeofrho}
    If the jump size density $\{f_\alpha\}_{\alpha\in\Theta_3}$ is given as the family of probability density functions of normal distribution (see Example \ref{eg:normaldistr}),
    then the range of $\rho$ in Theorems \ref{thm:3.1} and \ref{thm:3.2} is same as in Shimizu and Yoshida \cite{shimizu2006estimation} and Ogihara and Yoshida \cite{ogihara2011quasi}.
    However, if the jump size density $\{f_\alpha\}_{\alpha\in\Theta_3}$ is given as the family of probability density functions of gamma distribution (see Example \ref{eg:gammadistr}), then the range of $\rho$ is $(0,1/4)$ which is different from the range $(3/8+b,1/2)$ of $\rho$ in Ogihara and Yoshida \cite{ogihara2011quasi}, where $b$ is the constant defined in the equation (1) in Ogihara and Yoshida \cite{ogihara2011quasi}. Needless to say that this paper deal with a small noise model and they deal with an ergodic model; we note that our estimator is established without truncation, and their estimator is established by using truncation.
  \end{remark}

\section{Proofs}\label{sec:proofs}

  \subsection{Inequalities}

    \begin{lemma}\label{lem:4.1}
      Under Assumptions \ref{asmp:A2} and \ref{asmp:A3}, let
      $0<\varepsilon\leq1$, $\lambda_\varepsilon\geq1$, $\varepsilon\lambda_\varepsilon\leq1$ and $0\leq s<t\leq1$.
      Then, for $p\geq2$,
      \begin{align*}
        E \left[
            \sup_{u\in[s,t]} \left| X^\varepsilon_u - X^\varepsilon_s \right|^p \,
          \middle| \, \mathcal{F}_s \right]
        &\leq C \left\{ (t-s)^p + \varepsilon^p \left( (t-s)^{p/2}  + \lambda_\varepsilon (t-s) \right. \right. \\
        &\quad \left. \left. + \lambda_\varepsilon^{p/2}(t-s)^{p/2} + \lambda_\varepsilon^{p}(t-s)^{p} \right) \right\}
          \left(1 + |X^\varepsilon_s|^p \right),
      \end{align*}
      where $C$ depends only on $p,a,b,c$ and $f_{\alpha_0}$.
      In particular, when $\lambda_\varepsilon/n\leq1$ and $\lambda_\varepsilon\geq1$,
      it holds for $p\geq2$ and $k=1,\dots,n$ that
      \begin{align*}
        E \left[
            \sup_{t\in[t_{k-1},t_k]} \frac{\left| X^\varepsilon_t - X^\varepsilon_{t_{k-1}} \right|^p}{\varepsilon^p} \,
          \middle| \, \mathcal{F}_{t_{k-1}} \right]
        &\leq C \left\{ \frac{1}{\varepsilon^p n^p} + \frac{1}{n^{p/2}} + \frac{\lambda_\varepsilon}{n} \right\}
          \left(1 + |X^\varepsilon_s|^p \right), \\
        E \left[
            \sup_{t\in[0,1]} \left| X^\varepsilon_t - x_0 \right|^p \,
          \middle| \, \mathcal{F}_{t_0} \right]
        &\leq C \left\{ 1 + \varepsilon^p \lambda_\varepsilon^p \right\}
          \left(1 + |x_0|^p \right),
      \end{align*}
      where $C$ depends only on $p,a,b,c$ and $f_{\alpha_0}$.
    \end{lemma}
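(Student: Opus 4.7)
The plan is to write the SDE in integral form
\begin{equation*}
X^\varepsilon_u - X^\varepsilon_s = \int_s^u a(X^\varepsilon_r,\mu_0)\,dr + \varepsilon\int_s^u b(X^\varepsilon_r,\sigma_0)\,dW_r + \varepsilon\int_s^u c(X^\varepsilon_{r-},\alpha_0)\,dZ^{\lambda_\varepsilon}_r,
\end{equation*}
apply $|x+y+z|^p\leq 3^{p-1}(|x|^p+|y|^p+|z|^p)$, bound each piece, and close the loop with Gronwall's inequality. Assumption \ref{asmp:A2} yields, for $f\in\{a(\cdot,\mu_0),b(\cdot,\sigma_0),c(\cdot,\alpha_0)\}$, both the linear-growth bound $|f(x)|\leq C(1+|x|)$ and the pointwise control $|f(X^\varepsilon_r)|\leq C(1+|X^\varepsilon_s|)+L|X^\varepsilon_r-X^\varepsilon_s|$, so every integrand can be centered at $|X^\varepsilon_s|$ plus a running-increment remainder suitable for Gronwall.

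For the drift, H\"older's inequality gives $|\int_s^u a\,dr|^p\leq(t-s)^{p-1}\int_s^t|a(X^\varepsilon_r,\mu_0)|^p\,dr$, producing a leading term of order $(t-s)^p(1+|X^\varepsilon_s|^p)$ plus a remainder $\int_s^t|X^\varepsilon_r-X^\varepsilon_s|^p\,dr$. The Burkholder--Davis--Gundy inequality applied to the Brownian martingale, followed by H\"older on $[s,t]$, gives $\varepsilon^p(t-s)^{p/2}(1+|X^\varepsilon_s|^p)$ plus an analogous remainder. The jump contribution is the delicate piece: I would decompose $Z^{\lambda_\varepsilon}_r=\bar Z_r+\lambda_\varepsilon m_1 r$ with $m_1=E[V_1]$ and $\bar Z$ the compensated martingale, treat $\lambda_\varepsilon m_1\int_s^u c\,dr$ as a drift to obtain $\varepsilon^p\lambda_\varepsilon^p(t-s)^p(1+|X^\varepsilon_s|^p)$, and apply Kunita's first inequality (cf.\ Applebaum \cite{applebaum2009levy}) to the martingale part:
\begin{equation*}
E\Bigl[\sup_{u\in[s,t]}\Bigl|\int_s^u c\,d\bar Z_r\Bigr|^p\,\Big|\,\mathcal{F}_s\Bigr]\leq C\,E\Bigl[\Bigl(\lambda_\varepsilon\!\int_s^t\!\!\int c^2 z^2 f_{\alpha_0}(z)\,dz\,dr\Bigr)^{p/2}+\lambda_\varepsilon\!\int_s^t\!\!\int|c|^p|z|^p f_{\alpha_0}(z)\,dz\,dr\,\Big|\,\mathcal{F}_s\Bigr].
\end{equation*}
Absorbing $\int|z|^k f_{\alpha_0}\,dz$ for $k=2,p/2,p$ via Assumption \ref{asmp:A3} and using linear growth of $c$ produces exactly $\varepsilon^p(\lambda_\varepsilon^{p/2}(t-s)^{p/2}+\lambda_\varepsilon(t-s))(1+|X^\varepsilon_s|^p)$ together with Gronwall-compatible remainders.

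Setting $\phi(u):=E[\sup_{r\in[s,u]}|X^\varepsilon_r-X^\varepsilon_s|^p\mid\mathcal{F}_s]$ and assembling, I obtain $\phi(u)\leq K(u)(1+|X^\varepsilon_s|^p)+C'\int_s^u\phi(r)\,dr$, where $K(u)$ is the bracketed expression of the statement on $[s,u]$. The integral Gronwall lemma then yields $\phi(t)\leq K(t)e^{C'(t-s)}(1+|X^\varepsilon_s|^p)$, and $t-s\leq 1$ absorbs the exponential into the constant. The two specialized estimates follow by substitution: with $s=t_{k-1}$, $t=t_k$ and $\lambda_\varepsilon/n\leq 1$ we have $(\lambda_\varepsilon/n)^{p/2}\vee(\lambda_\varepsilon/n)^p\leq\lambda_\varepsilon/n$ for $p\geq 2$, so the intermediate and top-order $\lambda_\varepsilon$-terms collapse into $\lambda_\varepsilon/n$; while for $s=0$, $t=1$ and $\lambda_\varepsilon\geq 1$, the top power $\varepsilon^p\lambda_\varepsilon^p$ dominates the lower ones.

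The main obstacle is getting all three $\lambda_\varepsilon$-exponents cleanly in the jump estimate. A naive conditioning on $N^{\lambda_\varepsilon}_t-N^{\lambda_\varepsilon}_s$ together with $|\sum_{i\leq N}c_iV_i|^p\leq N^{p-1}\sum|c_i|^p|V_i|^p$ would only yield $\lambda_\varepsilon(t-s)$ and $\lambda_\varepsilon^p(t-s)^p$, missing the intermediate $\lambda_\varepsilon^{p/2}(t-s)^{p/2}$; the Kunita decomposition into compensated martingale plus drift is what supplies all three terms. A second subtle point is the centering trick: writing $f(X^\varepsilon_r)$ as $f(X^\varepsilon_s)+L|X^\varepsilon_r-X^\varepsilon_s|$ (rather than the bare linear-growth bound) is what keeps $|X^\varepsilon_s|^p$ on the right-hand side instead of the less useful $E[\sup_u|X^\varepsilon_u|^p\mid\mathcal{F}_s]$.
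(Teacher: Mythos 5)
Your proposal is correct and follows essentially the same route as the paper's proof: centering each coefficient at $X^\varepsilon_s$ via the Lipschitz assumption, H\"older and Burkholder for the drift and Brownian parts, a Kunita-type inequality (Theorem 4.4.23 in Applebaum) for the jump integral producing the three powers $\lambda_\varepsilon(t-s)$, $\lambda_\varepsilon^{p/2}(t-s)^{p/2}$, $\lambda_\varepsilon^{p}(t-s)^{p}$, and Gronwall to close. The only cosmetic difference is that the paper splits the sum via Minkowski on conditional $L^p$-norms rather than $3^{p-1}(|x|^p+|y|^p+|z|^p)$, and bounds the frozen term $\varepsilon|c(X^\varepsilon_s)|\sup_u|\int_s^u dZ^{\lambda_\varepsilon}_v|$ directly by BDG, which yields the same three terms as your compensated-martingale-plus-drift decomposition.
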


    \begin{proof}
      For any $p\geq2$, we have
      \begin{equation}\label{eq: tri ineq to E[|X_u-X_s|^p|F_s]}
        \begin{aligned}
          \left( E \left[
              \sup_{u\in[s,t]} \left| X^\varepsilon_u - X^\varepsilon_{s} \right|^p \,
            \middle| \, \mathcal{F}_{s} \right] \right)^{1/p}
          &\leq \left( E \left[ \left| \int_s^t
                  \left| a(X^\varepsilon_u,\mu_0) - a(X^\varepsilon_s,\mu_0) \right|
                du \right|^p \, \middle| \, \mathcal{F}_s \right] \right)^{1/p} \\
          &\quad
            + \varepsilon \left( E \left[ \sup_{u\in[s,t]} \left| \int_s^u
                \left( b(X^\varepsilon_{v}, \sigma_0) - b(X^\varepsilon_s, \sigma_0) \right)
              \, dW_v \right|^p \, \middle| \, \mathcal{F}_s \right] \right)^{1/p} \\
          &\quad
            + \varepsilon \left( E \left[ \sup_{u\in[s,t]} \left| \int_s^u
                \left( c(X^\varepsilon_v, \alpha_0) - c(X^\varepsilon_s, \alpha_0) \right)
              \, dZ_v^{\lambda_\varepsilon} \right|^p \, \middle| \, \mathcal{F}_s \right] \right)^{1/p} \\
          &\quad
            + (t-s) \left| a(X^\varepsilon_s,\mu_0) \right|
            + C \varepsilon\sqrt{t-s} \left| b(X^\varepsilon_s,\sigma_0) \right| \\
          &\quad
            + \varepsilon \left| c(X^\varepsilon_s, \alpha_0) \right| \left( E \left[ \sup_{u\in[s,t]} \left| \int_s^u
            \, dZ_v^{\lambda_\varepsilon} \right|^p \right] \right)^{1/p},
        \end{aligned}
      \end{equation}
      where $C$ depends only on $p$.
      Then, it follows from the Lipschitz continuity of $a(\cdot,\mu_0)$ that
      \begin{equation}\label{eq: E[|a(X_u)-a(X_s)|^p|F_s]}
        \begin{aligned}
          E \left[ \left( \int_s^t
            \left| a(X^\varepsilon_u,\mu_0) - a(X^\varepsilon_s,\mu_0) \right|
            du \right)^p \, \middle| \, \mathcal{F}_s \right]
          &
            \leq C E \left[ \left(
              \int_s^t |X^\varepsilon_u - X^\varepsilon_s| \, du
            \right)^p \, \middle| \, \mathcal{F}_s \right]\\
          &
            \leq C (t-s)^{p-1} \int_s^t E \left[
              \left| X^\varepsilon_u - X^\varepsilon_s \right|^p \,
            \middle| \, \mathcal{F}_s \right] du,
        \end{aligned}
      \end{equation}
      where $C$ depends only on $a$,
      and it follows from the Lipschitz continuity of $b(\cdot,\sigma_0)$ and Burkholder's inequality (see, e.g., Theorem 4.4.21 in Applebaum \cite{applebaum2009levy}) that
      \begin{equation}\label{eq: E[|b(X_u)-b(X_s)|^p|F_s]}
        \begin{aligned}
          E \left[ \sup_{u\in[s,t]} \left| \int_s^u
            \left( b(X^\varepsilon_v, \sigma_0) - b(X^\varepsilon_s, \sigma_0) \right)
          dW_v \right|^p \, \middle| \, \mathcal{F}_s \right]
          &\leq C E \left[ \left( \int_s^t
            \left| X^\varepsilon_u - X^\varepsilon_s \right|^2
          du \right)^{p/2} \, \middle| \, \mathcal{F}_s \right] \\
          &\leq C (t-s)^{p/2-1} \int_s^t E \left[
            \left| X^\varepsilon_u - X^\varepsilon_s \right|^p \,
          \middle| \, \mathcal{F}_s \right] du,
        \end{aligned}
      \end{equation}
      where $C$ depends only on $p$ and $b$,
      and from the Lipschitz continuity of $c(\cdot,\alpha_0)$,
      it is analogous to the proof of Theorem 4.4.23 in Applebaum \cite{applebaum2009levy} that
      \begin{align*}
        &E \left[ \sup_{u\in[s,t]} \left| \int_s^u
          \left( c(X^\varepsilon_v, \alpha_0) - c(X^\varepsilon_s, \alpha_0) \right)
        \, dZ_v^{\lambda_\varepsilon} \right|^p \, \middle| \, \mathcal{F}_s \right] \\
        &\quad
        \begin{aligned}
          \leq & ~ C \left\{ E \left [ \left ( \int_s^t \int_{\mathbb{R}}
            \left| X^\varepsilon_u - X^\varepsilon_s \right|^2 \left | z \right |^2 \lambda_\varepsilon \, f_{\alpha_0}(z)
          \, dz \, du \right )^{p/2} \, \middle| \, \mathcal{F}_s \right ] \right. \\
          &
          + E \left [ \int_s^t \int_{\mathbb{R}}
            \left| X^\varepsilon_u - X^\varepsilon_s \right|^p \left | z \right |^p \lambda_\varepsilon \, f_{\alpha_0}(z)
          \, dz \, du \, \middle| \, \mathcal{F}_s \right]
          + \left. E \left [ \left ( \int_s^t \int_{\mathbb{R}}
            \left| X^\varepsilon_u - X^\varepsilon_s \right| \left | z \right | \lambda_\varepsilon \, f_{\alpha_0}(z)
          \, dz \, du \right )^{p} \, \middle| \, \mathcal{F}_s \right ] \right\},
        \end{aligned}
      \end{align*}
      where $C$ depends only on $p$ and $c$.
      Here, we have
      \begin{align*}
        E \left [ \left ( \int_s^t \int_{\mathbb{R}}
          \left| X^\varepsilon_u - X^\varepsilon_s \right|^2 \left | z \right |^2 \lambda_\varepsilon \, f_{\alpha_0}(z)
        \, dz \, du \right )^{p/2} \, \middle| \, \mathcal{F}_s \right ]
        &\leq C \lambda_\varepsilon^{p/2} \left( \int_s^t \left( E \left[
          \left| X^\varepsilon_u - X^\varepsilon_s \right|^p \,
        \middle| \, \mathcal{F}_s \right] \right)^{2/p} du \right)^{p/2} \\
        &\leq C \lambda_\varepsilon^{p/2} (t-s)^{p/2-1} \int_s^t E \left[
          \left| X^\varepsilon_u - X^\varepsilon_s \right|^p \,
        \middle| \, \mathcal{F}_s \right] du,
      \end{align*}
      where $C$ depends only on $p$ and $f_{\alpha_0}$, and
      \begin{align*}
        E \left [ \left ( \int_s^t \int_{\mathbb{R}}
          \left| X^\varepsilon_u - X^\varepsilon_s \right| \left | z \right | \lambda_\varepsilon \, f_{\alpha_0}(z)
        \, dz \, du \right )^{p} \, \middle| \, \mathcal{F}_s \right ]
        &\leq C \lambda_\varepsilon^{p} \left( \int_s^t \left( E \left[
          \left| X^\varepsilon_u - X^\varepsilon_s \right|^p \,
        \middle| \, \mathcal{F}_s \right] \right)^{1/p} du \right)^p \\
        &\leq C \lambda_\varepsilon^p (t-s)^{p-1} \int_s^t E \left[
          \left| X^\varepsilon_u - X^\varepsilon_s \right|^p \,
        \middle| \, \mathcal{F}_s \right] du,
      \end{align*}
      where $C$ depends only on $p$ and $f_{\alpha_0}$.
      Thus,
      \begin{equation}\label{eq: E[|c(X_u)-c(X_s)|^p|F_s]}
        \begin{aligned}
          & E \left[ \sup_{u\in[s,t]} \left| \int_s^u
            \left( c(X^\varepsilon_v, \alpha_0) - c(X^\varepsilon_s, \alpha_0) \right)
          \, dZ_v^{\lambda_\varepsilon} \right|^p \, \middle| \, \mathcal{F}_s \right] \\
          & \qquad
          \leq C \left( \lambda_\varepsilon^{p/2}(t-s)^{p/2-1} + \lambda_\varepsilon + \lambda_\varepsilon^p (t-s)^{p-1} \right)
          \int_s^t E \left[
            \left| X^\varepsilon_u - X^\varepsilon_s \right|^p \,
          \middle| \, \mathcal{F}_s \right] du,
        \end{aligned}
      \end{equation}
      where $C$ depends only on $p$, $c$ and $f_{\alpha_0}$.
      By using the Burkholder--Davis--Gundy inequality,
      \begin{equation}\label{eq: E[|Z_u-Z_s|^p|F_s]}
        E \left[ \sup_{u\in[s,t]} \left| \int_s^u \, dZ_v^{\lambda_\varepsilon} \right|^p \right]
        \leq C \left( \lambda_\varepsilon^{p/2} (t-s)^{p/2} + \lambda_\varepsilon(t-s) + \lambda_\varepsilon^p (t-s)^p \right),
      \end{equation}
      where $C$ depends only on $p$ and $f_{\alpha_0}$.
      From the equations \eqref{eq: tri ineq to E[|X_u-X_s|^p|F_s]} to \eqref{eq: E[|Z_u-Z_s|^p|F_s]},
      \begin{align*}
        &E \left[
            \sup_{u\in[s,t]} \left| X^\varepsilon_u - X^\varepsilon_s \right|^p \,
          \middle| \, \mathcal{F}_s \right] \\
          & \quad
            \leq C \left\{ \left((t-s)^{p-1} + \varepsilon^p(t-s)^{\frac{p-2}{2}}  + \varepsilon^p \left( \lambda_\varepsilon + \lambda_\varepsilon^{\frac{p}{2}}(t-s)^{\frac{p-2}{2}} + \lambda_\varepsilon^{p}(t-s)^{p-1} \right) \right)
            \int_s^t E \left[
              \left| X^\varepsilon_u - X^\varepsilon_s \right|^p \,
            \middle| \, \mathcal{F}_s \right] du \right. \\
          & \quad \quad
            + (t-s)^p \left| a(X^\varepsilon_s,\mu_0) \right|^p
            + \varepsilon^p(t-s)^{p/2} \left| b(X^\varepsilon_s,\sigma_0) \right|^p \\
          &\quad \quad \left.
            + \varepsilon^p \left( \lambda_\varepsilon (t-s) + \lambda_\varepsilon^{p/2} (t-s)^{p/2} + \lambda_\varepsilon^p (t-s)^p \right) \left| c(X^\varepsilon_s, \alpha_0) \right|^p \right\},
      \end{align*}
      where $C$ depends only on $p,a,b,c$ and $f_{\alpha_0}$.
      By Gronwall's inequality,
      \begin{align*}
        &E \left[
            \sup_{u\in[s,t]} \left| X^\varepsilon_u - X^\varepsilon_s \right|^p \,
          \middle| \, \mathcal{F}_s \right] \\
        &\leq C  \left\{
          (t-s)^p \left| a(X^\varepsilon_s,\mu_0) \right|^p
          + \varepsilon^p(t-s)^{p/2} \left| b(X^\varepsilon_s,\sigma_0) \right|^p \right. \\
        &\quad \left.
          + \varepsilon^p \left( \lambda_\varepsilon(t-s) + \lambda_\varepsilon^{p/2}(t-s)^{p/2} + \lambda_\varepsilon^p (t-s)^p \right) \left| c(X^\varepsilon_s, \alpha_0) \right|^p \right\} \\
        &\quad \times \exp \left( C \left\{ (t-s)^p + \varepsilon^p(t-s)^{p/2}  + \varepsilon^p \lambda_\varepsilon (t-s) + \varepsilon^p \lambda_\varepsilon^{p/2}(t-s)^{p/2} + \varepsilon^p \lambda_\varepsilon^{p}(t-s)^{p} \right\} \right).
      \end{align*}
      This implies the conclusion.
    \end{proof}

    \begin{lemma}\label{lem:4.2}
      Under Assumptions \ref{asmp:A2} and \ref{asmp:A3}, let
      $0<\varepsilon\leq1$, $\lambda_\varepsilon\geq1$, $\varepsilon\lambda_\varepsilon\leq1$ and $0\leq s<t\leq1$.
      Then, for $p\geq2$
      \begin{equation*}
        E \left[ \sup_{u\in[s,t]} \left| X^\varepsilon_u - x_u \right|^p \, \middle| \, \mathcal{F}_s \right]
        \leq C \varepsilon^p \left( (t-s)^{p/2}  + \lambda_\varepsilon (t-s) + \lambda_\varepsilon^{p/2}(t-s)^{p/2} + \lambda_\varepsilon^{p}(t-s)^{p} \right),
      \end{equation*}
      where $C$ depends only on $p$, $a$ and $b$.
    \end{lemma}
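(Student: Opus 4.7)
The plan is to mirror the computations carried out for Lemma~\ref{lem:4.1}, but applied to the deviation $Y_u := X^\varepsilon_u - x_u$ rather than to $X^\varepsilon$ itself. Subtracting the defining ODE of $x_u$ from the SDE for $X^\varepsilon_u$, one obtains
\begin{equation*}
Y_u = Y_s + \int_s^u \bigl(a(X^\varepsilon_v,\mu_0) - a(x_v,\mu_0)\bigr)\,dv + \varepsilon \int_s^u b(X^\varepsilon_v,\sigma_0)\,dW_v + \varepsilon \int_s^u c(X^\varepsilon_{v-},\alpha_0)\,dZ^{\lambda_\varepsilon}_v.
\end{equation*}
The crucial simplification compared with Lemma~\ref{lem:4.1} is that the drift no longer contributes an unwanted term of the form $(t-s)^p|a(X^\varepsilon_s,\mu_0)|^p$: the difference $a(X^\varepsilon_v,\mu_0)-a(x_v,\mu_0)$ is handled directly via the Lipschitz continuity of $a(\cdot,\mu_0)$ from Assumption~\ref{asmp:A2}, yielding a pointwise bound proportional to $|Y_v|$ that can be absorbed into the Gronwall step.

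First I would take $\sup_{u\in[s,t]}$, then $p$-th powers, and then conditional expectations. H\"older's inequality gives
\begin{equation*}
E\!\left[\left(\int_s^u |a(X^\varepsilon_v,\mu_0)-a(x_v,\mu_0)|\,dv\right)^{\!p} \,\bigg|\, \mathcal{F}_s\right]
\le C(t-s)^{p-1}\int_s^t E\bigl[|Y_v|^p\,\big|\,\mathcal{F}_s\bigr]\,dv,
\end{equation*}
while the Brownian and compound-Poisson stochastic integrals are handled exactly as in Lemma~\ref{lem:4.1}: Burkholder's inequality bounds the Wiener term in terms of $\int_s^t |b(X^\varepsilon_v,\sigma_0)|^p\,dv$, and the analogue of Applebaum's Theorem~4.4.23 bounds the jump term by a sum of three contributions involving $\lambda_\varepsilon$, $\lambda_\varepsilon^{p/2}$, and $\lambda_\varepsilon^p$. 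The coefficients $|b(X^\varepsilon_v,\sigma_0)|^p$ and $|c(X^\varepsilon_v,\alpha_0)|^p$ appearing there are controlled by linear growth (a consequence of Lipschitz continuity) together with the second inequality in Lemma~\ref{lem:4.1}, which dominates $E[\sup_{v\in[0,1]}|X^\varepsilon_v|^p]$ by a constant depending only on $|x_0|$. This produces precisely the announced factor $\varepsilon^p\bigl((t-s)^{p/2}+\lambda_\varepsilon(t-s)+\lambda_\varepsilon^{p/2}(t-s)^{p/2}+\lambda_\varepsilon^p(t-s)^p\bigr)$.

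At that point, a Gronwall inequality applied to $\phi(u):=E[\sup_{r\in[s,u]}|Y_r|^p\mid \mathcal{F}_s]$ closes the estimate, and the exponential Gronwall factor stays bounded by a universal constant under the standing hypotheses $\varepsilon\le 1$, $\varepsilon\lambda_\varepsilon\le 1$, $t-s\le 1$. Since the announced right-hand side contains no $|Y_s|^p$ contribution, the estimate should be read either as starting from $s=0$ (where $Y_0=X^\varepsilon_0-x_0=0$ by the initial condition), or as being iterated from $0$ so that the boundary term $|Y_s|^p$ is itself of order $\varepsilon^p$ by the same argument.

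The main obstacle will be bookkeeping in the compound-Poisson contribution. As in the proof of Lemma~\ref{lem:4.1}, one must carry along the three separate estimates (for the martingale, drift, and large-jump components) and verify that after combining them with the linear-growth bound $|c(X^\varepsilon_v,\alpha_0)|^p\le C(1+|X^\varepsilon_v|^p)$ and the a priori moment bound from Lemma~\ref{lem:4.1}, the final constant depends only on $p$, $a$, $b$, $c$, $f_{\alpha_0}$ (and, once $b$ and $c$ are folded in via the a priori bound, only on $a$ and $b$ as claimed). The rest of the argument is a direct transcription of the chain of inequalities appearing in the proof of Lemma~\ref{lem:4.1}.
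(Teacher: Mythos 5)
Your plan is the same as the paper's: the published proof of Lemma~\ref{lem:4.2} consists of the single remark ``same as the proof of Lemma~\ref{lem:4.1}'' followed by the Gronwall output, i.e.\ exactly the transcription you describe, with the drift difference $a(X^\varepsilon_v,\mu_0)-a(x_v,\mu_0)$ absorbed into the Gronwall step and the Wiener and compound-Poisson integrals estimated as in \eqref{eq: E[|b(X_u)-b(X_s)|^p|F_s]}--\eqref{eq: E[|Z_u-Z_s|^p|F_s]}. One caveat on your handling of the coefficients: bounding $|b(X^\varepsilon_v,\sigma_0)|^p$ and $|c(X^\varepsilon_v,\alpha_0)|^p$ by linear growth plus the conditional moment bound of Lemma~\ref{lem:4.1} leaves a random factor $(1+|X^\varepsilon_s|^p)$ that the stated (deterministic) right-hand side does not contain; the cleaner route, consistent with the structure of Lemma~\ref{lem:4.1}, is to center at the deterministic path, writing $b(X^\varepsilon_v,\sigma_0)=\bigl(b(X^\varepsilon_v,\sigma_0)-b(x_v,\sigma_0)\bigr)+b(x_v,\sigma_0)$ (and likewise for $c$), absorbing the Lipschitz differences into Gronwall and using that $b(x_\cdot,\sigma_0)$, $c(x_\cdot,\alpha_0)$ are bounded on $[0,1]$. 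Your observation that the statement implicitly requires $s=0$ (or an extra $|X^\varepsilon_s-x_s|^p$ boundary term) is correct and is a looseness of the paper itself, not of your argument.
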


    \begin{proof}
      Same as the proof of Lemma \ref{lem:4.1},
      for any $p\geq2$, we obtain
      \begin{align*}
        &E \left[ \sup_{u\in[s,t]} \left| X^\varepsilon_u - x_u \right|^p \,
          \middle| \, \mathcal{F}_s \right] \\
        &\quad
          \leq C \varepsilon^p \left( (t-s)^{p/2}  + \lambda_\varepsilon (t-s) + \lambda_\varepsilon^{p/2}(t-s)^{p/2} + \lambda_\varepsilon^{p}(t-s)^{p} \right) \\
        &\quad\quad
          \times \exp \left( C \left\{ (t-s)^p + \varepsilon^p \left( (t-s)^{p/2}  + \lambda_\varepsilon (t-s) + \lambda_\varepsilon^{p/2}(t-s)^{p/2} + \lambda_\varepsilon^{p}(t-s)^{p} \right) \right\} \right),
      \end{align*}
      where $C$ depends only on $p,a,b,c$ and $f_{\alpha_0}$.
    \end{proof}

    \begin{lemma}\label{lem:4.3}
      Under Assumptions \ref{asmp:A2} and \ref{asmp:A3}, for $p\geq1$
      \begin{equation*}
        \left\| X^\varepsilon_\cdot - x_\cdot \right\|_{L^p(\Omega;L^\infty([0,1]))} = O(\varepsilon\lambda_\varepsilon)
      \end{equation*}
      as $\varepsilon\to0$, $\lambda_\varepsilon\to\infty$ and $\varepsilon\lambda_\varepsilon\to0$, and
      \begin{align*}
        \left\| \sup_{\substack{0\leq u,s\leq 1 \\ |u-s|\leq1/n}} \left | X^\varepsilon_u - x_s \right | \right\|_{L^p(\Omega)} = O(1/n+\varepsilon\lambda_\varepsilon)
      \end{align*}
      as $n\to\infty$, $\varepsilon\to0$, $\lambda_\varepsilon\to\infty$ and $\varepsilon\lambda_\varepsilon\to0$.
    \end{lemma}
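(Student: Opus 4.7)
The plan is to reduce both statements to Lemma \ref{lem:4.2} by plugging in convenient parameter choices and then handling the remaining error in a more or less routine way.

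For the first estimate, I would first assume $p\geq 2$ so that Lemma \ref{lem:4.2} applies directly with $s=0$ and $t=1$. This yields
\begin{equation*}
  E\left[\sup_{u\in[0,1]}|X^\varepsilon_u-x_u|^p\right]
  \leq C\varepsilon^p\left(1+\lambda_\varepsilon+\lambda_\varepsilon^{p/2}+\lambda_\varepsilon^p\right).
\end{equation*}
Since $\lambda_\varepsilon\geq 1$ in the regime of interest, the $\lambda_\varepsilon^p$ term dominates, so taking $p$-th roots gives the bound $O(\varepsilon\lambda_\varepsilon)$. To extend to $1\leq p<2$, I would just use the standard Jensen/monotonicity inequality $\|Y\|_{L^p(\Omega)}\leq\|Y\|_{L^2(\Omega)}$ on a probability space, applied to $Y=\sup_{u\in[0,1]}|X^\varepsilon_u-x_u|$, which reduces the case to the $p=2$ case already handled.

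For the second estimate, I would use the triangle inequality
\begin{equation*}
  |X^\varepsilon_u-x_s|\leq |X^\varepsilon_u-x_u|+|x_u-x_s|,
\end{equation*}
take supremum over $|u-s|\leq 1/n$, and then take $L^p(\Omega)$ norms. The first term is bounded by $\sup_{u\in[0,1]}|X^\varepsilon_u-x_u|$, whose $L^p$ norm is $O(\varepsilon\lambda_\varepsilon)$ by part one. For the second (deterministic) term, I would observe that the ODE $\dot x_t=a(x_t,\mu_0)$ with Lipschitz $a(\cdot,\mu_0)$ has a unique continuous, hence bounded, solution on $[0,1]$, so $\sup_{t\in[0,1]}|a(x_t,\mu_0)|<\infty$, and therefore $x_\cdot$ is Lipschitz on $[0,1]$ with some constant $L$, giving $|x_u-x_s|\leq L/n$ whenever $|u-s|\leq 1/n$. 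Combining the two terms yields $O(1/n+\varepsilon\lambda_\varepsilon)$, as claimed.

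I do not expect a real obstacle here since the heavy lifting is inside Lemma \ref{lem:4.2}; the only mild point to be careful about is that Lemma \ref{lem:4.2} is stated for $p\geq 2$ while the present statement requires $p\geq 1$, which is why the Jensen step above is needed. Note also that the hypotheses $\varepsilon\leq 1$, $\lambda_\varepsilon\geq 1$, $\varepsilon\lambda_\varepsilon\leq 1$ required by Lemma \ref{lem:4.2} are automatically fulfilled (for sufficiently advanced index in the directed set) under the asymptotics $\varepsilon\to0$, $\lambda_\varepsilon\to\infty$ and $\varepsilon\lambda_\varepsilon\to0$, so invoking Lemma \ref{lem:4.2} is legitimate.
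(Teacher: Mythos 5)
Your proposal is correct and follows essentially the same route as the paper, which simply states that both rates follow immediately from Lemma \ref{lem:4.2}; you have filled in the omitted details (the choice $s=0$, $t=1$, the reduction to $p\geq2$ via monotonicity of $L^p$ norms on a probability space, and the Lipschitz bound on the deterministic flow $x_\cdot$) correctly.
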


    \begin{proof}
      Both rates of convergence are obtained immediately from Lemma \ref{lem:4.2}.
    \end{proof}

    \begin{lemma}\label{lem:4.4}
      Under Assumptions \ref{asmp:A2} and \ref{asmp:A3}, let
      a family $\{g(\cdot,\theta)\}_{\theta\in\Theta}$ of functions from $\mathbb{R}$ to $\mathbb{R}$ be equicontinuous at every points in $I_{x_0}$.
      Then,
      \begin{equation*}
        \frac{1}{n}
          \sum_{k=1}^n
            g \left( X^\varepsilon_{t_{k-1}}, \theta \right)
        \overset{p}\longrightarrow
        \int_0^1 g(x_t, \theta) \, dt
      \end{equation*}
      as $n\to\infty$, $\varepsilon\to0$, $\lambda_\varepsilon\to\infty$ and $\varepsilon\lambda_\varepsilon\to0$, uniformly in $\theta\in\Theta$.
    \end{lemma}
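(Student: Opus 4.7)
The plan is to split the difference into a stochastic fluctuation term and a deterministic Riemann-sum term, and to treat each separately. Concretely, I write
\begin{equation*}
  \frac{1}{n}\sum_{k=1}^n g(X^\varepsilon_{t_{k-1}},\theta) - \int_0^1 g(x_t,\theta)\,dt
  = A^\varepsilon_n(\theta) + B_n(\theta),
\end{equation*}
where
\begin{equation*}
  A^\varepsilon_n(\theta) := \frac{1}{n}\sum_{k=1}^n \bigl[g(X^\varepsilon_{t_{k-1}},\theta) - g(x_{t_{k-1}},\theta)\bigr],
  \quad
  B_n(\theta) := \frac{1}{n}\sum_{k=1}^n g(x_{t_{k-1}},\theta) - \int_0^1 g(x_t,\theta)\,dt,
\end{equation*}
and show that each tends to $0$ uniformly in $\theta\in\Theta$ (the first in probability, the second deterministically).

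The key preparatory observation is that $I_{x_0}$ is compact, being the continuous image of $[0,1]$ under $t\mapsto x_t$. Equicontinuity of $\{g(\cdot,\theta)\}_{\theta\in\Theta}$ at every point of a compact set upgrades, by the usual finite-cover argument, to uniform equicontinuity on a neighborhood of $I_{x_0}$: for every $\eta>0$ there is $\delta>0$ such that whenever $y_1\in I_{x_0}^\delta$, $y_2\in I_{x_0}$ and $|y_1-y_2|<\delta$, one has $|g(y_1,\theta)-g(y_2,\theta)|<\eta$ uniformly in $\theta$.

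For $A^\varepsilon_n(\theta)$, Lemma \ref{lem:4.3} gives $\sup_{s\in[0,1]}|X^\varepsilon_s-x_s|\to0$ in probability under the stated asymptotics. On the event $\{\sup_s|X^\varepsilon_s-x_s|<\delta\}$, each $X^\varepsilon_{t_{k-1}}$ lies in $I_{x_0}^\delta$ within distance $\delta$ of $x_{t_{k-1}}\in I_{x_0}$, so the uniform equicontinuity above yields $|A^\varepsilon_n(\theta)|<\eta$ for every $\theta$ simultaneously. This gives uniform convergence in probability. For $B_n(\theta)$, I use that $t\mapsto x_t$ is (uniformly) continuous on $[0,1]$, so the composition $t\mapsto g(x_t,\theta)$ has modulus of continuity controlled by the modulus of $x$ composed with the (uniform-in-$\theta$) modulus of $g$. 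Thus $\sup_{|s-t|\le 1/n}|g(x_s,\theta)-g(x_t,\theta)|\to0$ uniformly in $\theta$, from which the standard Riemann-sum bound $|B_n(\theta)|\le \sup_k \sup_{t\in[t_{k-1},t_k]}|g(x_t,\theta)-g(x_{t_{k-1}},\theta)|\to0$ uniformly in $\theta$ follows.

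The only mildly delicate step is the first one: I must ensure that the neighborhood $I_{x_0}^\delta$ produced by equicontinuity is the same $\delta$ used to define the good event via Lemma \ref{lem:4.3}, and that the resulting estimate is uniform in both $k$ and $\theta$. Once this is handled, combining the two bounds via the triangle inequality and letting $\eta\downarrow0$ along the net gives the claim. No serious calculations are involved beyond citing Lemma \ref{lem:4.3}.
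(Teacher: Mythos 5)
Your proof is correct and follows essentially the same route as the paper: the paper simply cites Lemma \ref{lem:4.3} together with Lemma \ref{lem:A.2}, whose proof (via Lemma \ref{lem:A.1}) is exactly your finite-cover upgrade of pointwise equicontinuity on the compact set $I_{x_0}$ to uniform equicontinuity on a neighborhood $I_{x_0}^\delta$. Your explicit splitting into the stochastic term $A^\varepsilon_n$ and the deterministic Riemann-sum term $B_n$ just makes visible what the paper leaves implicit.
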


    \begin{proof}
      This follows from Lemmas \ref{lem:4.3} and \ref{lem:A.2}.
    \end{proof}

    \begin{lemma}\label{lem:4.5}
      Under Assumptions \ref{asmp:A2} and \ref{asmp:A3},
      let $0<\varepsilon\leq1$, $\lambda_\varepsilon\geq1$, $\varepsilon\lambda_\varepsilon\leq1$.
      Then, for any $p\in[1,\infty)$,
      \begin{equation*}
        E \left[ \sup_{t\in[t_{k-1},\tau_k)} \left| X^\varepsilon_t - X^\varepsilon_{t_{k-1}} \right|^p \, \middle| \, \mathcal{F}_{t_{k-1}} \right]
        \leq C \left( \frac{1}{n^p} + \frac{\varepsilon^p}{n^{p/2}} \right) \left( 1+|X^\varepsilon_{t_{k-1}}|^p \right),
      \end{equation*}
      where $C$ depends only on $p$, $a$ and $b$, and
      \begin{equation*}
        E \left[ \sup_{t\in[\eta_k,t_k]} \left| X^\varepsilon_t - X^\varepsilon_{t_k} \right|^p \, \middle| \, \mathcal{F}_{t_{k-1}} \right]
        \leq C \left( \frac{1}{n^p} + \frac{\varepsilon^p}{n^{p/2}} \right) \left( 1+|X^\varepsilon_{t_{k-1}}|^p \right),
      \end{equation*}
      where $C$ depends only on $p,a,b,c$ and $f_{\alpha_0}$.
    \end{lemma}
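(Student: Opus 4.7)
The central observation is that on $[t_{k-1},\tau_k)$ and on $(\eta_k,t_k]$ the process $Z^{\lambda_\varepsilon}$ has no jumps by the definitions \ref{ntt:stoppingTime}, so the jump integral in \eqref{eq: main SDE} contributes nothing and $X^\varepsilon$ evolves there as a pure diffusion. This removes all $\lambda_\varepsilon$-dependent terms from the Lemma \ref{lem:4.1}-type estimates and leaves exactly the rate $1/n^p + \varepsilon^p/n^{p/2}$. I will treat the two inequalities separately; the case $p\in[1,2)$ follows from the $p\geq 2$ case by conditional Jensen applied to $|\cdot|^{p/2}$ together with $(1/n^2+\varepsilon^2/n)^{p/2}\leq 1/n^p+\varepsilon^p/n^{p/2}$ and $(1+|X|^2)^{p/2}\leq 1+|X|^p$.

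For the first inequality, I would introduce the pathwise unique strong solution $\tilde X$ of the pure-diffusion SDE
\[
d\tilde X_t = a(\tilde X_t,\mu_0)\,dt + \varepsilon b(\tilde X_t,\sigma_0)\,dW_t, \qquad \tilde X_{t_{k-1}} = X^\varepsilon_{t_{k-1}},
\]
which exists by Assumption \ref{asmp:A2}. Because the jump integral vanishes on $[t_{k-1},\tau_k)$, pathwise uniqueness yields $X^\varepsilon_t=\tilde X_t$ on that interval, so
\[
\sup_{t\in[t_{k-1},\tau_k)}|X^\varepsilon_t-X^\varepsilon_{t_{k-1}}|^p \leq \sup_{t\in[t_{k-1},t_k]}|\tilde X_t-X^\varepsilon_{t_{k-1}}|^p.
\]
Applying the standard BDG-plus-Gronwall moment bound to the pure diffusion $\tilde X$ (the same argument underlying Lemma \ref{lem:4.1} but with the jump contributions discarded) then yields the stated bound, and because no jump term appears the constant depends only on $p$, $a$, $b$.

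The second inequality is more delicate: $\eta_k$ is not a forward stopping time, so one cannot restart the SDE at $X^\varepsilon_{\eta_k}$ with $\mathcal{F}_{t_{k-1}}$-measurable control, and the coupling trick of the first part is unavailable. Instead I would use that there are no jumps on $(\eta_k,t_k]$, so for $t\in[\eta_k,t_k]$
\[
X^\varepsilon_t - X^\varepsilon_{t_k} = -\int_t^{t_k} a(X^\varepsilon_u,\mu_0)\,du - \varepsilon\int_t^{t_k} b(X^\varepsilon_u,\sigma_0)\,dW_u,
\]
bound the drift piece trivially by $\int_{t_{k-1}}^{t_k}|a(X^\varepsilon_u,\mu_0)|\,du$, and for the It\^o piece write $\int_t^{t_k}=\int_{t_{k-1}}^{t_k}-\int_{t_{k-1}}^t$ to obtain
\[
\sup_{t\in[\eta_k,t_k]}\Bigl|\int_t^{t_k} b(X^\varepsilon_u,\sigma_0)\,dW_u\Bigr| \leq 2\sup_{s\in[t_{k-1},t_k]}\Bigl|\int_{t_{k-1}}^s b(X^\varepsilon_u,\sigma_0)\,dW_u\Bigr|.
\]
Then BDG on the right-hand supremum, H\"older on the drift integral, the Lipschitz growth bounds $|a|+|b|\leq C(1+|X^\varepsilon_u|)$ from Assumption \ref{asmp:A2}, and the $p$-th moment estimate $E[|X^\varepsilon_u|^p\mid \mathcal{F}_{t_{k-1}}]\leq C(1+|X^\varepsilon_{t_{k-1}}|^p)$ inherited from Lemma \ref{lem:4.1} (which is where $c$ and $f_{\alpha_0}$ enter the constant) combine to give the claim. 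The main obstacle is precisely the non-adaptedness of $\eta_k$; it is circumvented by the factor-of-$2$ extension of the supremum to the deterministic interval $[t_{k-1},t_k]$.
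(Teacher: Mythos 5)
Your proof is correct, and it rests on the same structural observation as the paper's --- on $[t_{k-1},\tau_k)$ and on $(\eta_k,t_k]$ the jump integral vanishes, so only the drift and Brownian terms survive and produce the rate $1/n^p+\varepsilon^p/n^{p/2}$, while $c$ and $f_{\alpha_0}$ enter the second bound only through a Lemma \ref{lem:4.1}-type moment control of $X^\varepsilon$ inside the interval --- but your execution differs from the paper's in both halves. For the first inequality the paper runs the triangle-inequality-plus-Gronwall argument of Lemma \ref{lem:4.1} directly on $[t_{k-1},t)$ with the jump terms dropped, whereas you couple $X^\varepsilon$ to an auxiliary pure diffusion $\tilde X$ via pathwise uniqueness up to the stopping time $\tau_k$ and invoke the standard diffusion moment bound; this is a clean, equivalent formalization. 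For the second inequality the paper simply says ``similarly'' and writes a backward estimate anchored at $X^\varepsilon_{t_k}$, producing $1+E[|X^\varepsilon_{t_k}|^2\,|\,\mathcal{F}_{t_{k-1}}]$ on the right-hand side, which it then controls by Lemma \ref{lem:4.1}; it does not comment on the fact that $\eta_k$ is not a stopping time. You address exactly that point: since no Gronwall iteration is actually needed once one has the a priori bound $E[\sup_u|X^\varepsilon_u|^p\,|\,\mathcal{F}_{t_{k-1}}]\leq C(1+|X^\varepsilon_{t_{k-1}}|^p)$ from Lemma \ref{lem:4.1} (valid here because $\varepsilon\lambda_\varepsilon\leq1$ makes all $\lambda_\varepsilon$-terms harmless), you dominate $\sup_{t\in[\eta_k,t_k]}\bigl|\int_t^{t_k}b\,dW\bigr|$ by twice the supremum of the martingale over the deterministic interval $[t_{k-1},t_k]$ and apply BDG there. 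This buys a proof in which the randomness of $\eta_k$ never interacts with the stochastic calculus, at the mild cost of a factor $2^p$ and of bounding the drift over the whole subinterval; the paper's route is shorter on the page but leaves the reader to supply the same justification. Your reduction of $p\in[1,2)$ to $p=2$ via conditional Jensen and subadditivity of $x\mapsto x^{p/2}$ matches the paper's appeal to H\"older.
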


    \begin{proof}
      For $t\in[t_{k-1},\tau_{k})$ and $p\geq2$,
      \begin{align*}
        &\left( E \left[ \sup_{s\in[t_{k-1},t)} \left| X^\varepsilon_s - X^\varepsilon_{t_{k-1}} \right|^p \, \middle| \, \mathcal{F}_{t_{k-1}} \right] \right)^{1/p} \\
        &\qquad
        \begin{aligned}
          &\leq C \int_{t_{k-1}}^t \left( E \left[ \left| X^\varepsilon_s - X^\varepsilon_{t_{k-1}} \right|^p \, \middle| \, \mathcal{F}_{t_{k-1}} \right] \right)^{1/p} ds
          + \frac{1}{n} \left| a(X^\varepsilon_{t_{k-1}},\mu_0) \right| \\
          &\quad + C \varepsilon \left( \int_{t_{k-1}}^t \left( E \left[ \left| X^\varepsilon_s - X^\varepsilon_{t_{k-1}} \right|^p \, \middle| \, \mathcal{F}_{t_{k-1}} \right] \right)^{2/p} ds \right)^{1/2}
          + \frac{\varepsilon}{\sqrt{n}} \left| b(X^\varepsilon_{t_{k-1}},\sigma_0) \right|,
        \end{aligned}
      \end{align*}
      where $C$ depnds only on $p$, $a$ and $b$.
      By using Gronwall's inequality, we obtain
      \begin{equation*}
        \left( E \left[ \sup_{s\in[t_{k-1},t)} \left| X^\varepsilon_s - X^\varepsilon_{t_{k-1}} \right|^p \, \middle| \, \mathcal{F}_{t_{k-1}} \right] \right)^{2/p}
        \leq C e^{C(1/n+\varepsilon^2)t} \left( \frac{1}{n^2} + \frac{\varepsilon^2}{n} \right) (1+|X^\varepsilon_{t_{k-1}}|^2),
      \end{equation*}
      where $C$ depnds only on $p$, $a$ and $b$.
      Similarly,
      \begin{equation*}
        \left( E \left[ \sup_{s\in[\eta_k,t_k]} \left| X^\varepsilon_s - X^\varepsilon_{t_k} \right|^p \, \middle| \, \mathcal{F}_{t_{k-1}} \right] \right)^{2/p}
        \leq C \left( \frac{1}{n^2} + \frac{\varepsilon^2}{n} \right)
          \left( 1 + E \left[ |X^\varepsilon_{t_k}|^2 \, \middle| \, \mathcal{F}_{t_{k-1}} \right] \right),
      \end{equation*}
      where $C$ depnds only on $p$, $a$ and $b$.
      From Lemma \ref{lem:4.1}, we have
      \begin{equation*}
        E \left[ \sup_{u,s\in[t_{k-1},t_k]} \left| X^\varepsilon_u - X^\varepsilon_s \right|^p \, \middle| \, \mathcal{F}_{t_{k-1}} \right]
        \leq C \left( \frac{1}{n^p} + \varepsilon^p\frac{\lambda_\varepsilon}{n} \right) \left( 1+|X^\varepsilon_{t_{k-1}}|^p \right),
      \end{equation*}
      where $C$ depnds only on $p,a,b,c$ and $f_{\alpha_0}$.
      We can easily extend this result to the case $p\in[1,2)$ by using H\"older inequality.
    \end{proof}

    \begin{lemma}\label{lem:4.6}
      Under Assumptions \ref{asmp:A2} and \ref{asmp:A3}, let
      $0<\varepsilon\leq1$, $\lambda_\varepsilon\geq1$, $\varepsilon\lambda_\varepsilon\leq1$,
      and
      \begin{equation*}
        Y^\varepsilon_k := \sup_{t\in[t_{k-1},\tau_k)} \frac{| X^\varepsilon_t - X^\varepsilon_{t_{k-1}} |}{\varepsilon} + \sup_{t\in[\eta_k,t_k]} \frac{| X^\varepsilon_t - X^\varepsilon_{t_k}|}{\varepsilon}.
      \end{equation*}
      Then, for any $p\in(2,\infty)$,
      \begin{equation*}
        \sup_{k=1,\dots,n} Y^\varepsilon_k
        = O_p \left( \frac{1}{\varepsilon n^{1-1/p}} + \frac{1}{n^{1/2-1/p}} \right)
      \end{equation*}
      as $n\to\infty$, $\varepsilon\to0$, $\lambda_\varepsilon\to\infty$ and $\varepsilon\lambda_\varepsilon\to0$.
    \end{lemma}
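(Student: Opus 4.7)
The plan is to combine the per-subinterval $L^p$ moment bound in Lemma~\ref{lem:4.5} with a Markov/union-bound argument over the $n$ indices, using Lemma~\ref{lem:4.1} to uniformly control the endpoint moments of $X^\varepsilon$.

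First, applying Lemma~\ref{lem:4.5} to both terms that define $Y^\varepsilon_k$ and using $(a+b)^p \leq 2^{p-1}(a^p+b^p)$, I get, for any $p\in(2,\infty)$,
\begin{equation*}
  E\!\left[|Y^\varepsilon_k|^p \,\middle|\, \mathcal{F}_{t_{k-1}}\right]
  \leq C\!\left( \frac{1}{\varepsilon^p n^p} + \frac{1}{n^{p/2}} \right)
       \bigl(1 + |X^\varepsilon_{t_{k-1}}|^p\bigr).
\end{equation*}
Lemma~\ref{lem:4.1} (the second displayed inequality) gives $E[\sup_{t\in[0,1]}|X^\varepsilon_t|^p] \leq C(1+\varepsilon^p\lambda_\varepsilon^p)(1+|x_0|^p)$, which is uniformly bounded under the standing hypothesis $\varepsilon\lambda_\varepsilon\leq 1$. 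Taking total expectations in the display above therefore yields
\begin{equation*}
  \max_{1\leq k\leq n} E\!\left[|Y^\varepsilon_k|^p\right]
  \leq C\!\left( \frac{1}{\varepsilon^p n^p} + \frac{1}{n^{p/2}} \right),
\end{equation*}
with a constant $C$ independent of $k,n,\varepsilon$.

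Second, I pass to the maximum over $k$ via Markov's inequality and a union bound. For any $M>0$,
\begin{equation*}
  P\!\left( \sup_{k=1,\dots,n} Y^\varepsilon_k > M \right)
  \leq \sum_{k=1}^n \frac{E[|Y^\varepsilon_k|^p]}{M^p}
  \leq \frac{Cn}{M^p}\!\left( \frac{1}{\varepsilon^p n^p} + \frac{1}{n^{p/2}} \right)
  = \frac{C}{M^p}\!\left( \frac{1}{(\varepsilon n^{1-1/p})^{p}} + \frac{1}{(n^{1/2-1/p})^{p}} \right).
\end{equation*}
Taking $M$ to be a sufficiently large constant multiple of $\bigl(\varepsilon n^{1-1/p}\bigr)^{-1}+\bigl(n^{1/2-1/p}\bigr)^{-1}$ makes this probability arbitrarily small, which is precisely the claimed $O_p$ bound.

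There is no serious obstacle. The essential input---the correct $\varepsilon$- and $n$-dependence on the continuous pre-first-jump and post-last-jump pieces of each subinterval---has already been extracted in Lemma~\ref{lem:4.5}, and the loss of the factor $n^{1/p}$ when passing from one subinterval to the maximum over $n$ of them is the standard cost of combining Markov with the union bound at exponent $p$. The only minor point to check is that the residual factor $1+|X^\varepsilon_{t_{k-1}}|^p$ is controlled uniformly in $k$, which is immediate from Lemma~\ref{lem:4.1} in the regime $\varepsilon\lambda_\varepsilon\leq 1$.
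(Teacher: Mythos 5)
Your proof is correct, and it reaches the same $O_p$ rate via the same essential input (the per-subinterval moment bound of Lemma \ref{lem:4.5}), but the passage from the individual intervals to the maximum is organized differently from the paper. You take total expectations, using the uniform bound $E[\sup_{t\in[0,1]}|X^\varepsilon_t|^p]\leq C$ from Lemma \ref{lem:4.1} under $\varepsilon\lambda_\varepsilon\leq1$, and then apply Markov's inequality with a union bound over $k$; the paper instead stays at the level of conditional expectations, controls $\frac1n\sum_{k}(1+|X^\varepsilon_{t_{k-1}}|^p)=O_p(1)$ via Lemma \ref{lem:4.4}, invokes the Lenglart-type Lemma \ref{lem:A.3} to pass from $\sum_k E[|Y^\varepsilon_k|^p\mid\mathcal{F}_{t_{k-1}}]=O_p(r^p)$ to $\sum_k |Y^\varepsilon_k|^p=O_p(r^p)$, and finally bounds $\sup_k|Y^\varepsilon_k|\leq(\sum_k|Y^\varepsilon_k|^p)^{1/p}$. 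Both routes pay the same $n^{1/p}$ cost. Your argument is more elementary and self-contained here because the unconditional moment of $X^\varepsilon$ happens to be uniformly bounded; the paper's conditional/Lenglart formulation is the pattern it reuses throughout (where the weights in the conditional bounds are only controlled in probability, not in expectation), which is presumably why it is phrased that way. No gap.
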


    \begin{proof}
      By using Lemmas \ref{lem:4.4} and \ref{lem:4.5}, we have
      \begin{equation*}
        \sum_{k=1}^n E \left[ \left| Y_k^{\varepsilon} \right|^p \, \middle| \, \mathcal{F}_{t_{k-1}} \right]
        \leq C \left( \frac{n}{(\varepsilon n)^p} + \frac{n}{n^{p/2}} \right) \frac{1}{n} \sum_{k=1}^n \left( 1 + \left| X^\varepsilon_{t_{k-1}} \right|^p \right)
        = O_p \left( \frac{n}{(\varepsilon n)^p} + \frac{n}{n^{p/2}} \right)
      \end{equation*}
      as $n\to\infty$, $\varepsilon\to0$, $\lambda_\varepsilon\to\infty$ and $\varepsilon\lambda_\varepsilon\to0$.
      It follows from Lemma \ref{lem:A.3} that
      \begin{equation*}
        \sup_{k=1,\dots,n} |Y^\varepsilon_k|
        \leq \left( \sum_{k=1}^n \left| Y_k^{\varepsilon} \right|^p \right)^{1/p}
        = O_p \left( \frac{1}{\varepsilon n^{1-1/p}} + \frac{1}{n^{1/2-1/p}} \right).
      \end{equation*}
      as $n\to\infty$, $\varepsilon\to0$, $\lambda_\varepsilon\to\infty$ and $\varepsilon\lambda_\varepsilon\to0$.
    \end{proof}

  \subsection{Limit theorems}

    We make a version of Lemma 2.2 in Shimizu and Yoshida \cite{shimizu2006estimation} in the sequel, and remark that the estimations for $C^{n,\varepsilon,\rho}_{k,1}$ and $D^{n,\varepsilon,\rho}_{k,1}$, the notations of which are denoted in \ref{ntt:CkDk}, are essentially important (the estimation for $C^{n,\varepsilon,\rho}_{k,0}$ may be useless).

    \begin{lemma}\label{lem:4.7}
      Under Assumptions \ref{asmp:A2} to \ref{asmp:A6},
      let $0<\varepsilon\leq1$, $\lambda_\varepsilon\geq1$, $\varepsilon\lambda_\varepsilon\leq1$.
      Then, for $p\geq2$ and $\rho\in(0,1/2)$
      \begin{align*}
        &P \left[ C^{n,\varepsilon,\rho}_{k,0} \, \middle | \, \mathcal{F}_{t_{k-1}} \right]
          \geq e^{-\lambda_\varepsilon/n} \left\{ 1 - C \left( \frac{1}{n^{p(1-\rho)}} + \frac{\varepsilon^p}{n^{p(1/2-\rho)}} \right) \left( 1+|X^\varepsilon_{t_{k-1}}|^p \right) \right\}\\
        &P \left[ D^{n,\varepsilon,\rho}_{k,0} \, \middle | \, \mathcal{F}_{t_{k-1}} \right]
          \leq C \left( \frac{1}{n^{p(1-\rho)}} + \frac{\varepsilon^p}{n^{p(1/2-\rho)}} \right) \left( 1+|X^\varepsilon_{t_{k-1}}|^p \right) \\
        &P \left[ C^{n,\varepsilon,\rho}_{k,1} \, \middle | \, \mathcal{F}_{t_{k-1}} \right]
          \leq \frac{\lambda_\varepsilon}{n}
          \left\{ C \left( \frac{1}{n^{p(1-\rho)}} + \frac{\varepsilon^p}{n^{p(1/2-\rho)}}
          + \frac{\varepsilon^p\lambda_\varepsilon}{n} \right) \left( 1+|X^\varepsilon_{t_{k-1}}|^p \right)
          + \int_{|z|\leq\kappa/n^\rho} f_{\alpha_0}(z) \, dz \right\}, \\
        &P \left[ D^{n,\varepsilon,\rho}_{k,1} \, \middle | \, \mathcal{F}_{t_{k-1}} \right]
          \leq \frac{\lambda_\varepsilon}{n}
          \left\{ C \left( \frac{1}{n^{p(1-\rho)}} + \frac{\varepsilon^p}{n^{p(1/2-\rho)}}
          + \frac{\varepsilon^p\lambda_\varepsilon}{n} \right) \left( 1+|X^\varepsilon_{t_{k-1}}|^p \right)
          + 1 \right\}, \\
        &P \left[ C^{n,\varepsilon,\rho}_{k,2} \, \middle | \, \mathcal{F}_{t_{k-1}} \right]
          \leq \frac{\lambda_\varepsilon^2}{n^2}, \quad
          P \left[ D^{n,\varepsilon,\rho}_{k,2} \, \middle | \, \mathcal{F}_{t_{k-1}} \right]
          \leq \frac{\lambda_\varepsilon^2}{n^2},
      \end{align*}
      where $\kappa$ is given in \ref{ntt:N9},
      and $C$ depends only on $p,a,b,c,f_{\alpha_0}$ and $v_1$.
    \end{lemma}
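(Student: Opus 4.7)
The plan is to handle the six pairs $C^{n,\varepsilon,\rho}_{k,i}, D^{n,\varepsilon,\rho}_{k,i}$ ($i=0,1,2$) separately by conditioning on the number of jumps of $N^{\lambda_\varepsilon}$ in $[t_{k-1},t_k]$ and exploiting the independence of $N^{\lambda_\varepsilon}$ and the jump sizes $V_i$ from $W$. The cases $i=2$ are immediate: since $C^{n,\varepsilon,\rho}_{k,2},D^{n,\varepsilon,\rho}_{k,2}\subset J^{n,\varepsilon}_{k,2}$ and $P[J^{n,\varepsilon}_{k,2}\mid\mathcal{F}_{t_{k-1}}] = 1-e^{-\lambda_\varepsilon/n}(1+\lambda_\varepsilon/n)$, the elementary inequality $e^{-x}(1+x)\geq 1-x^2$ on $[0,\infty)$ (verified by differentiating $e^{-x}(1+x)-1+x^2$, whose derivative $x(2-e^{-x})$ is non-negative) immediately produces the $(\lambda_\varepsilon/n)^2$ bound.

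For $i=0$, on $J^{n,\varepsilon}_{k,0}$ the process $X^\varepsilon$ restricted to $[t_{k-1},t_k]$ satisfies the jump-free SDE $dY = a(Y,\mu_0)\,dt+\varepsilon b(Y,\sigma_0)\,dW_t$, and $\tau_k=t_k$ by definition. Using the conditional independence of $N^{\lambda_\varepsilon}$ from $W$ given $\mathcal{F}_{t_{k-1}}$ to factor out $e^{-\lambda_\varepsilon/n}$, together with Markov's inequality at exponent $p$ and the jump-free sup estimate of Lemma \ref{lem:4.5} on $[t_{k-1},\tau_k)$, I would obtain
\[
P[D^{n,\varepsilon,\rho}_{k,0}\mid\mathcal{F}_{t_{k-1}}] \leq e^{-\lambda_\varepsilon/n}\cdot C\Bigl(\tfrac{1}{n^{p(1-\rho)}}+\tfrac{\varepsilon^p}{n^{p(1/2-\rho)}}\Bigr)(1+|X^\varepsilon_{t_{k-1}}|^p),
\]
which gives the $D^{n,\varepsilon,\rho}_{k,0}$ bound after dropping the leading $e^{-\lambda_\varepsilon/n}\leq 1$ and, via $P[C^{n,\varepsilon,\rho}_{k,0}] = P[J^{n,\varepsilon}_{k,0}] - P[D^{n,\varepsilon,\rho}_{k,0}] = e^{-\lambda_\varepsilon/n} - P[D^{n,\varepsilon,\rho}_{k,0}]$, the matching $C^{n,\varepsilon,\rho}_{k,0}$ bound.

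For $i=1$, decompose $\Delta^n_k X^\varepsilon = R + \varepsilon c(X^\varepsilon_{\tau_k-},\alpha_0)V$ on $J^{n,\varepsilon}_{k,1}$, where $R$ is the continuous remainder on $[t_{k-1},\tau_k)\cup(\tau_k,t_k]$ and $V$ is the single jump size with law $f_{\alpha_0}\,dz$, independent of $\mathcal{F}_{t_{k-1}}\vee\sigma(W,N^{\lambda_\varepsilon})$. The $D^{n,\varepsilon,\rho}_{k,1}$ bound follows crudely from $P[D^{n,\varepsilon,\rho}_{k,1}]\leq P[J^{n,\varepsilon}_{k,1}]\leq \lambda_\varepsilon/n$, supplying the ``$+1$'' in the bracket. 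For $C^{n,\varepsilon,\rho}_{k,1}$ the core step is the split
\[
C^{n,\varepsilon,\rho}_{k,1}\subset\bigl(J^{n,\varepsilon}_{k,1}\cap\{|V|\leq \kappa/n^\rho\}\bigr)\cup\bigl(C^{n,\varepsilon,\rho}_{k,1}\cap\{|V|>\kappa/n^\rho\}\bigr).
\]
The first piece, by the independence of $V$ from $(W,N^{\lambda_\varepsilon},\mathcal{F}_{t_{k-1}})$, contributes at most $(\lambda_\varepsilon/n)\int_{|z|\leq\kappa/n^\rho}f_{\alpha_0}(z)\,dz$, yielding the integral term. On the second piece, combining $|V|>\kappa/n^\rho=4v_2/(c_1n^\rho)$ with $|c|\geq c_1$ (Assumption \ref{asmp:A6}) and $|\Delta^n_k X^\varepsilon|\leq v_2/n^\rho$ via the reverse triangle inequality forces $|R|$ to exceed a threshold of order $1/n^\rho$; Markov's inequality at exponent $p$ with the full-interval moment bound of Lemma \ref{lem:4.1} (which contributes the three summands $1/n^p$, $\varepsilon^p/n^{p/2}$, and $\varepsilon^p\lambda_\varepsilon/n$) then produces the remaining $C(\,\cdots)(1+|X^\varepsilon_{t_{k-1}}|^p)$ factor.

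The main obstacle is the second piece of the $C^{n,\varepsilon,\rho}_{k,1}$ split: one must carry out the reverse-triangle step carefully so that the specific choice $\kappa=4v_2/c_1$ of \ref{ntt:N9} matches the lower bound extracted on $|R|$, and must then arrange the Markov-plus-Lemma-\ref{lem:4.1} estimate so that the exponents of $n$ and $\varepsilon$ in the three summands come out exactly as stated. The analogous estimate for $D^{n,\varepsilon,\rho}_{k,0}$ uses only the jump-free Lemma \ref{lem:4.5} and so lacks the $\varepsilon^p\lambda_\varepsilon/n$ contribution, which is precisely why the $i=0$ bound is sharper than its $i=1$ counterpart.
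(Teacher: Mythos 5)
Your treatment of the cases $i=0$ and $i=2$ matches the paper's (which simply invokes Lemma~\ref{lem:4.5} and the Shimizu--Yoshida computation $1-e^{-x}(1+x)\le x^{2}$), and your crude bound $P[D^{n,\varepsilon,\rho}_{k,1}\mid\mathcal{F}_{t_{k-1}}]\le P[J^{n,\varepsilon}_{k,1}\mid\mathcal{F}_{t_{k-1}}]\le\lambda_\varepsilon/n$ is a legitimate shortcut, since the stated bound carries the summand $+1$ inside the braces. The genuine gap is in the second piece of your split for $C^{n,\varepsilon,\rho}_{k,1}$. The jump enters the SDE as $\varepsilon c(X^\varepsilon_{t-},\alpha_0)\,dZ^{\lambda_\varepsilon}_t$, so on $J^{n,\varepsilon}_{k,1}$ the single jump displaces the path by $\Delta X^\varepsilon_{\tau_k}=\varepsilon c(X^\varepsilon_{\tau_k-},\alpha_0)V$. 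From $|V|>\kappa/n^\rho=4v_2/(c_1n^\rho)$ and $|c|\ge c_1$ you only obtain $|\Delta X^\varepsilon_{\tau_k}|\ge 4\varepsilon v_2/n^\rho$, which for small $\varepsilon$ lies \emph{below} the filter level $v_{nk}/n^\rho\ge v_1/n^\rho$; the reverse triangle inequality then forces nothing on the continuous remainder $R$ (it gives $|R|\ge 4\varepsilon v_2/n^\rho - v_2/n^\rho$, which is negative). You have dropped the factor $\varepsilon$ in front of the jump coefficient, and the second half of your partition cannot be closed as described.

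The paper argues in the opposite direction: on $C^{n,\varepsilon,\rho}_{k,1}$, either the continuous part exceeds the threshold (controlled by Markov's inequality and the jump-free Lemma~\ref{lem:4.5}), or $c(X^\varepsilon_{\tau_k-},\alpha_0)$ has drifted by more than $c_1/2$ from $c(x_\cdot,\alpha_0)$ (controlled via the Lipschitz continuity of $c$ and Lemma~\ref{lem:4.2}; this is where the extra summand $\varepsilon^p\lambda_\varepsilon/n$ in the $i=1$ bounds originates, and it is a localization step you cannot skip, because Assumption~\ref{asmp:A6} bounds $|c|$ from below only near $I_{x_0}$, not along the whole random path), or else the jump size is \emph{forced} to satisfy $|\Delta Z^{\lambda_\varepsilon}_{\tau_k}|\le 4v_{nk}/(c_1 n^\rho)\le\kappa/n^\rho$, which yields the integral term by independence of $V$ from everything else. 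In other words, ``$|V|$ small'' is the conclusion of the complementary reasoning, not one half of an a priori partition of the sample space. Relatedly, your attribution of the $\varepsilon^p\lambda_\varepsilon/n$ term to Lemma~\ref{lem:4.1} cannot work: applying Lemma~\ref{lem:4.1} to the full increment reintroduces the very jump you are isolating, and after Markov at level $n^{-\rho}$ its $\lambda$-summand becomes $\varepsilon^p\lambda_\varepsilon n^{p\rho}/n$ rather than $\varepsilon^p\lambda_\varepsilon/n$.
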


    \begin{proof}
      We only give a proof under Assumption \ref{asmp:A4} \ref{asmp:A4(i)}, because the same argument still works under Assumption \ref{asmp:A4} \ref{asmp:A4(ii)}.
      Same as in the proof of Lemma 2.2 in Shimizu and Yoshida \cite[Section 4.2]{shimizu2006estimation},
      it follows that
      \begin{equation*}
        P \left[ C^{n,\varepsilon,\rho}_{k,2} \, \middle | \, \mathcal{F}_{t_{k-1}} \right],~
        P \left[ D^{n,\varepsilon,\rho}_{k,2} \, \middle | \, \mathcal{F}_{t_{k-1}} \right]
        \leq \frac{\lambda_\varepsilon^2}{n^2}.
      \end{equation*}
      Also, it follows from
      \begin{align*}
        &P \left[
          \left| X^\varepsilon_{t_k} - X^\varepsilon_{\tau_k}
          + \Delta X^\varepsilon_{\tau_k} + X^\varepsilon_{\tau_k-} - X^\varepsilon_{t_{k-1}} \right|
          \leq \frac{v_{nk}}{n^\rho}
          \, \middle | \, \mathcal{F}_{t_{k-1}},~\Delta^n_k N^{\lambda_\varepsilon}=1 \right] \\
        & \quad
        \begin{aligned}
          &\leq P \left[ \left| X^\varepsilon_{t_k} - X^\varepsilon_{\tau_k} \right|
          + \sup_{t\in[t_{k-1},\tau_k)} \left| X^\varepsilon_t - X^\varepsilon_{t_{k-1}} \right| > \frac{v_{nk}}{n^\rho}
          \, \middle | \, \mathcal{F}_{t_{k-1}},~\Delta^n_k N^{\lambda_\varepsilon}=1 \right] \\
          &\quad + P \left[ \left| \Delta Z^{\lambda_\varepsilon}_{\tau_k} \right| \leq \frac{4v_{nk}}{c_1n^\rho}
          ~\text{or}~
          \sup_{t\in[t_k,t_{k-1}]} \left| c(X^\varepsilon_t,\alpha_0) - c(x_t, \alpha_0) \right| > \frac{c_1}{2}
          \, \middle | \, \mathcal{F}_{t_{k-1}},~\Delta^n_k N^{\lambda_\varepsilon}=1 \right], \\
        \end{aligned}\\
        &P \left[
          \left| X^\varepsilon_{t_k} - X^\varepsilon_{\tau_k}
          + \Delta X^\varepsilon_{\tau_k} + X^\varepsilon_{\tau_k-} - X^\varepsilon_{t_{k-1}} \right|
          > \frac{v_{nk}}{n^\rho}
          \, \middle | \, \mathcal{F}_{t_{k-1}},~\Delta^n_k N^{\lambda_\varepsilon}=1 \right] \\
        & \quad
        \begin{aligned}
          &\leq P \left[ \left| X^\varepsilon_{t_k} - X^\varepsilon_{\tau_k} \right|
          + \sup_{t\in[t_{k-1},\tau_k)} \left| X^\varepsilon_t - X^\varepsilon_{t_{k-1}} \right| > \frac{v_{nk}}{2n^\rho}
          \, \middle | \, \mathcal{F}_{t_{k-1}},~\Delta^n_k N^{\lambda_\varepsilon}=1 \right] \\
          &\quad + P \left[ \left| \Delta Z^{\lambda_\varepsilon}_{\tau_k} \right| > \frac{v_{nk}}{4c_2n^\rho}
          ~\text{or}~
          \sup_{t\in[t_k,t_{k-1}]} \left| c(X^\varepsilon_t,\alpha_0) - c(x_t, \alpha_0) \right| > c_2
          \, \middle | \, \mathcal{F}_{t_{k-1}},~\Delta^n_k N^{\lambda_\varepsilon}=1 \right] \\
        \end{aligned}
      \end{align*}
      and Lemmas \ref{lem:4.2}, \ref{lem:4.5} and \ref{lem:A.2} that
      \begin{align*}
        &P \left[ C^{n,\varepsilon,\rho}_{k,1} \, \middle | \, \mathcal{F}_{t_{k-1}} \right] \\
        &\quad
          \leq \frac{\lambda_\varepsilon}{n} e^{-\lambda_\varepsilon/n}
          \left\{ C \left( \frac{1}{n^{p(1-\rho)}} + \frac{\varepsilon^p}{n^{p(1/2-\rho)}} \right) \left( 1+|X^\varepsilon_{t_{k-1}}|^p \right)
          + \int_{|z|\leq4v_{nk}/c_1n^\rho} f_{\alpha_0}(z) \, dz
          + C \frac{\varepsilon^p \lambda_\varepsilon}{n} \right\}, \\
        &P \left[ D^{n,\varepsilon,\rho}_{k,1} \, \middle | \, \mathcal{F}_{t_{k-1}} \right] \\
        &\quad
          \leq \frac{\lambda_\varepsilon}{n} e^{-\lambda_\varepsilon/n}
          \left\{ C \left( \frac{1}{n^{p(1-\rho)}} + \frac{\varepsilon^p}{n^{p(1/2-\rho)}} \right) \left( 1+|X^\varepsilon_{t_{k-1}}|^p \right)
          + \int_{|z|>v_{nk}/4c_2n^\rho} f_{\alpha_0}(z) \, dz
          + C \frac{\varepsilon^p \lambda_\varepsilon}{n} \right\},
      \end{align*}
      where $C$ depends only on $p,a,b,c,f_{\alpha_0}$ and $v_1$.
      Here, note that $\kappa:=4v_2/c_1\geq4v_{nk}/c_1$.
      The other inequalities follow from Lemma \ref{lem:4.5}.
    \end{proof}

    In the proof of Proposition 3.3 (ii) in Shimizu and Yoshida \cite{shimizu2006estimation}, the intensity of the Poisson process driving on the background is constant, although we assume the intensity $\lambda_\varepsilon$ goes to infinity. So, we prepare the following lemma.

    \begin{lemma}\label{lem:4.8}
      Under Assumptions \ref{asmp:A2} to \ref{asmp:A6},
      for $\rho\in(0,1/2)$
      \begin{align*}
        \frac{1}{\lambda_\varepsilon} \sum_{k=1}^n  1_{D^{n,\varepsilon,\rho}_{k}}
        \overset{p}\longrightarrow 1,
      \end{align*}
      as $n\to\infty$, $\varepsilon\to0$, $\lambda_\varepsilon\to\infty$, $\lambda_\varepsilon/n \to 0$ and $\varepsilon\lambda_\varepsilon\to0$.
      More precisely, for $\rho\in(0,1/2)$ and $p\in[2/(1-2\rho),\infty)$
      \begin{align*}
        \frac{1}{\lambda_\varepsilon} \sum_{k=1}^n  1_{D^{n,\varepsilon,\rho}_{k,0}}
          &= O_p \left( \frac{1}{\lambda_\varepsilon n^{p(1-\rho)-1}}
          + \frac{\varepsilon^p}{\lambda_\varepsilon n^{p(1/2-\rho)-1}} \right), \\
        \frac{1}{\lambda_\varepsilon} \sum_{k=1}^n  1_{D^{n,\varepsilon,\rho}_{k,1}}
          &= 1 + O_p \left( \frac{\lambda_\varepsilon}{n}
            + \frac{1}{n^{p(1-\rho)}}
            + \frac{\varepsilon^p}{n^{p(1/2-\rho)}}
            + \int_{|z|\leq\kappa/n^\rho} f_{\alpha_0}(z) \, dz \right), \\
        \frac{1}{\lambda_\varepsilon} \sum_{k=1}^n  1_{D^{n,\varepsilon,\rho}_{k,2}}
          &= O_p \left( \frac{\lambda_\varepsilon}{n} \right)
      \end{align*}
      as $n\to\infty$, $\varepsilon\to0$, $\lambda_\varepsilon\to\infty$ and $\varepsilon\lambda_\varepsilon\to0$.
    \end{lemma}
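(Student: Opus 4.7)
My plan is to exploit the disjoint decomposition $D^{n,\varepsilon,\rho}_k = D^{n,\varepsilon,\rho}_{k,0} \sqcup D^{n,\varepsilon,\rho}_{k,1} \sqcup D^{n,\varepsilon,\rho}_{k,2}$ induced by the partition $\{J^{n,\varepsilon}_{k,i}\}_{i=0,1,2}$ and to analyze each piece separately using the conditional probability bounds from Lemma \ref{lem:4.7}, finishing off each estimate by a single application of Markov's inequality. The moment control $E[|X^\varepsilon_{t_{k-1}}|^p] \leq C$ uniformly in $k$, needed to absorb the $(1 + |X^\varepsilon_{t_{k-1}}|^p)$-factors appearing in Lemma \ref{lem:4.7}, follows from Lemma \ref{lem:4.3}.

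For the ``spurious detection'' piece $i = 0$, the Lemma \ref{lem:4.7} bound gives $E[\sum_k 1_{D^{n,\varepsilon,\rho}_{k,0}}] = O(n^{1-p(1-\rho)} + \varepsilon^p n^{1 - p(1/2-\rho)})$, which by Markov's inequality yields precisely the first stated rate after dividing by $\lambda_\varepsilon$; the condition $p \geq 2/(1-2\rho)$ forces $p(1/2-\rho) \geq 1$ so that the exponent of $n$ in the second term is non-positive. The ``multi-jump'' piece $i = 2$ is immediate: $P(D^{n,\varepsilon,\rho}_{k,2}\mid \mathcal{F}_{t_{k-1}}) \leq \lambda_\varepsilon^2/n^2$ gives $E[\sum_k 1_{D^{n,\varepsilon,\rho}_{k,2}}] \leq \lambda_\varepsilon^2/n$, whence $O_p(\lambda_\varepsilon/n)$ after normalization.

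The main piece is $i=1$, handled via
\[
  \sum_{k=1}^n 1_{D^{n,\varepsilon,\rho}_{k,1}} \;=\; \sum_{k=1}^n 1_{J^{n,\varepsilon}_{k,1}} \;-\; \sum_{k=1}^n 1_{C^{n,\varepsilon,\rho}_{k,1}}.
\]
The ``wrong classification'' term $\sum_k 1_{C^{n,\varepsilon,\rho}_{k,1}}$ is bounded directly by Lemma \ref{lem:4.7}; after dividing by $\lambda_\varepsilon$ its dominant contribution is $\int_{|z|\leq\kappa/n^\rho} f_{\alpha_0}(z)\,dz$, with the remaining terms matching exactly those listed in the conclusion. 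For the ``one-jump count'' use the Poisson identity
\[
  \sum_k 1_{J^{n,\varepsilon}_{k,1}} \;=\; N^{\lambda_\varepsilon}_1 \;-\; \sum_k \Delta^n_k N^{\lambda_\varepsilon} \cdot 1_{J^{n,\varepsilon}_{k,2}}.
\]
A direct calculation using $\Delta^n_k N^{\lambda_\varepsilon} \sim \mathrm{Poisson}(\lambda_\varepsilon/n)$ shows that the expectation of the remainder is $O(\lambda_\varepsilon^2/n)$, so Markov yields $O_p(\lambda_\varepsilon^2/n)$ and hence $O_p(\lambda_\varepsilon/n)$ after normalization. Finally $N^{\lambda_\varepsilon}_1/\lambda_\varepsilon \overset{p}{\to} 1$ by Chebyshev, since $\mathrm{Var}(N^{\lambda_\varepsilon}_1)/\lambda_\varepsilon^2 = 1/\lambda_\varepsilon \to 0$. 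Combining these three estimates delivers both the convergence in probability to $1$ and the refined stochastic expansion.

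The main technical obstacle is not any single step but the careful bookkeeping of rates under the composite asymptotics $n \to \infty$, $\varepsilon \to 0$, $\lambda_\varepsilon \to \infty$, $\varepsilon \lambda_\varepsilon \to 0$: one has to check that the choice $p \geq 2/(1-2\rho)$ really makes each contribution from Lemma \ref{lem:4.7} either match one of the listed terms or be dominated by it, that the uniform $L^p$-control of $X^\varepsilon_{t_{k-1}}$ survives summation over $k = 1,\dots,n$, and that the statistical fluctuation $N^{\lambda_\varepsilon}_1/\lambda_\varepsilon - 1 = O_p(1/\sqrt{\lambda_\varepsilon})$ is either absorbed into the stated error or into the implicit $o_p(1)$ of the first (less precise) assertion.
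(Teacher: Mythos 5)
Your treatment of the pieces $i=0$ and $i=2$, and of the first (qualitative) assertion, is correct and close in spirit to the paper's, which likewise feeds the conditional bounds of Lemma \ref{lem:4.7} into a summation argument (the paper closes with Lemma \ref{lem:4.4} and the Lenglart-type Lemma \ref{lem:A.3} rather than unconditional expectations plus Markov, but that difference is immaterial). Where you genuinely diverge is the piece $i=1$: the paper never introduces the total jump count; it compares $P[D^{n,\varepsilon,\rho}_{k,1}\mid\mathcal{F}_{t_{k-1}}]$ with $\lambda_\varepsilon/n$ summand by summand, using $P(J^{n,\varepsilon}_{k,1})=\tfrac{\lambda_\varepsilon}{n}e^{-\lambda_\varepsilon/n}$ together with the bound on $P[C^{n,\varepsilon,\rho}_{k,1}\mid\mathcal{F}_{t_{k-1}}]$, and then invokes Lemma \ref{lem:A.3}. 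You instead write $\sum_k 1_{J^{n,\varepsilon}_{k,1}}=N^{\lambda_\varepsilon}_1-\sum_k\Delta^n_kN^{\lambda_\varepsilon}\,1_{J^{n,\varepsilon}_{k,2}}$ and appeal to a law of large numbers for $N^{\lambda_\varepsilon}_1$.

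The gap is in that last step. Your route leaves the explicit remainder $N^{\lambda_\varepsilon}_1/\lambda_\varepsilon-1=O_p(\lambda_\varepsilon^{-1/2})$, and this is \emph{not} absorbed into the stated error of the refined expansion: in the regime the paper works in ($\lambda_\varepsilon^2/n\to0$, $\lambda_\varepsilon\int_{|z|\leq\kappa/n^\rho}f_{\alpha_0}(z)\,dz\to0$, $p$ large) every listed term --- $\lambda_\varepsilon/n$, $n^{-p(1-\rho)}$, $\varepsilon^p n^{-p(1/2-\rho)}$ and $\int_{|z|\leq\kappa/n^\rho}f_{\alpha_0}(z)\,dz$ --- is $o(\lambda_\varepsilon^{-1/2})$; for instance $\lambda_\varepsilon/n=\lambda_\varepsilon^{-1/2}\cdot\lambda_\varepsilon^{3/2}/n=o(\lambda_\varepsilon^{-1/2})$. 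So your argument yields $\frac{1}{\lambda_\varepsilon}\sum_k1_{D^{n,\varepsilon,\rho}_{k,1}}=1+O_p(\lambda_\varepsilon^{-1/2})+O_p(\cdots)$, which suffices for the first assertion (since $\lambda_\varepsilon^{-1/2}\to0$) but is strictly weaker than the displayed rate for $i=1$, and your claim that the fluctuation is ``absorbed into the stated error'' is false. To reproduce the paper's statement you must not make the Poisson fluctuation explicit: center each indicator at its conditional compensator, bound only $\sum_k\bigl|E[\lambda_\varepsilon^{-1}1_{D^{n,\varepsilon,\rho}_{k,1}}\mid\mathcal{F}_{t_{k-1}}]-1/n\bigr|$ via Lemma \ref{lem:4.7}, and hand the martingale part to Lemma \ref{lem:A.3}. (Your computation does correctly expose that the martingale fluctuation of the one-jump count is intrinsically of order $\lambda_\varepsilon^{-1/2}$; that term is kept implicit in the paper's bookkeeping, so the two proofs are not merely cosmetically different on this point.)
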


    \begin{proof} Since
      \begin{align*}
        \left| \frac{\lambda_\varepsilon}{n} - P \left[ D^{n,\varepsilon,\rho}_{k,1} \, \middle | \, \mathcal{F}_{t_{k-1}} \right] \right|
        &\leq \frac{\lambda_\varepsilon}{n} - \frac{\lambda_\varepsilon}{n} e^{-\lambda_\varepsilon/n}
        + \left| \frac{\lambda_\varepsilon}{n} e^{-\lambda_\varepsilon/n} - P \left[ D^{n,\varepsilon,\rho}_{k,1} \, \middle | \, \mathcal{F}_{t_{k-1}} \right] \right| \\
        &\leq \left( \frac{\lambda_\varepsilon}{n} \right)^2 + P \left[ C^{n,\varepsilon,\rho}_{k,1} \, \middle | \, \mathcal{F}_{t_{k-1}} \right],
      \end{align*}
      it follows from Lemmas \ref{lem:4.4} and \ref{lem:4.7} that for $p\geq2$ and $\rho\in(0,1/2)$
      \begin{align*}
        &\sum_{k=1}^n E \left[ \left| \frac{1}{\lambda_\varepsilon} 1_{D^{n,\varepsilon,\rho}_{k,1}} - \frac{1}{n} \right| \, \middle | \, \mathcal{F}_{t_{k-1}} \right] \\
        &\quad \leq \frac{\lambda_\varepsilon}{n}
          + \frac{1}{\lambda_\varepsilon}\sum_{k=1}^n P \left[ C^{n,\varepsilon,\rho}_{k,1} \, \middle | \, \mathcal{F}_{t_{k-1}} \right] \\
        &\quad \leq \frac{\lambda_\varepsilon}{n} +
          C \left( \frac{1}{n^{p(1-\rho)}} + \frac{\varepsilon^p}{n^{p(1/2-\rho)}}
          + \frac{\varepsilon^p\lambda_\varepsilon}{n} \right) \frac{1}{n} \sum_{k=1}^n\left( 1+|X^\varepsilon_{t_{k-1}}|^p \right)
          + \int_{|z|\leq\kappa/n^\rho} f_{\alpha_0}(z) \, dz
      \end{align*}
      converges to zero in probability as $n\to\infty$, $\varepsilon\to0$, $\lambda_\varepsilon\to\infty$, $\lambda_\varepsilon/n \to 0$ and $\varepsilon\lambda_\varepsilon\to0$.
      Similarly, we obtain
      \begin{align*}
        \sum_{k=1}^n E \left[ \frac{1}{\lambda_\varepsilon} 1_{D^{n,\varepsilon,\rho}_{k,0}} \, \middle | \, \mathcal{F}_{t_{k-1}} \right]
        &\leq C \frac{1}{\lambda_\varepsilon} \left( \frac{1}{n^{p(1-\rho)}} + \frac{\varepsilon^p}{n^{p(1/2-\rho)}} \right)
          \sum_{k=1}^n \left( 1+|X^\varepsilon_{t_{k-1}}|^p \right), \\
        \sum_{k=1}^n E \left[ \frac{1}{\lambda_\varepsilon} 1_{D^{n,\varepsilon,\rho}_{k,2}} \, \middle | \, \mathcal{F}_{t_{k-1}} \right]
        &\leq \frac{\lambda_\varepsilon}{n}.
      \end{align*}
      Hence, the conclusion follows from Lemma \ref{lem:A.3}.
    \end{proof}

    \begin{lemma}\label{lem:4.9}
      Under Assumptions \ref{asmp:A2} to \ref{asmp:A6},
      for $\rho\in(0,1/2)$
      \begin{align*}
        \frac{1}{n} \sum_{k=1}^n  1_{C^{n,\varepsilon,\rho}_{k}}
        \overset{p}\longrightarrow 1,
      \end{align*}
      as $n\to\infty$, $\varepsilon\to0$, $\lambda_\varepsilon\to\infty$, $\lambda_\varepsilon/n \to 0$ and $\varepsilon\lambda_\varepsilon\to0$.
      More precisely, for $\rho\in(0,1/2)$ and $p\in[2,\infty)$
      \begin{align*}
        \frac{1}{n} \sum_{k=1}^n  1_{C^{n,\varepsilon,\rho}_{k,0}}
          &= 1 + O_p \left( \frac{\lambda_\varepsilon}{n} \right), \\
        \frac{1}{n} \sum_{k=1}^n  1_{C^{n,\varepsilon,\rho}_{k,1}}
          &= O_p \left( \frac{\lambda_\varepsilon}{n^{p(1-\rho)+1}}
            + \frac{\varepsilon^p \lambda_\varepsilon}{n^{p(1/2-\rho)+1}}
            + \frac{\lambda_\varepsilon}{n} \int_{|z|\leq\kappa/n^\rho} f_{\alpha_0}(z) \, dz \right), \\
        \frac{1}{n} \sum_{k=1}^n  1_{C^{n,\varepsilon,\rho}_{k,2}}
          &= O_p \left( \frac{\lambda_\varepsilon^2}{n^2} \right)
      \end{align*}
      as $n\to\infty$, $\varepsilon\to0$, $\lambda_\varepsilon\to\infty$, $\lambda_\varepsilon/n \to 0$ and $\varepsilon\lambda_\varepsilon\to0$.
    \end{lemma}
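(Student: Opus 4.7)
The plan is to mimic the proof of Lemma \ref{lem:4.8} almost verbatim, based on the partition
\[
  C^{n,\varepsilon,\rho}_k = C^{n,\varepsilon,\rho}_{k,0} \sqcup C^{n,\varepsilon,\rho}_{k,1} \sqcup C^{n,\varepsilon,\rho}_{k,2}
\]
according to the number of jumps in $[t_{k-1},t_k]$, with the conditional-probability bounds of Lemma \ref{lem:4.7} playing the central role. For each $i\in\{0,1,2\}$ I would estimate $\frac{1}{n}\sum_{k=1}^n E[1_{C^{n,\varepsilon,\rho}_{k,i}}\,|\,\mathcal{F}_{t_{k-1}}]$ using Lemma \ref{lem:4.7}, control the resulting state-dependent terms via Lemma \ref{lem:4.4} (so that $\frac{1}{n}\sum_k (1+|X^\varepsilon_{t_{k-1}}|^p) = O_p(1)$), and finally transfer the bound to $\frac{1}{n}\sum_k 1_{C^{n,\varepsilon,\rho}_{k,i}}$ itself by Lemma \ref{lem:A.3}.

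The easy pieces are $i=2$ and $i=1$. For $i=2$, the bound $P[C^{n,\varepsilon,\rho}_{k,2}\,|\,\mathcal{F}_{t_{k-1}}]\leq \lambda_\varepsilon^2/n^2$ from Lemma \ref{lem:4.7} is state-independent and sums trivially to $O_p(\lambda_\varepsilon^2/n^2)$. For $i=1$, I substitute the bound from Lemma \ref{lem:4.7}, giving
\[
  \frac{1}{n}\sum_{k=1}^n E\left[1_{C^{n,\varepsilon,\rho}_{k,1}}\,\middle|\,\mathcal{F}_{t_{k-1}}\right]
  \leq \frac{\lambda_\varepsilon}{n}\Bigl\{C\Bigl(\tfrac{1}{n^{p(1-\rho)}}+\tfrac{\varepsilon^p}{n^{p(1/2-\rho)}}+\tfrac{\varepsilon^p\lambda_\varepsilon}{n}\Bigr)\cdot\tfrac{1}{n}\sum_k(1+|X^\varepsilon_{t_{k-1}}|^p)+\int_{|z|\leq\kappa/n^\rho}f_{\alpha_0}(z)\,dz\Bigr\},
\]
and Lemma \ref{lem:4.4} handles the empirical average. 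The cross-term $\varepsilon^p\lambda_\varepsilon^2/n^2$ coming from $\tfrac{\lambda_\varepsilon}{n}\cdot\tfrac{\varepsilon^p\lambda_\varepsilon}{n}$ is absorbed by using $\varepsilon\lambda_\varepsilon\leq 1$ (so $\varepsilon^p\lambda_\varepsilon\leq \varepsilon^{p-1}$) into the second stated term.

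The one case that requires a small perspective shift is $i=0$, where we must compare $\frac{1}{n}\sum_k 1_{C^{n,\varepsilon,\rho}_{k,0}}$ to $1$ rather than to $0$. Since $J^{n,\varepsilon}_{k,0} = C^{n,\varepsilon,\rho}_{k,0}\sqcup D^{n,\varepsilon,\rho}_{k,0}$, the decomposition
\[
  1 - 1_{C^{n,\varepsilon,\rho}_{k,0}} \;=\; \bigl(1 - 1_{J^{n,\varepsilon}_{k,0}}\bigr) + 1_{D^{n,\varepsilon,\rho}_{k,0}}
\]
gives $E[1-1_{C^{n,\varepsilon,\rho}_{k,0}}\,|\,\mathcal{F}_{t_{k-1}}] = (1-e^{-\lambda_\varepsilon/n}) + P[D^{n,\varepsilon,\rho}_{k,0}\,|\,\mathcal{F}_{t_{k-1}}]\leq \lambda_\varepsilon/n + C(\tfrac{1}{n^{p(1-\rho)}}+\tfrac{\varepsilon^p}{n^{p(1/2-\rho)}})(1+|X^\varepsilon_{t_{k-1}}|^p)$ by Lemma \ref{lem:4.7}. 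Averaging, applying Lemma \ref{lem:4.4}, and checking that the remainder terms are dominated by $\lambda_\varepsilon/n$ under $\rho\in(0,1/2)$ with $p$ chosen large, an application of Lemma \ref{lem:A.3} yields $\frac{1}{n}\sum_k 1_{C^{n,\varepsilon,\rho}_{k,0}} = 1 + O_p(\lambda_\varepsilon/n)$.

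I do not anticipate a genuine obstacle—everything is a bookkeeping analogue of Lemma \ref{lem:4.8}. The only mildly delicate point is recognising the correct centring in the $i=0$ case and keeping track of which of the competing error rates dominates in each regime of $(n,\varepsilon,\lambda_\varepsilon)$, which is resolved by taking $p$ sufficiently large and using the standing hypotheses $\lambda_\varepsilon/n\to 0$ and $\varepsilon\lambda_\varepsilon\to 0$.
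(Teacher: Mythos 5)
Your proposal is correct and follows essentially the same route as the paper: the conditional-probability bounds of Lemma \ref{lem:4.7}, Lemma \ref{lem:4.4} to control $\frac{1}{n}\sum_k(1+|X^\varepsilon_{t_{k-1}}|^p)$, and Lemma \ref{lem:A.3} to pass from conditional expectations to the sums themselves. The only cosmetic difference is that the paper gets the convergence of $\frac{1}{n}\sum_k 1_{C^{n,\varepsilon,\rho}_{k}}$ (and hence the $i=0$ case) in one line from Lemma \ref{lem:4.8} via $1_{C^{n,\varepsilon,\rho}_{k}}=1-1_{D^{n,\varepsilon,\rho}_{k}}$, whereas you rederive the $i=0$ bound directly from Lemma \ref{lem:4.7}; both amount to the same bookkeeping.
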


    \begin{proof}
      From Lemma \ref{lem:4.8} we have
      \begin{equation*}
        \frac{1}{n} \sum_{k=1}^n  1_{C^{n,\varepsilon,\rho}_{k}} - 1
        = \frac{\lambda_\varepsilon}{n} \frac{1}{\lambda_\varepsilon} \sum_{k=1}^n  1_{D^{n,\varepsilon,\rho}_{k}}
        = O_p \left( \frac{\lambda_\varepsilon}{n} \right)
      \end{equation*}
      as $n\to\infty$, $\varepsilon\to0$, $\lambda_\varepsilon\to\infty$, $\lambda_\varepsilon/n \to 0$ and $\varepsilon\lambda_\varepsilon\to0$.
      It follows from Lemmas \ref{lem:4.4} and \ref{lem:4.7} that for any $p\in[0,\infty)$
      \begin{align*}
        &\sum_{k=1}^n E \left[ \left| \frac{1}{n} 1_{C^{n,\varepsilon,\rho}_{k,1}} \right| \, \middle | \, \mathcal{F}_{t_{k-1}} \right]
        = \frac{1}{n} \sum_{k=1}^n P \left[ C^{n,\varepsilon,\rho}_{k,1} \, \middle | \, \mathcal{F}_{t_{k-1}} \right] \\
        &\quad
          \leq C \frac{\lambda_\varepsilon}{n}
            \left( \frac{1}{n^{p(1-\rho)}} + \frac{\varepsilon^p}{n^{p(1/2-\rho)}}
            + \frac{\varepsilon^p\lambda_\varepsilon}{n} \right)
            \frac{1}{n} \sum_{k=1}^n \left( 1+|X^\varepsilon_{t_{k-1}}|^p \right)
            + \frac{\lambda_\varepsilon}{n} \int_{|z|\leq\kappa/n^\rho} f_{\alpha_0}(z) \, dz,
      \end{align*}
      and
      \begin{equation*}
        \sum_{k=1}^n E \left[ \left| \frac{1}{n} 1_{C^{n,\varepsilon,\rho}_{k,2}} \right| \, \middle | \, \mathcal{F}_{t_{k-1}} \right]
        = \frac{1}{n} \sum_{k=1}^n P \left[ C^{n,\varepsilon,\rho}_{k,2} \, \middle | \, \mathcal{F}_{t_{k-1}} \right]
        \leq \frac{\lambda_\varepsilon^2}{n^2}.
      \end{equation*}
      The conclusion follows from Lemma \ref{lem:A.3}.
    \end{proof}

    \begin{remark}\label{rmk:sumIndC}
      Under Assumptions \ref{asmp:A2} to \ref{asmp:A6},
      let $\rho\in(0,1/2)$
      and let $\xi_{k,\theta}^{n,\varepsilon}$ $(k=1,\dots,n,~n\in\mathbb{N},~\varepsilon>0,~\theta\in\bar{\Theta})$ be random variables.
      From Lemma \ref{lem:4.9},
      if
      \begin{equation*}
        \lambda_\varepsilon \to \infty,         ~
        \varepsilon \lambda_\varepsilon \to 0,  ~
        \frac{\lambda_\varepsilon^2}{n} \to 0   ~\text{and}~
        \lambda_\varepsilon \int_{|z|\leq\kappa/n^\rho} f_{\alpha_0}(z) \, dz \to 0
      \end{equation*}
      as $\varepsilon\to0$, then
      \begin{equation*}
        \sum_{k=1}^n
          \xi_{k,\theta}^{n,\varepsilon}
          \left\{ 1_{C^{n,\varepsilon,\rho}_{k}}
            - 1_{C^{n,\varepsilon,\rho}_{k,0}} \right\}
        = o_p(1)
      \end{equation*}
      as $n\to\infty$ and $\varepsilon\to0$, uniformly in $\theta\in\Theta$,
      since for any $\eta>0$
      \begin{equation*}
        P \left( \sup_{\theta\in\Theta} \left|
          \sum_{k=1}^n
            \xi_{k,\theta}^{n,\varepsilon}
            1_{C^{n,\varepsilon,\rho}_{k,j}}
          \right| > \eta \right)
        \leq P \left( \left| \sum_{k=1}^n  1_{C^{n,\varepsilon,\rho}_{k,j}} \right| > 1/2 \right)
        \quad \text{for } j=1,2.
      \end{equation*}
      Similarly, from Lemma \ref{lem:4.8},
      when
      \begin{equation*}
        \lambda_\varepsilon \to \infty, \quad
        \varepsilon \lambda_\varepsilon \to 0, \quad
        \frac{\lambda_\varepsilon^2}{n} \to 0
      \end{equation*}
      as $\varepsilon\to0$,
      \begin{equation*}
        \sum_{k=1}^n
          \xi_{k,\theta}^{n,\varepsilon}
          \left\{ 1_{D^{n,\varepsilon,\rho}_{k}}
            - 1_{D^{n,\varepsilon,\rho}_{k,1}} \right\}
        = o_p(1)
      \end{equation*}
      as $n\to\infty$ and $\varepsilon\to0$, uniformly in $\theta\in\Theta$,
    \end{remark}

    \begin{lemma}\label{lem:4.10}
      Under Assumptions \ref{asmp:A2} to \ref{asmp:A6},
      let $\rho\in(0,1/2)$, $\delta>0$,
      \begin{equation*}
        \tilde{D}^{n,\varepsilon,\rho}_{k,1}
        := D^{n,\varepsilon,\rho}_{k,1}
          \cap \{ X^\varepsilon_t \in I_{x_0}^\delta \text{ for all } t\in[0,1] \},
      \end{equation*}
      and let $\xi_{k,\theta}^{n,\varepsilon}$ $(k=1,\dots,n,~n\in\mathbb{N},~\varepsilon>0,~\theta\in\bar{\Theta})$
      be random variables.
      If
      \begin{equation}\label{eq:indicatorSumCondition}
        \lambda_\varepsilon \to \infty,         ~
        \varepsilon \lambda_\varepsilon \to 0,  ~
        \frac{\lambda_\varepsilon^2}{n} \to 0   ~\text{and}~
        \lambda_\varepsilon \int_{|z|\leq\kappa/n^\rho} f_{\alpha_0}(z) \, dz \to 0
      \end{equation}
      as $\varepsilon\to0$, then
      \begin{equation*}
        \sum_{k=1}^n
          \xi_{k,\theta}^{n,\varepsilon}
          \left\{ 1_{D^{n,\varepsilon,\rho}_{k}}
            - 1_{\tilde{D}^{n,\varepsilon,\rho}_{k,1}} \right\}
        = o_p(1), \quad
        \sum_{k=1}^n
          \xi_{k,\theta}^{n,\varepsilon}
          \left\{ 1_{\tilde{D}^{n,\varepsilon,\rho}_{k,1}} - 1_{J^{n,\varepsilon}_{k,1}} \right\}
        = o_p(1)
      \end{equation*}
      as $n\to\infty$ and $\varepsilon\to0$, uniformly in $\theta\in\Theta$.
    \end{lemma}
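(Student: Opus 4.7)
The plan is to reuse the integer-indicator trick sketched in Remark \ref{rmk:sumIndC}. For any family $\{E_k\}_{k=1}^n$ of events, since $\sum_{k=1}^n 1_{E_k}$ takes values in $\{0,1,\dots,n\}$, one has the bound
$$ P\left(\sup_{\theta\in\Theta}\left|\sum_{k=1}^n \xi^{n,\varepsilon}_{k,\theta}\, 1_{E_k}\right| > \eta\right) \leq P\left(\sum_{k=1}^n 1_{E_k} > \tfrac12\right), $$
because if the right-hand sum is $\leq 1/2$ it must vanish, so every $1_{E_k}$ is $0$ and the weighted sum is identically $0$. Hence the whole proof reduces to writing each of the two indicator-differences as $\pm 1_{E_k}$ for some set $E_k$, and showing $\sum_k 1_{E_k}=o_p(1)$ under hypothesis \eqref{eq:indicatorSumCondition}; crucially, nothing about $\xi^{n,\varepsilon}_{k,\theta}$ needs to be assumed.

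For the first equality I would set $G_\varepsilon:=\{X^\varepsilon_t\in I_{x_0}^\delta \text{ for all } t\in[0,1]\}$, observing that $P(G_\varepsilon^c)\to 0$ by Lemma \ref{lem:4.3} together with the inclusion $x_\cdot\subseteq I_{x_0}$. Using the disjoint decomposition $D^{n,\varepsilon,\rho}_k=\bigsqcup_{i=0,1,2}D^{n,\varepsilon,\rho}_{k,i}$ and $\tilde D^{n,\varepsilon,\rho}_{k,1}\subseteq D^{n,\varepsilon,\rho}_{k,1}$ one obtains
$$ 1_{D^{n,\varepsilon,\rho}_k}-1_{\tilde D^{n,\varepsilon,\rho}_{k,1}} = 1_{D^{n,\varepsilon,\rho}_{k,0}}+1_{D^{n,\varepsilon,\rho}_{k,2}}+1_{D^{n,\varepsilon,\rho}_{k,1}\cap G_\varepsilon^c}. $$
Summing over $k$, the first two sums are $o_p(1)$ via Lemma \ref{lem:4.8} (taking $p\geq 2/(1-2\rho)$ so that $n^{p(1-\rho)-1}$ and $n^{p(1/2-\rho)-1}\to\infty$, combined with $\lambda_\varepsilon^2/n\to0$), while the third is dominated by $n\cdot 1_{G_\varepsilon^c}$, an integer that vanishes on $G_\varepsilon$ and is therefore $o_p(1)$.

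For the second equality I would use the partition $J^{n,\varepsilon}_{k,1}=C^{n,\varepsilon,\rho}_{k,1}\sqcup D^{n,\varepsilon,\rho}_{k,1}$ and again $\tilde D^{n,\varepsilon,\rho}_{k,1}\subseteq D^{n,\varepsilon,\rho}_{k,1}$ to write
$$ 1_{J^{n,\varepsilon}_{k,1}}-1_{\tilde D^{n,\varepsilon,\rho}_{k,1}} = 1_{C^{n,\varepsilon,\rho}_{k,1}}+1_{D^{n,\varepsilon,\rho}_{k,1}\cap G_\varepsilon^c}. $$
The $k$-sum of $1_{C^{n,\varepsilon,\rho}_{k,1}}$ is $o_p(1)$ by Lemma \ref{lem:4.9} (this is where the residual hypothesis $\lambda_\varepsilon\int_{|z|\leq\kappa/n^\rho}f_{\alpha_0}(z)\,dz\to0$ is consumed), and the other term is treated exactly as before. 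Applying the indicator inequality above then delivers the two claims uniformly in $\theta$. The only mildly delicate step is choosing $p$ large enough that the polynomial rates in Lemmas \ref{lem:4.8} and \ref{lem:4.9} actually decay, and since $\rho<1/2$ this is never an obstruction.
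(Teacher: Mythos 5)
Your proposal is correct and takes essentially the same route as the paper: the paper likewise reduces both claims, via a union bound with the localization event $\{X^\varepsilon_t\in I_{x_0}^\delta,\ \forall t\}$ (whose probability tends to one by Lemma \ref{lem:4.3}), to the sums $\sum_k \xi^{n,\varepsilon}_{k,\theta}\{1_{D^{n,\varepsilon,\rho}_k}-1_{D^{n,\varepsilon,\rho}_{k,1}}\}$ and $\sum_k \xi^{n,\varepsilon}_{k,\theta}1_{C^{n,\varepsilon,\rho}_{k,1}}$, which are handled by the integer-indicator bound of Remark \ref{rmk:sumIndC} together with Lemmas \ref{lem:4.8} and \ref{lem:4.9}. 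Your explicit decompositions $1_{D^{n,\varepsilon,\rho}_k}-1_{\tilde D^{n,\varepsilon,\rho}_{k,1}}=1_{D^{n,\varepsilon,\rho}_{k,0}}+1_{D^{n,\varepsilon,\rho}_{k,2}}+1_{D^{n,\varepsilon,\rho}_{k,1}\cap G_\varepsilon^c}$ and $1_{J^{n,\varepsilon}_{k,1}}-1_{\tilde D^{n,\varepsilon,\rho}_{k,1}}=1_{C^{n,\varepsilon,\rho}_{k,1}}+1_{D^{n,\varepsilon,\rho}_{k,1}\cap G_\varepsilon^c}$ just spell out what the paper leaves to that remark, and the choice $p\geq 2/(1-2\rho)$ matches the paper's.
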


    \begin{proof}
      Since from Lemma \ref{lem:4.3}
      \begin{equation*}
        P \left( X^\varepsilon_t \in I_{x_0}^\delta \text{ for all } t\in[0,1] \right)
        \geq P \left( \sup_{t\in[0,1]} | X^\varepsilon_t - x_t | \leq \delta \right) \to 1
      \end{equation*}
      as $n\to\infty$, $\varepsilon\to0$ and $\varepsilon\lambda_\varepsilon\to0$,
      for any $\eta>0$
      \begin{align*}
        &P \left(
          \left|
            \sum_{k=1}^n
              \xi_{k,\theta}^{n,\varepsilon}
                \left\{ 1_{D^{n,\varepsilon,\rho}_{k}}
                - 1_{\tilde{D}^{n,\varepsilon,\rho}_{k,1}} \right\}
          \right| > \eta \right) \\
        & \qquad
          \leq
            P \left(
              \left|
                \sum_{k=1}^n
                  \xi_{k,\theta}^{n,\varepsilon}
                    \left\{ 1_{D^{n,\varepsilon,\rho}_{k}}
                    - 1_{D^{n,\varepsilon,\rho}_{k,1}} \right\}
              \right| > \eta/2 \right)
            + P \left( \{ X^\varepsilon_t \not\in I_{x_0}^\delta, ~ \exists \, t\in[0,1] \} \right), \\
        & P \left(
          \left|
            \sum_{k=1}^n
              \xi_{k,\theta}^{n,\varepsilon}
              \left\{ 1_{\tilde{D}^{n,\varepsilon,\rho}_{k,1}}
              - 1_{J^{n,\varepsilon}_{k,1}} \right\}
          \right| > \eta \right) \\
        & \qquad
        \leq P \left( \{ X^\varepsilon_t \not\in I_{x_0}^\delta, ~ \exists \, t\in[0,1] \} \right)
          + P \left(
            \left|
              \sum_{k=1}^n
                \xi_{k,\theta}^{n,\varepsilon}
                  1_{C^{n,\varepsilon,\rho}_{k,1}}
            \right| > \eta/2 \right).
      \end{align*}
      Take sufficiently large $p\in[2/(1-2\rho),\infty)$.
      Thus, we obtain from Remark \ref{rmk:sumIndC} the conclusion.
    \end{proof}

    \begin{remark}
      In this lemma, if $\{\xi_{k,\theta}^{n,\varepsilon}\}_{n,\varepsilon,k,\theta}$ is bounded in probability,
      we can replace the condition \eqref{eq:indicatorSumCondition} with a milder condition
      \begin{equation*}
        \lambda_\varepsilon \to \infty, ~
        \varepsilon \lambda_\varepsilon \to 0 ~ \text{and}~
        \frac{\lambda_\varepsilon}{n} \to 0.
      \end{equation*}
      But, we will never use this fact in this paper.
    \end{remark}

    \begin{lemma}\label{lem:4.11}
      Under Assumptions \ref{asmp:A2} to \ref{asmp:A6}, let $\rho\in(0,1/2)$
      and let a family $\{g(\cdot,\theta)\}_{\theta\in\Theta}$ of functions from $\mathbb{R}$ to $\mathbb{R}$
      be equicontinuous at every points in $I_{x_0}$.
      Then,
      \begin{equation*}
        \frac{1}{n}
          \sum_{k=1}^n
            g ( X^\varepsilon_{t_{k-1}}, \theta )
            1_{C^{n,\varepsilon,\rho}_k}
        \overset{p}\longrightarrow
        \int_0^1 g(x_t, \theta) \, dt
      \end{equation*}
      as $n\to\infty$, $\varepsilon\to0$, $\lambda_\varepsilon\to\infty$, $\lambda_\varepsilon/n\to0$ and $\varepsilon\lambda_\varepsilon\to0$, uniformly in $\theta\in\Theta$.
      Also, for $p\in[2,\infty)$
      \begin{align*}
        &\frac{1}{n} \sum_{k=1}^n g(X^\varepsilon_{t_{k-1}}, \theta) 1_{C^{n,\varepsilon,\rho}_{k,0}}
        \overset{p}\longrightarrow \int_0^1 g(x_t,\theta) \, dt, \\
        &\frac{1}{n} \sum_{k=1}^n g(X^\varepsilon_{t_{k-1}}, \theta) 1_{C^{n,\varepsilon,\rho}_{k,1}}
          = O_p \left( \frac{\lambda_\varepsilon}{n^{p(1-\rho)+1}}
          + \frac{\varepsilon^p \lambda_\varepsilon}{n^{p(1/2-\rho)+1}}
          + \frac{\lambda_\varepsilon}{n} \int_{|z|\leq\kappa/n^\rho} f_{\alpha_0}(z) \, dz \right), \\
        &\frac{1}{n} \sum_{k=1}^n g(X^\varepsilon_{t_{k-1}}, \theta) 1_{C^{n,\varepsilon,\rho}_{k,2}}
          = O_p \left( \frac{\lambda_\varepsilon^2}{n^2} \right)
      \end{align*}
      as $n\to\infty$, $\varepsilon\to0$, $\lambda_\varepsilon\to\infty$, $\lambda_\varepsilon/n\to0$ and $\varepsilon\lambda_\varepsilon\to0$, uniformly in $\theta\in\Theta$.
    \end{lemma}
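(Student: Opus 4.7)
The plan is to reduce everything to the bounded case via the localization event $\Omega_\delta := \{X^\varepsilon_t \in I_{x_0}^\delta \text{ for all } t\in[0,1]\}$, whose probability tends to one by Lemma \ref{lem:4.3}. Pointwise equicontinuity of the family $\{g(\cdot,\theta)\}_{\theta\in\Theta}$ on the compact set $I_{x_0}$, together with a standard covering argument (essentially Lemma \ref{lem:A.2}), yields a constant $M>0$ with $\sup_{\theta\in\Theta}\sup_{x\in I_{x_0}^\delta}|g(x,\theta)| \le M$ for $\delta$ small enough. Hence on $\Omega_\delta$ all the sums under consideration are dominated by $M$ times sums of indicators, so that the precise $O_p$ bounds for the $j=1,2$ parts will follow immediately from the corresponding bounds in Lemma \ref{lem:4.9}.

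For the first assertion I would write $1_{C^{n,\varepsilon,\rho}_k} = 1 - 1_{D^{n,\varepsilon,\rho}_k}$ and split
\begin{equation*}
  \frac{1}{n}\sum_{k=1}^n g(X^\varepsilon_{t_{k-1}},\theta)\,1_{C^{n,\varepsilon,\rho}_k}
  = \frac{1}{n}\sum_{k=1}^n g(X^\varepsilon_{t_{k-1}},\theta)
    - \frac{1}{n}\sum_{k=1}^n g(X^\varepsilon_{t_{k-1}},\theta)\,1_{D^{n,\varepsilon,\rho}_k}.
\end{equation*}
The first sum converges to $\int_0^1 g(x_t,\theta)\,dt$ uniformly in $\theta$ by Lemma \ref{lem:4.4}. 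For the second, the localization bound gives (on $\Omega_\delta$)
\begin{equation*}
  \left|\frac{1}{n}\sum_{k=1}^n g(X^\varepsilon_{t_{k-1}},\theta)\,1_{D^{n,\varepsilon,\rho}_k}\right|
  \le M\,\frac{\lambda_\varepsilon}{n}\cdot\frac{1}{\lambda_\varepsilon}\sum_{k=1}^n 1_{D^{n,\varepsilon,\rho}_k}
  = O_p\!\left(\frac{\lambda_\varepsilon}{n}\right) = o_p(1),
\end{equation*}
by Lemma \ref{lem:4.8} and the assumption $\lambda_\varepsilon/n\to0$. Combined with $P(\Omega_\delta)\to1$, this proves the uniform convergence in $\theta$.

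For the three refined statements, the $j=1,2$ cases follow from the domination $|g(X^\varepsilon_{t_{k-1}},\theta)|1_{\Omega_\delta}\le M$ applied term-by-term to the precise estimates of $\frac{1}{n}\sum 1_{C^{n,\varepsilon,\rho}_{k,j}}$ given in Lemma \ref{lem:4.9}, giving the stated $O_p$ rates uniformly in $\theta$. For $j=0$, I would use the set identity $1_{C^{n,\varepsilon,\rho}_{k,0}} = 1_{C^{n,\varepsilon,\rho}_k} - 1_{C^{n,\varepsilon,\rho}_{k,1}} - 1_{C^{n,\varepsilon,\rho}_{k,2}}$ and combine the already-proved uniform convergence of the full-$C_k$ sum with the $o_p(1)$ bounds for the $j=1,2$ pieces.

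The main obstacle is the control of $g$, which is assumed only equicontinuous (and implicitly pointwise bounded) at points of $I_{x_0}$ rather than uniformly bounded on $\mathbb{R}$; the localization to $\Omega_\delta$ together with Lemma \ref{lem:A.2} is what converts that hypothesis into the uniform bound $M$ needed to factor out $g$ in every subsequent step, and ensures that all estimates remain uniform in $\theta\in\Theta$. Once this bound is in hand, every remaining piece reduces to a direct application of the preceding lemmas.
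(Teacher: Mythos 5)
Your proposal is correct and follows essentially the same route as the paper: localize via Lemma \ref{lem:4.3} to get a uniform bound on $g$ over $I_{x_0}^\delta\times\Theta$, split off the $D^{n,\varepsilon,\rho}_k$ (resp.\ $C^{n,\varepsilon,\rho}_{k,1}$, $C^{n,\varepsilon,\rho}_{k,2}$) contributions, and invoke Lemmas \ref{lem:4.4}, \ref{lem:4.8} and \ref{lem:4.9}. The only cosmetic difference is that the paper localizes on the event $\{\sup_k|X^\varepsilon_{t_k}-x_{t_k}|<\delta\}$ at the grid points rather than on the whole path, which changes nothing.
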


    \begin{proof}[Proof of Lemma \ref{lem:4.11}]
      Since $\{g(\cdot,\theta)\}_{\theta\in\Theta}$ is equicontinuous at every points in $I_{x_0}$,
      there exists $\delta>0$ such that
      \begin{equation*}
        \sup_{(x,\theta)\in I_{x_0}^\delta \times \Theta} |g(x,\theta)| < \infty.
      \end{equation*}
      For any $\eta>0$
      \begin{align*}
        &P \left( \left| \frac{1}{n} \sum_{k=1}^n g(X^\varepsilon_{t_{k-1}}, \theta) 1_{D^{n,\varepsilon,\rho}_{k}} \right| > \eta \right) \\
        &\quad
        \leq P \left( \sup_{k=0,\dots,n-1} |X^\varepsilon_{t_k} - x_{t_k}| \geq \delta \right)
        + P \left( \sup_{(x,\theta)\in I_{x_0}^\delta \times \Theta} |g(x,\theta)| \frac{\lambda_\varepsilon}{n} \frac{1}{\lambda_\varepsilon} \sum_{k=1}^n 1_{D^{n,\varepsilon,\rho}_k} > \eta \right),
      \end{align*}
      and for $j=1,2$,
      \begin{align*}
        &P \left( \left| \frac{1}{n} \sum_{k=1}^n g(X^\varepsilon_{t_{k-1}}, \theta) 1_{C^{n,\varepsilon,\rho}_{k,j}} \right| > \eta \right) \\
        &\quad
        \leq P \left( \sup_{k=0,\dots,n-1} |X^\varepsilon_{t_k} - x_{t_k}| \geq \delta \right)
        + P \left( \sup_{(x,\theta)\in I_{x_0}^\delta \times \Theta} |g(x,\theta)| \frac{1}{n} \sum_{k=1}^n 1_{C^{n,\varepsilon,\rho}_{k,j}} > \eta \right).
      \end{align*}
      It follows from Lemmas \ref{lem:4.3}, \ref{lem:4.4} and \ref{lem:4.9} that
      \begin{align*}
        &\left | \frac{1}{n} \sum_{k=1}^n g(X^\varepsilon_{t_{k-1}}, \theta) 1_{C^{n,\varepsilon,\rho}_k}
        -\int_0^1 g(x_t,\theta) \, dt \right | \\
        &\quad
        \leq \left | \frac{1}{n} \sum_{k=1}^n g(X^\varepsilon_{t_{k-1}}, \theta) 1_{D^{n,\varepsilon,\rho}_k} \right |
        + \left | \frac{1}{n} \sum_{k=1}^n g(X^\varepsilon_{t_{k-1}}, \theta) - \int_0^1 g(x_t,\theta) \, dt \right |
        \overset{p}\longrightarrow 0, \\
        &\frac{1}{n} \sum_{k=1}^n g(X^\varepsilon_{t_{k-1}}, \theta) 1_{C^{n,\varepsilon,\rho}_{k,1}}
          = O_p \left( \frac{\lambda_\varepsilon}{n^{p(1-\rho)+1}}
          + \frac{\varepsilon^p \lambda_\varepsilon}{n^{p(1/2-\rho)+1}}
          + \frac{\lambda_\varepsilon}{n} \int_{|z|\leq\kappa/n^\rho} f_{\alpha_0}(z) \, dz \right), \\
        &\frac{1}{n} \sum_{k=1}^n g(X^\varepsilon_{t_{k-1}}, \theta) 1_{C^{n,\varepsilon,\rho}_{k,2}}
          = O_p \left( \frac{\lambda_\varepsilon^2}{n^2} \right)
      \end{align*}
      as $n\to\infty$, $\varepsilon\to0$, $\lambda_\varepsilon\to\infty$, $\lambda_\varepsilon/n\to0$ and $\varepsilon\lambda_\varepsilon\to0$, uniformly in $\theta\in\Theta$.
    \end{proof}

    \begin{lemma}\label{lem:4.12}
      Under Assumptions \ref{asmp:A2} to \ref{asmp:A6},
      let $\rho\in(0,1/2)$.
      We assume either of the following conditions \ref{item:asmp:ContrastJumpPreConv1(i)}
      or \ref{item:asmp:ContrastJumpPreConv1(ii)}:
      \begin{enumerate}[label=(\roman*)]
        \item \label{item:asmp:ContrastJumpPreConv1(i)}
              Under Assumption \ref{asmp:A4} \ref{asmp:A4(i)},
              we assume the following two conditions:
              \begin{enumerate}[label=(i.\alph*)]
                \item \label{item:asmp:ContrastJumpPreConv1(i-a)}
                      There exists $\delta>0$ such that for every $(x,\theta)\in I_{x_0}^\delta\times\bar{\Theta}$, $g(x,y,\theta)$ is continuously differentiable with respect to $y \in \mathbb{R}$.
                \item There exist constants $C>0$, $q\geq1$ and $\delta>0$ such that
                      \begin{equation*}
                        \sup_{(x,\theta) \in I_{x_0}^\delta\times\bar{\Theta}}
                        \left| \frac{\partial g}{\partial y} (x,y,\theta) \right|
                        \leq C (1+|y|^q)
                        \quad (y \in \mathbb{R}).
                      \end{equation*}
                \item \label{item:asmp:ContrastJumpPreConv1(i-c)}
                      There exists a sufficiently large $p\geq2$ such that
                      \begin{equation*}
                        \lambda_\varepsilon             \to \infty, ~
                        \frac{\lambda_\varepsilon^2}{n} \to 0,      ~
                        \varepsilon \lambda_\varepsilon \to 0,      ~
                        \varepsilon n^{1-1/p}\to\infty              ~\text{and}~
                        \lambda_\varepsilon \int_{|z|\leq\kappa/n^\rho} f_{\alpha_0}(z) \, dz \to 0
                      \end{equation*}
                      as $n\to\infty$ and $\varepsilon\to0$.
                \item \label{item:asmp:ContrastJumpPreConv1(i-d)}
                      Let $p$ be taken as in the condition \ref{item:asmp:ContrastJumpPreConv1(i-c)}.
                      Put $r_{n,\varepsilon}$ by
                      \begin{equation*}
                        r_{n,\varepsilon} := \frac{1}{\varepsilon n^{1-1/p}} + \frac{1}{n^{1/2-1/p}}.
                      \end{equation*}
              \end{enumerate}
        \item \label{item:asmp:ContrastJumpPreConv1(ii)}
              Under Assumption \ref{asmp:A4} \ref{asmp:A4(ii)},
              we assume the following four conditions:
              \begin{enumerate}[label=(ii.\alph*)]
                \item \label{item:asmp:ContrastJumpPreConv1(ii-a)}
                      There exists $\delta>0$ such that for every $(x,\theta)\in I_{x_0}^\delta\times\bar{\Theta}$, $g(x,y,\theta)$ is continuously differentiable with respect to $y \in (0,\infty)$.
                \item \label{item:asmp:ContrastJumpPreConv1(ii-b)}
                      There exists $\delta>0$ and $L>0$ such that if $0<y_1\leq y \leq y_2$, then
                      \begin{equation*}
                        \left| \frac{\partial g}{\partial y} (x,y,\theta) \right|
                        \leq \left| \frac{\partial g}{\partial y} (x,y_1,\theta) \right|
                        + \left| \frac{\partial g}{\partial y} (x,y_2,\theta) \right| + L
                        \quad
                        \text{for all }
                        (x,\theta)\in I_{x_0}^\delta\times\bar{\Theta}.
                      \end{equation*}
                \item \label{item:asmp:ContrastJumpPreConv1(ii-c)}
                      There exist $q\geq0$ and $\delta>0$ such that
                      \begin{equation*}
                        \sup_{(x,\theta) \in I_{x_0}^\delta\times\bar{\Theta}}
                        \left| \frac{\partial g}{\partial y} (x,y,\theta) \right|
                        \leq O \left( \frac{1}{|y|^q} \right)
                        \quad \text{as } |y| \to 0.
                      \end{equation*}
                \item \label{item:asmp:ContrastJumpPreConv1(ii-d)}
                      There exists $\delta>0$ such that
                      for any $C_1>0$ and $C_2\geq0$
                      the map
                      \begin{equation*}
                        x \mapsto \int \sup_\theta \left| \frac{\partial g}{\partial y}(x, C_1 y + C_2,\theta) \right| f_{\alpha_0}(y) \, dy
                      \end{equation*}
                      takes values in $\mathbb{R}$ from $I_{x_0}^\delta$, and is continous on $I_{x_0}^\delta$.
                \item \label{item:asmp:ContrastJumpPreConv1(ii-e)}
                      Let $q$ be the constant in the condition \ref{item:asmp:ContrastJumpPreConv1(ii-c)},
                      and let $\rho<1/4q$.
                      For any large $p\geq2/(1-2q\rho)$,
                      \begin{equation*}
                        \lambda_\varepsilon             \to \infty, ~
                        \frac{\lambda_\varepsilon^2}{n} \to 0,      ~
                        \varepsilon \lambda_\varepsilon \to 0,      ~
                        \varepsilon n^{1-q\rho-1/p}\to\infty        ~\text{and}~
                        \lambda_\varepsilon \int_{|z|\leq\kappa/n^\rho} f_{\alpha_0}(z) \, dz \to 0
                      \end{equation*}
                      as $n\to\infty$, $\varepsilon\to0$.
                \item \label{item:asmp:ContrastJumpPreConv1(ii-f)}
                      Let $p$ and $q$ be the constants in the condition \ref{item:asmp:ContrastJumpPreConv1(ii-e)}.
                      Put $r_{n,\varepsilon}$ by
                      \begin{equation*}
                        r_{n,\varepsilon} := \frac{1}{\varepsilon n^{1-1/p-q\rho}} + \frac{1}{n^{1/2-1/p-q\rho}}.
                      \end{equation*}
              \end{enumerate}
      \end{enumerate}
      Then,
      \begin{align*}
        &\left|
            \frac{1}{\lambda_\varepsilon}
              \sum_{k=1}^n
                g \left( X^\varepsilon_{t_{k-1}}, \frac{\Delta^n_k X^{\varepsilon}}{\varepsilon}, \theta \right)
                1_{D^{n,\varepsilon,\rho}_{k}}
            - \frac{1}{\lambda_\varepsilon}
              \sum_{k=1}^n
                g \left( X^\varepsilon_{t_{k-1}}, c(X^\varepsilon_{t_{k-1}},\alpha_0) V_{N^{\lambda_\varepsilon}_{\tau_k}}, \theta \right)
                1_{J^{n,\varepsilon}_{k,1}}
          \right| \\
        &\quad
          = O_p(r_{n,\varepsilon})
      \end{align*}
      as $n\to\infty$ and $\varepsilon\to0$, uniformly in $\theta\in\Theta$.
    \end{lemma}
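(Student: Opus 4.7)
The plan is to telescope the target difference into three pieces and control each. Set $y_k := \Delta^n_k X^\varepsilon/\varepsilon$ and $y_k^\star := c(X^\varepsilon_{t_{k-1}},\alpha_0) V_{N^{\lambda_\varepsilon}_{\tau_k}}$, so that the target difference equals
\begin{align*}
&\tfrac{1}{\lambda_\varepsilon}\sum_{k=1}^n g(X^\varepsilon_{t_{k-1}},y_k,\theta)\bigl(1_{D^{n,\varepsilon,\rho}_k}-1_{\tilde D^{n,\varepsilon,\rho}_{k,1}}\bigr) \\
&\quad+ \tfrac{1}{\lambda_\varepsilon}\sum_{k=1}^n\bigl[g(X^\varepsilon_{t_{k-1}},y_k,\theta)-g(X^\varepsilon_{t_{k-1}},y_k^\star,\theta)\bigr]1_{\tilde D^{n,\varepsilon,\rho}_{k,1}} \\
&\quad+ \tfrac{1}{\lambda_\varepsilon}\sum_{k=1}^n g(X^\varepsilon_{t_{k-1}},y_k^\star,\theta)\bigl(1_{\tilde D^{n,\varepsilon,\rho}_{k,1}}-1_{J^{n,\varepsilon}_{k,1}}\bigr).
\end{align*}
The first and third sums are indicator-replacement errors. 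After checking that the conditional moments of the summands are tractable via Assumption \ref{asmp:A3}, the growth bounds in (i.b) or (ii.c), and Lemmas \ref{lem:4.1}--\ref{lem:4.5}, they are handled by Lemma \ref{lem:4.10} together with the refined probability estimates of Lemma \ref{lem:4.7}; these errors are negligible compared with $r_{n,\varepsilon}$.

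The middle sum is the main term. On $\tilde D^{n,\varepsilon,\rho}_{k,1}$ the unique jump at $\tau_k=\eta_k$ contributes $\Delta X^\varepsilon_{\tau_k}=\varepsilon c(X^\varepsilon_{\tau_k-},\alpha_0) V_{N^{\lambda_\varepsilon}_{\tau_k}}$, whence
\begin{equation*}
y_k-y_k^\star = \bigl(c(X^\varepsilon_{\tau_k-},\alpha_0)-c(X^\varepsilon_{t_{k-1}},\alpha_0)\bigr) V_{N^{\lambda_\varepsilon}_{\tau_k}} + \varepsilon^{-1}\bigl((X^\varepsilon_{t_k}-X^\varepsilon_{\tau_k})+(X^\varepsilon_{\tau_k-}-X^\varepsilon_{t_{k-1}})\bigr),
\end{equation*}
which, by Assumption \ref{asmp:A2} and the notation of Lemma \ref{lem:4.6}, is bounded by $L\varepsilon Y_k^\varepsilon|V_{N^{\lambda_\varepsilon}_{\tau_k}}|+Y_k^\varepsilon$. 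Under case (i) the mean value theorem together with (i.b) yields $|g(X^\varepsilon_{t_{k-1}},y_k,\theta)-g(X^\varepsilon_{t_{k-1}},y_k^\star,\theta)|\leq C|y_k-y_k^\star|(1+|y_k|^q+|y_k^\star|^q)$. Factoring $\sup_k Y_k^\varepsilon=O_p(r_{n,\varepsilon})$ out of the sum (Lemma \ref{lem:4.6}) and bounding $\lambda_\varepsilon^{-1}\sum_k 1_{J^{n,\varepsilon}_{k,1}}(1+|V_{N^{\lambda_\varepsilon}_{\tau_k}}|^{q+1})=O_p(1)$ via Assumption \ref{asmp:A3} and Lemma \ref{lem:4.8} finishes case (i).

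The main obstacle is case (ii), where $\partial_y g$ may blow up as $y\to 0^+$. I would split $\tilde D^{n,\varepsilon,\rho}_{k,1}$ according to whether $y_k^\star\geq y_k/2$ or not. On the typical part, both endpoints lie above $v_1/(2\varepsilon n^\rho)$, so (ii.c) yields $|\partial_y g|\leq C(\varepsilon n^\rho)^q$ uniformly on the segment $[y_k^\star,y_k]$; plugging into the MVT produces the extra $n^{q\rho}$ factor that accounts for the exponent $1-1/p-q\rho$ in $r_{n,\varepsilon}$. On the atypical part $\{y_k^\star<y_k/2\}$, the continuous fluctuation must exceed half the jump, a rare event controllable through Lemmas \ref{lem:4.5} and \ref{lem:4.6}. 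There, (ii.b) is used to bound $|\partial_y g|$ at intermediate points by its values at the endpoints plus $L$, and (ii.d) together with Lemma \ref{lem:A.2} converts the resulting sums into tractable integrals against $f_{\alpha_0}$. The delicate balance $\rho<1/(4q)$ and $p\geq 2/(1-2q\rho)$ imposed in (ii.e) is precisely what ensures the combined typical/atypical bound matches $r_{n,\varepsilon}$.
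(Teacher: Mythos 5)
Your skeleton matches the paper's: the same three-term telescoping (indicator replacement to $\tilde D^{n,\varepsilon,\rho}_{k,1}$ and $J^{n,\varepsilon}_{k,1}$ via Lemma \ref{lem:4.10}, then a mean-value/Taylor estimate on the middle term), and your treatment of case (i) — polynomial growth of $\partial_y g$, factoring out $\sup_k Y^\varepsilon_k=O_p(r_{n,\varepsilon})$ from Lemma \ref{lem:4.6}, and moments of $V$ via Assumption \ref{asmp:A3} — is essentially the paper's argument (you should make explicit the restriction to $\{\sup_k Y^\varepsilon_k\le 1\}$ so that $|y_k|^q$ reduces to moments of $V$, as the paper does).

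Case (ii), however, has a genuine gap. On your ``typical'' event $\{y_k^\star\ge y_k/2\}$ the Taylor segment is contained in $[v_1/(2\varepsilon n^\rho),\,\max(y_k,y_k^\star)]$, and you invoke \ref{item:asmp:ContrastJumpPreConv1(ii-c)} to get a uniform bound $C(\varepsilon n^\rho)^q$ there. But \ref{item:asmp:ContrastJumpPreConv1(ii-c)} is only an asymptotic bound as $|y|\to0$; it says nothing on a segment bounded away from the origin — indeed in the relevant regime ($\varepsilon n^\rho\to0$) the left endpoint $v_1/(2\varepsilon n^\rho)$ tends to \emph{infinity} — and it certainly does not control $\partial_y g$ at the right endpoint, which is of order $V$ and can be large (cf.\ the Weibull example). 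The paper's mechanism is different: writing the intermediate point as the convex combination $(1-\zeta)c(X^\varepsilon_{\tau_k-},\alpha_0)V_{N^{\lambda_\varepsilon}_{\tau_k}}+\zeta\,\Delta^n_kX^\varepsilon/\varepsilon$, one gets the pointwise lower bound $\min\{c_1V_{N^{\lambda_\varepsilon}_{\tau_k}}/2,\,v_1/n^\rho\}$ on all of $\tilde D^{n,\varepsilon,\rho}_{k,1}$; then \ref{item:asmp:ContrastJumpPreConv1(ii-b)} bounds $|\partial_y g|$ at the intermediate point by its values at the three reference points $c_1V/2$, $v_1/n^\rho$ and $2c_2V+1$ plus $L$; the two $V$-dependent points are integrated against $f_{\alpha_0}$ via \ref{item:asmp:ContrastJumpPreConv1(ii-d)}, and \ref{item:asmp:ContrastJumpPreConv1(ii-c)} is used only at the deterministic point $v_1/n^\rho\to0$, which is exactly where the factor $n^{q\rho}$ (and hence the exponent $1-1/p-q\rho$ in $r_{n,\varepsilon}$) comes from — note also that your claimed bound $(\varepsilon n^\rho)^q$ would not reproduce that exponent. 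Your typical/atypical split does not supply these ingredients, and the atypical part is left too vague to assess. (A minor further point: the paper inserts the intermediate value $\Delta X^\varepsilon_{\tau_k}/\varepsilon=c(X^\varepsilon_{\tau_k-},\alpha_0)V$ between $y_k$ and $y_k^\star$, so that the second leg stays above $c_1V/2$ and incurs no $n^{q\rho}$ loss; your direct route could be repaired with the same convex-combination bound, but as written it is not.)
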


    \begin{remark}
      In Lemmas \ref{lem:4.7}, \ref{lem:4.8}, \ref{lem:4.10} and \ref{lem:4.11},
      Assumption \ref{asmp:A4} is needed only for defining $D^{n,\varepsilon,\rho}_{k}$.
      Assumption \ref{asmp:A4} is essentially used in this lemma.
    \end{remark}

    \begin{remark}
      The conditions \ref{item:asmp:ContrastJumpPreConv1(i-c)}
      and \ref{item:asmp:ContrastJumpPreConv1(ii-e)} in Lemma \ref{lem:4.12}
      are ensured if
      \begin{equation*}
        \lambda_\varepsilon\to\infty,         ~
        \varepsilon \lambda_\varepsilon\to0,  ~
        (\varepsilon \sqrt{n})^{-1} <\infty   ~\text{and}~
        \lambda_\varepsilon \int_{|z|\leq\kappa/n^\rho} f_{\alpha_0}(z) \, dz \to 0
      \end{equation*}
      as $n\to\infty$ and $\varepsilon\to0$.
      This condition seems to be natural when we consider the asymptotic normality for our estimator
      (see, e.g., the condition (B2) in S{\o}rensen and Uchida \cite{sorensen2003small}).
    \end{remark}

    \begin{proof}[Proof of Lemma \ref{lem:4.12}]
      Let $\delta>0$ be a sufficiently small number satisfying the conditions of the statement and
      \begin{equation*}
        \frac{c_1}{2} \leq c(x,\alpha_0) \leq 2 c_2 \quad \text{for } x\in I_{x_0}^\delta,
      \end{equation*}
      where $c_1$ and $c_2$ are the constants from Assumption \ref{asmp:A6}.
      In this proof, we may simply write the maps
      \begin{equation*}
        (y,\theta) \mapsto g(X^\varepsilon_{t_{k-1}},y,\theta) =: g_k(y,\theta) \quad
        \text{and} \quad
        (y,\theta) \mapsto \left. \frac{\partial g}{\partial y} (x,y,\theta) \right|_{x=X^\varepsilon_{t_{k-1}}} =: \frac{\partial g_k}{\partial y} (y, \theta),
      \end{equation*}
      and we denote the following event by $\tilde{D}^{n,\varepsilon,\rho}_{k,1}$
      \begin{equation*}
        \tilde{D}^{n,\varepsilon,\rho}_{k,1}
        := D^{n,\varepsilon,\rho}_{k,1}
          \cap \{ X^\varepsilon_t \in I_{x_0}^\delta \text{ for all } t\in[0,1] \}.
      \end{equation*}

      Since
      \begin{equation*}
        \frac{\lambda_\varepsilon^2}{n} \to 0 \quad\text{and}\quad
        \lambda_\varepsilon \int_{|z|\leq\kappa/n^\rho} f_{\alpha_0}(z) \, dz \to 0
      \end{equation*}
      under either of the conditions \ref{item:asmp:ContrastJumpPreConv1(i-c)}
      or \ref{item:asmp:ContrastJumpPreConv1(ii-e)},
      we obtain from Lemma \ref{lem:4.10} that for any non-random $r_{n,\varepsilon}'>0$ ($n\in\mathbb{N}, \varepsilon>0$),
      \begin{align*}
        \frac{1}{\lambda_\varepsilon} \sum_{k=1}^n
          g_k \left( \frac{\Delta^n_k X^{\varepsilon}}{\varepsilon}, \theta \right)
          \left\{ 1_{D^{n,\varepsilon,\rho}_{k}} - 1_{\tilde{D}^{n,\varepsilon,\rho}_{k,1}} \right\}
          &= o_p(r_{n,\varepsilon}'), \\
        \frac{1}{\lambda_\varepsilon} \sum_{k=1}^n
          g_k \left( c(X^\varepsilon_{t_{k-1}},\alpha_0) V_{N^{\lambda_\varepsilon}_{\tau_k}}, \theta \right)
          \left\{ 1_{\tilde{D}^{n,\varepsilon,\rho}_{k,1}} - 1_{J^{n,\varepsilon}_{k,1}} \right\}
          &= o_p(r_{n,\varepsilon}')
      \end{align*}
      as $n\to\infty$, $\varepsilon\to0$ $\lambda_\varepsilon\to\infty$ and $\varepsilon \lambda_\varepsilon\to0$, uniformly in $\theta\in\Theta$.
      Thus, it is sufficient to show that
      \begin{align}
        \label{eq:ContrastJumpPreConv1(3)}
        \frac{1}{\lambda_\varepsilon} \sum_{k=1}^n \left\{
          g_k \left( \frac{\Delta^n_k X^{\varepsilon}}{\varepsilon}, \theta \right)
          - g_k \left( \frac{\Delta X^\varepsilon_{\tau_k}}{\varepsilon}, \theta \right)
          \right\} 1_{\tilde{D}^{n,\varepsilon,\rho}_{k,1}}
          &= O_p(r_{n,\varepsilon}), \\
        \label{eq:ContrastJumpPreConv1(4)}
        \frac{1}{\lambda_\varepsilon} \sum_{k=1}^n \left\{
          g_k \left( \frac{\Delta X^\varepsilon_{\tau_k}}{\varepsilon}, \theta \right)
          - g_k \left( c(X^\varepsilon_{t_{k-1}},\alpha_0) V_{N^{\lambda_\varepsilon}_{\tau_k}}, \theta \right)
          \right\} 1_{\tilde{D}^{n,\varepsilon,\rho}_{k,1}}
          &= O_p \left( \frac{1}{\varepsilon n^{1-1/p}} + \frac{1}{n^{1/2-1/p}} \right)
      \end{align}
      as $n\to\infty$, $\varepsilon\to0$, $\lambda_\varepsilon\to\infty$ and $\varepsilon \lambda_\varepsilon\to0$, uniformly in $\theta\in\Theta$.

      Put
      \begin{equation*}
        Y^\varepsilon_k := \frac{X^\varepsilon_{t_k} - X^\varepsilon_{\eta_k}}{\varepsilon} + \frac{X^\varepsilon_{\tau_k-} - X^\varepsilon_{t_{k-1}}}{\varepsilon}
        \left( = \frac{\Delta^n_k X^{\varepsilon}}{\varepsilon} - \frac{\Delta X^\varepsilon_{\tau_k}}{\varepsilon} \quad \text{on}~D^{n,\varepsilon,\rho}_{k,1}\right).
      \end{equation*}
      By using Taylor's theorem under either of the conditions
      \ref{item:asmp:ContrastJumpPreConv1(i-a)}
      or \ref{item:asmp:ContrastJumpPreConv1(ii-a)}, we have
      \begin{equation*}
            g_k \left( \frac{\Delta^n_k X^{\varepsilon}}{\varepsilon}, \theta \right)
            - g_k \left( \frac{\Delta X^\varepsilon_{\tau_k}}{\varepsilon}, \theta \right)
        = \int_0^1 \frac{\partial g_k}{\partial y} \left( \frac{\Delta X^\varepsilon_{\tau_k}}{\varepsilon}  +\zeta Y^\varepsilon_k, \theta \right) Y^\varepsilon_k \, d\zeta \quad
        \text{on } \tilde{D}^{n,\varepsilon,\rho}_{k,1}.
      \end{equation*}
      Here, we remark that $\Delta^n_k X^{\varepsilon}$ and $\Delta X^\varepsilon_{\tau_k}$ are almost surely positive on $\tilde{D}^{n,\varepsilon,\rho}_{k,1}$ under Assumption \ref{asmp:A3} \ref{asmp:A4(ii)}.
      To see \eqref{eq:ContrastJumpPreConv1(3)}, it is sufficient to show that
      \begin{equation}\label{eq:ContrastJumpPreConv1(5)}
        \sup_{\theta\in\Theta} \left|
          \frac{1}{\lambda_\varepsilon} \sum_{k=1}^n
          \int_0^1 \frac{\partial g_k}{\partial y} \left( \frac{\Delta X^\varepsilon_{\tau_k}}{\varepsilon}  +\zeta Y^\varepsilon_k, \theta \right) Y^\varepsilon_k \, d\zeta \,
          1_{\tilde{D}^{n,\varepsilon,\rho}_{k,1} \cap \{|Y^\varepsilon_k|\leq 1\}} \right|
        = O_p(r_{n,\varepsilon})
      \end{equation}
      as $n\to\infty$, $\varepsilon\to0$, $\lambda_\varepsilon\to\infty$ and $\varepsilon \lambda_\varepsilon\to0$.
      Indeed, for any $M>0$
      \begin{align*}
        &P \left(
          \sup_{\theta\in\Theta} \left|
          \frac{1}{\lambda_\varepsilon} \sum_{k=1}^n \left\{
          g_k \left( \frac{\Delta^n_k X^{\varepsilon}}{\varepsilon}, \theta \right)
          - g_k \left( \frac{\Delta X^\varepsilon_{\tau_k}}{\varepsilon}, \theta \right)
          \right\} 1_{\tilde{D}^{n,\varepsilon,\rho}_{k,1}} \right|
          > M r_{n,\varepsilon} \right) \\
        &~
          \leq P \left( \sup_{k=1,\dots,n}|Y^\varepsilon_k| > 1 \right)
          + P \left(
            \sup_{\theta\in\Theta} \left|
            \frac{1}{\lambda_\varepsilon} \sum_{k=1}^n
            \int_0^1 \frac{\partial g_k}{\partial y} \left( \frac{\Delta X^\varepsilon_{\tau_k}}{\varepsilon}  +\zeta Y^\varepsilon_k, \theta \right) Y^\varepsilon_k \, d\zeta \,
            1_{\tilde{D}^{n,\varepsilon,\rho}_{k,1} \cap \{|Y^\varepsilon_k|\leq 1\}} \right|
            > M r_{n,\varepsilon} \right),
      \end{align*}
      and from Lemma \ref{lem:4.6} the first term converges to zero as $n\to\infty$, $\varepsilon\to0$, $\lambda_\varepsilon\to\infty$ and $\varepsilon \lambda_\varepsilon\to0$,
      since from either of the conditions \ref{item:asmp:ContrastJumpPreConv1(i-c)}
      or \ref{item:asmp:ContrastJumpPreConv1(ii-e)}
      we have $\varepsilon n^{1-1/p}\to\infty$ or $\varepsilon n^{1-q\rho-1/p}\to\infty$, respectively.

      We first consider the case under Assumption \ref{asmp:A3} \ref{asmp:A4(ii)}.
      Since for $\zeta\in[0,1]$ we have
      \begin{equation*}
        \frac{\Delta X^\varepsilon_{\tau_k}}{\varepsilon} + \zeta Y^\varepsilon_k
        \geq (1-\zeta) \, c(X^\varepsilon_{\tau_k-},\alpha_0) \, V_{N^{\lambda_\varepsilon}_{\tau_k}} + \zeta \frac{v_{nk}}{n^\rho}
        \geq \min \left\{ \frac{c_1}{2} V_{N^{\lambda_\varepsilon}_{\tau_k}}, \frac{v_1}{n^\rho} \right\}
        ~\text{on}~ \tilde{D}^{n,\varepsilon,\rho}_{k,1},
      \end{equation*}
      we obtain from the condition \ref{item:asmp:ContrastJumpPreConv1(ii-b)} that
      \begin{align*}
        &\int_0^1 \left| \frac{\partial g_k}{\partial y}
          \left( \frac{\Delta X^\varepsilon_{\tau_k}}{\varepsilon}  +\zeta Y^\varepsilon_k, \theta \right) \right| d\zeta
          \, 1_{\tilde{D}^{n,\varepsilon,\rho}_{k,1} \cap \{|Y^\varepsilon_k|\leq 1\}} \\
        &\quad
          \begin{aligned}
            &\leq \left\{
              \left| \frac{\partial g_k}{\partial y} \left( \frac{c_1}{2} V_{N^{\lambda_\varepsilon}_{\tau_k}}, \theta \right) \right|
              + \left| \frac{\partial g_k}{\partial y} \left( \frac{v_1}{n^\rho}, \theta \right) \right|
              + \left| \frac{\partial g_k}{\partial y} \left( 2c_2 V_{N^{\lambda_\varepsilon}_{\tau_k}} + 1, \theta \right) \right| + L \right\}
              1_{\tilde{D}^{n,\varepsilon,\rho}_{k,1} \cap \{|Y^\varepsilon_k|\leq 1\}} \\
            &\leq \left\{
              \left| \frac{\partial g_k}{\partial y} \left( \frac{c_1}{2} V_{N^{\lambda_\varepsilon}_{\tau_k}}, \theta \right) \right|
              + \left| \frac{\partial g_k}{\partial y} \left( \frac{v_1}{n^\rho}, \theta \right) \right|
              + \left| \frac{\partial g_k}{\partial y} \left( 2c_2 V_{N^{\lambda_\varepsilon}_{\tau_k}} + 1, \theta \right) \right| + L \right\}
              1_{J^{n,\varepsilon}_{k,1}}
          \end{aligned}
      \end{align*}
      Since
      \begin{align*}
        \frac{1}{\lambda_\varepsilon} \sum_{k=1}^n
          E \left[ \sup_\theta \left| \frac{\partial g_k}{\partial y} \left( \frac{c_1}{2} V_{N^{\lambda_\varepsilon}_{\tau_k}}, \theta \right) \right|
          1_{J^{n,\varepsilon}_{k,1}}
          \, \middle| \, \mathcal{F}_{t_{k-1}} \right]
        &
          \leq \frac{1}{\lambda_\varepsilon} \sum_{k=1}^n
          \int \sup_\theta \left| \frac{\partial g_k}{\partial y} \left( \frac{c_1}{2} z,  \theta \right) \right| f_{\alpha_0}(z) \, dz \cdot P \left( J^{n,\varepsilon}_{k,1} \right) \\
        &
          \leq \frac{1}{n} \sum_{k=1}^n
          \int \sup_\theta \left| \frac{\partial g_k}{\partial y} \left( \frac{c_1}{2} z,  \theta \right) \right| f_{\alpha_0}(z) \, dz,
      \end{align*}
      it follows from Lemmas \ref{lem:4.4}, \ref{lem:4.6} and \ref{lem:A.3} \ref{item:ConseqFromLenglart(ii)},
      and the condition \ref{item:asmp:ContrastJumpPreConv1(ii-d)} that
      \begin{equation*}
        \frac{1}{\lambda_\varepsilon} \sum_{k=1}^n \left|
          \frac{\partial g_k}{\partial y} \left( \frac{c_1}{2} V_{N^{\lambda_\varepsilon}_{\tau_k}}, \theta \right)
          \right|
          1_{J^{n,\varepsilon}_{k,1}}
          \sup_{k=1,\dots,n} \left| Y^\varepsilon_k \right|
        = O_p \left( \frac{1}{\varepsilon n^{1-1/p}} + \frac{1}{n^{1/2-1/p}} \right)
      \end{equation*}
      as $n\to\infty$, $\varepsilon\to0$, $\lambda_\varepsilon\to\infty$ and $\varepsilon \lambda_\varepsilon\to0$, uniformly in $\theta\in\Theta$,
      where $p$ is given in the condition \ref{item:asmp:ContrastJumpPreConv1(ii-e)}.
      Similarly,
      it follows from Lemmas \ref{lem:4.4}, \ref{lem:4.6} and \ref{lem:A.3} \ref{item:ConseqFromLenglart(ii)},
      and the condition \ref{item:asmp:ContrastJumpPreConv1(ii-d)} that
      \begin{equation*}
        \frac{1}{\lambda_\varepsilon} \sum_{k=1}^n \left|
          \frac{\partial g_k}{\partial y} \left( 2c_2 V_{N^{\lambda_\varepsilon}_{\tau_k}} + 1, \theta \right) \right|
          1_{J^{n,\varepsilon}_{k,1}}
          \sup_{k=1,\dots,n} \left| Y^\varepsilon_k \right|
        = O_p \left( \frac{1}{\varepsilon n^{1-1/p}} + \frac{1}{n^{1/2-1/p}} \right)
      \end{equation*}
      as $n\to\infty$, $\varepsilon\to0$, $\lambda_\varepsilon\to\infty$ and $\varepsilon \lambda_\varepsilon\to0$, uniformly in $\theta\in\Theta$, and
      it follows from Lemmas \ref{lem:4.6} and \ref{lem:A.3} \ref{item:ConseqFromLenglart(ii)},
      and the condition \ref{item:asmp:ContrastJumpPreConv1(ii-c)} that
      \begin{equation*}
        \frac{1}{\lambda_\varepsilon} \sum_{k=1}^n \left|
        \frac{\partial g_k}{\partial y} \left( \frac{v_1}{n^\rho}, \theta \right)
          \right| 1_{J^{n,\varepsilon}_{k,1}}
          \sup_{k=1,\dots,n} \left| Y^\varepsilon_k \right|
        = O_p \left( \frac{1}{\varepsilon n^{1-1/p-q\rho}} + \frac{1}{n^{1/2-1/p-q\rho}} \right)
      \end{equation*}
      as $n\to\infty$, $\varepsilon\to0$, $\lambda_\varepsilon\to\infty$ and $\varepsilon \lambda_\varepsilon\to0$, uniformly in $\theta\in\Theta$.
      Thus, we obtain \eqref{eq:ContrastJumpPreConv1(5)}.

      Under Assumption \ref{asmp:A3} \ref{asmp:A4(i)}, as in the same argument in the above, we have
      \begin{align*}
        &\frac{1}{\lambda_\varepsilon} \sum_{k=1}^n \sup_{\theta\in\bar{\Theta}} \left|
          \int_0^1 \frac{\partial g_k}{\partial y} \left( \frac{\Delta X^\varepsilon_{\tau_k}}{\varepsilon}  +\zeta Y^\varepsilon_k, \theta \right) Y^\varepsilon_k \, d\zeta \right|
          1_{\tilde{D}^{n,\varepsilon,\rho}_{k,1}\cap\{|Y^\varepsilon_k|\leq1\}} \\
        &\qquad
          \leq \frac{C}{\lambda_\varepsilon} \sum_{k=1}^n
          \left( 2 + \left| 2c_2 V_{N^{\lambda_\varepsilon}_{\tau_k}} \right|^p \right)
          1_{J^{n,\varepsilon}_{k,1}}
          \sup_{k=1,\dots,n} \left| Y^\varepsilon_k \right|
        = O_p \left( \frac{1}{\varepsilon n^{1-1/p}} + \frac{1}{n^{1/2-1/p}} \right)
      \end{align*}
      as $n\to\infty$, $\varepsilon\to0$, $\lambda_\varepsilon\to\infty$ and $\varepsilon \lambda_\varepsilon\to0$.
      Thus, we obtain \eqref{eq:ContrastJumpPreConv1(5)}.

      Analogously, it follows that for $\zeta\in[0,1]$
      \begin{equation*}
        (1-\zeta) \frac{\Delta X^\varepsilon_{\tau_k}}{\varepsilon}
          + \zeta c(X^\varepsilon_{t_{k-1}},\alpha_0) V_{N^{\lambda_\varepsilon}_{\tau_k}}
        \geq \frac{c_1}{2} V_{N^{\lambda_\varepsilon}_{\tau_k}}
        \quad
        \text{on } \tilde{D}^{n,\varepsilon,\rho}_{k,1},
      \end{equation*}
      and that on $\tilde{D}^{n,\varepsilon,\rho}_{k,1}$
      \begin{align*}
        &\int_0^1 \left| \frac{\partial g_k}{\partial y}
          \left( (1-\zeta) \frac{\Delta X^\varepsilon_{\tau_k}}{\varepsilon}
          + \zeta c(X^\varepsilon_{t_{k-1}},\alpha_0) V_{N^{\lambda_\varepsilon}_{\tau_k}}, \theta \right) \right| d\zeta \\
        &\qquad
        \leq \left\{ \begin{aligned}
            & C \left( 1 + \left| 2 c_2 V_{N^{\lambda_\varepsilon}_{\tau_k}} \right|^p \right)
            & &\text{under the condition \ref{item:asmp:ContrastJumpPreConv1(i)}}, \\
            & \left| \frac{\partial g_k}{\partial y}
              \left( \frac{c_1}{2} V_{N^{\lambda_\varepsilon}_{\tau_k}}, \theta \right) \right|
              + \left| \frac{\partial g_k}{\partial y} \left( 2c_2 V_{N^{\lambda_\varepsilon}_{\tau_k}}, \theta \right) \right|
            & &\text{under the condition \ref{item:asmp:ContrastJumpPreConv1(ii)}},
          \end{aligned} \right.
      \end{align*}
      so that, \eqref{eq:ContrastJumpPreConv1(4)} holds.
    \end{proof}

    \begin{lemma}\label{lem:4.13}
      Let $\rho\in(0,1/2)$.
      Under Assumptions \ref{asmp:A2} to \ref{asmp:A6},
      suppose that for $\theta\in\Theta$
      \begin{equation}\label{eq:ContrastJumpPreConv2(1)}
        x \mapsto \int g(x,c(x,\alpha_0)z,\theta) f_{\alpha_0}(z) \, dz, \quad
        x \mapsto \int |g(x,c(x,\alpha_0)z,\theta)|^2 f_{\alpha_0}(z) \, dz
      \end{equation}
      are continuous at every points in $I_{x_0}$, and that there exist $\delta>0$, $C>0$ and $q\geq0$ such that
      \begin{equation}\label{eq:ContrastJumpPreConv2(2)}
          \int \left\{
            \sup_{(x,\theta)\in I_{x_0}^\delta \times \Theta}
            \left| g (x,c(x,\alpha_0)z,\theta) \right|
            + \sum_{j=1}^d \sup_{(x,\theta)\in I_{x_0}^\delta \times \Theta}
            \left| \frac{\partial g}{\partial \theta_j} (x,c(x,\alpha_0)z,\theta) \right|
            \right\} f_{\alpha_0} (z) \, dz
            < \infty.
      \end{equation}
      Then,
      \begin{equation*}
        \frac{1}{\lambda_\varepsilon} \sum_{k=1}^n
          g \left( X^\varepsilon_{t_{k-1}}, c(X^\varepsilon_{t_{k-1}},\alpha_0) V_{N^{\lambda_\varepsilon}_{\tau_k}}, \theta \right)
          1_{D^{n,\varepsilon,\rho}_{k,1}}
        \overset{p}\longrightarrow
        \int_0^1 \int g(x_t,c(x_t,\alpha_0)z,\theta) f_{\alpha_0}(z) \, dz \, dt
      \end{equation*}
      as $n\to\infty$, $\varepsilon\to0$, $\lambda_\varepsilon\to\infty$ and $\varepsilon \lambda_\varepsilon\to0$, uniformly in $\theta\in\Theta$.
    \end{lemma}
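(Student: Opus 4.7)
The plan is to proceed by a conditional law-of-large-numbers argument after first swapping the indicator $1_{D^{n,\varepsilon,\rho}_{k,1}}$ for $1_{J^{n,\varepsilon}_{k,1}}$. Write $\xi^{n,\varepsilon}_{k,\theta} := g(X^\varepsilon_{t_{k-1}}, c(X^\varepsilon_{t_{k-1}},\alpha_0)V_{N^{\lambda_\varepsilon}_{\tau_k}}, \theta)\,1_{J^{n,\varepsilon}_{k,1}}$. Since $J^{n,\varepsilon}_{k,1}\setminus D^{n,\varepsilon,\rho}_{k,1}=C^{n,\varepsilon,\rho}_{k,1}$, Lemma \ref{lem:4.10} (whose hypotheses are granted by the $f_{\alpha_0}$-integrability of $\sup_{(x,\theta)\in I_{x_0}^\delta\times\Theta}|g(x,c(x,\alpha_0)\cdot,\theta)|$ in \eqref{eq:ContrastJumpPreConv2(2)}) reduces matters to showing $\lambda_\varepsilon^{-1}\sum_k \xi^{n,\varepsilon}_{k,\theta}$ converges in probability, uniformly in $\theta$, to the claimed double integral.

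The key step is a conditional-expectation calculation: on $J^{n,\varepsilon}_{k,1}$ one has $N^{\lambda_\varepsilon}_{\tau_k}=N^{\lambda_\varepsilon}_{t_{k-1}}+1$, and the random variable $V_{N^{\lambda_\varepsilon}_{t_{k-1}}+1}$ has density $f_{\alpha_0}$ and is independent of $\mathcal{F}_{t_{k-1}}$ and of the future of $N^{\lambda_\varepsilon}$. Setting $G(x,\theta):=\int g(x,c(x,\alpha_0)z,\theta)f_{\alpha_0}(z)\,dz$, this gives
\[
E[\xi^{n,\varepsilon}_{k,\theta}\mid\mathcal{F}_{t_{k-1}}]=\tfrac{\lambda_\varepsilon}{n}e^{-\lambda_\varepsilon/n}\,G(X^\varepsilon_{t_{k-1}},\theta),
\]
and an analogous identity with $g^2$ in place of $g$. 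Assumption \eqref{eq:ContrastJumpPreConv2(1)} makes $G(\cdot,\theta)$ continuous on $I_{x_0}$, and \eqref{eq:ContrastJumpPreConv2(2)} bounds $G$ and its $L^2$ analogue uniformly on $I_{x_0}^\delta\times\Theta$. The $L^2$ bound then feeds into Lenglart's inequality (Lemma \ref{lem:A.3}): since $\sum_k E[|\xi^{n,\varepsilon}_{k,\theta}|^2\mid\mathcal{F}_{t_{k-1}}]=O_p(\lambda_\varepsilon)$, we get $\lambda_\varepsilon^{-1}\sum_k(\xi^{n,\varepsilon}_{k,\theta}-E[\xi^{n,\varepsilon}_{k,\theta}\mid\mathcal{F}_{t_{k-1}}])=O_p(\lambda_\varepsilon^{-1/2})=o_p(1)$.

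Finally, under the paper's standing conditions $(\varepsilon^2 n)^{-1}<\infty$ together with $\varepsilon\lambda_\varepsilon\to 0$, one has $\lambda_\varepsilon=O(\sqrt{n})$, so $\lambda_\varepsilon/n\to 0$ and $e^{-\lambda_\varepsilon/n}\to 1$. Hence
\[
\frac{1}{\lambda_\varepsilon}\sum_{k=1}^n E[\xi^{n,\varepsilon}_{k,\theta}\mid\mathcal{F}_{t_{k-1}}]=\frac{e^{-\lambda_\varepsilon/n}}{n}\sum_{k=1}^n G(X^\varepsilon_{t_{k-1}},\theta)\;\longrightarrow\;\int_0^1 G(x_t,\theta)\,dt
\]
in probability, uniformly in $\theta\in\Theta$, by Lemma \ref{lem:4.4} applied to the family $\{G(\cdot,\theta)\}_{\theta\in\Theta}$, which is equicontinuous at every point of $I_{x_0}$ by \eqref{eq:ContrastJumpPreConv2(1)}.

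The main obstacle I foresee is the uniformity in $\theta\in\Theta$: the Lenglart step itself gives only pointwise-in-$\theta$ convergence of the martingale remainder. To upgrade to uniform convergence I would use the equi-Lipschitz property of $\theta\mapsto \xi^{n,\varepsilon}_{k,\theta}$ supplied by the integrability of $\sum_j\sup_{(x,\theta)}|\partial g/\partial\theta_j(x,c(x,\alpha_0)\cdot,\theta)|$ in \eqref{eq:ContrastJumpPreConv2(2)}, combined with a chaining argument over a finite $\eta$-net in the compact set $\Theta$, where the modulus estimate is controlled by the dominating integrable function.
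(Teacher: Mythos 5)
Your core argument --- conditioning on $\mathcal{F}_{t_{k-1}}$, using the independence of $V_{N^{\lambda_\varepsilon}_{t_{k-1}}+1}$ and $\{\Delta^n_k N^{\lambda_\varepsilon}=1\}$ to identify the compensator $\frac{\lambda_\varepsilon}{n}e^{-\lambda_\varepsilon/n}G(X^\varepsilon_{t_{k-1}},\theta)$, killing the martingale remainder by a conditional second-moment bound of order $O_p(\lambda_\varepsilon^{-1})$ after normalization, and passing to the limit via Lemma \ref{lem:4.4} --- is exactly the paper's computation; the paper invokes Lemma 9 of Genon-Catalot and Jacod for the martingale step, which is the right citation here (Lemma \ref{lem:A.3} in the appendix is a \emph{first}-moment domination criterion, not the classical Lenglart inequality, so it does not directly convert your conditional variance bound into $O_p(\sqrt{\lambda_\varepsilon})$; either cite Genon-Catalot--Jacod or the genuine Lenglart inequality). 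Where you genuinely diverge is the uniformity in $\theta$: you propose a finite $\eta$-net plus an equi-Lipschitz modulus controlled by the integrable bound on $\sum_j\sup_{x,\theta}|\partial g/\partial\theta_j|$, whereas the paper establishes tightness of the error process in $C(\Theta)$ by bounding its $W^{1,\infty}(\Theta)$-norm in expectation and invoking Rellich--Kondrachov together with Billingsley's Theorem 5.1. These are two packagings of the same idea (a stochastically bounded Lipschitz constant upgrades pointwise to uniform convergence), and your more elementary version is sound, provided you also note that the limit $\theta\mapsto\int_0^1 G(x_t,\theta)\,dt$ inherits the same Lipschitz bound.

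Two loose ends. First, the suprema in \eqref{eq:ContrastJumpPreConv2(2)} are taken only over $x\in I_{x_0}^\delta$, so before you can dominate either the summands or the Lipschitz constant you must restrict to the event $\{X^\varepsilon_t\in I_{x_0}^\delta \text{ for all } t\}$; the paper does this explicitly by replacing $J^{n,\varepsilon}_{k,1}$ with $\tilde J^{n,\varepsilon}_{k,1}$ and disposing of the complement via Lemma \ref{lem:4.3}. You skip this localization, and without it the dominating function is simply not defined off $I_{x_0}^\delta$. Second, your opening swap of $1_{D^{n,\varepsilon,\rho}_{k,1}}$ for $1_{J^{n,\varepsilon}_{k,1}}$ via Lemma \ref{lem:4.10} is not ``granted by the integrability of $g$'': Lemma \ref{lem:4.10} needs the asymptotic regime \eqref{eq:indicatorSumCondition} (in particular $\lambda_\varepsilon^2/n\to0$ and $\lambda_\varepsilon\int_{|z|\le\kappa/n^\rho}f_{\alpha_0}\,dz\to0$), none of which appear among the hypotheses of Lemma \ref{lem:4.13}. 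The paper's own proof quietly avoids this by only ever working with $1_{J^{n,\varepsilon}_{k,1}}$ (so the statement and proof of Lemma \ref{lem:4.13} are not perfectly aligned), and the swap is in effect performed upstream in Lemma \ref{lem:4.12}, where those conditions are assumed. You should either state the extra conditions or prove the $J^{n,\varepsilon}_{k,1}$ version, as the paper implicitly does.
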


    \begin{proof}
      It follows from Lemma \ref{lem:4.4} and the continuity of \eqref{eq:ContrastJumpPreConv2(1)} that for each $\theta\in\Theta$
      \begin{align*}
        \sum_{k=1}^n E \left[
          \frac{1}{\lambda_\varepsilon}
          g \left( X^\varepsilon_{t_{k-1}}, c(X^\varepsilon_{t_{k-1}},\alpha_0) V_{N^{\lambda_\varepsilon}_{\tau_k}}, \theta \right)
          1_{J^{n,\varepsilon}_{k,1}}
          \, \middle| \, \mathcal{F}_{t_{k-1}} \right]
        &= \frac{1}{n}
          \sum_{k=1}^n \int
          g \left( X^\varepsilon_{t_{k-1}}, c(X^\varepsilon_{t_{k-1}},\alpha_0) z, \theta \right) f_{\alpha_0}(z) \, dz \\
        &\overset{p}\longrightarrow
          \int_0^1 \int g(x_t,c(x_t,\alpha_0)z,\theta) f_{\alpha_0}(z) \, dz \, dt
      \end{align*}
      as $n\to\infty$, $\varepsilon\to0$, $\lambda_\varepsilon\to\infty$ and $\varepsilon\lambda_\varepsilon\to0$,
      and that
      \begin{equation*}
        \sum_{k=1}^n E \left[
            \frac{1}{\lambda_\varepsilon^2}
            \left| g \left( X^\varepsilon_{t_{k-1}}, c(X^\varepsilon_{t_{k-1}},\alpha_0) V_{N^{\lambda_\varepsilon}_{\tau_k}}, \theta \right) \right|^2
            1_{J^{n,\varepsilon}_{k,1}}
          \, \middle| \, \mathcal{F}_{t_{k-1}} \right]
        \overset{p}\longrightarrow 0
      \end{equation*}
      as $n\to\infty$, $\varepsilon\to0$, $\lambda_\varepsilon\to\infty$ and $\varepsilon\lambda_\varepsilon\to0$.
      Thus, Lemma 9 in Genon-Catalot and Jocod \cite{genon-catalot1993ontheest} shows us that for each $\theta\in\Theta$
      \begin{equation*}
        \frac{1}{\lambda_\varepsilon}
          \sum_{k=1}^n
              g \left( X^\varepsilon_{t_{k-1}}, c(X^\varepsilon_{t_{k-1}},\alpha_0) V_{N^{\lambda_\varepsilon}_{\tau_k}}, \theta \right)
              1_{J^{n,\varepsilon}_{k,1}}
        \overset{p}\longrightarrow
        \int_0^1 \int g(x_t,c(x_t,\alpha_0)z,\theta) f_{\alpha_0}(z) \, dz \, dt
      \end{equation*}
      as $n\to\infty$, $\varepsilon\to0$, $\lambda_\varepsilon\to\infty$ and $\varepsilon\lambda_\varepsilon\to0$.
      Put
      \begin{equation*}
        \tilde{J}^{n,\varepsilon}_{k,1}
        := J^{n,\varepsilon}_{k,1} \cap  \{ X^\varepsilon_t \in I_{x_0}^\delta \text{ for all } t\in[0,1] \}.
      \end{equation*}
      Then, by the same argument in the proof of Lemma \ref{lem:4.10}, it follows from Lemma \ref{lem:4.3} that
      \begin{equation*}
        \frac{1}{\lambda_\varepsilon}
          \sum_{k=1}^n
              g \left( X^\varepsilon_{t_{k-1}}, c(X^\varepsilon_{t_{k-1}},\alpha_0) V_{N^{\lambda_\varepsilon}_{\tau_k}}, \theta \right)
              \left\{ 1_{J^{n,\varepsilon}_{k,1}} - 1_{\tilde{J}^{n,\varepsilon}_{k,1}} \right\}
        \overset{p}\longrightarrow
        0
      \end{equation*}
      as $n\to\infty$, $\varepsilon\to0$, $\lambda_\varepsilon\to\infty$ and $\varepsilon \lambda_\varepsilon\to0$, uniformly in $\theta\in\Theta$.

      Now, we have for each $\theta\in\Theta$
      \begin{equation*}
        \frac{1}{\lambda_\varepsilon}
          \sum_{k=1}^n
              g \left( X^\varepsilon_{t_{k-1}}, c(X^\varepsilon_{t_{k-1}},\alpha_0) V_{N^{\lambda_\varepsilon}_{\tau_k}}, \theta \right)
              1_{\tilde{J}^{n,\varepsilon}_{k,1}}
        \overset{p}\longrightarrow
        \int_0^1 \int g(x_t,c(x_t,\alpha_0)z,\theta) f_{\alpha_0}(z) \, dz \, dt
      \end{equation*}
      as $n\to\infty$, $\varepsilon\to0$, $\lambda_\varepsilon\to\infty$ and $\varepsilon\lambda_\varepsilon\to0$.
      To say the uniformity of this convergence in $\theta\in\Theta$,
      put
      \begin{equation*}
        \chi^{n,\varepsilon}(\theta)
        :=
        \frac{1}{\lambda_\varepsilon}
          \sum_{k=1}^n
              g \left( X^\varepsilon_{t_{k-1}}, c(X^\varepsilon_{t_{k-1}},\alpha_0) V_{N^{\lambda_\varepsilon}_{\tau_k}}, \theta \right)
              1_{\tilde{J}^{n,\varepsilon}_{k,1}}
        - \int_0^1 \int g(x_t,c(x_t,\alpha_0)z,\theta) f_{\alpha_0}(z) \, dz \, dt
      \end{equation*}
      and we shall use Theorem 5.1 in Billingsley \cite{billingsley1999convergence} with the state space $C(\Theta)$, same as in the proofs of Propositions 3.3 and 3.6 in Shimizu and Yoshida \cite{shimizu2006estimation}.\footnote{We cannot use Theorem 20 in Ibragimov and Has'minskii \cite[Appendix I]{Ibragimov1981statistical} (or Lemma 3.1 in Yoshida \cite{yoshida1990asymptotic}),
      as in the proof of Lemma 2 in S{\o}rensen and Uchida \cite{sorensen2003small}.
      In fact, we fail to say that $\{\chi^{n,\varepsilon}\}$ satisfies (1) and (2) in Lemma 3.1 in Yoshida \cite{yoshida1990asymptotic}.}
      From \eqref{eq:ContrastJumpPreConv2(2)}, we obtain
      \begin{align*}
        E \left[ \sup_{\theta\in\Theta} \left| \frac{1}{\lambda_\varepsilon} \sum_{k=1}^n
          g\left( X^\varepsilon_{t_{k-1}}, c(X^\varepsilon_{t_{k-1}},\alpha_0) V_{N^{\lambda_\varepsilon}_{\tau_k}}, \theta \right)
          1_{\tilde{J}^{n,\varepsilon}_{k,1}}
          \right| \right]
        &\leq \frac{1}{\lambda_\varepsilon} \sum_{k=1}^n E \left[
          \sup_{(x,\theta)\in I_{x_0}^\delta \times \Theta}
          \left| g(x,c(x,\alpha_0) V_{N^{\lambda_\varepsilon}_{\tau_k}},\theta) \right|
          1_{J^{n,\varepsilon}_{k,1}}
          \right] \\
        &= \int
          \sup_{(x,\theta)\in I_{x_0}^\delta \times \Theta}
          \left| g(x,c(x,\alpha_0)z,\theta) \right|
          f_{\alpha_0} (z) \, dz \, (< \infty)
      \end{align*}
      and
      \begin{align*}
        &E \left[ \sup_{\theta\in\Theta} \left| \frac{1}{\lambda_\varepsilon} \sum_{k=1}^n
          \frac{\partial g}{\partial \theta_j} \left( X^\varepsilon_{t_{k-1}}, c(X^\varepsilon_{t_{k-1}},\alpha_0) V_{N^{\lambda_\varepsilon}_{\tau_k}}, \theta \right)
          1_{\tilde{J}^{n,\varepsilon}_{k,1}}
          \right| \right] \\
        & \qquad
        \begin{aligned}
          &\leq \frac{1}{\lambda_\varepsilon} \sum_{k=1}^n E \left[
            \sup_{(x,\theta)\in I_{x_0}^\delta \times \Theta}
            \left| \frac{\partial g}{\partial \theta_j} (x,c(x,\alpha_0) V_{N^{\lambda_\varepsilon}_{\tau_k}},\theta) \right|
            1_{J^{n,\varepsilon}_{k,1}}
            \right] \\
          &= \int
            \sup_{(x,\theta)\in I_{x_0}^\delta \times \Theta}
            \left| \frac{\partial g}{\partial \theta_j} (x,c(x,\alpha_0)z,\theta) \right|
            f_{\alpha_0} (z) \, dz \, (< \infty)
        \end{aligned}
      \end{align*}
      for $j=1,\dots,d$.
      The above equalities hold from the fact that $V_{N^{\lambda_\varepsilon}_{\tau_k}}$ and $1_{J^{n,\varepsilon}_{k,1}}$ are independent.
      Hence, for any closed ball $B_r$ of radius $r>0$ centered at zero in $W^{1,\infty}(\Theta)$,
      we obtain from Markov's inequality that
      \begin{equation*}
        \sup_{n,\varepsilon} P \left( \chi^{n,\varepsilon}
          \not\in B_r \right)
        = P \left( \| \chi^{n,\varepsilon} \|_{W^{1,\infty}(\Theta)} \geq r \right)
        \leq \frac{2C}{r},
      \end{equation*}
      where $C$ is defined as \eqref{eq:ContrastJumpPreConv2(2)}.
      From Rellich-Kondrachov's theorem (see, e.g., Theorem 9.16 in Brezis \cite{brezis2010functional}),
      it follows that the balls $B_r$, $r>0$ are compact in $C(\Theta)$, and so from Theorem 5.1 in Billingsley \cite{billingsley1999convergence} that
      $\{\chi^{n,\varepsilon}\}$ is relatively compact in distribution sense as in the Billingsley's book.
      Since for each $\theta\in\Theta$ $\{\chi^{n,\varepsilon}(\theta)\}$ converges to zero in probability, all convergent subsequences of $\{\chi^{n,\varepsilon}\}$ converges to zero in probability.
      Analogously, all subnet of $\{\chi^{n,\varepsilon}\}$ has a subsequence convergent in probability to zero,
      and so $\{\chi^{n,\varepsilon}\}$ converges to zero in probability as $n\to\infty$, $\varepsilon\to0$, $\lambda_\varepsilon\to\infty$ and $\varepsilon\lambda_\varepsilon\to0$.
    \end{proof}

    \begin{lemma}\label{lem:4.14}
      Under Assumptions \ref{asmp:A2} to \ref{asmp:A6},
      let $\rho\in(0,1/2)$,
      and suppose that a function $g:\mathbb{R}\times\Theta\to\mathbb{R}$
      satisfies
      that $\{\frac{\partial g}{\partial \theta_j}(\cdot,\theta)\}_{\theta\in\Theta}$, $j=1,\dots,d$ are equi-Lipschitz continuous on $I_{x_0}^\delta$ for some small $\delta>0$.
      Then,
      \begin{equation*}
        \frac{1}{\varepsilon} \sum_{k=1}^n
          g ( X^\varepsilon_{t_{k-1}}, \theta )
          \left\{ \Delta^n_k X^{\varepsilon} - \frac{1}{n} a(X^\varepsilon_{t_{k-1}}, \mu_0) \right\}
          1_{C^{n,\varepsilon,\rho}_k}
        \overset{p}\longrightarrow
        \int_0^1 g(x_t, \theta) b(x_t,\theta) \, dW_t
      \end{equation*}
      as $n\to\infty$, $\varepsilon\to0$, $\lambda_\varepsilon\to\infty$, $\lambda_\varepsilon^2/n \to 0$, $\varepsilon\lambda_\varepsilon\to0$ and $\lambda_\varepsilon \int_{|z|\leq\kappa/n^\rho} f_{\alpha_0}(z) \, dz\to0$, uniformly in $\theta\in\Theta$.
    \end{lemma}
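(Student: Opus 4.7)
The plan is to decompose $\varepsilon^{-1}(\Delta^n_k X^\varepsilon - a(X^\varepsilon_{t_{k-1}},\mu_0)/n)$ via the SDE~\eqref{eq: main SDE} as $A_k + B_k + J_k$, where
\[
  A_k := \frac{1}{\varepsilon}\int_{t_{k-1}}^{t_k}[a(X^\varepsilon_s,\mu_0) - a(X^\varepsilon_{t_{k-1}},\mu_0)]\,ds, \quad
  B_k := \int_{t_{k-1}}^{t_k} b(X^\varepsilon_s,\sigma_0)\,dW_s, \quad
  J_k := \int_{t_{k-1}}^{t_k} c(X^\varepsilon_{s-},\alpha_0)\,dZ_s^{\lambda_\varepsilon},
\]
so that only the Brownian sum $\sum_k g(X^\varepsilon_{t_{k-1}},\theta)\,B_k$ should survive in the limit. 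First I reduce the indicator: by Remark~\ref{rmk:sumIndC} (whose bound $P(|\sum \xi_k 1_{C^{n,\varepsilon,\rho}_{k,j}}|>\eta)\le P(\sum 1_{C^{n,\varepsilon,\rho}_{k,j}}\ge 1)$ is valid for arbitrary $\xi_k$) together with the tail estimate on $D^{n,\varepsilon,\rho}_{k,0}$ from Lemma~\ref{lem:4.7} (which gives $\sum_k 1_{D^{n,\varepsilon,\rho}_{k,0}} = o_p(1)$ for sufficiently large $p$), $1_{C^{n,\varepsilon,\rho}_k}$ may be replaced by $1_{J^{n,\varepsilon}_{k,0}}$ up to an $o_p(1)$ error uniform in $\theta$. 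On $J^{n,\varepsilon}_{k,0}$ the Poisson integral $J_k$ vanishes identically, so the jump contribution disappears.

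The drift piece is negligible: by Lipschitz continuity of $a(\cdot,\mu_0)$ (Assumption~\ref{asmp:A2}) and the no-jump increment estimate of Lemma~\ref{lem:4.5}, one obtains $E[\sum_k|g(X^\varepsilon_{t_{k-1}},\theta)\,A_k|\,1_{J^{n,\varepsilon}_{k,0}}] = O((\varepsilon n)^{-1}+n^{-1/2})$, which is $o(1)$ since $\limsup(\varepsilon^2 n)^{-1}<\infty$ forces $\varepsilon n\to\infty$.

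The Brownian sum is the heart of the argument. I split
\begin{align*}
  \sum_{k=1}^n g(X^\varepsilon_{t_{k-1}},\theta)\,B_k\,1_{J^{n,\varepsilon}_{k,0}}
  &= \int_0^1 g(X^\varepsilon_{\lfloor ns\rfloor/n},\theta)\,b(X^\varepsilon_s,\sigma_0)\,dW_s \\
  &\quad - \sum_{k=1}^n g(X^\varepsilon_{t_{k-1}},\theta)\,B_k\,1_{\{\Delta^n_k N^{\lambda_\varepsilon}\ge 1\}}.
\end{align*}
For the first term, Lemma~\ref{lem:4.3} gives $X^\varepsilon_\cdot\to x_\cdot$ in ucp, so after localizing on $\{\sup_t|X^\varepsilon_t-x_t|\le\delta\}$ (a set whose probability tends to $1$) the integrand converges in $L^2([0,1]\times\Omega)$ to $g(x_s,\theta)\,b(x_s,\sigma_0)$, and the It\^o isometry delivers convergence in probability to $\int_0^1 g(x_t,\theta)\,b(x_t,\sigma_0)\,dW_t$. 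For the subtracted error term, I exploit the independence of $W$ and $\mathcal{G}:=\sigma(N^{\lambda_\varepsilon},(V_i)_i)$: conditionally on $\mathcal{G}$ the Poisson indicators are deterministic and the sum becomes a genuine $W$-stochastic integral, whose squared $L^2$-norm (on the localizing event, where $b$ is bounded) is at most $C\cdot E[\#\{k:\Delta^n_k N^{\lambda_\varepsilon}\ge 1\}]/n = O(\lambda_\varepsilon/n)\to 0$ under $\lambda_\varepsilon^2/n\to 0$.

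Finally, uniformity in $\theta\in\Theta$ follows by the tightness argument in the proof of Lemma~\ref{lem:4.13}: the equi-Lipschitz derivatives $\partial g/\partial\theta_j$ are bounded on $I_{x_0}^\delta$, so each of the sums defines a family bounded in probability in $W^{1,\infty}(\Theta)$, which by Rellich--Kondrachov is relatively compact in $C(\Theta)$, and pointwise convergence in $\theta$ then lifts to uniform convergence. The principal obstacle is the error sum over intervals carrying jumps, where $X^\varepsilon$ and the jump indicators are correlated through the SDE; conditioning on $\mathcal{G}$ to freeze the indicators while preserving the stochastic-integral structure of the Brownian part is the trick that makes the sharp $O(\lambda_\varepsilon/n)$ bound accessible under the standing hypothesis $\lambda_\varepsilon^2/n\to 0$.
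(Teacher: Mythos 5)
Your proof is correct in substance but reaches the conclusion by a genuinely different route in two places. The paper keeps the filter $1_{C^{n,\varepsilon,\rho}_k}$ throughout: it kills the compound-Poisson term at the very end via Lemma \ref{lem:4.9}, and it controls the Brownian contribution on the ``bad'' events by H\"older's inequality, paying a factor $P(\tilde D^{n,\varepsilon,\rho}_{k,1}\mid\mathcal{F}_{t_{k-1}})^{1/q'}$ that must then be fed through Lemma \ref{lem:4.7} and Morrey's inequality. You instead replace $1_{C^{n,\varepsilon,\rho}_k}$ by $1_{J^{n,\varepsilon}_{k,0}}$ at the outset (legitimate: $\sum_k 1_{D^{n,\varepsilon,\rho}_{k,0}}$ and $\sum_k 1_{C^{n,\varepsilon,\rho}_{k,1}\cup C^{n,\varepsilon,\rho}_{k,2}}$ are each $\geq1$ with vanishing probability by Lemmas \ref{lem:4.7}--\ref{lem:4.9}, so the swap costs $o_p(1)$ for arbitrary summands), which makes the jump integral vanish identically; and you handle the remaining Brownian sum over jump-carrying intervals by enlarging the filtration with $\sigma(N^{\lambda_\varepsilon},(V_i))$, under which $W$ stays a Brownian motion by independence, so the It\^o isometry gives the clean $O(\lambda_\varepsilon/n)$ bound directly. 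This is shorter and avoids the $q'$-bookkeeping; the paper's version stays inside the original filtration at the price of the H\"older decoupling. Two caveats. First, your drift estimate needs $\varepsilon n\to\infty$, which you import from $\limsup(\varepsilon^2n)^{-1}<\infty$; that hypothesis is not in the statement of Lemma \ref{lem:4.14} (the paper's own proof quietly inserts ``$\varepsilon n\to\infty$'' at the same point, so you inherit rather than create this gap, but you should flag that the lemma as stated needs it). Second, ``bounded in probability in $W^{1,\infty}(\Theta)$'' is not quite what you can get for a stochastic integral; the correct device is the $L^q(\Theta)$-valued Burkholder bound plus Morrey's embedding for finite $q>d$ (as in the paper's proof and in Lemma \ref{lem:4.13}), after which tightness in $C(\Theta)$ plus pointwise convergence yields the uniformity exactly as you describe.
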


    \begin{proof}
      At first, we can easily check that
      \begin{align}\label{eq:E[g(Xk)bdW1Ck](1)}
        \frac{1}{\varepsilon} \sum_{k=1}^n
          g ( X^\varepsilon_{t_{k-1}}, \theta )
          \left\{ \int_{t_{k-1}}^{t_k} a(X^\varepsilon_t,\mu_0) \, dt  - \frac{1}{n} a(X^\varepsilon_{t_{k-1}}, \mu_0) \right\}
          1_{C^{n,\varepsilon,\rho}_k}
          \overset{p}\longrightarrow 0
      \end{align}
      as $n\to\infty$, $\varepsilon\to0$, $\lambda_\varepsilon\to\infty$, $\varepsilon n\to\infty$, and $\varepsilon\lambda_\varepsilon\to0$, uniformly in $\theta\in\Theta$.
      Indeed, this follows from Lemmas \ref{lem:4.3}, \ref{lem:A.2} and \ref{lem:A.3} with the equicontinuity of $g$ on $I_{x_0}$ and the following estimate:
      \begin{align*}
        &\frac{1}{\varepsilon} \sum_{k=1}^n
          E \left[ \sup_{\theta\in\Theta} \left| g ( X^\varepsilon_{t_{k-1}}, \theta )
          \left\{ \int_{t_{k-1}}^{t_k} a(X^\varepsilon_t,\mu_0) \, dt  - \frac{1}{n} a(X^\varepsilon_{t_{k-1}}, \mu_0) \right\}
          1_{C^{n,\varepsilon,\rho}_{k}} \right|
          \, \middle| \, \mathcal{F}_{t_{k-1}} \right] \\
        & \quad
          \leq C
            \left( \frac{1}{n} \sum_{k=1}^n
            E \left[ \sup_{\theta\in\Theta} \left| g ( X^\varepsilon_{t_{k-1}}, \theta )
            \sup_{t\in[t_{k-1},t_k]} \frac{|X^\varepsilon_t -X^\varepsilon_{t_{k-1}}|}{\varepsilon} \right|^{2}
            \, \middle| \, \mathcal{F}_{t_{k-1}} \right] \right)^{1/2} \\
        & \quad
          = O_p \left( \frac{1}{\varepsilon n} + \frac{1}{\sqrt{n}} + \frac{\lambda_\varepsilon}{n} \right)
      \end{align*}
      as $n\to\infty$, $\varepsilon\to0$, $\lambda_\varepsilon\to\infty$ and $\varepsilon\lambda_\varepsilon\to0$.
      Here, the first inequality holds from Schwartz's inequality and \ref{asmp:A2}, and the last equality holds from Lemmas \ref{lem:4.1} and \ref{lem:4.4}.

      At second, we show that
      \begin{equation}\label{eq:E[g(Xk)bdW1Ck](2)}
        \begin{aligned}
          &\sum_{k=1}^n
            g ( X^\varepsilon_{t_{k-1}}, \theta )
            \int_{t_{k-1}}^{t_k} b(X^\varepsilon_t,\sigma_0) \, dW_t
            1_{C^{n,\varepsilon,\rho}_k} - \int_0^1 g(X^\varepsilon_t,\theta) b(X^\varepsilon_t,\sigma_0) \, dW_t \\
          &\quad
            = \sum_{k=1}^n \int_{t_{k-1}}^{t_k}
              \left\{ g ( X^\varepsilon_{t_{k-1}}, \theta ) - g ( X^\varepsilon_{t}, \theta ) \right\}
              b(X^\varepsilon_t,\sigma_0) \, dW_t
              1_{C^{n,\varepsilon,\rho}_k} \\
          &\quad \quad
            - \sum_{k=1}^n \int_{t_{k-1}}^{t_k}
            g ( X^\varepsilon_{t}, \theta )
            b(X^\varepsilon_t,\sigma_0) \, dW_t
            1_{D^{n,\varepsilon,\rho}_k}
            \overset{p}\longrightarrow
            0
        \end{aligned}
      \end{equation}
      as $n\to\infty$, $\varepsilon\to0$, $\lambda_\varepsilon\to\infty$, $\lambda_\varepsilon^2/n \to 0$, $\varepsilon\lambda_\varepsilon\to0$ and $\lambda_\varepsilon \int_{|z|\leq\kappa/n^\rho} f_{\alpha_0}(z) \, dz\to0$, uniformly in $\theta\in\Theta$.
      When we put
      \begin{equation*}
        \tilde{C}^{n,\varepsilon,\rho}_{k}
        := C^{n,\varepsilon,\rho}_{k}
        \cap \left\{ \sup_{t\in[0,1]} |X^\varepsilon_{t} - x_{t}| < \delta \right\},
      \end{equation*}
      it holds from Morrey's inequality (see, e.g., Theorem 5 in Evans \cite[Section 5.6]{evans2010partial}) that for $q\in(d,\infty)$
      \begin{align*}
        &\sum_{k=1}^n E \left[ \sup_{\theta\in\Theta} \left|
          \int_{t_{k-1}}^{t_k}
          \left\{ g ( X^\varepsilon_{t_{k-1}}, \theta ) - g ( X^\varepsilon_{t}, \theta ) \right\}
          b(X^\varepsilon_t,\sigma_0) \, dW_t
          1_{\tilde{C}^{n,\varepsilon,\rho}_{k}} \right|
          \, \middle| \, \mathcal{F}_{t_{k-1}} \right] \\
        &\quad
          \leq C_1 \sum_{k=1}^n E \left[ \left\|
            \int_{t_{k-1}}^{t_k}
            \left\{ g ( X^\varepsilon_{t_{k-1}}, \theta ) - g ( X^\varepsilon_{t}, \theta ) \right\}
            b(X^\varepsilon_t,\sigma_0) \, dW_t
            1_{\tilde{C}^{n,\varepsilon,\rho}_{k}} \right\|_{W^{1,q}(\Theta)}
            \, \middle| \, \mathcal{F}_{t_{k-1}} \right],
      \end{align*}
      where the constant $C_1$ depends only on $d,q$ and $\Theta$.
      Then, it follows from H\"older's and Burkholder's inequalities, the equi-Lipschitz continuity of $g$ and $b$, and Lemmas \ref{lem:4.1} and \ref{lem:4.4} that\footnote{More precisely, the first inequality holds from H\"older's and Burkholder's inequalities, the second inequality holds from H\"older's inequality and the equi-Lipschitz continuity of $g$ and $b$, and the last equality holds from Lemmas \ref{lem:4.1} and \ref{lem:4.4}.}
      \begin{align*}
        &\sum_{k=1}^n E \left[ \left\|
          \int_{t_{k-1}}^{t_k}
          \left\{ g ( X^\varepsilon_{t_{k-1}}, \theta ) - g ( X^\varepsilon_{t}, \theta ) \right\}
          b(X^\varepsilon_t,\sigma_0) \, dW_t
          1_{\tilde{C}^{n,\varepsilon,\rho}_{k}}
          \right\|_{L^{q}(\Theta)}
          \, \middle| \, \mathcal{F}_{t_{k-1}} \right] \\
        &\quad
        \begin{aligned}
          &\leq C_2 \sum_{k=1}^n \left( \int_\Theta E \left[ \int_{t_{k-1}}^{t_k} \left|
            \left\{ g ( X^\varepsilon_{t_{k-1}}, \theta ) - g ( X^\varepsilon_{t}, \theta ) \right\}
            1_{\tilde{C}^{n,\varepsilon,\rho}_{k}}
            b(X^\varepsilon_t,\sigma_0) \right|^{2} \, dt
            \, \middle | \, \mathcal{F}_{t_{k-1}} \right]^{q/2} d\theta \right)^{1/q}
        \end{aligned} \\
        &\quad
        \begin{aligned}
          &\leq \frac{C_3}{\sqrt{n}} \sum_{k=1}^n \left( E \left[
            \sup_{t\in[t_{k-1},t_k]} | X^\varepsilon_{t_{k-1}} - X^\varepsilon_{t} |^2
            \times \left( 1 + \sup_{t\in[t_{k-1},t_k]} | X^\varepsilon_{t_{k-1}} - X^\varepsilon_{t} |^2 + |X^\varepsilon_{t_{k-1}}|^2 \right)
            \, \middle | \, \mathcal{F}_{t_{k-1}} \right]^{q/2} \right)^{1/q}
        \end{aligned} \\
        &\quad
          =\ O_p \left( \frac{1}{\sqrt{n}} + \varepsilon + \varepsilon \sqrt{\lambda_\varepsilon} \right)
      \end{align*}
      as $n\to\infty$, $\varepsilon\to0$, $\lambda_\varepsilon\to\infty$ and $\varepsilon\lambda_\varepsilon\to0$,
      where $C_2$ depends only on $q$, and $C_3$ depends only on $q,b,g$ and $\Theta$.
      By the same argument with Theorem B.4 in Prakasa Rao \cite{prakasarao1999semimartingale}, it follows that
      \begin{align*}
        &\sum_{k=1}^n E \left[ \left\|
          \int_{t_{k-1}}^{t_k}
          \frac{\partial}{\partial \theta_j} \left\{ g ( X^\varepsilon_{t_{k-1}}, \theta ) - g ( X^\varepsilon_{t}, \theta ) \right\}
          b(X^\varepsilon_t,\sigma_0) \, dW_t
          1_{\tilde{C}^{n,\varepsilon,\rho}_{k}}
          \right\|_{L^{q}(\Theta)}
          \, \middle| \, \mathcal{F}_{t_{k-1}} \right] \\
        &\quad= O_p \left( \frac{1}{\sqrt{n}} + \varepsilon + \varepsilon \sqrt{\lambda_\varepsilon} \right)
      \end{align*}
      as $n\to\infty$, $\varepsilon\to0$, $\lambda_\varepsilon\to\infty$ and $\varepsilon\lambda_\varepsilon\to0$.
      Thus, it follows from Lemma \ref{lem:A.3} that
      \begin{align*}
        \sum_{k=1}^n
          \int_{t_{k-1}}^{t_k}
          \left\{ g ( X^\varepsilon_{t_{k-1}}, \theta ) - g ( X^\varepsilon_{t}, \theta ) \right\}
          b(X^\varepsilon_t,\sigma_0) \, dW_t
          1_{\tilde{C}^{n,\varepsilon,\rho}_{k}}
          = O_p \left( \frac{1}{\sqrt{n}} + \varepsilon + \varepsilon \sqrt{\lambda_\varepsilon} \right)
      \end{align*}
      as $n\to\infty$, $\varepsilon\to0$, $\lambda_\varepsilon\to\infty$ and $\varepsilon\lambda_\varepsilon\to0$,
      uniformly in $\theta\in\Theta$, and therefore, from Lemma \ref{lem:4.3} we obtain the convergence of the first term in the left-hand side of \eqref{eq:E[g(Xk)bdW1Ck](2)}.
      To obtain \eqref{eq:E[g(Xk)bdW1Ck](2)}, we remain to prove
      \begin{equation*}
        \sum_{k=1}^n \int_{t_{k-1}}^{t_k}
          g ( X^\varepsilon_{t}, \theta )
          b(X^\varepsilon_t,\sigma_0) \, dW_t
          1_{D^{n,\varepsilon,\rho}_k}
        \overset{p}\longrightarrow 0
        \label{eq:E[g(Xk)bdW1Ck](3)}
      \end{equation*}
      as $\lambda_\varepsilon \to \infty$, $\varepsilon \lambda_\varepsilon \to 0$, $\frac{\lambda_\varepsilon^2}{n} \to 0$, $\lambda_\varepsilon \int_{|z|\leq\kappa/n^\rho} f_{\alpha_0}(z) \, dz \to 0$,
      uniformly in $\theta\in\Theta$.
      Put
      $\tilde{D}^{n,\varepsilon,\rho}_{k,1}
      := D^{n,\varepsilon,\rho}_{k,1}
        \cap \{ X^\varepsilon_t \in I_{x_0}^\delta \text{ for all } t\in[0,1] \}$.
      We begin with showing that for any $p\in(2,\infty)$ and $q'\in(1,d/(d-1))$
      \begin{align}\label{eq:E[g(Xk)bdW1Ck](4)}
        &\sum_{k=1}^n \int_{t_{k-1}}^{t_k}
          g ( X^\varepsilon_{t}, \theta )
          b(X^\varepsilon_t,\sigma_0) \, dW_t
          1_{\tilde{D}^{n,\varepsilon,\rho}_{k,1}}
        = O_p \left( \frac{1}{\sqrt{n}}
          \left( \frac{\varepsilon^p\lambda_\varepsilon^2}{n} + \lambda_\varepsilon \right)^{1/2+1/q'} \right)
      \end{align}
      as $n\to\infty$, $\varepsilon\to0$, $\lambda_\varepsilon\to\infty$ and $\varepsilon\lambda_\varepsilon\to0$, uniformly in $\theta\in\Theta$.
      It follows from Morrey's inequality (see, e.g., Theorem 5 in Evans \cite[Section 5.6]{evans2010partial}) that for $q\in(d,\infty)$
      \begin{align*}
        &\sum_{k=1}^n E \left[ \sup_{\theta\in\Theta} \left| \int_{t_{k-1}}^{t_k}
          g ( X^\varepsilon_{t}, \theta )
          b(X^\varepsilon_t,\sigma_0) \, dW_t
          1_{\tilde{D}^{n,\varepsilon,\rho}_{k,1}}
          \right| \, \middle | \, \mathcal{F}_{t_{k-1}} \right] \\
        &\qquad
          \leq C_1 \sum_{k=1}^n E \left[ \left\| \int_{t_{k-1}}^{t_k}
          g ( X^\varepsilon_{t}, \theta )
          b(X^\varepsilon_t,\sigma_0) \, dW_t
          \right\|_{W^{1,q}(\Theta)} 1_{\tilde{D}^{n,\varepsilon,\rho}_{k,1}}  \, \middle | \, \mathcal{F}_{t_{k-1}} \right],
      \end{align*}
      where the constant $C_1$ depends only on $d,q$ and $\Theta$.
      If we put $q'=q/(q-1)$, then it follows from H\"older's inequality, Burkholder's inequality (see, e.g., Theorem 4.4.21 in Applebaum \cite{applebaum2009levy}), the equicontinuity of $g$ and Assumption \ref{asmp:A2} that
      \begin{align*}
        &\sum_{k=1}^n E \left[ \left\| \int_{t_{k-1}}^{t_k}
          g ( X^\varepsilon_{t}, \theta )
          b(X^\varepsilon_t,\sigma_0) \, dW_t
          \right\|_{L^{q}(\Theta)} 1_{\tilde{D}^{n,\varepsilon,\rho}_{k,1}}  \, \middle | \, \mathcal{F}_{t_{k-1}} \right] \\
        &\quad
        \begin{aligned}
          &\leq \sum_{k=1}^n \left\{ \left( \int_\Theta E \left[ \left| \int_{t_{k-1}}^{t_k}
            g ( X^\varepsilon_{t}, \theta )
            b(X^\varepsilon_t,\sigma_0) 1_{\tilde{D}^{n,\varepsilon,\rho}_{k,1}} \, dW_t
            \right|^{q} \, \middle | \, \mathcal{F}_{t_{k-1}} \right] d\theta \right)^{1/q}
            P \left( \tilde{D}^{n,\varepsilon,\rho}_{k,1} \, \middle | \, \mathcal{F}_{t_{k-1}} \right)^{1/q'} \right\} \\
          &\leq C_2 \sum_{k=1}^n \left\{ \left( \int_\Theta E \left[ \int_{t_{k-1}}^{t_k} \left|
            g ( X^\varepsilon_{t}, \theta )
            b(X^\varepsilon_t,\sigma_0) 1_{\tilde{D}^{n,\varepsilon,\rho}_{k,1}}
            \right|^{2} dt \, \middle | \, \mathcal{F}_{t_{k-1}} \right]^{q/2} \, d\theta \right)^{1/q}
            P \left( \tilde{D}^{n,\varepsilon,\rho}_{k,1} \, \middle | \, \mathcal{F}_{t_{k-1}} \right)^{1/q'} \right\} \\
          &\leq C_2
            \sup_{(x,\theta)\in I_{x_0}^\delta\times\Theta} |g(x,\theta)b(x,\sigma_0)|
            \frac{|\Theta|^{1/q}}{n^{1/2}} \sum_{k=1}^n
            P \left( \tilde{D}^{n,\varepsilon,\rho}_{k,1} \, \middle | \, \mathcal{F}_{t_{k-1}} \right)^{1/2+1/q'},
        \end{aligned}
      \end{align*}
      where $C_2$ depends only on $q$.
      By using Lemmas \ref{lem:4.4} and \ref{lem:4.7}, for any $p>2$ we obtain
      \begin{align*}
        &\sum_{k=1}^n E \left[ \left\| \int_{t_{k-1}}^{t_k}
          g ( X^\varepsilon_{t}, \theta )
          b(X^\varepsilon_t,\sigma_0) \, dW_t
          \right\|_{L^{q}(\Theta)} 1_{\tilde{D}^{n,\varepsilon,\rho}_{k,1}}  \, \middle | \, \mathcal{F}_{t_{k-1}} \right] \\
        &\qquad
          = O_p \left( \sqrt{n}
          \left\{ \frac{\lambda_\varepsilon}{n} \left( \frac{1}{n^{p(1-\rho)}} + \frac{\varepsilon^p}{n^{p(1/2-\rho)}}
          + \frac{\varepsilon^p\lambda_\varepsilon}{n} \right)
          + \frac{\lambda_\varepsilon}{n} \right\}^{1/2+1/q'} \right)
      \end{align*}
      as $n\to\infty$, $\varepsilon\to0$, $\lambda_\varepsilon\to\infty$ and $\varepsilon\lambda_\varepsilon\to0$.
      Similarly, by using Theorem B.4 in Prakasa Rao \cite{prakasarao1999semimartingale}, we obtain for $j=1,\dots,d$
      \begin{align*}
        &\sum_{k=1}^n E \left[ \left\| \int_{t_{k-1}}^{t_k}
          \frac{\partial g}{\partial \theta_j} ( X^\varepsilon_{t}, \theta )
          b(X^\varepsilon_t,\sigma_0) \, dW_t
          \right\|_{L^{q}(\Theta)} 1_{\tilde{D}^{n,\varepsilon,\rho}_{k,1}}  \, \middle | \, \mathcal{F}_{t_{k-1}} \right] \\
        &\qquad
        = O_p \left( \sqrt{n}
          \left\{ \frac{\lambda_\varepsilon}{n} \left( \frac{1}{n^{p(1-\rho)}} + \frac{\varepsilon^p}{n^{p(1/2-\rho)}}
          + \frac{\varepsilon^p\lambda_\varepsilon}{n} \right)
          + \frac{\lambda_\varepsilon}{n} \right\}^{1/2+1/q'} \right)
      \end{align*}
      as $n\to\infty$, $\varepsilon\to0$, $\lambda_\varepsilon\to\infty$ and $\varepsilon\lambda_\varepsilon\to0$.
      Since we can take $q'<2$ small enough, we obtain \eqref{eq:E[g(Xk)bdW1Ck](4)} from Remark \ref{rmk:ConseqFromLenglart2}.
      Hence, \eqref{eq:E[g(Xk)bdW1Ck](3)} holds from \eqref{eq:E[g(Xk)bdW1Ck](4)} and Lemma \ref{lem:4.10}.

      At last, it is an immediate consequence from Lemma \ref{lem:4.9} that
      \begin{equation*}
        \sum_{k=1}^n
          g ( X^\varepsilon_{t_{k-1}}, \theta )
          \int_{t_{k-1}}^{t_k} c(X^\varepsilon_{t-},\alpha_0) \, dZ^{\lambda_\varepsilon}_t
          1_{C^{n,\varepsilon,\rho}_k}
          \overset{p}\longrightarrow
          0
      \end{equation*}
      as $n\to\infty$, $\varepsilon\to0$, $\lambda_\varepsilon\to\infty$, $\lambda_\varepsilon^2/n \to 0$, $\varepsilon\lambda_\varepsilon\to0$ and $\lambda_\varepsilon \int_{|z|\leq\kappa/n^\rho} f_{\alpha_0}(z) \, dz\to0$, uniformly in $\theta\in\Theta$.
    \end{proof}

    \begin{lemma}\label{lem:4.15}
      Under Assumptions \ref{asmp:A2} to \ref{asmp:A6},
      let $\rho\in(0,1/2)$,
      and suppose that a function $g:\mathbb{R}\times\Theta\to\mathbb{R}$
      satisfies
      that $\{\frac{\partial g}{\partial \theta_j}(\cdot,\theta)\}_{\theta\in\Theta}$ ($j=1,\dots,d$) are equicontinuous on $I_{x_0}^\delta$ for some small $\delta>0$.
      Then,
      \begin{equation*}
        \frac{1}{\varepsilon^2} \sum_{k=1}^n
          g ( X^\varepsilon_{t_{k-1}}, \theta )
          \left| \Delta^n_k X^{\varepsilon} - \frac{1}{n} a(X^\varepsilon_{t_{k-1}}, \mu_0) \right|^2
          1_{C^{n,\varepsilon,\rho}_k}
        \overset{p}\longrightarrow
        \int_0^1 g(x_t, \theta) |b(x_t,\sigma_0)|^2 \, dt
      \end{equation*}
      as $n\to\infty$, $\varepsilon\to0$, $\lambda_\varepsilon\to\infty$,
      $\varepsilon\lambda_\varepsilon\to0$, $\lambda_\varepsilon^2/n\to0$ and $\lambda_\varepsilon \int_{|z|\leq\kappa/n^\rho} f_{\alpha_0}(z) \, dz\to0$, uniformly in $\theta\in\Theta$.
    \end{lemma}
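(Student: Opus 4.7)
The strategy parallels Lemma \ref{lem:4.14}, with the quadratic variation of the Wiener integral supplying the limit in place of the stochastic integral itself. First, by Remark \ref{rmk:sumIndC}, we replace $1_{C^{n,\varepsilon,\rho}_k}$ by $1_{C^{n,\varepsilon,\rho}_{k,0}}$ at an $o_p(1)$ cost, noting that the equicontinuity of $\{\partial_{\theta_j}g(\cdot,\theta)\}$ on $I_{x_0}^\delta$ makes $g(X^\varepsilon_{t_{k-1}},\theta)$ bounded uniformly in $\theta$ on the high-probability event that the path stays in $I_{x_0}^\delta$. On $J^{n,\varepsilon}_{k,0}\supset C^{n,\varepsilon,\rho}_{k,0}$ the compound Poisson part is inactive and we may write
\begin{equation*}
\Delta^n_k X^\varepsilon - \tfrac{1}{n} a(X^\varepsilon_{t_{k-1}},\mu_0) = A_k + \varepsilon B_k,
\end{equation*}
where $A_k := \int_{t_{k-1}}^{t_k}\{a(X^\varepsilon_t,\mu_0)-a(X^\varepsilon_{t_{k-1}},\mu_0)\}\,dt$ and $B_k := \int_{t_{k-1}}^{t_k} b(X^\varepsilon_t,\sigma_0)\,dW_t$, and expand $\varepsilon^{-2}|A_k+\varepsilon B_k|^2 = |B_k|^2 + 2\varepsilon^{-1}A_kB_k + \varepsilon^{-2}|A_k|^2$.

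The drift-correction term and the cross term are negligible: the Lipschitz continuity of $a$ and Lemma \ref{lem:4.1} give $E[|A_k|^2\,|\,\mathcal{F}_{t_{k-1}}] = O(n^{-3}+\varepsilon^2 n^{-2}+\varepsilon^2\lambda_\varepsilon n^{-2})(1+|X^\varepsilon_{t_{k-1}}|^2)$ times a bounded factor, so $\varepsilon^{-2}\sum_k g(X^\varepsilon_{t_{k-1}},\theta)|A_k|^2 = O_p((\varepsilon^2 n^2)^{-1}+n^{-1}+\lambda_\varepsilon n^{-1})$, which is $o_p(1)$ under the regime $\lim(\varepsilon^2 n)^{-1}<\infty$ and $\lambda_\varepsilon^2/n\to 0$; the cross term is then handled by Cauchy--Schwartz combined with control of $\sum_k|B_k|^2$. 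For the dominant term $\sum_k g(X^\varepsilon_{t_{k-1}},\theta)|B_k|^2 1_{C^{n,\varepsilon,\rho}_{k,0}}$, we split off the predictable quadratic variation: the Lipschitz bound on $b$ and Lemma \ref{lem:4.3} let us replace $|b(X^\varepsilon_t,\sigma_0)|^2$ by $|b(X^\varepsilon_{t_{k-1}},\sigma_0)|^2$ on $[t_{k-1},t_k]$ up to an $o_p(1)$ error, whence Lemma \ref{lem:4.11} yields convergence of the Riemann sum to $\int_0^1 g(x_t,\theta)|b(x_t,\sigma_0)|^2\,dt$; the martingale residual $|B_k|^2 - \int_{t_{k-1}}^{t_k}|b(X^\varepsilon_t,\sigma_0)|^2\,dt$, bounded by the Burkholder--Davis--Gundy inequality with second moment $O(n^{-2})$, is eliminated via Lemma \ref{lem:A.3}.

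The main obstacle is uniformity in $\theta\in\Theta$. As in the proof of Lemma \ref{lem:4.14}, the strategy is to pass through Sobolev norms $W^{1,q}(\Theta)$ with $q>d$, use Morrey's inequality to dominate the supremum in $\theta$ by the $W^{1,q}(\Theta)$ norm, and exploit the equicontinuity of the derivatives $\{\partial_{\theta_j} g(\cdot,\theta)\}$ to obtain $L^q(\Theta)$ bounds on the gradients of each summand. Combining this functional argument with the Lenglart-type domination of Lemma \ref{lem:A.3} then upgrades pointwise convergence to uniform convergence and closes the proof.
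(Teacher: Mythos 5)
Your overall architecture coincides with the paper's: reduce $1_{C^{n,\varepsilon,\rho}_k}$ to $1_{C^{n,\varepsilon,\rho}_{k,0}}$, expand $\varepsilon^{-2}|A_k+\varepsilon B_k|^2$, kill the drift-squared and cross terms with Lemma \ref{lem:4.1} and Cauchy--Schwarz/Burkholder, and obtain uniformity in $\theta$ through $W^{1,q}(\Theta)$ norms, Morrey's inequality and the tightness argument of Lemma \ref{lem:4.13}. All of that matches the paper's proof.

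There is, however, one step that fails as written: eliminating the martingale residual $M_k:=|B_k|^2-\int_{t_{k-1}}^{t_k}|b(X^\varepsilon_t,\sigma_0)|^2\,dt$ ``via Lemma \ref{lem:A.3}.'' Lemma \ref{lem:A.3} requires $\sum_k E\left[\,|g_k M_k|\,\middle|\,\mathcal{F}_{t_{k-1}}\right]\overset{p}\to 0$, but by It\^o's formula $M_k=2\int_{t_{k-1}}^{t_k}\bigl(\int_{t_{k-1}}^{s}b\,dW\bigr)b(X^\varepsilon_s,\sigma_0)\,dW_s$ has conditional second moment of order $n^{-2}$, hence conditional first absolute moment of order $n^{-1}$, and the sum over $k$ is $O_p(1)$, not $o_p(1)$. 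To make the residual vanish you must exploit the cancellation coming from $E[M_k\,|\,\mathcal{F}_{t_{k-1}}]=0$ together with $\sum_k E[M_k^2\,|\,\mathcal{F}_{t_{k-1}}]=O_p(1/n)\to0$, i.e.\ Lemma 9 of Genon-Catalot and Jacod rather than the pure domination Lemma \ref{lem:A.3}; this is exactly what the paper does, applying that lemma to $\sum_k g_k\left|\int_{t_{k-1}}^{t_k}b(X^\varepsilon_{t_{k-1}},\sigma_0)\,dW_t\right|^2$ with no indicator attached. A related point you should address: once the indicator $1_{C^{n,\varepsilon,\rho}_{k,0}}$ is attached, $M_k 1_{C^{n,\varepsilon,\rho}_{k,0}}$ is no longer a martingale difference (the indicator is not $\mathcal{F}_{t_{k-1}}$-measurable), so the conditional-mean-zero argument cannot be applied directly to the truncated sum. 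The paper sidesteps this by first proving the limit for the untruncated sum and then showing separately, using the boundedness in probability of $\sup_k n\left|\int_{t_{k-1}}^{t_k}dW_t\right|^2$ together with Lemmas \ref{lem:4.8} and \ref{lem:4.9}, that the contribution on $D^{n,\varepsilon,\rho}_k\cup C^{n,\varepsilon,\rho}_{k,1}\cup C^{n,\varepsilon,\rho}_{k,2}$ is negligible. Your outline needs this reordering (or an explicit Cauchy--Schwarz bound on $E[M_k 1_{(C^{n,\varepsilon,\rho}_{k,0})^c}\,|\,\mathcal{F}_{t_{k-1}}]$) to close.
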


    \begin{proof}
      From Lemma \ref{lem:4.9}, it is sufficient to show that
      \begin{equation*}
        \frac{1}{\varepsilon^2} \sum_{k=1}^n
          g ( X^\varepsilon_{t_{k-1}}, \theta )
          \left| \Delta^n_k X^{\varepsilon} - \frac{1}{n} a(X^\varepsilon_{t_{k-1}}, \mu_0) \right|^2
          1_{C^{n,\varepsilon,\rho}_{k,0}}
        \overset{p}\longrightarrow
        \int_0^1 g(x_t, \theta) |b(x_t,\sigma_0)|^2 \, dt
      \end{equation*}
      as $n\to\infty$, $\varepsilon\to0$, $\lambda_\varepsilon\to\infty$,
      $\varepsilon\lambda_\varepsilon\to0$, $\lambda_\varepsilon^2/n\to0$ and $\lambda_\varepsilon \int_{|z|\leq\kappa/n^\rho} f_{\alpha_0}(z) \, dz\to0$, uniformly in $\theta\in\Theta$,
      and we note that
      \begin{align*}
        \left| \Delta^n_k X^{\varepsilon} - \frac{1}{n} a(X^\varepsilon_{t_{k-1}}, \mu_0) \right|^2
        1_{J^{n,\varepsilon}_{k,0}}
        &= \left\{ \left| \int_{t_{k-1}}^{t_k} \left\{ a(X^\varepsilon_t,\mu_0) - a(X^\varepsilon_{t_{k-1}}, \mu_0) \right\} dt \right|^2
        + \left| \int_{t_{k-1}}^{t_k} b(X^\varepsilon_t,\sigma_0) \, dW_t \right|^2 \right. \\
        &\quad \left. + 2 \int_{t_{k-1}}^{t_k} \left\{ a(X^\varepsilon_t,\mu_0) - a(X^\varepsilon_{t_{k-1}}, \mu_0) \right\} dt
        \int_{t_{k-1}}^{t_k} b(X^\varepsilon_t,\sigma_0) \, dW_t \right\} 1_{J^{n,\varepsilon}_{k,0}}.
      \end{align*}
      Similarly to the proof of \eqref{eq:E[g(Xk)bdW1Ck](1)}, it follows that
      \begin{align*}
        \sup_{\theta\in\Theta} \left| \frac{1}{\varepsilon^2} \sum_{k=1}^n
          g ( X^\varepsilon_{t_{k-1}}, \theta )
          \left| \int_{t_{k-1}}^{t_k} \left\{ a(X^\varepsilon_t,\mu_0) - a(X^\varepsilon_{t_{k-1}}, \mu_0) \right\} dt \right|^2 1_{C^{n,\varepsilon,\rho}_{k,0}} \right|
        = O_p \left( \frac{1}{\varepsilon^2 n^3} + \frac{1}{n^2} \right)
      \end{align*}
      as $n\to\infty$, $\varepsilon\to0$, $\lambda_\varepsilon\to\infty$, $\varepsilon n\to\infty$, and $\varepsilon\lambda_\varepsilon\to0$.
      Also, it holds that
      \begin{align*}
        \sup_{\theta\in\Theta} \left| \frac{2}{\varepsilon} \sum_{k=1}^n
          g ( X^\varepsilon_{t_{k-1}}, \theta )
          \int_{t_{k-1}}^{t_k} \left\{ a(X^\varepsilon_t,\mu_0) - a(X^\varepsilon_{t_{k-1}}, \mu_0) \right\} dt
          \int_{t_{k-1}}^{t_k} b(X^\varepsilon_t,\sigma_0) \, dW_t
          \, 1_{C^{n,\varepsilon,\rho}_{k,0}} \right|
          = O_p \left( \frac{1}{\varepsilon n^{3/2}} + \frac{1}{n} \right)
      \end{align*}
      as $n\to\infty$, $\varepsilon\to0$, $\lambda_\varepsilon\to\infty$ and $\varepsilon\lambda_\varepsilon\to0$, uniformly in $\theta\in\Theta$.
      Indeed, by using \ref{asmp:A2}, H\"older's inequality and Burkholder's inequality, we obtain
      \begin{align*}
        &\frac{2}{\varepsilon} \sum_{k=1}^n
          E \left[ \sup_{\theta\in\Theta} \left| g ( X^\varepsilon_{t_{k-1}}, \theta )
          \int_{t_{k-1}}^{t_k} \left\{ a(X^\varepsilon_t,\mu_0) - a(X^\varepsilon_{t_{k-1}}, \mu_0) \right\} dt
          \int_{t_{k-1}}^{t_k} b(X^\varepsilon_t,\sigma_0) \, dW_t
          \, 1_{C^{n,\varepsilon,\rho}_{k,0}} \right|
          \, \middle | \, \mathcal{F}_{t_{k-1}} \right] \\
        &\quad
          \leq \frac{2C}{n} \sum_{k=1}^n \sup_{\theta\in\Theta} |g ( X^\varepsilon_{t_{k-1}}, \theta )|
          \left( E \left[
            \frac{1}{\varepsilon^2} \sup_{t\in[t_{k-1},\tau_k]} |X^\varepsilon_t - X^\varepsilon_{t_{k-1}}|^2
            \, \middle | \, \mathcal{F}_{t_{k-1}} \right] \right)^{1/2} \\
        &\quad\quad
          \times \left( \frac{1}{n} E \left[
            \sup_{t\in[t_{k-1},\tau_k]} |X^\varepsilon_t - X^\varepsilon_{t_{k-1}}|^2 + |b(X^\varepsilon_{t_{k-1}},\sigma_0)|^2
            \, \middle | \, \mathcal{F}_{t_{k-1}} \right]\right)^{1/2}
      \end{align*}
      where $C$ depends only on $a,b$.
      By applying Lemmas \ref{lem:4.3} to \ref{lem:4.5} and \ref{lem:A.3}
      and the boundedness of $g$ on $I_{x_0}^\delta\times\Theta$ for some small $\delta>0$,
      we obtain the above convergence.

      From Lemma \ref{lem:4.11},
      we remain to prove that
      \begin{equation*}
        \sum_{k=1}^n
          g ( X^\varepsilon_{t_{k-1}}, \theta )
          \left| \int_{t_{k-1}}^{t_k} b(X^\varepsilon_t,\sigma_0) \, dW_t \right|^2
          1_{C^{n,\varepsilon,\rho}_k}
        \overset{p}\longrightarrow
        \int_0^1 g(x_t, \theta) |b(x_t,\sigma_0)|^2 \, dt
      \end{equation*}
      as $n\to\infty$, $\varepsilon\to0$, $\lambda_\varepsilon\to\infty$ and $\varepsilon\lambda_\varepsilon\to0$, uniformly in $\theta\in\Theta$.
      At first, by using Lemma \ref{lem:4.4}, we have
      \begin{align*}
        \sum_{k=1}^n
          E \left[ g ( X^\varepsilon_{t_{k-1}}, \theta )
          \left| \int_{t_{k-1}}^{t_k} b(X^\varepsilon_{t_{k-1}},\sigma_0) \, dW_t \right|^2
          \, \middle | \, \mathcal{F}_{t_{k-1}} \right]
        & = \frac{1}{n} \sum_{k=1}^n
          g ( X^\varepsilon_{t_{k-1}}, \theta ) | b(X^\varepsilon_{t_{k-1}},\sigma_0) |^2 \\
        & \overset{p}\longrightarrow
          \int_0^1 g(x_t, \theta) |b(x_t,\sigma_0)|^2 \, dt
      \end{align*}
      as $n\to\infty$, $\varepsilon\to0$, $\lambda_\varepsilon\to\infty$ and $\varepsilon\lambda_\varepsilon\to0$,
      and
      \begin{equation*}
        \sum_{k=1}^n
          E \left[ \left| g ( X^\varepsilon_{t_{k-1}}, \theta )
          \left| \int_{t_{k-1}}^{t_k} b(X^\varepsilon_{t_{k-1}},\sigma_0) \, dW_t \right|^2 \right|^2
          \, \middle | \, \mathcal{F}_{t_{k-1}} \right]
        \overset{p}\longrightarrow 0
      \end{equation*}
      as $n\to\infty$, $\varepsilon\to0$, $\lambda_\varepsilon\to\infty$ and $\varepsilon\lambda_\varepsilon\to0$.
      Thus, by Lemma 9 in Genon-Catalot and Jocod \cite{genon-catalot1993ontheest}, we obtain
      \begin{equation*}
        \sum_{k=1}^n
          g ( X^\varepsilon_{t_{k-1}}, \theta )
          \left| \int_{t_{k-1}}^{t_k} b(X^\varepsilon_{t_{k-1}},\sigma_0) \, dW_t \right|^2
        \overset{p}\longrightarrow
          \int_0^1 g(x_t, \theta) |b(x_t,\theta)|^2 \, dt
      \end{equation*}
      as $n\to\infty$, $\varepsilon\to0$, $\lambda_\varepsilon\to\infty$ and $\varepsilon\lambda_\varepsilon\to0$.
      From the equidifferentiablities of $g$ on $I_{x_0}^\delta$ for some $\delta>0$,
      the uniform tightness is shown by the same argument in the proof of Lemma \ref{lem:4.13}.
      At second, we shall see
      \begin{equation*}
        \sum_{k=1}^n
          g ( X^\varepsilon_{t_{k-1}}, \theta )
          \left\{ \left| \int_{t_{k-1}}^{t_k} b(X^\varepsilon_t,\sigma_0) \, dW_t \right|^2
          - \left| \int_{t_{k-1}}^{t_k} b(X^\varepsilon_{t_{k-1}},\sigma_0) \, dW_t \right|^2 \right\}
          1_{C^{n,\varepsilon,\rho}_{k,0}}
        \overset{p}\longrightarrow 0
      \end{equation*}
      as $n\to\infty$, $\varepsilon\to0$, $\lambda_\varepsilon\to\infty$, $\varepsilon\lambda_\varepsilon\to0$ and $\lambda_\varepsilon/n\to0$, uniformly in $\theta\in\Theta$.
      This convergence is obtained from Lemma \ref{lem:A.3} and the following estimate:\footnote{The first inequality holds from H\"older's inequality, the second inequality holds from Burkholder's inequality, the third inequality holds from Lipschitz continuity of $b$, and the last equality holds from Lemmas \ref{lem:4.4} and \ref{lem:4.5}}
      \begin{align*}
        &\sum_{k=1}^n
          E \left[ \sup_{\theta\in\Theta} \left| g ( X^\varepsilon_{t_{k-1}}, \theta )
          \left\{ \left| \int_{t_{k-1}}^{t_k} b(X^\varepsilon_t,\sigma_0) \, dW_t \right|^2
          - \left| \int_{t_{k-1}}^{t_k} b(X^\varepsilon_{t_{k-1}},\sigma_0) \, dW_t \right|^2 \right\}
          1_{J^{n,\varepsilon}_{k,0}} \right|
          \, \middle | \, \mathcal{F}_{t_{k-1}} \right] \\
        & \quad
          \leq \sum_{k=1}^n \left\{
            \sup_{\theta\in\Theta} | g ( X^\varepsilon_{t_{k-1}}, \theta )|
          \left( E \left[ \left|
            \int_{t_{k-1}}^{t_k} \{ b(X^\varepsilon_t,\sigma_0) + b(X^\varepsilon_{t_{k-1}},\sigma_0) \} \, dW_t
            \right|^2 1_{J^{n,\varepsilon}_{k,0}}
            \, \middle | \, \mathcal{F}_{t_{k-1}} \right] \right)^{1/2} \right.\\
        & \quad \quad \left.
          \times \left( E \left[ \left|
          \int_{t_{k-1}}^{t_k} \{ b(X^\varepsilon_t,\sigma_0) - b(X^\varepsilon_{t_{k-1}},\sigma_0) \} \, dW_t
          \right|^2 1_{J^{n,\varepsilon}_{k,0}}
          \, \middle | \, \mathcal{F}_{t_{k-1}} \right] \right)^{1/2} \right\} \\
        & \quad
          \leq \sum_{k=1}^n \left\{
            \sup_{\theta\in\Theta} | g ( X^\varepsilon_{t_{k-1}}, \theta )|
            \left( E \left[
            \int_{t_{k-1}}^{t_k} | b(X^\varepsilon_t,\sigma_0) + b(X^\varepsilon_{t_{k-1}},\sigma_0) |^2
            1_{J^{n,\varepsilon}_{k,0}} \, dt
            \, \middle | \, \mathcal{F}_{t_{k-1}} \right] \right)^{1/2} \right.\\
        & \quad \quad \left.
          \times \left( E \left[
          \int_{t_{k-1}}^{t_k} | b(X^\varepsilon_t,\sigma_0) - b(X^\varepsilon_{t_{k-1}},\sigma_0) |^2 1_{J^{n,\varepsilon}_{k,0}} \, dt
          \, \middle | \, \mathcal{F}_{t_{k-1}} \right] \right)^{1/2} \right\} \\
        & \quad \leq \frac{C}{n} \sum_{k=1}^n \left\{
          \sup_{\theta\in\Theta} | g ( X^\varepsilon_{t_{k-1}}, \theta )|
          \left( E \left[
          \sup_{t\in[t_{k-1},\tau_k]} ( 1 + |X^\varepsilon_{t} - X^\varepsilon_{t_{k-1}}|^2 + |X^\varepsilon_{t_{k-1}}|^2)
          \, \middle | \, \mathcal{F}_{t_{k-1}} \right] \right)^{1/2} \right. \\
        & \quad \quad \left.
          \times \left( E \left[
          \sup_{t\in[t_{k-1},\tau_k]} |X^\varepsilon_{t} - X^\varepsilon_{t_{k-1}}|^2
          \, \middle | \, \mathcal{F}_{t_{k-1}} \right] \right)^{1/2} \right\} \\
        & \quad
          = O_p \left( \frac{1}{n} + \frac{\varepsilon}{\sqrt{n}} \right)
      \end{align*}
      as $n\to\infty$, $\varepsilon\to0$, $\lambda_\varepsilon\to\infty$ and $\varepsilon\lambda_\varepsilon\to0$.

      At last, since
      \begin{equation*}
        \sup_{k=1,\dots,n} n \left| \int_{t_{k-1}}^{t_k} \, dW_t \right|^2
      \end{equation*}
      is bounded in probability,
      it follows from Lemmas \ref{lem:4.1}, \ref{lem:4.8} and \ref{lem:4.9} and the linarity of $b$ that
      \begin{equation*}
        \sum_{k=1}^n
          g ( X^\varepsilon_{t_{k-1}}, \theta )
          \left| \int_{t_{k-1}}^{t_k} b(X^\varepsilon_{t_{k-1}},\sigma_0) \, dW_t \right|^2
          1_{D^{n,\varepsilon,\rho}_k\cup C^{n,\varepsilon,\rho}_{k,1} \cup C^{n,\varepsilon,\rho}_{k,2}}
        \overset{p}\longrightarrow 0
      \end{equation*}
      as $n\to\infty$, $\varepsilon\to0$, $\lambda_\varepsilon\to\infty$, $\varepsilon\lambda_\varepsilon\to0$ and $\lambda_\varepsilon/n\to0$, uniformly in $\theta\in\Theta$.
    \end{proof}

  \subsection{Proof of main results}

    \subsubsection{Proof of Theorem \ref{thm:3.1}}

      \begin{proof}[Proof of Theorem \ref{thm:3.1}]
        It follows from  that
        \begin{align*}
          \Phi_{n,\varepsilon}^{(1)}(\mu,\sigma)
          &= \sum_{k=1}^{n} \frac{ \left ( \Delta^n_k X^{\varepsilon} - a(X^\varepsilon_{t_{k-1}}, \mu_0) / n \right ) \left ( a(X^\varepsilon_{t_{k-1}}, \mu) - a(X^\varepsilon_{t_{k-1}}, \mu_0) \right )}{\left | b(X^\varepsilon_{t_{k-1}},\sigma) \right |^2}
          1_{C^{n,\varepsilon,\rho}_k}\\
          &\quad - \frac{1}{2n} \sum_{k=1}^{n} \frac{ \left | a(X^\varepsilon_{t_{k-1}}, \mu) - a(X^\varepsilon_{t_{k-1}}, \mu_0) \right |^2}{\left | b(X^\varepsilon_{t_{k-1}},\sigma) \right |^2}
          1_{C^{n,\varepsilon,\rho}_k}
          \overset{p}\longrightarrow
          - \frac{1}{2} \int_0^1 \frac{ \left | a(x_t, \mu) - a(x_t, \mu_0) \right |^2}{\left | b(x_{t},\sigma) \right |^2} \, dt
        \end{align*}
        as $n\to\infty$, $\varepsilon\to0$, $\lambda_\varepsilon\to\infty$, $\lambda_\varepsilon^2/n \to 0$, $\varepsilon\lambda_\varepsilon\to0$ and $\lambda_\varepsilon \int_{|z|\leq\kappa/n^\rho} f_{\alpha_0}(z) \, dz\to0$, uniformly in $(\mu,\sigma)\in\bar{\Theta}_1\times\bar{\Theta}_2$,
        and from Lemmas \ref{lem:4.11}, \ref{lem:4.14} and \ref{lem:4.15} that
        \begin{align*}
          \Psi_{n,\varepsilon}^{(1)}(\mu,\sigma)
            &= \frac{1}{\varepsilon^2 n} \Phi_{n,\varepsilon}^{(1)}(\mu,\sigma) + \Psi_{n,\varepsilon}^{(1)}(\mu_0,\sigma) \\
            &= \frac{1}{\varepsilon^2 n} \Phi_{n,\varepsilon}^{(1)}(\mu,\sigma)
             - \frac{1}{n} \sum_{k=1}^{n} \left \{ \frac{ \left | \Delta^n_k X^{\varepsilon} - a(X^\varepsilon_{t_{k-1}}, \mu_0) / n \right |^2 }{2\left | \varepsilon b(X^\varepsilon_{t_{k-1}},\sigma) \right |^2 / n}
            + \frac{1}{2} \log |b(X^\varepsilon_{t_{k-1}},\sigma)|^2 \right \}
            1_{C^{n,\varepsilon,\rho}_k} \\
            & \overset{p}\longrightarrow
              - \left ( \lim_{\substack{n\to\infty \\ \varepsilon\to0}} \frac{1}{\varepsilon^2 n} \right ) \int_0^1 \frac{ \left | a(x_t, \mu) - a(x_t, \mu_0) \right |^2}{2\left | b(x_{t},\sigma) \right |^2} \, dt
              - \frac{1}{2} \int_0^1 \left | \frac{b(x_t,\sigma_0)}{b(x_t,\sigma)} \right |^2 \, dt
              - \frac{1}{2} \int_0^1 \log |b(x_t,\sigma)|^2 \, dt
        \end{align*}
        as $n\to\infty$, $\varepsilon\to0$, $\lambda_\varepsilon\to\infty$, $\lambda_\varepsilon^2/n \to 0$, $\varepsilon\lambda_\varepsilon\to0$ and $\lambda_\varepsilon \int_{|z|\leq\kappa/n^\rho} f_{\alpha_0}(z) \, dz\to0$, uniformly in $(\mu,\sigma)\in\bar{\Theta}_1\times\bar{\Theta}_2$,
        Also, it follows from Lemmas \ref{lem:4.12} and \ref{lem:4.13} that
        \begin{align*}
          \Psi_{n,\varepsilon}^{(2)}(\alpha)
          &= \frac{1}{\lambda_\varepsilon} \sum_{k=1}^{n}
            \psi \left( X^\varepsilon_{t_{k-1}}, \frac{\Delta^n_k X^{\varepsilon}}\varepsilon, \alpha \right)
            1_{D^{n,\varepsilon,\rho}_k} \\
          &\overset{p}\longrightarrow
          \int_0^1 \int_{-\infty}^\infty \frac{1}{c(x_t,\alpha_0)} f_{\alpha_0}\left ( \frac{y}{c(x_t,\alpha_0)} \right ) \log \left \{ \frac{1}{c(x_t,\alpha)} f_{\alpha}\left ( \frac{y}{c(x_t,\alpha)} \right ) \right \} dy \, dt
        \end{align*}
        as $n\to\infty$, $\varepsilon\to0$, $\lambda_\varepsilon\to\infty$, $\lambda_\varepsilon^2/n \to 0$, $\varepsilon\lambda_\varepsilon\to0$ and $\lambda_\varepsilon \int_{|z|\leq\kappa/n^\rho} f_{\alpha_0}(z) \, dz\to0$, uniformly in $\alpha\in\bar{\Theta}_3$.
        Thus, by using usual argument (see, e.g., the proof of Theorem 1 in S{\o}rensen and Uchida \cite{sorensen2003small}),
        the consistency of $\hat{\theta}_{n,\varepsilon}$ holds under Assumption \ref{asmp:A1}.
      \end{proof}

    \subsubsection{Proof of Theorem \ref{thm:3.2}}

    To establish the proof of Theorem \ref{thm:3.2},
    we set up random variables $\xi^i_{\ell k}$, $\tilde\xi^i_{\ell k}$ ($\ell=1,\dots,3$, $i=1,\dots,d_\ell$, $k=1,\dots,n$) as the followings:
    \begin{align*}
      \left. \sqrt{\varepsilon^{-2}} \frac{\partial}{\partial \mu_i}
      \Phi_{n,\varepsilon}^{(1)}(\mu,\sigma) \right|_{\theta=\theta_0}
      &= - \left. \frac1\varepsilon \sum_{k=1}^{n} \frac{ \left ( \Delta^n_k X^{\varepsilon} - a(X^\varepsilon_{t_{k-1}}, \mu) / n \right ) \frac{\partial a}{\partial \mu_i} (X^\varepsilon_{t_{k-1}}, \mu) }{\left | b(X^\varepsilon_{t_{k-1}},\sigma) \right |^2}
      1_{C^{n,\varepsilon,\rho}_k} \right|_{\theta=\theta_0}
      =: \sum_{k=1}^n \xi^i_{1,k} \\
      \sqrt{n} \left. \frac{\partial}{\partial \sigma_i}
      \Psi_{n,\varepsilon}^{(1)}(\mu,\sigma) \right|_{\theta=\theta_0}
      &= - \frac{1}{\sqrt{n}} \sum_{k=1}^{n}
        \left\{ \left ( - \frac{ \left | \Delta^n_k X^{\varepsilon} - a(X^\varepsilon_{t_{k-1}}, \mu) / n \right |^2 }{\left | \varepsilon b(X^\varepsilon_{t_{k-1}},\sigma) \right |^2 / n}
        + 1 \right )
        \left.
        \frac{\frac{\partial b}{\partial \sigma_i}(X^\varepsilon_{t_{k-1}},\sigma)}{b(X^\varepsilon_{t_{k-1}},\sigma)}
        1_{C^{n,\varepsilon,\rho}_k} \right|_{\theta=\theta_0} \right\} \\
      &=: \sum_{k=1}^n \xi^i_{2,k}, \\
      \left. \sqrt{\lambda_\varepsilon} \frac{\partial}{\partial \alpha_i}
      \Psi_{n,\varepsilon}^{(2)}(\alpha) \right|_{\alpha=\alpha_0}
      &= \frac{1}{\sqrt{\lambda_\varepsilon}}
        \sum_{k=1}^{n} \frac{\partial\psi}{\partial\alpha_i} \left( X^\varepsilon_{t_{k-1}}, \frac{\Delta^n_k X^{\varepsilon}}\varepsilon, \alpha_0 \right)
        1_{D^{n,\varepsilon,\rho}_k}
      =: \sum_{k=1}^n \xi^i_{3,k},
    \end{align*}
    and
    \begin{align*}
      \sum_{k=1}^n \tilde\xi^i_{1,k}
      &:= \sum_{k=1}^{n} \frac{ \frac{\partial a}{\partial \mu_i} (X^\varepsilon_{t_{k-1}}, \mu_0) }{b(X^\varepsilon_{t_{k-1}},\sigma_0)} \int_{t_{k-1}}^{t_k} dW_t
      1_{C^{n,\varepsilon,\rho}_{k,0}}, \\
      \sum_{k=1}^n \tilde\xi^i_{2,k}
      &:= - \sqrt{n} \sum_{k=1}^{n}
        \left \{ - \left | \int_{t_{k-1}}^{t_k} dW_t \right |^2 + \frac{1}{n}
        \right \} \frac{\frac{\partial b}{\partial \sigma_i}(X^\varepsilon_{t_{k-1}},\sigma_0)}{b(X^\varepsilon_{t_{k-1}},\sigma_0)}
        1_{C^{n,\varepsilon,\rho}_{k,0}}, \\
      \sum_{k=1}^n \tilde{\xi}^i_{3,k}
      &:= \sum_{k=1}^{n} \frac{1}{\sqrt{\lambda_\varepsilon}}
        \frac{\partial\psi}{\partial\alpha_i} \left( X^\varepsilon_{t_{k-1}}, c(X^\varepsilon_{t_{k-1}},\alpha_0) V_{N^{\lambda_\varepsilon}_{\tau_k}}, \alpha_0 \right)
        1_{J^{n,\varepsilon}_{k,1}}.
    \end{align*}

    \begin{remark}
      Under Assumptions \ref{asmp:A2} to \ref{asmp:A8},
      let $\rho\in(0,1/2)$,
      it follows from Lemma \ref{lem:4.14} that
      \begin{equation*}
        \sum_{k=1}^n \xi^i_{1,k}
        \overset{p}\longrightarrow \int_0^1 \frac{ \frac{\partial a}{\partial \mu_i} (x_{t},\mu_0)}{b(x_{t},\sigma_0)} \, dW_t
      \end{equation*}
      as $n\to\infty$, $\varepsilon\to0$, $\lambda_\varepsilon\to\infty$, $\lambda_\varepsilon^2/n \to 0$, $\varepsilon\lambda_\varepsilon\to0$ and $\lambda_\varepsilon \int_{|z|\leq\kappa/n^\rho} f_{\alpha_0}(z) \, dz\to0$, while we will never use this fact in this paper.
    \end{remark}

    \begin{lemma}\label{lem:4.16}
      Under Assumptions \ref{asmp:A2} to \ref{asmp:A9},
      for $\ell=1,2$,
      \begin{equation*}
        \sum_{k=1}^n \xi^i_{\ell k}
        - \sum_{k=1}^{n} \tilde\xi^i_{\ell k}
        \overset{p}\longrightarrow 0
        \quad (i=1,\dots,d_\ell)
      \end{equation*}
      as $n\to\infty$, $\varepsilon\to0$, $\lambda_\varepsilon\to\infty$, $\varepsilon n\to\infty$, $\varepsilon\lambda_\varepsilon\to0$, $\lambda_\varepsilon^2/n\to0$ and
      $\lambda_\varepsilon \int_{|z|\leq\kappa/n^\rho} f_{\alpha_0}(z) \, dz\to0$.

      For $\ell=3$, take $\rho$ as either of the following:
      \begin{enumerate}
        \item[(i)]  Under Assumption \ref{asmp:A4} \ref{asmp:A4(i)}, take $\rho\in(0,1/2)$.
        \item[(ii)] Under Assumption \ref{asmp:A4} \ref{asmp:A4(ii)},
                    take $\rho\in(0,\min\{1/2,1/4q\})$, where $q$ is the constant in Assumption \ref{asmp:A9} \ref{asmp:A9(iib)}.
      \end{enumerate}
      Then,
      \begin{equation*}
        \sum_{k=1}^n \xi^i_{\ell k}
        - \sum_{k=1}^{n} \tilde\xi^i_{\ell k}
        \overset{p}\longrightarrow 0
        \quad (i=1,\dots,d_\ell)
      \end{equation*}
      as $n\to\infty$, $\varepsilon\to0$, $\lambda_\varepsilon\to\infty$, $\varepsilon\lambda_\varepsilon\to0$, $\lambda_\varepsilon^2/n\to0$ and
      $\lambda_\varepsilon \int_{|z|\leq\kappa/n^\rho} f_{\alpha_0}(z) \, dz\to0$ with $\lim (\varepsilon^2 n)^{-1}<\infty$.
    \end{lemma}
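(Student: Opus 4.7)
The plan is to treat each of the three cases by different tools, since the natures of $\xi^i_{1,k}, \xi^i_{2,k}$ and $\xi^i_{3,k}$ are quite different. For $\ell=1,2$ the strategy is to use the SDE decomposition
\begin{equation*}
  \Delta^n_k X^\varepsilon - \frac{a(X^\varepsilon_{t_{k-1}},\mu_0)}{n}
  = R^{(1)}_k
  + \varepsilon \int_{t_{k-1}}^{t_k} b(X^\varepsilon_s,\sigma_0) \, dW_s
  + \varepsilon \int_{t_{k-1}}^{t_k} c(X^\varepsilon_{s-},\alpha_0) \, dZ^{\lambda_\varepsilon}_s,
\end{equation*}
where $R^{(1)}_k = \int_{t_{k-1}}^{t_k}[a(X^\varepsilon_s,\mu_0)-a(X^\varepsilon_{t_{k-1}},\mu_0)] \, ds$. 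For $\ell=1$, substituting this into the definition of $\xi^i_{1,k}$ produces three pieces: the Lipschitz continuity of $a$ together with Lemma~\ref{lem:4.1} gives that the $R^{(1)}_k$--piece is $O_p((\varepsilon n)^{-1} + \varepsilon^{-1}n^{-1/2})$, which vanishes under $\varepsilon n\to\infty$ and $\lim(\varepsilon^2n)^{-1}<\infty$; the stochastic integral piece is handled by first swapping $b(X^\varepsilon_s,\sigma_0)$ for $b(X^\varepsilon_{t_{k-1}},\sigma_0)$ using Burkholder and Assumption~\ref{asmp:A2} together with Lemma~\ref{lem:4.5}; and the jump piece vanishes on $C^{n,\varepsilon,\rho}_{k,0}$ and is negligible on $C^{n,\varepsilon,\rho}_{k,1}\cup C^{n,\varepsilon,\rho}_{k,2}$ by Lemma~\ref{lem:4.9}. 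Finally the replacement of $1_{C^{n,\varepsilon,\rho}_k}$ by $1_{C^{n,\varepsilon,\rho}_{k,0}}$ is precisely Remark~\ref{rmk:sumIndC}.

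For $\ell=2$ the same decomposition is substituted into $|\Delta^n_k X^\varepsilon - a(X^\varepsilon_{t_{k-1}},\mu_0)/n|^2$ and then multiplied by $n/(\varepsilon^2 b^2)$, so the quadratic expansion produces a leading term $n|\int_{t_{k-1}}^{t_k} b(X^\varepsilon_{t_{k-1}},\sigma_0) \, dW_t|^2/|b(X^\varepsilon_{t_{k-1}},\sigma_0)|^2 = n|\int_{t_{k-1}}^{t_k} dW_t|^2$, plus cross terms involving $R^{(1)}_k$, the jump martingale, and the Burkholder/Lipschitz approximation error $\int_{t_{k-1}}^{t_k}[b(X^\varepsilon_s,\sigma_0)-b(X^\varepsilon_{t_{k-1}},\sigma_0)]dW_s$. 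Each cross term carries a factor $\sqrt{n}/\varepsilon^2$ from the normalization; Lemmas~\ref{lem:4.1}, \ref{lem:4.5} and Schwarz's inequality bound these by quantities of the form $n^{-1/2}\cdot(\varepsilon n)^{-1}$, $\varepsilon^{-1}n^{-1}$, $\lambda_\varepsilon/\sqrt{n}$, all of which are $o_p(1)$ under the stated rates. Restriction to $C^{n,\varepsilon,\rho}_{k,0}$ is again by Remark~\ref{rmk:sumIndC}; the residue after subtracting $1$ and multiplying by $(\partial b/\partial\sigma_i)/b$ gives exactly $\tilde\xi^i_{2,k}$.

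For $\ell=3$ the approach is entirely different: we apply Lemma~\ref{lem:4.12} to $g(x,y,\alpha):=\frac{\partial\psi}{\partial\alpha_i}(x,y,\alpha_0)$ (regarded as a function of $(x,y)$ only, with $\theta$-dependence trivial), so that the conclusion reads
\begin{equation*}
  \frac{1}{\lambda_\varepsilon}\sum_{k=1}^n
    \left\{ \frac{\partial\psi}{\partial\alpha_i}\!\left(X^\varepsilon_{t_{k-1}},\tfrac{\Delta^n_k X^\varepsilon}{\varepsilon},\alpha_0\right) 1_{D^{n,\varepsilon,\rho}_k}
    - \frac{\partial\psi}{\partial\alpha_i}\!\left(X^\varepsilon_{t_{k-1}},c(X^\varepsilon_{t_{k-1}},\alpha_0)V_{N^{\lambda_\varepsilon}_{\tau_k}},\alpha_0\right) 1_{J^{n,\varepsilon}_{k,1}}\right\}
  = O_p(r_{n,\varepsilon}).
\end{equation*}
Multiplying by $\sqrt{\lambda_\varepsilon}$, the desired convergence $\sum_k(\xi^i_{3,k}-\tilde\xi^i_{3,k}) \overset{p}\to 0$ reduces to $\sqrt{\lambda_\varepsilon}\, r_{n,\varepsilon}\to 0$. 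Under Assumption~\ref{asmp:A4}\ref{asmp:A4(i)} with $\lim(\varepsilon^2 n)^{-1}<\infty$ and $\lambda_\varepsilon^2/n\to0$, both summands of $r_{n,\varepsilon}$ behave like $n^{-(1/2-1/p)}$ up to constants, so $\sqrt{\lambda_\varepsilon}\,r_{n,\varepsilon}\leq n^{-1/2+2/p}$, which vanishes once $p>4$. Under~\ref{asmp:A4}\ref{asmp:A4(ii)}, the analogous bound becomes $n^{-1/2+2/p+q\rho}$, which vanishes precisely because $\rho<1/(4q)$ allows one to pick $p$ large enough.

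The main obstacle I anticipate is the $\ell=2$ case: the quadratic expansion produces roughly half a dozen cross terms, and each must be cleanly bounded using a different combination of Lemmas~\ref{lem:4.1}, \ref{lem:4.5}, \ref{lem:4.9}, Burkholder, and the constraints $\varepsilon n\to\infty$, $\lambda_\varepsilon^2/n\to0$, $\lambda_\varepsilon\int_{|z|\leq\kappa/n^\rho}f_{\alpha_0}(z) \, dz\to0$. The bookkeeping on which rate controls which term is delicate, particularly for the jump--diffusion cross term on $C^{n,\varepsilon,\rho}_{k,1}$, where one must exploit both the smallness of $P(C^{n,\varepsilon,\rho}_{k,1}|\mathcal{F}_{t_{k-1}})$ from Lemma~\ref{lem:4.7} and Cauchy--Schwarz to absorb the $n/\varepsilon^2$ prefactor.
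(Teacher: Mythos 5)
Your outline follows essentially the same route as the paper: for $\ell=1,2$ you reduce to the no-jump event $C^{n,\varepsilon,\rho}_{k,0}$ via Remark \ref{rmk:sumIndC}, substitute the SDE decomposition, and bound the drift-correction and Burkholder-swap terms through Lemma \ref{lem:4.5}; for $\ell=3$ you invoke Lemma \ref{lem:4.12} with $g=\frac{\partial\psi}{\partial\alpha_i}(\cdot,\cdot,\alpha_0)$ and check $\sqrt{\lambda_\varepsilon}\,r_{n,\varepsilon}\to0$, exactly as the paper does (with the same threshold $p>4$ under \ref{asmp:A4}~\ref{asmp:A4(i)} and $p>4/(1-4q\rho)$ under \ref{asmp:A4}~\ref{asmp:A4(ii)}).

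One step as written does not close. You claim the $R^{(1)}_k$-piece in the $\ell=1$ case is $O_p((\varepsilon n)^{-1}+\varepsilon^{-1}n^{-1/2})$ and dispose of it using $\lim(\varepsilon^2n)^{-1}<\infty$; but that condition is not among the hypotheses for $\ell=1,2$ (only $\varepsilon n\to\infty$ is), and even with it $\varepsilon^{-1}n^{-1/2}$ is merely bounded, not vanishing (take $\varepsilon=n^{-3/4}$: then $\varepsilon n\to\infty$ yet $\varepsilon^{-1}n^{-1/2}=n^{1/4}\to\infty$). The correct rate is $O_p((\varepsilon n)^{-1}+n^{-1/2})$: on $C^{n,\varepsilon,\rho}_{k,0}$ Lemma \ref{lem:4.5} gives $E[\sup_t|X^\varepsilon_t-X^\varepsilon_{t_{k-1}}|\mid\mathcal F_{t_{k-1}}]\lesssim (n^{-1}+\varepsilon n^{-1/2})(1+|X^\varepsilon_{t_{k-1}}|)$, so $\varepsilon^{-1}\sum_k n^{-1}\cdot(n^{-1}+\varepsilon n^{-1/2})\lesssim (\varepsilon n)^{-1}+n^{-1/2}$, which vanishes under $\varepsilon n\to\infty$ alone — the factor $\varepsilon$ from the diffusion term cancels the $\varepsilon^{-1}$ normalization. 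With that correction your argument matches the paper's. (A smaller remark: your anticipated difficulty with the jump--diffusion cross term on $C^{n,\varepsilon,\rho}_{k,1}$ evaporates if, as you yourself propose, the indicator reduction to $C^{n,\varepsilon,\rho}_{k,0}$ is performed first; Remark \ref{rmk:sumIndC} requires no moment control on the summands, so the jump contributions never need to be estimated.)
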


    \begin{proof}
      For $\ell=1,2$, from Lemmas \ref{lem:4.9} and \ref{lem:A.3}, it is suffcient to show that for $\rho\in(0,1/2)$
      \begin{align*}
        &\sum_{k=1}^n E \left[ \left| \xi^i_{\ell k} 1_{J^{n,\varepsilon}_{k,0}} - \tilde\xi^i_{\ell k} \right|
        \, \middle| \, \mathcal{F}_{t_{k-1}} \right]
        \overset{p}\longrightarrow 0
      \end{align*}
      as $n\to\infty$, $\varepsilon\to0$, $\lambda_\varepsilon\to\infty$, $\varepsilon n\to\infty$, $\varepsilon\lambda_\varepsilon\to0$, $\lambda_\varepsilon^2/n\to0$ and
      $\lambda_\varepsilon \int_{|z|\leq\kappa/n^\rho} f_{\alpha_0}(z) \, dz\to0$.

      For $\ell=1$, let $i\in\{1,\dots,d_1\}$, and put $g(x)=\frac{\partial a}{\partial \mu_i}(x,\mu_0)/|b(x,\sigma_0)|^2$.
      Then,
      \begin{align*}
        &\xi^i_{1, k} 1_{J^{n,\varepsilon}_{k,0}} - \tilde\xi^i_{1, k} \\
        &= g(X^\varepsilon_{t_{k-1}}) \left\{ \int_{t_{k-1}}^{t_k} a(X^\varepsilon_t,\mu_0) \, dt  - \frac{1}{n} a(X^\varepsilon_{t_{k-1}}, \mu_0)
        + \, \varepsilon \int_{t_{k-1}}^{t_k} \{ b(X^\varepsilon_{t},\sigma_0) - b(X^\varepsilon_{t_{k-1}},\sigma_0) \} \, dW_t \right\}
        1_{C^{n,\varepsilon,\rho}_{k,0}}.
      \end{align*}
      As in the same argument in Lemma \ref{lem:4.14},
      it holds from Assumption \ref{asmp:A2}, \ref{asmp:A3} and \ref{asmp:A8} and Lemmas \ref{lem:4.4} and \ref{lem:4.5} that
      \begin{align*}
        &\frac{1}{\varepsilon} \sum_{k=1}^n
          E \left[ |g ( X^\varepsilon_{t_{k-1}} )|
          \left| \int_{t_{k-1}}^{t_k} a(X^\varepsilon_t,\mu_0) \, dt  - \frac{1}{n} a(X^\varepsilon_{t_{k-1}}, \mu_0) \right|
          1_{C^{n,\varepsilon,\rho}_{k,0}}
          \, \middle| \, \mathcal{F}_{t_{k-1}} \right]
          = O_p \left( \frac{1}{\varepsilon n} + \frac{1}{\sqrt{n}} \right)
      \end{align*}
      as $n\to\infty$, $\varepsilon\to0$, $\lambda_\varepsilon\to\infty$ and $\varepsilon\lambda_\varepsilon\to0$,
      and from \ref{asmp:A2}, Burkholder's inequality, Lemmas \ref{lem:4.4} and \ref{lem:4.5} that
      \begin{align*}
        &\sum_{k=1}^n
          E \left[ |g ( X^\varepsilon_{t_{k-1}} )| \left|
          \int_{t_{k-1}}^{t_k} \{ b(X^\varepsilon_{t},\sigma_0) - b(X^\varepsilon_{t_{k-1}},\sigma_0) \} \, dW_t \right|
          1_{C^{n,\varepsilon,\rho}_{k,0}}
          \, \middle| \, \mathcal{F}_{t_{k-1}} \right] \\
        & \quad
          \leq \frac{C}{\sqrt{n}} \sum_{k=1}^n
          |g ( X^\varepsilon_{t_{k-1}} )|
          \left( E \left[ \sup_{t\in[t_{k-1},\tau_k]}
          |X^\varepsilon_{t} - X^\varepsilon_{t_{k-1}} |^2
          \, \middle| \, \mathcal{F}_{t_{k-1}} \right] \right)^{1/2}
        = O_p \left( \frac{1}{\sqrt{n}} + \varepsilon \right)
      \end{align*}
      as $n\to\infty$, $\varepsilon\to0$, $\lambda_\varepsilon\to\infty$ and $\varepsilon\lambda_\varepsilon\to0$.

      For $\ell=2$, let $i\in\{1,\dots,d_2\}$, and put $g(x)=-\frac{\partial b}{\partial \sigma_i}(x,\sigma_0)/|b(x,\sigma_0)|^3$.
      Then, we have
      \begin{align*}
        \xi^i_{2, k} 1_{J^{n,\varepsilon}_{k,0}} - \tilde\xi^i_{2, k}
        &= g(X^\varepsilon_{t_{k-1}})
          \left\{ \left| \int_{t_{k-1}}^{t_k} \left\{ a(X^\varepsilon_t,\mu_0) - a(X^\varepsilon_{t_{k-1}}, \mu_0) \right\} dt \right|^2 \right. \\
        &\quad+ 2 \varepsilon \int_{t_{k-1}}^{t_k} \left\{ a(X^\varepsilon_t,\mu_0) - a(X^\varepsilon_{t_{k-1}}, \mu_0) \right\} dt
        \int_{t_{k-1}}^{t_k} b(X^\varepsilon_t,\sigma_0) \, dW_t \\
        &\quad\left.+ \left| \varepsilon \int_{t_{k-1}}^{t_k} b(X^\varepsilon_t,\sigma_0) \, dW_t \right|^2
          - \left| \varepsilon \int_{t_{k-1}}^{t_k} b(X^\varepsilon_{t_{k-1}},\sigma_0) \, dW_t \right|^2 \right\}
          1_{C^{n,\varepsilon,\rho}_{k,0}},
      \end{align*}
      and by the same argument as in the proof of Lemma \ref{lem:4.15}, we obtain
      \begin{align*}
        &\frac{\sqrt{n}}{\varepsilon^2} \sum_{k=1}^n E \left[
          |g ( X^\varepsilon_{t_{k-1}})|
          \left| \int_{t_{k-1}}^{t_k} \left\{ a(X^\varepsilon_t,\mu_0) - a(X^\varepsilon_{t_{k-1}}, \mu_0) \right\} dt \right|^2 1_{C^{n,\varepsilon,\rho}_{k,0}}
          \, \middle| \, \mathcal{F}_{t_{k-1}} \right]
          = O_p \left( \frac{1}{\varepsilon^2 n^{5/2}} + \frac{1}{n^{3/2}} \right), \\
        &\frac{\sqrt{n}}{\varepsilon} \sum_{k=1}^{n} E \left[
          \left|g(X^\varepsilon_{t_{k-1}})
          \int_{t_{k-1}}^{t_k} \left\{ a(X^\varepsilon_t,\mu_0) - a(X^\varepsilon_{t_{k-1}}, \mu_0) \right\} dt
          \int_{t_{k-1}}^{t_k} b(X^\varepsilon_t,\sigma_0) \, dW_t \right|
          1_{C^{n,\varepsilon,\rho}_{k,0}}
          \, \middle| \, \mathcal{F}_{t_{k-1}} \right] \\
        &\quad
          = O_p \left( \frac{1}{\varepsilon n} + \frac{1}{\sqrt{n}} \right), \\
        &\sqrt{n} \sum_{k=1}^{n} E \left[
          \left|g(X^\varepsilon_{t_{k-1}})
          \left| \int_{t_{k-1}}^{t_k} b(X^\varepsilon_t,\sigma_0) \, dW_t \right|^2
          - \left| \int_{t_{k-1}}^{t_k} b(X^\varepsilon_{t_{k-1}},\sigma_0) \, dW_t \right|^2 \right|
          1_{C^{n,\varepsilon,\rho}_{k,0}}
          \, \middle| \, \mathcal{F}_{t_{k-1}} \right]
          = O_p \left( \frac{1}{\sqrt{n}} + \varepsilon \right)
      \end{align*}
      as $n\to\infty$, $\varepsilon\to0$, $\lambda_\varepsilon\to\infty$, $\varepsilon n\to\infty$ and $\varepsilon\lambda_\varepsilon\to0$.

      For $\ell=3$, let $r_{n,\varepsilon}$ be defined as either of the following:
      \begin{enumerate}
        \item[(i)]  Under Assumption \ref{asmp:A4} \ref{asmp:A4(i)},
                    $r_{n,\varepsilon}:=\frac{1}{\varepsilon n^{1-1/p}} + \frac{1}{n^{1/2-1/p}}$ with sufficiently large $p>1$.
        \item[(ii)] Under Assumption \ref{asmp:A4} \ref{asmp:A4(ii)},
                    $r_{n,\varepsilon}:=\frac{1}{\varepsilon n^{1-1/p-q\rho}} + \frac{1}{n^{1/2-1/p-q\rho}}$ with sufficiently large $p>1$.
      \end{enumerate}
      Then, it follows from Lemmas \ref{lem:4.10}, \ref{lem:4.12} and \ref{lem:A.3} that
      \begin{align*}
        &\sum_{k=1}^n E \left[ \left| \xi^i_{3,k} 1_{J^{n,\varepsilon}_{k,1}} - \tilde\xi^i_{3,k} \right|
        \, \middle| \, \mathcal{F}_{t_{k-1}} \right]
        = O_p \left( \sqrt{\lambda_\varepsilon} r_{n,\varepsilon} \right)
      \end{align*}
      as $n\to\infty$, $\varepsilon\to0$, $\lambda_\varepsilon\to\infty$, $\varepsilon n\to\infty$, $\varepsilon\lambda_\varepsilon\to0$, $\lambda_\varepsilon^2/n\to0$ and
      $\lambda_\varepsilon \int_{|z|\leq\kappa/n^\rho} f_{\alpha_0}(z) \, dz\to0$.
      Here,
      \begin{equation*}
        \sqrt{\lambda_\varepsilon} r_{n,\varepsilon} \to 0
      \end{equation*}
      as $n\to\infty$, $\varepsilon\to0$, $\lambda_\varepsilon\to\infty$, $\varepsilon\lambda_\varepsilon\to0$, $\lambda_\varepsilon^2/n\to0$ and
      $\lambda_\varepsilon \int_{|z|\leq\kappa/n^\rho} f_{\alpha_0}(z) \, dz\to0$ with $\lim (\varepsilon^2 n)^{-1}<\infty$, when $p>4$ under Assumption \ref{asmp:A4} \ref{asmp:A4(i)},
      and when $p>4/(1-4q\rho)$ under Assumption \ref{asmp:A4} \ref{asmp:A4(ii)}.
    \end{proof}

    \begin{lemma}\label{lem:4.17}
      Under Assumptions \ref{asmp:A2} to \ref{asmp:A8} and \ref{asmp:A10},
      \begin{equation*}
        \sum_{k=1}^n E \left[ \tilde\xi^i_{\ell k} \, \middle| \, \mathcal{F}_{t_{k-1}} \right]
        \overset{p}\longrightarrow 0
      \end{equation*}
      as $n\to\infty$, $\varepsilon\to0$, $\lambda_\varepsilon\to\infty$, $\varepsilon\lambda_\varepsilon\to0$ and $\lambda_\varepsilon/n\to0$.
    \end{lemma}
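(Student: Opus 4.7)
The plan is to handle the three indices $\ell=1,2,3$ separately, with $\ell=3$ dispatched by the classical score identity and $\ell\in\{1,2\}$ by an independence argument combined with conditional Cauchy--Schwarz applied to the decomposition $1_{C^{n,\varepsilon,\rho}_{k,0}}=1_{J^{n,\varepsilon}_{k,0}}-1_{D^{n,\varepsilon,\rho}_k\cap J^{n,\varepsilon}_{k,0}}$.

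For $\ell=3$: on $J^{n,\varepsilon}_{k,1}$ we have $V_{N^{\lambda_\varepsilon}_{\tau_k}}=V_{N^{\lambda_\varepsilon}_{t_{k-1}}+1}$, which is independent of $\mathcal{F}_{t_{k-1}}$ (it is the next unused jump size) and of $1_{J^{n,\varepsilon}_{k,1}}$ (the latter depending only on the Poisson increment $N^{\lambda_\varepsilon}_{t_k}-N^{\lambda_\varepsilon}_{t_{k-1}}$, which is independent of the $V_i$). Factoring these independences yields
\begin{equation*}
  E[\tilde\xi^i_{3,k}\mid\mathcal{F}_{t_{k-1}}]
  =\frac{P(J^{n,\varepsilon}_{k,1})}{\sqrt{\lambda_\varepsilon}}
   \int \frac{\partial\psi}{\partial\alpha_i}(X^\varepsilon_{t_{k-1}},c(X^\varepsilon_{t_{k-1}},\alpha_0)z,\alpha_0)\,f_{\alpha_0}(z)\,dz.
\end{equation*}
By the remark following Assumption \ref{asmp:A10}, the integral equals $\partial_{\alpha_i}\bigl[\int\psi(x,c(x,\alpha_0)z,\alpha)f_{\alpha_0}(z)\,dz\bigr]_{\alpha=\alpha_0}$; a change of variables $y=c(x,\alpha_0)z$ identifies the bracketed map as the expected log-likelihood $E[\log g_\alpha(Y)]$ of the model $g_\alpha(y)=|c(x,\alpha)|^{-1}f_\alpha(y/c(x,\alpha))$ under the true density $g_{\alpha_0}$. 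The standard score identity gives that its derivative at $\alpha=\alpha_0$ vanishes for every $x\in I_{x_0}^\delta$, so $\sum_{k=1}^n E[\tilde\xi^i_{3,k}\mid\mathcal{F}_{t_{k-1}}]=0$ identically.

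For $\ell=1,2$: set $M_k^{(1)}:=\int_{t_{k-1}}^{t_k}dW_t$ and $M_k^{(2)}:=|\int_{t_{k-1}}^{t_k}dW_t|^2-1/n$; both satisfy $E[M_k^{(\ell)}\mid\mathcal{F}_{t_{k-1}}]=0$. Since $1_{J^{n,\varepsilon}_{k,0}}$ is $\sigma(N^{\lambda_\varepsilon}_{t_k}-N^{\lambda_\varepsilon}_{t_{k-1}})$-measurable and therefore independent of both $\mathcal{F}_{t_{k-1}}$ and the Brownian increment on $[t_{k-1},t_k]$, the product factors:
\begin{equation*}
  E[M_k^{(\ell)}\,1_{J^{n,\varepsilon}_{k,0}}\mid\mathcal{F}_{t_{k-1}}]
  =E[M_k^{(\ell)}]\,P(J^{n,\varepsilon}_{k,0})=0.
\end{equation*}
Only the $D$-piece remains; by conditional Cauchy--Schwarz,
\begin{equation*}
  \bigl|E[M_k^{(\ell)}\,1_{D^{n,\varepsilon,\rho}_{k,0}}\mid\mathcal{F}_{t_{k-1}}]\bigr|
  \le\sqrt{E[|M_k^{(\ell)}|^2\mid\mathcal{F}_{t_{k-1}}]}\,\sqrt{P(D^{n,\varepsilon,\rho}_{k,0}\mid\mathcal{F}_{t_{k-1}})},
\end{equation*}
whose first factor is $O(n^{-1/2})$ for $\ell=1$ and $O(n^{-1})$ for $\ell=2$. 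After incorporating the bounded prefactors $\partial_{\mu_i}a/b$ (for $\ell=1$) and the outer $\sqrt{n}\cdot\partial_{\sigma_i}b/b$ (for $\ell=2$) and summing in $k$, the total is $O(\sqrt{n})$ times $\sum_k\sqrt{P(D^{n,\varepsilon,\rho}_{k,0}\mid\mathcal{F}_{t_{k-1}})}$; by Lemma \ref{lem:4.7} with a sufficiently large exponent $p$ together with Lemma \ref{lem:4.4}, this is bounded by $n^{1/2}\bigl(n^{-p(1-\rho)/2}+\varepsilon^{p/2}n^{-p(1/2-\rho)/2}\bigr)\,O_p(1)$, which tends to $0$.

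The main care is bookkeeping. The prefactors $\partial_{\mu_i}a/b$ and $\partial_{\sigma_i}b/b$ are only bounded on $I_{x_0}^\delta\times\Theta$ under Assumptions \ref{asmp:A7}--\ref{asmp:A8}, so one further restricts to the event $\{X^\varepsilon_t\in I_{x_0}^\delta\text{ for all }t\in[0,1]\}$, whose complement has vanishing probability by Lemma \ref{lem:4.3}; this is the same localization used in Lemmas \ref{lem:4.10} and \ref{lem:4.14}. Verifying that $p$ may be chosen large enough so that both $n^{-p(1-\rho)/2}$ and $\varepsilon^{p/2}n^{-p(1/2-\rho)/2}$ beat the outer $n^{1/2}$ under the asymptotic regime $\lambda_\varepsilon\to\infty$, $\varepsilon\lambda_\varepsilon\to0$, $\lambda_\varepsilon/n\to0$ is then routine.
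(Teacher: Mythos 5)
Your proposal is correct and follows essentially the same route as the paper: for $\ell=3$ the independence of $V_{N^{\lambda_\varepsilon}_{\tau_k}}$, $1_{J^{n,\varepsilon}_{k,1}}$ and $\mathcal{F}_{t_{k-1}}$ plus the score identity from Assumption \ref{asmp:A10} make the sum vanish identically, and for $\ell=1,2$ the paper likewise kills the main part of $1_{C^{n,\varepsilon,\rho}_{k,0}}$ by independence of the Brownian and Poisson increments and controls the residual $D^{n,\varepsilon,\rho}_{k,0}$-piece via conditional Cauchy--Schwarz together with Lemmas \ref{lem:4.4} and \ref{lem:4.7}. The only blemish is the intermediate phrase ``$O(\sqrt{n})$ times $\sum_k\sqrt{P(D^{n,\varepsilon,\rho}_{k,0}\mid\mathcal{F}_{t_{k-1}})}$'', which should read $O(n^{-1/2})$ times that sum (for both $\ell=1$ and $\ell=2$); your displayed final bound $n^{1/2}\bigl(n^{-p(1-\rho)/2}+\varepsilon^{p/2}n^{-p(1/2-\rho)/2}\bigr)O_p(1)$ is nonetheless the correct one and vanishes for $p$ large, matching the paper's rate.
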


    \begin{proof}
      For $\ell=1$, let $i\in\{1,\dots,d_1\}$, and put $g(x)=\frac{\partial a}{\partial \mu_i}(x,\mu_0)/b(x,\sigma_0)$.
      Since
      \begin{equation*}
        E \left[
        \int_{t_{k-1}}^{t_k} dW_t
        \, \middle| \, \mathcal{F}_{t_{k-1}} \right] = 0,
        \quad
        \text{and } \int_{t_{k-1}}^{t_k} dW_t \text{ and } 1_{J^{n,\varepsilon}_{k,i}}~(i=1,2)
        \text{ are independent},
      \end{equation*}
      it holds from Lemmas \ref{lem:4.4} and \ref{lem:4.7} that for any $p\geq1$
      \begin{align*}
        \left| \sum_{k=1}^n
          E \left[ \tilde{\xi}^i_{1,k}
          \, \middle| \, \mathcal{F}_{t_{k-1}} \right] \right|
        &= \left| \sum_{k=1}^n g ( X^\varepsilon_{t_{k-1}} )
          E \left[
          \int_{t_{k-1}}^{t_k} \, dW_t
          1_{C^{n,\varepsilon,\rho}_{k,0}}
          \, \middle| \, \mathcal{F}_{t_{k-1}} \right] \right| \\
        &= \left| \sum_{k=1}^n
            g ( X^\varepsilon_{t_{k-1}} )
            E \left[
            \int_{t_{k-1}}^{t_k} \, dW_t
            1_{D^{n,\varepsilon,\rho}_{k,0}\cup J^{n,\varepsilon}_{k,1}\cup J^{n,\varepsilon}_{k,2}}
            \, \middle| \, \mathcal{F}_{t_{k-1}} \right] \right| \\
        &= O_p \left(\frac{1}{n^{p(1-\rho)-1/2}} + \frac{\varepsilon^p}{n^{p(1/2-\rho)-1/2}} \right) + O_p \left( \frac{\lambda_\varepsilon}{n} \right) + O_p \left( \frac{\lambda_\varepsilon^2}{n^2} \right)
      \end{align*}
      as $n\to\infty$, $\varepsilon\to0$, $\lambda_\varepsilon\to\infty$ and $\varepsilon\lambda_\varepsilon\to0$.

      For $\ell=2$,
      let $i\in\{1,\dots,d_2\}$, and put $g(x)=-\frac{\partial b}{\partial \sigma_i}(x,\sigma_0)/b(x,\sigma_0)$.
      Since
      \begin{equation*}
        E \left[
        \left| \int_{t_{k-1}}^{t_k} \, dW_t \right|^2
        \, \middle| \, \mathcal{F}_{t_{k-1}} \right]
        = \frac{1}{n},
        \quad \text{and} ~\left| \int_{t_{k-1}}^{t_k} \, dW_t \right|^2
        \text{ and } 1_{J^{n,\varepsilon}_{k,i}}~(i=1,2) \text{ are independent},
      \end{equation*}
      it follows from Lemmas \ref{lem:4.4} and \ref{lem:4.7} that for any $p\geq1$
      \begin{align*}
        \left| \sum_{k=1}^n
          E \left[ \tilde{\xi}^i_{2,k}
          \, \middle| \, \mathcal{F}_{t_{k-1}} \right] \right|
        &=\left| \sqrt{n} \sum_{k=1}^{n} g(X^\varepsilon_{t_{k-1}}) E \left[
          \left\{ \left| \int_{t_{k-1}}^{t_k} dW_t \right|^2
          - \frac{1}{n} \right\}
          1_{C^{n,\varepsilon,\rho}_{k,0}}
          \, \middle| \, \mathcal{F}_{t_{k-1}} \right] \right| \\
        &= \left| \sqrt{n} \sum_{k=1}^{n}
          g(X^\varepsilon_{t_{k-1}})
          E \left[
          \left\{ \left| \int_{t_{k-1}}^{t_k}  \, dW_t \right|^2
          - \frac{1}{n} \right\}
          1_{D^{n,\varepsilon,\rho}_{k,0}}
          \, \middle| \, \mathcal{F}_{t_{k-1}} \right] \right| \\
        &= O_p \left( \frac{1}{n^{p(1-\rho)-1/2}} + \frac{\varepsilon^p}{n^{p(1/2-\rho)-1/2}} \right)
      \end{align*}
      as $n\to\infty$, $\varepsilon\to0$, $\lambda_\varepsilon\to\infty$ and $\varepsilon\lambda_\varepsilon\to0$.

      For $\ell=3$, we may assume $\sup_t |X^\varepsilon_{t} - x_t |<\delta$ for some enough small $\delta>0$.
      From Assumption \ref{asmp:A10}, we obtain
      \begin{align*}
        \sum_{k=1}^n E \left[ \tilde\xi^i_{3, k} \, \middle| \, \mathcal{F}_{t_{k-1}} \right]
        &= \frac{\sqrt{\lambda_\varepsilon}}{n} \sum_{k=1}^n
          \int \frac{\partial\psi}{\partial\alpha_i} \left( X^\varepsilon_{t_{k-1}}, c(X^\varepsilon_{t_{k-1}},\alpha_0) z, \alpha_0 \right) f_{\alpha_0}(z) \, dz \\
        &= \frac{\sqrt{\lambda_\varepsilon}}{n} \sum_{k=1}^n
          \frac{\partial}{\partial\alpha_i} \left( \int \psi \left( X^\varepsilon_{t_{k-1}}, c(X^\varepsilon_{t_{k-1}},\alpha_0) z, \alpha \right) f_{\alpha_0}(z) \, dz \right)_{\alpha=\alpha_0}
        = 0.
      \end{align*}
      The last equality holds from the fact that
      \begin{equation*}
        \alpha \mapsto \int \psi \left( X^\varepsilon_{t_{k-1}}, c(X^\varepsilon_{t_{k-1}},\alpha_0) z, \alpha \right) f_{\alpha_0}(z) \, dz
      \end{equation*}
      behaves like the Kullbac--Leibler divergence from $p_{\alpha,x}$ to $p_{\alpha_0,x}$ at $x=X^\varepsilon_{t_{k-1}}$, where
      \begin{equation*}
        p_{\alpha,x}(y):=\frac{1}{c(x,\alpha)} f_{\alpha} \left( \frac{y}{c(x,\alpha)} \right) \quad (y\in\mathbb{R})
      \end{equation*}
      for $(x,\alpha)\in I_{x_0}^\delta\times\Theta_3$.
    \end{proof}

    \begin{lemma}\label{lem:4.18}
      Under Assumptions \ref{asmp:A2} to \ref{asmp:A8}, \ref{asmp:A10} and \ref{asmp:A11},
      \begin{align*}
        \sum_{k=1}^n E \left[ \tilde\xi^{i_1}_{\ell k} \tilde\xi^{i_2}_{\ell k} \, \middle| \, \mathcal{F}_{t_{k-1}} \right]
        &\overset{p}\longrightarrow I^{i_1i_2}_{\ell}
        & &(\ell=1,2,3,~i_1,i_2=1,\dots,d_\ell),\\
        \sum_{k=1}^n E \left[ \tilde\xi^{i_1}_{\ell_1k} \tilde\xi^{i_2}_{\ell_2k} \, \middle| \, \mathcal{F}_{t_{k-1}} \right]
        &\overset{p}\longrightarrow 0 & & (\ell_1,\ell_2=1,2,3,~\ell_1\neq\ell_2,~i_j=1,\dots,d_{\ell_j},~j=1,2)
      \end{align*}
      as $n\to\infty$, $\varepsilon\to0$, $\lambda_\varepsilon\to\infty$ and $\varepsilon\lambda_\varepsilon\to0$,
      where
      \begin{align*}
        I^{i_1i_2}_1 &:= \int_0^1 \frac{\frac{\partial a}{\partial \mu_{i_1}}(x_t,\mu_0)\frac{\partial a}{\partial \mu_{i_2}}(x_t,\mu_0)}{|b(x_t,\mu_0)|^2} dt, \quad
        I^{i_1i_2}_2 := 2 \int_0^1 \frac{\frac{\partial b}{\partial \sigma_{i_1}}(x_t,\sigma_0)\frac{\partial b}{\partial \sigma_{i_2}}(x_t,\sigma_0)}{|b(x_t,\sigma_0)|^2} dt, \\
        I^{i_1i_2}_3 &:= \int_0^1 \int \frac{\partial \psi}{\partial \alpha_{i_1}}(x_t,c(x_t,\alpha_0) z,\alpha_0)\frac{\partial \psi}{\partial \alpha_{i_2}}(x_t,c(x_t,\alpha_0) z,\alpha_0) f_{\alpha_0}(z) \, dz \, dt.
      \end{align*}
    \end{lemma}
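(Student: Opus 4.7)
The plan is to compute each conditional expectation $E[\tilde\xi^{i_1}_{\ell_1 k}\tilde\xi^{i_2}_{\ell_2 k}\mid\mathcal{F}_{t_{k-1}}]$ in closed form (up to a negligible error arising from the indicator $1_{C^{n,\varepsilon,\rho}_{k,0}}$ or $1_{J^{n,\varepsilon}_{k,1}}$), reduce each sum over $k$ to a Riemann sum of continuous functions of $X^\varepsilon_{t_{k-1}}$, and then invoke Lemma \ref{lem:4.4} to obtain convergence in probability to the desired integral. The three ingredients are (i) the Gaussian moment identities $E[(\int_{t_{k-1}}^{t_k}dW_t)^2\mid\mathcal{F}_{t_{k-1}}]=1/n$, $E[(\int_{t_{k-1}}^{t_k}dW_t)^4\mid\mathcal{F}_{t_{k-1}}]=3/n^2$, and the vanishing of all odd Gaussian moments; (ii) the fact that conditionally on $J^{n,\varepsilon}_{k,1}$ the random variable $V_{N^{\lambda_\varepsilon}_{\tau_k}}$ has density $f_{\alpha_0}$ and is independent of $\mathcal{F}_{t_{k-1}}$, with $P(J^{n,\varepsilon}_{k,1}\mid\mathcal{F}_{t_{k-1}})=(\lambda_\varepsilon/n)e^{-\lambda_\varepsilon/n}$; and (iii) the continuity assumptions \ref{asmp:A10} and \ref{asmp:A11}, which allow us to apply Lemma \ref{lem:4.4} to the integrand $x\mapsto\int\frac{\partial\psi}{\partial\alpha_{i_1}}\frac{\partial\psi}{\partial\alpha_{i_2}}(x,c(x,\alpha_0)z,\alpha_0)f_{\alpha_0}(z)\,dz$.

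For the diagonal terms $\ell_1=\ell_2=1$, pulling out the $\mathcal{F}_{t_{k-1}}$-measurable factor $\frac{\partial a/\partial\mu_{i_1}\,\partial a/\partial\mu_{i_2}}{|b|^2}(X^\varepsilon_{t_{k-1}})$ and replacing $1_{C^{n,\varepsilon,\rho}_{k,0}}$ by $1$ (the error being $\sum_k|g(X^\varepsilon_{t_{k-1}})|\cdot\frac{1}{n}\cdot P(J^{n,\varepsilon}_{k,0}\setminus C^{n,\varepsilon,\rho}_{k,0}\mid\mathcal{F}_{t_{k-1}})+\cdots$, which is $o_p(1)$ by Lemma \ref{lem:4.7} once $p$ is large enough), the sum telescopes into $\frac{1}{n}\sum_k g(X^\varepsilon_{t_{k-1}})\to I_1^{i_1i_2}$ by Lemma \ref{lem:4.4}. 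The case $\ell_1=\ell_2=2$ is analogous but produces the factor $E[(|\int dW_t|^2-1/n)^2\mid\mathcal{F}_{t_{k-1}}]=2/n^2$; the prefactor $n$ in $\tilde\xi^i_{2,k}$ absorbs $1/n^2$ into $2/n$ and yields $I_2^{i_1i_2}$. For $\ell_1=\ell_2=3$, the independence structure gives $\sum_kE[\tilde\xi^{i_1}_{3,k}\tilde\xi^{i_2}_{3,k}\mid\mathcal{F}_{t_{k-1}}]=\frac{e^{-\lambda_\varepsilon/n}}{n}\sum_k\int\frac{\partial\psi}{\partial\alpha_{i_1}}\frac{\partial\psi}{\partial\alpha_{i_2}}(X^\varepsilon_{t_{k-1}},c(X^\varepsilon_{t_{k-1}},\alpha_0)z,\alpha_0)f_{\alpha_0}(z)\,dz$, and since $\lambda_\varepsilon/n\to0$ we have $e^{-\lambda_\varepsilon/n}\to1$; Lemma \ref{lem:4.4} combined with Assumption \ref{asmp:A11} then delivers $I_3^{i_1i_2}$.

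For the off-diagonal terms, the pairs $(\ell_1,\ell_2)$ with $\{\ell_1,\ell_2\}\cap\{3\}\neq\emptyset$ vanish identically, because $\tilde\xi^i_{\ell,k}$ for $\ell=1,2$ carries $1_{C^{n,\varepsilon,\rho}_{k,0}}\leq 1_{J^{n,\varepsilon}_{k,0}}$, whereas $\tilde\xi^i_{3,k}$ carries $1_{J^{n,\varepsilon}_{k,1}}$ and $J^{n,\varepsilon}_{k,0}\cap J^{n,\varepsilon}_{k,1}=\emptyset$. For $(\ell_1,\ell_2)=(1,2)$, the factor $E[\int dW_t\cdot(|\int dW_t|^2-1/n)\mid\mathcal{F}_{t_{k-1}}]$ equals zero by the vanishing of odd Gaussian moments, so the contribution with $1_{C^{n,\varepsilon,\rho}_{k,0}}$ replaced by $1$ is zero; the replacement error is handled as before by Cauchy--Schwarz together with the probability estimates from Lemma \ref{lem:4.7}.

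The main obstacle is the case $\ell_1=\ell_2=2$, where the prefactor $n$ in $\tilde\xi^i_{2,k}$ amplifies every error, so the indicator replacement must be carried out with care: one must bound $n\sum_k|g(X^\varepsilon_{t_{k-1}})|\cdot E[(|\int dW_t|^2-1/n)^2\cdot(1-1_{C^{n,\varepsilon,\rho}_{k,0}})\mid\mathcal{F}_{t_{k-1}}]$ by applying H\"older's inequality and then Lemma \ref{lem:4.7} with $p$ chosen large relative to $\rho$, using also $\lambda_\varepsilon^2/n\to 0$ and $\lambda_\varepsilon\int_{|z|\leq\kappa/n^\rho}f_{\alpha_0}(z)\,dz\to0$ to kill the $J^{n,\varepsilon}_{k,1}$ and $D^{n,\varepsilon,\rho}_{k,0}$ contributions. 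Once these routine but delicate moment controls are in place, the rest of the proof is a direct application of Lemma \ref{lem:4.4}.
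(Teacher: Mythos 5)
Your proposal is correct and follows essentially the same route as the paper's proof: Gaussian moment identities plus independence of the Wiener increments from the jump events for $\ell=1,2$, independence of $V_{N^{\lambda_\varepsilon}_{\tau_k}}$ and $1_{J^{n,\varepsilon}_{k,1}}$ for $\ell=3$, indicator-replacement errors controlled via Lemma \ref{lem:4.7} with $p$ large, and Lemma \ref{lem:4.4} to pass to the limiting Riemann integrals. Your observation that the cross terms involving $\ell=3$ vanish identically because $C^{n,\varepsilon,\rho}_{k,0}\cap J^{n,\varepsilon}_{k,1}=\emptyset$ is the correct (and in the paper implicit) reason, and your reliance on $\lambda_\varepsilon/n\to0$ to absorb the factor $e^{-\lambda_\varepsilon/n}$ matches what the paper itself does in its proof.
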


    \begin{proof}
      For $\ell=1$, $i_1,i_2\in\{1,\dots,d_1\}$,
      put $g(x)=\frac{\partial a}{\partial \mu_{i_1}}\frac{\partial a}{\partial \mu_{i_2}}(x,\mu_0)/b(x,\sigma_0)^2$.
      Since from Lemmas \ref{lem:4.4} and \ref{lem:4.7} for any $p>1$ we have
      \begin{align*}
        &\sum_{k=1}^n g(X^\varepsilon_{t_{k-1}}) E \left[ \left | \int_{t_{k-1}}^{t_k} dW_t \right |^2
        1_{D^{n,\varepsilon,\rho}_{k,0}\cup J^{n,\varepsilon}_{k,1} \cup J^{n,\varepsilon}_{k,2}} \, \middle| \, \mathcal{F}_{t_{k-1}} \right]
        = O_p \left( \frac{1}{n^{p(1-\rho)}} + \frac{\varepsilon^p}{n^{p(1/2-\rho)}} + \frac{\lambda_\varepsilon}{n} \right)
      \end{align*}
      as $n\to\infty$, $\varepsilon\to0$, $\lambda_\varepsilon\to\infty$ and $\varepsilon\lambda_\varepsilon\to0$,
      we obtain
      \begin{align*}
        \sum_{k=1}^n E \left[ \tilde\xi^{i_1}_{1,k} \tilde\xi^{i_2}_{1,k} \, \middle| \, \mathcal{F}_{t_{k-1}} \right]
        &=\sum_{k=1}^n g(X^\varepsilon_{t_{k-1}}) E \left[ \left | \int_{t_{k-1}}^{t_k} dW_t \right |^2
        1_{C^{n,\varepsilon,\rho}_{k,0}} \, \middle| \, \mathcal{F}_{t_{k-1}} \right] \\
        &= \frac{1}{n} \sum_{k=1}^n g(X^\varepsilon_{t_{k-1}})
          + O_p \left( \frac{1}{n^{p(1-\rho)}} + \frac{\varepsilon^p}{n^{p(1/2-\rho)}} + \frac{\lambda_\varepsilon}{n} \right)
        \overset{p}\longrightarrow
          \int_0^1 g(x_t) dt
      \end{align*}
      as $n\to\infty$, $\varepsilon\to0$, $\lambda_\varepsilon\to\infty$, $\varepsilon\lambda_\varepsilon\to0$, and $\lambda_\varepsilon/n\to0$.

      For $\ell=2$, $i_1,i_2\in\{1,\dots,d_2\}$,
      put $g(x)=\frac{\partial b}{\partial \sigma_{i_1}}(x,\sigma_0)\frac{\partial b}{\partial \sigma_{i_2}}(x,\sigma_0)/b(x,\sigma_0)^2$.
      Since similarly to the proof of Lemma \ref{lem:4.17},
      it follows from Lemmas \ref{lem:4.4} and \ref{lem:4.7} that for any $p>1$
      \begin{align*}
        &n \sum_{k=1}^{n} g(X^\varepsilon_{t_{k-1}}) E \left[
          \left| \left| \int_{t_{k-1}}^{t_k} \, dW_t \right|^2
          - \frac{1}{n} \right|^2
          1_{D^{n,\varepsilon,\rho}_{k,0}\cup J^{n,\varepsilon}_{k,1} \cup J^{n,\varepsilon}_{k,2}}
          \, \middle| \, \mathcal{F}_{t_{k-1}} \right] \\
        &\quad = O_p \left( \frac{1}{n^{p(1-\rho)}} + \frac{\varepsilon^p}{n^{p(1/2-\rho)}} \right)
          + O_p \left( \frac{\lambda_\varepsilon}{n} \right) + O_p \left( \frac{\lambda_\varepsilon^2}{n^2} \right)
      \end{align*}
      as $n\to\infty$, $\varepsilon\to0$, $\lambda_\varepsilon\to\infty$ and $\varepsilon\lambda_\varepsilon\to0$,
      we obtain from Lemma \ref{lem:4.4} that
      \begin{align*}
        &\sum_{k=1}^n E \left[ \tilde\xi^{i_1}_{2,k} \tilde\xi^{i_2}_{2,k} \, \middle| \, \mathcal{F}_{t_{k-1}} \right]
        = n \sum_{k=1}^{n} g(X^\varepsilon_{t_{k-1}}) E \left[
          \left| \left| \int_{t_{k-1}}^{t_k} \, dW_t \right|^2
          - \frac{1}{n} \right|^2
          1_{C^{n,\varepsilon,\rho}_{k,0}}
          \, \middle| \, \mathcal{F}_{t_{k-1}} \right] \\
        &\quad= n \sum_{k=1}^{n} g(X^\varepsilon_{t_{k-1}}) E \left[
          \left| \left| \int_{t_{k-1}}^{t_k} \, dW_t \right|^2
          - \frac{1}{n} \right|^2
          \, \middle| \, \mathcal{F}_{t_{k-1}} \right]
        + O_p \left( \frac{1}{n^{p(1-\rho)}} + \frac{\varepsilon^p}{n^{p(1/2-\rho)}} + \frac{\lambda_\varepsilon}{n} \right)
        \overset{p}\longrightarrow 2\int_0^1 g(x_t) \, dt
      \end{align*}
      as $n\to\infty$, $\varepsilon\to0$, $\lambda_\varepsilon\to\infty$, $\varepsilon\lambda_\varepsilon\to0$ and $\lambda_\varepsilon/n\to0$.

      For $\ell=3$, $i_1,i_2\in\{1,\dots,d_3\}$, put $g(x,y)=\frac{\partial \psi}{\partial \alpha_{i_1}}(x,y,\alpha_0)\frac{\partial \psi}{\partial \alpha_{i_2}}(x,y,\alpha_0)$.
      Then, it follows from Lemma \ref{lem:4.4} and Assumption \ref{asmp:A11} that
      \begin{align*}
        \sum_{k=1}^n E \left[ \tilde\xi^{i_1}_{3,k} \tilde\xi^{i_2}_{3,k} \, \middle| \, \mathcal{F}_{t_{k-1}} \right]
        &= \frac{1}{\lambda_\varepsilon} \sum_{k=1}^n E \left[
          g \left( X^\varepsilon_{t_{k-1}}, c(X^\varepsilon_{t_{k-1}},\alpha_0) V_{N^{\lambda_\varepsilon}_{\tau_k}} \right)
          1_{J^{n,\varepsilon}_{k,1}}
          \, \middle| \, \mathcal{F}_{t_{k-1}} \right] \\
        &= \frac1n \sum_{k=1}^n \int g \left( X^\varepsilon_{t_{k-1}}, c(X^\varepsilon_{t_{k-1}},\alpha_0) z \right) f_{\alpha_0}(z) \, dz \\
        &\overset{p}\longrightarrow
        \int_0^1 \int g \left( x_t, c(x_t,\alpha_0) z \right) f_{\alpha_0}(z) \, dz \, dt
      \end{align*}
      as $n\to\infty$, $\varepsilon\to0$, $\lambda_\varepsilon\to\infty$ and $\varepsilon\lambda_\varepsilon\to0$.
      The second equality holds from the fact that $V_{N^{\lambda_\varepsilon}_{\tau_k}}$ and $1_{J^{n,\varepsilon}_{k,1}}$ are independent.

      For $\ell_j=j$, $i_j=1,\dots,d_j$ ($j=1,2$),
      put $g(x)=-\frac{\partial a}{\partial \mu_{i_1}}(x,\mu_0)\frac{\partial b}{\partial \sigma_{i_2}}(x,\sigma_0)/|b(x,\sigma_0)|^2$.
      Since for $i=1,3$ and $j=1,2$
      \begin{equation*}
        E \left[
        \left( \int_{t_{k-1}}^{t_k} dW_t \right)^i
        \, \middle| \, \mathcal{F}_{t_{k-1}} \right] = 0,
        \text{ and } \left( \int_{t_{k-1}}^{t_k} dW_t \right)^i \text{ and } 1_{J^{n,\varepsilon}_{k,j}}
        \text{ are independent},
      \end{equation*}
      it follows from Lemmas \ref{lem:4.4} and \ref{lem:4.7} that for any $p\geq1$
      \begin{align*}
        \sum_{k=1}^n E \left[ \tilde\xi^{i_1}_{1,k} \tilde\xi^{i_2}_{2,k} \, \middle| \, \mathcal{F}_{t_{k-1}} \right]
        &
          = \sqrt{n} \sum_{k=1}^n g( X^\varepsilon_{t_{k-1}}) E \left[
          \left \{ - \left | \int_{t_{k-1}}^{t_k} dW_t \right |^2 + \frac{1}{n} \right \}
          \int_{t_{k-1}}^{t_k} dW_t
          1_{C^{n,\varepsilon,\rho}_{k,0}}
          \, \middle| \, \mathcal{F}_{t_{k-1}} \right] \\
        &
          = \sqrt{n} \sum_{k=1}^n g( X^\varepsilon_{t_{k-1}}) E \left[
          \left \{ - \left | \int_{t_{k-1}}^{t_k} dW_t \right |^2 + \frac{1}{n} \right \}
          \int_{t_{k-1}}^{t_k} dW_t
          1_{D^{n,\varepsilon,\rho}_{k,0}}
          \, \middle| \, \mathcal{F}_{t_{k-1}} \right] \\
        &
          = O_p \left( \frac{1}{n^{p(1-\rho)}} + \frac{\varepsilon^p}{n^{p(1/2-\rho)}} \right)
      \end{align*}
      as $n\to\infty$, $\varepsilon\to0$, $\lambda_\varepsilon\to\infty$ and $\varepsilon\lambda_\varepsilon\to0$.
    \end{proof}

    \begin{lemma}\label{lem:4.19}
      Under Assumptions \ref{asmp:A2} to \ref{asmp:A8} and \ref{asmp:A10},
      \begin{equation*}
        \sum_{k=1}^n \left|E \left[ \tilde\xi^i_{\ell k} \, \middle| \, \mathcal{F}_{t_{k-1}} \right]\right|^2
        \overset{p}\longrightarrow 0
      \end{equation*}
      as $n\to\infty$, $\varepsilon\to0$, $\lambda_\varepsilon\to\infty$, $\varepsilon\lambda_\varepsilon\to0$ and $\lambda_\varepsilon/n\to0$.
    \end{lemma}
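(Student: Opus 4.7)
The plan is to split into cases by $\ell$ and reuse the identities already derived in the proof of Lemma \ref{lem:4.17}. For $\ell=3$ there is nothing to do: the computation in Lemma \ref{lem:4.17} shows, using Remark after Assumption \ref{asmp:A10} and that the inner integral is a score at $\alpha_0$, that each individual term $E[\tilde\xi^i_{3,k}\mid\mathcal{F}_{t_{k-1}}]$ vanishes identically. Consequently the sum of squares is zero and convergence is trivial.

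For $\ell=1,2$, I would exploit the fact that the Brownian functionals
\[
Y_{1,k} := \int_{t_{k-1}}^{t_k} dW_t, \qquad
Y_{2,k} := \sqrt n\left(\Bigl|\int_{t_{k-1}}^{t_k} dW_t\Bigr|^2 - \tfrac{1}{n}\right)
\]
are both conditionally centered given $\mathcal{F}_{t_{k-1}}$ and, since they are measurable with respect to the Brownian $\sigma$-algebra, independent of $1_{J^{n,\varepsilon}_{k,1}}$ and $1_{J^{n,\varepsilon}_{k,2}}$. Hence, using $1_{C^{n,\varepsilon,\rho}_{k,0}} = 1 - 1_{J^{n,\varepsilon}_{k,1}} - 1_{J^{n,\varepsilon}_{k,2}} - 1_{D^{n,\varepsilon,\rho}_{k,0}}$, the conditional expectations reduce to
\[
E\bigl[\tilde\xi^i_{\ell,k}\bigm|\mathcal{F}_{t_{k-1}}\bigr]
= -\,g_\ell(X^\varepsilon_{t_{k-1}})\,E\bigl[Y_{\ell,k}\cdot 1_{D^{n,\varepsilon,\rho}_{k,0}}\bigm|\mathcal{F}_{t_{k-1}}\bigr]
\]
with bounded (and locally Lipschitz) $g_\ell$ coming from $a,b$ and their $\mu$- or $\sigma$-derivatives.

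Then I would apply the conditional Cauchy--Schwarz inequality together with $E[Y_{1,k}^2\mid\mathcal{F}_{t_{k-1}}]=1/n$ and $E[Y_{2,k}^2\mid\mathcal{F}_{t_{k-1}}]=2/n$, yielding
\[
\bigl|E[\tilde\xi^i_{\ell,k}\mid\mathcal{F}_{t_{k-1}}]\bigr|^2
\leq \frac{C}{n}\,|g_\ell(X^\varepsilon_{t_{k-1}})|^2\,P\bigl(D^{n,\varepsilon,\rho}_{k,0}\bigm|\mathcal{F}_{t_{k-1}}\bigr).
\]
Summing over $k$, the desired convergence follows from the product of
\[
\frac{1}{n}\sum_{k=1}^n |g_\ell(X^\varepsilon_{t_{k-1}})|^2 (1+|X^\varepsilon_{t_{k-1}}|^p) = O_p(1)
\]
(Lemma \ref{lem:4.4}) and the bound of Lemma \ref{lem:4.7}, namely
$P(D^{n,\varepsilon,\rho}_{k,0}\mid\mathcal{F}_{t_{k-1}})\leq C(n^{-p(1-\rho)}+\varepsilon^p n^{-p(1/2-\rho)})(1+|X^\varepsilon_{t_{k-1}}|^p)$, choosing $p$ sufficiently large that $p(1/2-\rho)>1$, which is possible since $\rho<1/2$.

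No step should be genuinely difficult; the only mildly delicate point is keeping track of the fact that the $J_{k,1}$ and $J_{k,2}$ contributions vanish exactly (independence of the Poisson indicator from the centered Brownian variable), so that the Cauchy--Schwarz bound can be applied solely over $D^{n,\varepsilon,\rho}_{k,0}$, whose probability is polynomially small in $1/n$ rather than only of order $\lambda_\varepsilon/n$.
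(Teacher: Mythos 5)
Your proposal is correct and follows essentially the same route as the paper, whose proof of this lemma is just the one-line remark that it follows from the argument of Lemma \ref{lem:4.17}: for $\ell=3$ the conditional expectation vanishes identically by the score identity under Assumption \ref{asmp:A10}, and for $\ell=1,2$ one uses the conditional centering and independence of the Brownian functional from the Poisson indicators to reduce to the event $D^{n,\varepsilon,\rho}_{k,0}$, then Cauchy--Schwarz together with the polynomial bound of Lemma \ref{lem:4.7} and the law-of-large-numbers bound of Lemma \ref{lem:4.4}. Your write-up simply makes explicit what the paper leaves implicit, and the details (including the variance computations $1/n$ and $2/n$ and the choice of $p$) check out.
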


    \begin{proof}
      This follows from the same argument as in the proof of Lemma \ref{lem:4.17}.
    \end{proof}

    \begin{lemma}\label{lem:4.20}
      Under Assumptions \ref{asmp:A2} to \ref{asmp:A8} and \ref{asmp:A11},
      \begin{equation*}
        \sum_{k=1}^n E \left[ |\tilde\xi^i_{\ell k}|^4 \, \middle| \, \mathcal{F}_{t_{k-1}} \right]
        \overset{p}\longrightarrow 0
      \end{equation*}
      as $n\to\infty$, $\varepsilon\to0$, $\lambda_\varepsilon\to\infty$ and $\varepsilon\lambda_\varepsilon\to0$.
    \end{lemma}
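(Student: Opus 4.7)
The plan is to handle the three cases $\ell=1,2,3$ separately, because each $\tilde\xi^i_{\ell k}$ has a very different probabilistic structure. The Brownian cases $\ell=1,2$ reduce to direct moment calculations on Wiener increments, while $\ell=3$ exploits the independence structure established already in Lemma \ref{lem:4.18}. In every case the goal is to extract enough scaling to produce a factor of $1/n$ (for $\ell=1,2$) or $1/\lambda_\varepsilon$ (for $\ell=3$), after summing $n$ terms.

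For $\ell=1$, write $\tilde\xi^i_{1,k}=g(X^\varepsilon_{t_{k-1}})\int_{t_{k-1}}^{t_k}dW_t\,1_{C^{n,\varepsilon,\rho}_{k,0}}$ with $g:=\frac{\partial a}{\partial\mu_i}(\cdot,\mu_0)/b(\cdot,\sigma_0)$, which is equicontinuous (in fact, locally bounded) on $I_{x_0}^\delta$ by \ref{asmp:A7} and \ref{asmp:A8}. Since the Wiener increment is independent of $\mathcal{F}_{t_{k-1}}$ and has Gaussian distribution with variance $1/n$, its fourth conditional moment equals $3/n^2$, so
\begin{equation*}
  \sum_{k=1}^n E\!\left[|\tilde\xi^i_{1,k}|^4\,\middle|\,\mathcal{F}_{t_{k-1}}\right]
  \leq \frac{3}{n^2}\sum_{k=1}^n |g(X^\varepsilon_{t_{k-1}})|^4
  = \frac{1}{n}\cdot O_p(1) \to 0,
\end{equation*}
the last step by Lemma \ref{lem:4.4} applied to the equicontinuous family $\{g^4\}$. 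For $\ell=2$ the strategy is identical: the Gaussian fourth central moment gives $E[(|\int_{t_{k-1}}^{t_k}dW_t|^2-1/n)^4\mid\mathcal{F}_{t_{k-1}}]=O(1/n^4)$, and after multiplying by the $n$ inside $\tilde\xi^i_{2,k}$ and summing we again obtain $O_p(1/n)\to 0$.

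For $\ell=3$, mirror the key observation from Lemma \ref{lem:4.18}: $V_{N^{\lambda_\varepsilon}_{\tau_k}}$ is independent of $\mathcal{F}_{t_{k-1}}$ and of the indicator $1_{J^{n,\varepsilon}_{k,1}}$. Consequently,
\begin{equation*}
  E\!\left[|\tilde\xi^i_{3,k}|^4\,\middle|\,\mathcal{F}_{t_{k-1}}\right]
  = \frac{1}{\lambda_\varepsilon^2}\,h(X^\varepsilon_{t_{k-1}})\,P(J^{n,\varepsilon}_{k,1}\mid\mathcal{F}_{t_{k-1}}),
\end{equation*}
where $h(x):=\int|\tfrac{\partial\psi}{\partial\alpha_i}(x,c(x,\alpha_0)z,\alpha_0)|^4 f_{\alpha_0}(z)\,dz$. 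Using $P(J^{n,\varepsilon}_{k,1}\mid\mathcal{F}_{t_{k-1}})\leq \lambda_\varepsilon/n$ and summing,
\begin{equation*}
  \sum_{k=1}^n E\!\left[|\tilde\xi^i_{3,k}|^4\,\middle|\,\mathcal{F}_{t_{k-1}}\right]
  \leq \frac{1}{\lambda_\varepsilon}\cdot\frac{1}{n}\sum_{k=1}^n h(X^\varepsilon_{t_{k-1}}),
\end{equation*}
which is $o_p(1)$ as soon as the empirical average of $h$ remains $O_p(1)$.

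The main obstacle is that \ref{asmp:A11} only guarantees continuity of the $L^2$-version of the integrand of $h$, not directly $L^4$-control. The plan to overcome this is a standard localization: restrict to the event $A_\delta:=\{X^\varepsilon_t\in I_{x_0}^\delta\text{ for all }t\in[0,1]\}$, which has probability tending to one by Lemma \ref{lem:4.3}; on this event, continuity on the compact set $I_{x_0}$ combined with \ref{asmp:A11} can be upgraded to local boundedness of $h$ (using the same Rellich–Kondrachov / uniform integrability machinery as in Lemma \ref{lem:4.13}), so that $\frac{1}{n}\sum_k h(X^\varepsilon_{t_{k-1}})=O_p(1)$ and the prefactor $1/\lambda_\varepsilon\to0$ supplies the desired convergence to zero. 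This localization step is the only delicate piece; the rest is direct calculation.
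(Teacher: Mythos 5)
Your proposal follows essentially the same route as the paper's proof: for $\ell=1,2$ the paper likewise bounds the sum by the Gaussian fourth (central) moments of the Wiener increments times $\frac1n\sum_k g(X^\varepsilon_{t_{k-1}})$ and invokes Lemma \ref{lem:4.4}, and for $\ell=3$ it uses exactly the independence of $V_{N^{\lambda_\varepsilon}_{\tau_k}}$ and $1_{J^{n,\varepsilon}_{k,1}}$ together with $P(J^{n,\varepsilon}_{k,1})\leq\lambda_\varepsilon/n$ to extract the factor $1/\lambda_\varepsilon$. The only divergence is that the paper simply cites Assumption \ref{asmp:A11} to control the fourth-moment integral $\int|\frac{\partial\psi}{\partial\alpha_i}(x,c(x,\alpha_0)z,\alpha_0)|^4 f_{\alpha_0}(z)\,dz$ without further comment, so the $L^4$-integrability concern you raise (and your proposed localization) is if anything more careful than the paper's own treatment rather than a deviation from it.
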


    \begin{proof}
      For $\ell=1$, let $i\in\{1,\dots,d_1\}$, and put $g(x)=|\frac{\partial a}{\partial \mu_i}(x,\mu_0)/b(x,\sigma_0)|^4$.
      Then, it holds from Lemma \ref{lem:4.4} that
      \begin{equation*}
        \sum_{k=1}^n
          E \left[ |\tilde{\xi}^i_{1,k}|^4
          \, \middle| \, \mathcal{F}_{t_{k-1}} \right]
        \leq \sum_{k=1}^n g ( X^\varepsilon_{t_{k-1}} )
          E \left[
          \left| \int_{t_{k-1}}^{t_k} \, dW_t \right|^4
          \, \middle| \, \mathcal{F}_{t_{k-1}} \right]
          \overset{p}\longrightarrow 0
      \end{equation*}
      as $n\to\infty$, $\varepsilon\to0$, $\lambda_\varepsilon\to\infty$ and $\varepsilon\lambda_\varepsilon\to0$.

      For $\ell=2$,
      let $i\in\{1,\dots,d_2\}$, and put $g(x)=|\frac{\partial b}{\partial \sigma_i}(x,\sigma_0)/b(x,\sigma_0)|^4$.
      Then, it follows from Lemma \ref{lem:4.4} that
      \begin{equation*}
        \sum_{k=1}^n
          E \left[ | \tilde{\xi}^i_{2,k}|^4
          \, \middle| \, \mathcal{F}_{t_{k-1}} \right]
        \leq n^2 \sum_{k=1}^{n} g(X^\varepsilon_{t_{k-1}}) E \left[
          \left| \left| \int_{t_{k-1}}^{t_k} dW_t \right|^2
          - \frac{1}{n}\right|^4
          \, \middle| \, \mathcal{F}_{t_{k-1}} \right]
          \overset{p}\longrightarrow 0
      \end{equation*}
      as $n\to\infty$, $\varepsilon\to0$, $\lambda_\varepsilon\to\infty$ and $\varepsilon\lambda_\varepsilon\to0$.

      For $\ell=3$, $i\in\{1,\dots,d_3\}$, put $g(x,y)=|\frac{\partial \psi}{\partial \alpha_i}(x,y,\alpha_0)|^4$.
      Then, similarly to the proof of Lemma \ref{lem:4.18}, it follows from Lemma \ref{lem:4.4} and Assumption \ref{asmp:A11} that
      \begin{align*}
        \sum_{k=1}^n E \left[ |\tilde\xi^{i}_{3,k}|^4 \, \middle| \, \mathcal{F}_{t_{k-1}} \right]
        = \frac{1}{\lambda_\varepsilon^2} \sum_{k=1}^n E \left[
          g \left( X^\varepsilon_{t_{k-1}}, c(X^\varepsilon_{t_{k-1}},\alpha_0) V_{N^{\lambda_\varepsilon}_{\tau_k}} \right)
          1_{J^{n,\varepsilon}_{k,1}}
          \, \middle| \, \mathcal{F}_{t_{k-1}} \right]
        \overset{p}\longrightarrow 0
      \end{align*}
      as $n\to\infty$, $\varepsilon\to0$, $\lambda_\varepsilon\to\infty$ and $\varepsilon\lambda_\varepsilon\to0$.
    \end{proof}

    \begin{proof}[Proof of Theorem \ref{thm:3.2}]
      From Theorem A.3 in Shimizu \cite{shimizu2007asymptotic} and Lemmas \ref{lem:4.16} to \ref{lem:4.20},
      \begin{equation*}
        \Lambda_{n,\varepsilon} :=
        \sum_{k=1}^n \left( \xi^1_{1,k}, \dots, \xi^{d_1}_{1,k},
        \xi^1_{2,k}, \dots, \xi^{d_2}_{2,k},
        \xi^1_{3,k}, \dots, \xi^{d_3}_{3,k} \right)^T
        \overset{d}\longrightarrow
        \mathcal{N} \left( 0, I_{\theta_0} \right)
      \end{equation*}
      as $n\to\infty$, $\varepsilon\to0$, $\lambda_\varepsilon\to\infty$, $\varepsilon\lambda_\varepsilon\to0$, $\lambda_\varepsilon^2/n\to0$ and
      $\lambda_\varepsilon \int_{|z|\leq\kappa/n^\rho} f_{\alpha_0}(z) \, dz\to0$ with $\lim (\varepsilon^2 n)^{-1}<\infty$.
      Also, it follows from Lemmas \ref{lem:4.11} to \ref{lem:4.15}
      under Assumption \ref{asmp:A12}
      that
      \begin{equation}\label{eq:matrixCtoI0inp}
        \begin{aligned}
          C_{\varepsilon,n} (\theta)
          &:= \left( \begin{array}{ccc}
            \varepsilon^2n \left( \frac{\partial^2}{\partial\mu_{i}\partial\mu_{j}} \Psi_{n,\varepsilon}(\theta) \right)_{ij}&
            \varepsilon^2n \left( \frac{\partial^2}{\partial\mu_{i}\partial\sigma_{j}} \Psi_{n,\varepsilon}(\theta) \right)_{ij} &
            0 \\
            \left( \frac{\partial^2}{\partial\sigma_{i}\partial\mu_{j}} \Psi_{n,\varepsilon}(\theta) \right)_{ij}&
            \left( \frac{\partial^2}{\partial\sigma_{i}\partial\sigma_{j}} \Psi_{n,\varepsilon}(\theta) \right)_{ij} &
            0 \\
            0 &
            0 &
            \left( \frac{\partial^2}{\partial\alpha_{i}\partial\alpha_{j}} \Psi_{n,\varepsilon}(\theta) \right)_{ij} \\
          \end{array} \right)
          \overset{p}\longrightarrow - I_{\theta_0}
        \end{aligned}
      \end{equation}
      as $n\to\infty$, $\varepsilon\to0$, $\lambda_\varepsilon\to\infty$, $\varepsilon\lambda_\varepsilon\to0$, $\lambda_\varepsilon^2/n\to0$ and
      $\lambda_\varepsilon \int_{|z|\leq\kappa/n^\rho} f_{\alpha_0}(z) \, dz\to0$ with $\lim (\varepsilon^2 n)^{-1}<\infty$, uniformly in $\theta\in\Theta$.
      Indeed,
      \begin{align*}
        \varepsilon^2n \frac{\partial^2}{\partial\mu_{i}\partial\mu_{j}} \Psi_{n,\varepsilon}(\theta)
        &= \sum_{k=1}^{n}
          \left \{ \Delta^n_k X^{\varepsilon} - \frac{1}{n}a(X^\varepsilon_{t_{k-1}}, \mu) \right \} \frac{ \frac{\partial^2 a}{\partial \mu_i \partial \mu_j} (X^\varepsilon_{t_{k-1}}, \mu) }{\left | b(X^\varepsilon_{t_{k-1}},\sigma) \right |^2}
          1_{C^{n,\varepsilon,\rho}_k} \\
        &\quad - \frac{1}{n} \sum_{k=1}^{n}
          \frac{ \frac{\partial a}{\partial\mu_i}\frac{\partial a}{\partial\mu_j} (X^\varepsilon_{t_{k-1}}, \mu) }{\left | b(X^\varepsilon_{t_{k-1}},\sigma) \right |^2}
          1_{C^{n,\varepsilon,\rho}_k}
          \overset{p}\longrightarrow - I^{i_1i_2}_1, \\
        \varepsilon^2n\frac{\partial^2}{\partial\mu_{i}\partial\sigma_{j}} \Psi_{n,\varepsilon}(\theta)
        &= - 2 \sum_{k=1}^{n}
          \left \{ \Delta^n_k X^{\varepsilon} - \frac{1}{n}a(X^\varepsilon_{t_{k-1}}, \mu) \right \}
          \frac{ \frac{\partial a}{\partial \mu_i} (X^\varepsilon_{t_{k-1}}, \mu) \frac{\partial b}{\partial\sigma_j}(X^\varepsilon_{t_{k-1}},\sigma) }{\left | b(X^\varepsilon_{t_{k-1}},\sigma) \right |^3}
          1_{C^{n,\varepsilon,\rho}_k}
          \overset{p}\longrightarrow 0, \\
        \frac{\partial^2}{\partial\sigma_{i}\partial\sigma_{j}} \Psi_{n,\varepsilon}(\theta)
        &= - \frac{1}{n} \sum_{k=1}^{n}
          \left \{ - \frac{ \left | \Delta^n_k X^{\varepsilon} - a(X^\varepsilon_{t_{k-1}}, \mu) / n \right |^2 }{\left | \varepsilon b(X^\varepsilon_{t_{k-1}},\sigma) \right |^2 / n}
          + 1 \right \} \frac{\partial (\frac{\partial b}{\partial\sigma_i}/b)}{\partial\sigma_j}(X^\varepsilon_{t_{k-1}},\sigma) 1_{C^{n,\varepsilon,\rho}_k} \\
        &\quad - \frac{2}{\varepsilon^2} \sum_{k=1}^{n}
          \left | \Delta^n_k X^{\varepsilon} - a(X^\varepsilon_{t_{k-1}}, \mu) / n \right |^2
          \frac{\frac{\partial b}{\partial \sigma_i}\frac{\partial b}{\partial \sigma_j}(X^\varepsilon_{t_{k-1}},\sigma)}{|b(X^\varepsilon_{t_{k-1}},\sigma)|^4} 1_{C^{n,\varepsilon,\rho}_k}
          \overset{p}\longrightarrow - I^{i_1i_2}_2,\\
        \frac{\partial^2}{\partial\alpha_{i}\partial\alpha_{j}} \Psi_{n,\varepsilon}(\theta)
        &= \frac{1}{\lambda_\varepsilon}
          \sum_{k=1}^{n} \frac{1}{\varphi} \frac{\partial^2 \varphi}{\partial\alpha_i\partial\alpha_j}
          \left( X^\varepsilon_{t_{k-1}}, \frac{\Delta^n_k X^{\varepsilon}}\varepsilon, \alpha \right)
          1_{D^{n,\varepsilon,\rho}_k} \\
        &\quad - \frac{1}{\lambda_\varepsilon}
          \sum_{k=1}^{n}
          \frac{1}{|\varphi|^2} \frac{\partial \varphi}{\partial\alpha_i}\frac{\partial \varphi}{\partial\alpha_j}
          \left( X^\varepsilon_{t_{k-1}}, \frac{\Delta^n_k X^{\varepsilon}}\varepsilon, \alpha \right)
          1_{D^{n,\varepsilon,\rho}_k}
          \overset{p}\longrightarrow - I^{i_1i_2}_3,
      \end{align*}
      where $\varphi(x,y,\alpha) := \exp \psi(x,y,\alpha)$.
      Since
      \begin{equation*}
        D_{n,\varepsilon}
        \left( \begin{aligned}
          \varepsilon^{-1} (\hat{\mu}_{n,\varepsilon}-\mu_0) \\
          \sqrt{n} (\hat{\sigma}_{n,\varepsilon}-\sigma_0) \\
          \sqrt{\lambda_\varepsilon} (\hat{\alpha}_{n,\varepsilon}-\alpha_0)
        \end{aligned} \right)
        = \Lambda_{n,\varepsilon},
      \end{equation*}
      where
      \begin{equation*}
        D_{n,\varepsilon} := \int_0^1 C_{n,\varepsilon}(\theta_0+u(\hat{\theta}_{n,\varepsilon}-\theta_0)),
      \end{equation*}
      the conclusion follows by the same argument in the proof of Theorem 1 in S{\o}rensen and Uchida \cite{sorensen2003small}.
    \end{proof}

\section{Examples}\label{sec:examples}
  This section is devoted to give some examples of densities which satisfies Assumptions \ref{asmp:A9} to \ref{asmp:A12}.
  For simplicity, suppose that $c(x,\alpha)$ is an enough smooth postive function on $I_{x_0}^\delta\times\Theta_3$, and derivatives of $c$ are uniformly continuous.
  Let $D_+$ is the interior of the common support of $\{f_\alpha\}_{\alpha\in\Theta_3}$, i.e.,
  \begin{equation*}
    f_\alpha(z) \left\{ \begin{aligned}
      & > 0     & & \text{for } z \in D_+, \\
      & = 0     & & \text{otherwise}.
    \end{aligned} \right.
  \end{equation*}
  Note that $y\in D_+(=\mathbb{R}\text{ or }\mathbb{R}_+)$ if and only if $y/c(x,\alpha)\in D_+$ for $(x,\alpha)\in I_{x_0}^\delta\times\Theta_3$ owing to Assumption \ref{asmp:A4}.
  If $(x,y,\alpha)\in I_{x_0}^\delta\times D_+\times\Theta_3$,
  \begin{align*}
    \frac{\partial \psi}{\partial y}(x, y, \alpha)
    & = \frac{1}{c(x,\alpha)}
          \frac{f_\alpha'\left( \frac{y}{c(x,\alpha)} \right)}{f_\alpha\left( \frac{y}{c(x,\alpha)} \right)}, \\
    \frac{\partial \psi}{\partial \alpha_j} (x, y, \alpha)
    & = - \frac{\partial(\log c)}{\partial \alpha_j}(x,\alpha)
          \left( 1 + y \, \frac{\partial \psi}{\partial y}(x,y,\alpha) \right)
          + \frac{\frac{\partial f_\alpha}{\partial \alpha_j}\left( \frac{y}{c(x,\alpha)} \right)}{c(x,\alpha) f_\alpha\left( \frac{y}{c(x,\alpha)} \right)}
  \end{align*}
  for $(x,\alpha)\in I_{x_0}^\delta\times\Theta_3$.
  The values of these functions may be undefined if $(x,y,\alpha)\in I_{x_0}^\delta\times \partial D_+ \times\Theta_3$. Otherwise their values are equal to zero.

  \subsection{Examples under Assumption \ref{asmp:A4} \ref{asmp:A4(i)}}

    \begin{example}[Normal distribution]\label{eg:normaldistr}
      Let $\Theta_3$ be a smooth open convex set which is compactly contained in $\mathbb{R}\times\mathbb{R}_+\times\mathbb{R}^{d_3-2}$,
      and let $f_\alpha$ be of the form
      \begin{equation*}
        f_\alpha(z) = \frac{1}{\sqrt{2\pi \alpha_2^2}} \exp \left( - \frac{|z-\alpha_1|^2}{2\alpha_2^2} \right)
        \quad \text{for } \alpha=(\alpha_1,\alpha_2)\in\Theta_3.
      \end{equation*}
      Then,
      \begin{equation*}
        \psi(x,y,\alpha)
        = - \log c(x,\alpha) - \frac{1}{2} \log (2\pi\alpha_2^2) - \frac{|\frac{y}{c(x,\alpha)}-\alpha_1|^2}{2\alpha_2^2}
        \quad \text{on } I_{x_0}^\delta\times\mathbb{R}\times\Theta_3.
      \end{equation*}
      Since
      \begin{equation*}
        f_\alpha'(z) = - \frac{z-\alpha_1}{\alpha_2^2} f_\alpha(z), \quad
        \frac{\partial f_{\alpha}}{\partial \alpha_1} (z)
        = \frac{z - \alpha_1}{\alpha_2} f_{\alpha}(z)
        \quad\text{and}\quad
        \frac{\partial f_{\alpha}}{\partial \alpha_2} (z)
        = \left\{ - \frac{1}{\alpha_2} + \frac{(z-\alpha_1)^2}{\alpha_2^3} \right\} f_\alpha(z),
      \end{equation*}
      we have
      \begin{align*}
        \frac{\partial \psi}{\partial y}(x,y,\alpha)
        &= - \frac{1}{c(x,\alpha)} \frac{1}{\alpha_2^2} \left( \frac{y}{c(x,\alpha)} - \alpha_1 \right), \\
        \frac{\partial \psi}{\partial \alpha_1}(x,y,\alpha)
        &= - \frac{\partial(\log c)}{\partial \alpha_1}(x,\alpha)
              \left( 1 + y \, \frac{\partial \psi}{\partial y}(x,y,\alpha) \right)
              - \frac{\frac{y}{c(x,\alpha)} - \alpha_1}{\alpha_2 c(x,\alpha)}, \\
        \frac{\partial \psi}{\partial \alpha_2}(x,y,\alpha)
        &= - \frac{\partial(\log c)}{\partial \alpha_2}(x,\alpha)
              \left( 1 + y \, \frac{\partial \psi}{\partial y}(x,y,\alpha) \right)
              + \frac{1}{c(x,\alpha)} \left\{ - \frac{1}{\alpha_2} + \frac{|\frac{y}{c(x,\alpha)}-\alpha_1|^2}{\alpha_2^3} \right\}, \\
        \frac{\partial \psi}{\partial \alpha_2}(x,y,\alpha)
        &= - \frac{\partial(\log c)}{\partial \alpha_j}(x,\alpha)
              \left( 1 + y \, \frac{\partial \psi}{\partial y}(x,y,\alpha) \right)
      \end{align*}
      for $(x,y,\alpha)\in I_{x_0}^\delta\times\mathbb{R}_+\times\Theta_3$ and $j=3,\dots,d_3$.
      Furthermore, the derivatives of $c$ and $\log c$ with respect to $\alpha$ are bounded on $I_{x_0}^\delta\times\Theta_3$,
      and so for $(x,y,\alpha)\in I_{x_0}^\delta\times \mathbb{R}\times\Theta_3$
      \begin{equation*}
        \left| \frac{\partial^2 \psi}{\partial \alpha_j\partial y}(x,y,\alpha) \right|
        \leq C (1 + |y|), ~
        \left| \frac{\partial^2 \psi}{\partial \alpha_i\partial \alpha_j}(x,y,\alpha) \right|
        \leq C (1 + |y|^2),
      \end{equation*}
      where $C$ is a constant not depending on $(x,y,\alpha)$.
      Thus, Assumptions \ref{asmp:A9} to \ref{asmp:A12} are satisfied.
    \end{example}

\subsection{Examples under Assumption \ref{asmp:A4} \ref{asmp:A4(ii)}}

  \begin{example}[Gamma distribution]\label{eg:gammadistr}
    Let $\Theta_3$ be an open interval compactly contained in $\mathbb{R}_+\times(1,\infty)\times\mathbb{R}^{d_3-2}$,
    and let $f_\alpha$ be of the form
    \begin{equation*}
      f_\alpha(z)
        = \left\{ \begin{aligned}
            & \frac{1}{\Gamma(\alpha_2)\alpha_1^{\alpha_2}} z^{\alpha_2-1} e^{-z/{\alpha_1}}
                & & (z>0), \\
            & 0 & & (z\leq0)
          \end{aligned} \right.
    \end{equation*}
    for $\alpha\in\Theta_3$.
    Then,
    \begin{equation*}
      \psi(x,y,\alpha) = - \log c(x,\alpha) - \log \Gamma(\alpha_2) - \alpha_2 \log \alpha_1
        + (\alpha_2-1) \log z - \frac{z}{\alpha_1}
      \quad \text{on } I_{x_0}^\delta\times\mathbb{R}_+\times\Theta_3.
    \end{equation*}
    Since
    \begin{equation*}
      f_\alpha' (z)
        = \left( \frac{\alpha_2-1}{z} - \frac{1}{\alpha_1} \right) f_\alpha(z), \quad
      \frac{\partial f_{\alpha}}{\partial \alpha_1} (z)
        = \left( -\frac{\alpha_2}{\alpha_1} + \frac{z}{\alpha_1^2} \right) f_\alpha(z)
    \end{equation*}
    and
    \begin{equation*}
      \frac{\partial f_{\alpha}}{\partial \alpha_2} (z)
        = \left\{ - \frac{\Gamma'(\alpha_2)}{\Gamma(\alpha_2)} - \log \alpha_1 + \log z \right\} f_\alpha(z)
    \end{equation*}
    for $z>0$ and $\alpha\in\Theta_3$,
    we have
    \begin{align*}
      \frac{\partial \psi}{\partial y}(x,y,\alpha)
        &= \frac{\alpha_2-1}{y} - \frac{1}{\alpha_1 c(x,\alpha)}, \\
      \frac{\partial \psi}{\partial \alpha_1}(x,y,\alpha)
        &=  - \frac{\partial(\log c)}{\partial \alpha_1}(x,\alpha)
              \left( 1 + y \, \frac{\partial \psi}{\partial y}(x,y,\alpha) \right)
              + \frac{1}{c(x,\alpha)} \left\{ -\frac{\alpha_2}{\alpha_1} + \frac{y}{\alpha_1^2 c(x,\alpha)} \right\}, \\
      \frac{\partial \psi}{\partial \alpha_2}(x,y,\alpha)
        &= - \frac{\partial(\log c)}{\partial \alpha_2}(x,\alpha)
              \left( 1 + y \, \frac{\partial \psi}{\partial y}(x,y,\alpha) \right)
              + \frac{1}{c(x,\alpha)} \left\{ - \frac{\Gamma'(\alpha_2)}{\Gamma(\alpha_2)} - \log \alpha_1 + \log \frac{y}{c(x,\alpha)} \right\}, \\
      \frac{\partial \psi}{\partial \alpha_j}(x,y,\alpha)
        &=  - \frac{\partial(\log c)}{\partial \alpha_j}(x,\alpha)
              \left( 1 + y \, \frac{\partial \psi}{\partial y}(x,y,\alpha) \right)
    \end{align*}
    for $(x,y,\alpha)\in I_{x_0}^\delta\times\mathbb{R}_+\times\Theta_3$ and $j=3,\dots,d_3$.
    Furthermore, the derivatives of $c$ and $\log c$ with respect to $\alpha$ are bounded on $I_{x_0}^\delta\times\Theta_3$,
    and so for $(x,y,\alpha)\in I_{x_0}^\delta\times \mathbb{R}\times\Theta_3$
    \begin{equation*}
      \left| \frac{\partial^2 \psi}{\partial \alpha_j\partial y}(x,y,\alpha) \right|
      \leq C, \quad
      \left| \frac{\partial^2 \psi}{\partial \alpha_i\partial \alpha_j}(x,y,\alpha) \right|
      \leq C (1 + |y|)
    \end{equation*}
    where $C$ is a constant not depending on $(x,y,\alpha)$.
    Thus, Assumptions \ref{asmp:A9} to \ref{asmp:A12} are satisfied,
    and $\rho$ in Theorem \ref{thm:3.1} and \ref{thm:3.2} can be taken as $\rho\in(0,1/4)$.
    Here, we remark that if $\alpha_2>1$, then
    \begin{equation*}
      \int \frac{1}{z} f_{\alpha}(z) \, dz < \infty \quad \text{if and only if} \quad \alpha_2>1.
    \end{equation*}
  \end{example}

  \begin{example}[Inverse Gaussian distribution]
    Let $\Theta_3$ be smooth, open, convex and compactly contained in $\mathbb{R}_+^2\times\mathbb{R}^{d_3-2}$,
    and let $f_\alpha$ be of the form
    \begin{equation*}
      f_\alpha(z)
        = \left\{ \begin{aligned}
            & \sqrt\frac{\alpha_2}{2\pi z^3} e^{-\alpha_2(z - \alpha_1)^2/2\alpha_1^2z}
                & & (z>0), \\
            & 0 & & (z\leq0)
          \end{aligned} \right.
    \end{equation*}
    for $\alpha\in\Theta_3$.
    Then,
    \begin{equation*}
      \psi(x,y,\alpha)
      = \frac{1}{2(x,\alpha)} \left\{ \log \frac{\alpha_2}{2\pi} - 3 \log \frac{y}{c(x,\alpha)} \right\}
      - \frac{\alpha_2 \left| \frac{y}{c(x,\alpha)} - \alpha_1 \right|^2}{2 \alpha_1^2 y}
      \quad \text{on } I_{x_0}^\delta\times\mathbb{R}_+\times\Theta_3.
    \end{equation*}
    Since
    \begin{gather*}
      f_\alpha' (z)
        = \left\{ - \frac{3}{2z} - \frac{\alpha_2(z-\alpha_1)}{\alpha_1^2z} - \frac{\alpha_2(z-\alpha_1)^2}{2\alpha_1z^2} \right\} f_\alpha(z), \\
      \frac{\partial f_{\alpha}}{\partial \alpha_1} (z)
        = \frac{\alpha_2(z-\alpha_1)}{\alpha_1^2} f_\alpha(z) \quad\text{and}\quad
      \frac{\partial f_{\alpha}}{\partial \alpha_2} (z)
        = \left\{ \frac{1}{2\alpha_2} - \frac{|z-\alpha_1|^2}{2\alpha_1^2z} \right\} f_\alpha(z)
    \end{gather*}
    for $z>0$ and $\alpha\in\Theta_3$,
    we have
    \begin{align*}
      \frac{\partial \psi}{\partial y} (x,y,\alpha)
        &= - \frac{3}{2y} - \frac{\alpha_2(\frac{y}{c(x,\alpha)}-\alpha_1)}{\alpha_1^2y} - \frac{\alpha_2|\frac{y}{c(x,\alpha)}-\alpha_1|^2}{2\alpha_1\frac{y^2}{c(x,\alpha)}}, \\
      \frac{\partial \psi}{\partial \alpha_1} (x, y, \alpha)
        & = - \frac{\partial(\log c)}{\partial \alpha_1}(x,\alpha)
            \left( 1 + y \, \frac{\partial \psi}{\partial y}(x,y,\alpha) \right)
            + \frac{\alpha_2(\frac{y}{c(x,\alpha)}-\alpha_1)}{\alpha_1^2 c(x,\alpha)}, \\
      \frac{\partial \psi}{\partial \alpha_2}(x,y,\alpha)
        &= - \frac{\partial(\log c)}{\partial \alpha_2}(x,\alpha)
              \left( 1 + y \, \frac{\partial \psi}{\partial y}(x,y,\alpha) \right)
              + \frac{1}{2\alpha_2 c(x,\alpha)} - \frac{|\frac{y}{c(x,\alpha)}-\alpha_1|^2}{2\alpha_1^2y}, \\
      \frac{\partial \psi}{\partial \alpha_j}(x,y,\alpha)
        &=  - \frac{\partial(\log c)}{\partial \alpha_j}(x,\alpha)
            \left( 1 + y \, \frac{\partial \psi}{\partial y}(x,y,\alpha) \right)
    \end{align*}
    for $(x,y,\alpha)\in I_{x_0}^\delta\times\mathbb{R}_+\times\Theta_3$ and $j=3,\dots,d_3$.
    Furthermore, the derivatives of $c$ and $\log c$ with respect to $\alpha$ are bounded on $I_{x_0}^\delta\times\Theta_3$,
    and so
    \begin{equation*}
      \sup_{(x,\alpha) \in I_{x_0}^\delta\times\Theta_3} \left| \frac{\partial^2 \psi}{\partial \alpha_j\partial y}(x,y,\alpha) \right|
      \leq O \left( \frac{1}{|y|^2} \right) \quad \text{as} ~ y\to0,
    \end{equation*}
    for $(x,y,\alpha)\in I_{x_0}^\delta\times \mathbb{R}_+\times\Theta_3$ with $y/c(x,\alpha)\neq\alpha_1$.
    Thus, Assumptions \ref{asmp:A9} to \ref{asmp:A12} are satisfied,
    and $\rho$ in Theorems \ref{thm:3.1} and \ref{thm:3.2} can be taken as $\rho\in(0,1/8)$.
  \end{example}

  \begin{example}[Weibull distribution]
    Let $\Theta_3$ be smooth, open, convex and compactly contained in $\mathbb{R}_+\times(1,\infty)\times\mathbb{R}^{d_3-2}$,
    and let $f_\alpha$ be of the form
    \begin{equation*}
      f_\alpha(z)
        = \left\{ \begin{aligned}
            &\frac{\alpha_2}{\alpha_1} \left( \frac{z}{\alpha_1} \right)^{\alpha_2-1} e^{-(z/\alpha_1)^{\alpha_2}}
                & & (z>0), \\
            & 0 & & (z\leq0)
          \end{aligned} \right.
    \end{equation*}
    for $\alpha\in\Theta_3$.
    Then,
    \begin{equation*}
      \psi(x,y,\alpha)
      = \frac{1}{c(x,\alpha)} \left\{ \log \alpha_2 - \alpha_2 \log \alpha_1 - (\alpha_2-1) \log \frac{y}{c(x,\alpha)} \right\}
      \quad \text{on } I_{x_0}^\delta\times\mathbb{R}_+\times\Theta_3.
    \end{equation*}
    Since
    \begin{equation*}
      f_\alpha' (z)
        = \left( \frac{\alpha_2-1}{z} - \alpha_2 \left( \frac{z}{\alpha_1} \right)^{\alpha_2-1} \right) f_\alpha(z), \quad
      \frac{\partial f_\alpha}{\partial \alpha_1}(z)
        = - \frac{\alpha_2}{\alpha_1} \left\{ 1 + \left( \frac{z}{\alpha_1} \right)^{\alpha_2} \right\} f_\alpha(z)
    \end{equation*}
    and
    \begin{equation*}
      \frac{\partial f_\alpha}{\partial \alpha_2}(z)
        = \left\{ \frac{1}{\alpha_2} + \log \frac{z}{\alpha_1} - \left( \frac{z}{\alpha_1} \right)^{\alpha_2} \log \frac{z}{\alpha_1} \right\} f_\alpha(z)
    \end{equation*}
    for $z>0$ and $\alpha\in\Theta_3$,
    we have
    \begin{align*}
      \frac{\partial \psi}{\partial y}(x,y,\alpha)
        &= \frac{(\alpha_2-1)}{y} - \frac{\alpha_2}{c(x,\alpha)} \left( \frac{y}{\alpha_1 c(x,\alpha)} \right)^{\alpha_2-1} \\
      \frac{\partial \psi}{\partial \alpha_1} (x, y, \alpha)
        & = - \frac{\partial(\log c)}{\partial \alpha_1}(x,\alpha)
            \left( 1 + y \, \frac{\partial \psi}{\partial y}(x,y,\alpha) \right)
            - \frac{\alpha_2}{\alpha_1 c(x,\alpha)} \left\{ 1 + \left( \frac{y}{\alpha_1 c(x,\alpha)} \right)^{\alpha_2} \right\}, \\
      \frac{\partial \psi}{\partial \alpha_2}(x,y,\alpha)
        &= - \frac{\partial(\log c)}{\partial \alpha_2}(x,\alpha)
              \left( 1 + y \, \frac{\partial \psi}{\partial y}(x,y,\alpha) \right) \\
              &\quad + \frac{1}{c(x,\alpha)} \left\{ \frac{1}{\alpha_2} + \log \frac{y}{\alpha_1c(x,\alpha)} - \left( \frac{y}{\alpha_1 c(x,\alpha)} \right)^{\alpha_2} \log \frac{y}{\alpha_1 c(x,\alpha)} \right\}, \\
      \frac{\partial \psi}{\partial \alpha_j}(x,y,\alpha)
        &=  - \frac{\partial(\log c)}{\partial \alpha_j}(x,\alpha)
            \left( 1 + y \, \frac{\partial \psi}{\partial y}(x,y,\alpha) \right)
    \end{align*}
    for $(x,y,\alpha)\in I_{x_0}^\delta\times\mathbb{R}_+\times\Theta_3$ and $j=3,\dots,d_3$.
    Furthermore, the derivatives of $c$ and $\log c$ with respect to $\alpha$ are bounded on $I_{x_0}^\delta\times\Theta_3$,
    and so
    \begin{equation*}
      \sup_{(x,\alpha) \in I_{x_0}^\delta\times\Theta_3} \left| \frac{\partial^2 \psi}{\partial \alpha_j\partial y}(x,y,\alpha) \right|
      \leq O \left( \frac{1}{y} \right) \quad \text{as} ~ y\to0,
    \end{equation*}
    for $(x,y,\alpha)\in I_{x_0}^\delta\times \mathbb{R}\times\Theta_3$ with $y/c(x,\alpha)\neq\alpha_1$,
    where $C$ is a constant not depending on $(x,y,\alpha)$.
    Here, we remark that
    \begin{equation*}
      \int \frac{1}{y} f_\alpha(y) \, dy < \infty \quad \text{if and only if} \quad \alpha_2 > 1
    \end{equation*}
    and that there exists a constant $C>0$ such that
    \begin{equation*}
      | y^{\alpha_2-1} \log y | \leq |y_1^{\alpha_2-1} \log y_1| + | y_2^{\alpha_2-1} \log y_2| + C \quad
      \text{for } y_1 \leq y \leq y_2.
    \end{equation*}
    Thus, Assumptions \ref{asmp:A9} to \ref{asmp:A12} are satisfied,
    and $\rho$ in Theorems \ref{thm:3.1} and \ref{thm:3.2} can be taken as $\rho\in(0,1/4)$.
  \end{example}

  \begin{example}[Log-normal distribution]
    Let $\Theta_3$ be smooth, open, convex and compactly contained in $\mathbb{R}\times[0,\infty)$,
    and let $f_\alpha$ be of the form
    \begin{equation*}
      f_\alpha(z)
        = \left\{ \begin{aligned}
            &\frac{1}{\sqrt{2\pi}\alpha_2 z} e^{-(\log z - \alpha_1)^2/2\alpha_2^2}
                & & (z>0), \\
            & 0 & & (z\leq0)
          \end{aligned} \right.
    \end{equation*}
    for $\alpha\in\Theta_3$.
    Then,
    \begin{equation*}
      \psi(x,y,\alpha) = \frac{1}{c(x,\alpha)} \left\{ - \log \frac{\sqrt{2\pi}\alpha_2 y}{c(x,\alpha)}- \frac{1}{2\alpha_2} \left| \log \frac{y}{c(x,\alpha)} - \alpha_1 \right|^2 \right\}
      \quad \text{on } I_{x_0}^\delta\times\mathbb{R}_+\times\Theta_3.
    \end{equation*}
    Since
    \begin{gather*}
      f_\alpha' (z)
        = \left\{ - \frac{1}{z} - \frac{\log z - \alpha_1}{\alpha_2^2 z} \right\} f_\alpha(z), \\
      \frac{\partial f_\alpha}{\partial \alpha_1}(z)
        = \frac{\log z - \alpha_1}{\alpha_2^2} f_\alpha(z), \quad
      \frac{\partial f_\alpha}{\partial \alpha_2}(z)
        = \left\{ - \frac{1}{\alpha_2} + \frac{|\log z - \alpha_1|^2}{\alpha_2^3} \right\} f_\alpha(z)
    \end{gather*}
    for $z>0$ and $\alpha\in\Theta_3$,
    we have
    \begin{align*}
      \frac{\partial \psi}{\partial y}(x,y,\alpha)
        &= - \frac{1}{\alpha_2^2 y} \left( \alpha_1 + \alpha_2^2 + \log \frac{y}{c(x,\alpha)} \right) \\
      \frac{\partial \psi}{\partial \alpha_1} (x, y, \alpha)
        & = - \frac{\partial(\log c)}{\partial \alpha_1}(x,\alpha)
            \left( 1 + y \, \frac{\partial \psi}{\partial y}(x,y,\alpha) \right)
            + \frac{\log \frac{y}{c(x,\alpha)} - \alpha_1}{\alpha_2^2 c(x,\alpha)}, \\
      \frac{\partial \psi}{\partial \alpha_2}(x,y,\alpha)
        &= - \frac{\partial(\log c)}{\partial \alpha_2}(x,\alpha)
              \left( 1 + y \, \frac{\partial \psi}{\partial y}(x,y,\alpha) \right)
              + \frac{1}{c(x,\alpha)} \left\{ - \frac{1}{\alpha_2} + \frac{|\log \frac{y}{c(x,\alpha)} - \alpha_1|^2}{\alpha_2^3} \right\}, \\
      \frac{\partial \psi}{\partial \alpha_j}(x,y,\alpha)
        &=  - \frac{\partial(\log c)}{\partial \alpha_j}(x,\alpha)
            \left( 1 + y \, \frac{\partial \psi}{\partial y}(x,y,\alpha) \right)
    \end{align*}
    for $(x,y,\alpha)\in I_{x_0}^\delta\times\mathbb{R}_+\times\Theta_3$ and $j=3,\dots,d_3$.
    Furthermore, the derivatives of $c$ and $\log c$ with respect to $\alpha$ are bounded on $I_{x_0}^\delta\times\Theta_3$,
    and so
    \begin{equation*}
      \sup_{(x,\alpha) \in I_{x_0}^\delta\times\Theta_3} \left| \frac{\partial^2 \psi}{\partial \alpha_j\partial y}(x,y,\alpha) \right|
      \leq O \left( \frac{1}{y} + \frac{1}{y} \log y \right) \quad \text{as} ~ y\to0,
    \end{equation*}
    for $(x,y,\alpha)\in I_{x_0}^\delta\times \mathbb{R}\times\Theta_3$ with $y/c(x,\alpha)\neq\alpha_1$,
    where $C$ is a constant not depending on $(x,y,\alpha)$.
    Here, we remark that
    \begin{equation*}
      \int \left( \frac{1}{y} + \frac{\log y}{y} \right) f_\alpha(y) \, dy < \infty
    \end{equation*}
    and that there exists a constant $C>0$ such that
    \begin{equation*}
      | \frac{1}{y} \log y | \leq | \frac{1}{y_1} \log y_1| + | \frac{1}{y_2} \log y_2| + C \quad
      \text{for } y_1 \leq y \leq y_2.
    \end{equation*}
    Thus, Assumptions \ref{asmp:A9} to \ref{asmp:A12} are satisfied,
    and $\rho$ in Theorems \ref{thm:3.1} and \ref{thm:3.2} can be taken as $\rho\in(0,1/4)$.
  \end{example}

\begin{appendix}
\section{Appendix}\label{appn}
    In this section, we state and prove some slightly different versions of well-known results.
    More precisely, we prepare Lemma \ref{lem:A.2} as localization of the continuous mapping theorem.
    Lemma \ref{lem:A.3} is a slightly different version of Lemma 9 in Genon-Catalot and Jacod \cite{genon-catalot1993ontheest}.

    \begin{lemma}\label{lem:A.1}
      Let $\mathcal{X}$ be a Banach space,
      and let $\{g_\theta\}_{\theta\in\Theta}$ be a family of functions from $\mathcal{X}$ to $\mathbb{R}$,
      and let $T_{g_\theta}$ be the composition operator on $L^\infty([0,1];\mathcal{X})$  generated by $g_\theta$, i.e.,
      \begin{equation*}
        T_{g_\theta}(\tilde{y}_\cdot):=g_\theta(\tilde{y}_\cdot) \quad \text{for}~\tilde{y}_\cdot\in L^\infty([0,1];\mathcal{X}).
      \end{equation*}
      Suppose that $y_\cdot$ is a version of a function of $C([0,1];\mathcal{X})$ in $L^\infty([0,1];\mathcal{X})$, and that $\{g_\theta\}_{\theta\in\Theta}$ is equicontinuous at every points in $\Image(y_\cdot):=\{y_t\,|\,t\in[0,1]\}$.
      Then, there is a neighborhood $\mathcal{N}_{y_\cdot}$ of $y_\cdot$ in $L^\infty([0,1];\mathcal{X})$ such that $\{T_{g_\theta}\}_{\theta\in\Theta}$ is a family of operators from $\mathcal{N}_{y_\cdot}$ to $L^\infty([0,1])$,
      and is equicontinuous at $y_\cdot$.
    \end{lemma}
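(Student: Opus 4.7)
My plan is to use compactness of the image of the continuous representative of $y_\cdot$ to upgrade the hypothesized pointwise equicontinuity on $K := \{y_t : t \in [0,1]\}$ to a uniform modulus of continuity on a tubular neighborhood of $K$, and then to transfer this estimate into the $L^\infty$-setting pointwise almost everywhere in $t$. The set $K$ is a compact subset of $\mathcal{X}$, being the continuous image of the compact interval $[0,1]$.

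For the uniform modulus step, I would fix $\epsilon > 0$ and use equicontinuity at each $z \in K$ to obtain $\eta_z > 0$ such that $|g_\theta(x) - g_\theta(z)| < \epsilon/2$ for all $\theta \in \Theta$ whenever $\|x - z\|_{\mathcal{X}} < \eta_z$. Covering $K$ by the open balls $B(z, \eta_z/2)$, extracting a finite subcover $\{B(z_j, \eta_{z_j}/2)\}_{j=1}^N$, and setting $\delta_\epsilon := \min_{1 \leq j \leq N} \eta_{z_j}/2$, a triangle-inequality argument shows that whenever $z \in K$, $x \in \mathcal{X}$ and $\|x - z\|_{\mathcal{X}} < \delta_\epsilon$, one can pick $j$ with $\|z - z_j\|_{\mathcal{X}} < \eta_{z_j}/2$; then both $\|x - z_j\|_{\mathcal{X}}$ and $\|z - z_j\|_{\mathcal{X}}$ are less than $\eta_{z_j}$, yielding $|g_\theta(x) - g_\theta(z)| < \epsilon$ uniformly in $\theta$.

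With this uniform modulus in hand, I would set $\mathcal{N}_{y_\cdot} := \{\tilde y_\cdot \in L^\infty([0,1];\mathcal{X}) : \|\tilde y_\cdot - y_\cdot\|_{L^\infty} < \delta_1\}$, using the modulus associated with $\epsilon = 1$. For any $\tilde y_\cdot \in \mathcal{N}_{y_\cdot}$ and a.e. $t$, the inequality $\|\tilde y_t - y_t\|_{\mathcal{X}} < \delta_1$ together with $y_t \in K$ gives $|g_\theta(\tilde y_t)| \leq |g_\theta(y_t)| + 1$, and since each $g_\theta$ is in particular continuous on the compact set $K$ (an immediate consequence of equicontinuity at each point), it is bounded there, so $T_{g_\theta}(\tilde y_\cdot) \in L^\infty([0,1])$ for each $\theta$. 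For equicontinuity of $\{T_{g_\theta}\}$ at $y_\cdot$, given arbitrary $\epsilon > 0$, I would observe that if $\|\tilde y_\cdot - y_\cdot\|_{L^\infty} < \min\{\delta_\epsilon, \delta_1\}$, then for a.e. $t$ the uniform modulus applied with $z = y_t \in K$ and $x = \tilde y_t$ yields $|g_\theta(\tilde y_t) - g_\theta(y_t)| < \epsilon$ uniformly in $\theta$; taking the essential supremum in $t$ delivers $\|T_{g_\theta}(\tilde y_\cdot) - T_{g_\theta}(y_\cdot)\|_{L^\infty([0,1])} \leq \epsilon$ uniformly in $\theta$, which is exactly the desired equicontinuity at $y_\cdot$.

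The only genuinely nontrivial step will be the finite-cover argument producing the uniform modulus on the tubular neighborhood of $K$; thereafter, the $L^\infty$ statements are immediate, since the almost-everywhere bound $\|\tilde y_t - y_t\|_{\mathcal{X}} < \delta_\epsilon$ delivered by $\|\tilde y_\cdot - y_\cdot\|_{L^\infty} < \delta_\epsilon$ lets the modulus be applied pointwise in $t$. A minor subtlety worth checking is that while $K$ depends on the choice of continuous representative of $y_\cdot$, the resulting $L^\infty$-neighborhood is independent of this choice, and modifying $\tilde y_\cdot$ on a null set leaves $T_{g_\theta}(\tilde y_\cdot)$ unchanged as an element of $L^\infty([0,1])$.
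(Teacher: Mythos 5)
Your proposal is correct and follows essentially the same route as the paper's proof: equicontinuity at each point of the compact image $\Image(y_\cdot)$, a finite subcover by half-radius balls, a triangle inequality through the ball centers to get a uniform modulus, and a pointwise-a.e.\ transfer to the $L^\infty$ norm. The extra remarks on boundedness of $g_\theta(\tilde y_t)$ and on independence of the choice of continuous representative are fine and only make explicit what the paper leaves implicit.
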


    \begin{proof}
      Fix an arbirary $\eta>0$.
      For each $x\in\Image(y_\cdot)$, there exists $\delta_x>0$ such that if $\left\| x-x' \right\|_{\mathcal{X}}<\delta_x$, $x,x'\in\mathcal{X}$, then
      \begin{equation*}
        \sup_{\theta\in\Theta} \left| g_\theta(x) - g_\theta(x') \right| < \frac{\eta}{2}.
      \end{equation*}
      Since $\Image(y_\cdot)$ is compact in $\mathcal{X}$,
      there are finite points $x_1,\dots,x_k\in\Image(y_\cdot)$ such that
      \begin{equation*}
        \Image(y_\cdot) \subset \bigcup_{i=1}^k B(x_i,\delta_{x_i}/2),
      \end{equation*}
      where $B(x_i,\delta_{x_i}/2)$ is the ball in $\mathcal{X}$ centered at $x_i$ with radius $\delta_{x_i}/2$.
      If $\left\| \tilde{y}_\cdot-y_\cdot \right\|_{L^\infty([0,1];\mathcal{X})}<\delta$
      with $\delta:=\min\{\delta_{x_1}/2,\dots,\delta_{x_k}/2\}$,
      then for a.e. $t\in[0,1]$ there is $i_t\in\{1,\dots,k\}$
      such that $y_t,\tilde{y}_t\in B(x_{i_t},\delta_{x_{i_t}})$.
      Thus, we obtain
      \begin{equation*}
        \sup_{\theta\in\Theta} \left| g_\theta(\tilde{y}_t) - g_\theta(y_t) \right|
        \leq \sup_{\theta\in\Theta} \left| g_\theta(\tilde{y}_t) - g_\theta(x_{i_t}) \right|
        +  \sup_{\theta\in\Theta} \left| g_\theta(x_{i_t}) - g_\theta(y_t) \right| < \eta,
      \end{equation*}
      that is,
      \begin{equation*}
        \sup_{\theta\in\Theta} \left\| g_\theta(\tilde{y}_\cdot) - g_\theta(y_\cdot) \right\|_{L^\infty([0,1])} < \eta.
      \end{equation*}
      This implies the conclusion.
    \end{proof}

    We prepare the following lemma as localization of the continuous mapping theorem.

    \begin{lemma}\label{lem:A.2}
      Under the same assumptions as in Lemma \ref{lem:A.1},
      suppose that $\{g(\cdot,\theta)\}_{\theta\in\Theta}$ is equicontinuous at every points in $\Image(y_\cdot):=\{y_t\,|\,t\in[0,1]\}$,
      and that $(Y^\iota_\cdot)_{\iota\in I}$ is a net of $\mathcal{X}$-valued bounded random processes on $[0,1]$ with a directed set $I$.
      If the net $(Y^\iota_\cdot)_{\iota\in I}$ converges in probability to $y_\cdot$ in $L^\infty([0,1;\mathcal{X}])$, \textit{i.e.},
      \begin{equation*}
        \left\| Y^\iota_\cdot - y_\cdot \right\|_{L^\infty([0,1];\mathcal{X})}
        \overset{p}\longrightarrow 0,
      \end{equation*}
      then
      \begin{equation*}
        \sup_{\theta\in\Theta} \left\| g(Y^\iota_\cdot,\theta) - g(y_\cdot,\theta) \right\|_{L^\infty([0,1])}
        \overset{p}\longrightarrow 0.
      \end{equation*}
    \end{lemma}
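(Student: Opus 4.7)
The plan is to reduce the statement to Lemma \ref{lem:A.1} together with the definition of convergence in probability. The key observation is that Lemma \ref{lem:A.1} gives precisely the kind of uniform-in-$\theta$ modulus of continuity at $y_\cdot$ that lets a continuous mapping-style argument go through, so no additional compactness or tightness of the net $(Y^\iota_\cdot)_{\iota\in I}$ will be needed.

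First I would fix an arbitrary $\eta>0$. By Lemma \ref{lem:A.1} applied to the family $\{g(\cdot,\theta)\}_{\theta\in\Theta}$, there is some $\delta>0$ (depending on $\eta$) such that
\begin{equation*}
  \left\| \tilde y_\cdot - y_\cdot \right\|_{L^\infty([0,1];\mathcal{X})} < \delta
  \quad \Longrightarrow \quad
  \sup_{\theta\in\Theta} \left\| g(\tilde y_\cdot,\theta) - g(y_\cdot,\theta) \right\|_{L^\infty([0,1])} < \eta.
\end{equation*}
This is exactly the equicontinuity of $\{T_{g_\theta}\}_{\theta\in\Theta}$ at $y_\cdot$ provided by Lemma \ref{lem:A.1}. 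In particular, the event
\begin{equation*}
  A^\iota_\eta := \left\{ \sup_{\theta\in\Theta} \left\| g(Y^\iota_\cdot,\theta) - g(y_\cdot,\theta) \right\|_{L^\infty([0,1])} \geq \eta \right\}
\end{equation*}
is contained in $\{\|Y^\iota_\cdot - y_\cdot\|_{L^\infty([0,1];\mathcal{X})} \geq \delta\}$ on the $\omega$-set where $Y^\iota_\cdot(\omega)$ lies in the neighborhood $\mathcal{N}_{y_\cdot}$ of $y_\cdot$ furnished by Lemma \ref{lem:A.1}.

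The second step is then the routine monotone bound
\begin{equation*}
  P(A^\iota_\eta)
  \leq P \left( \left\| Y^\iota_\cdot - y_\cdot \right\|_{L^\infty([0,1];\mathcal{X})} \geq \delta \right),
\end{equation*}
and the right-hand side tends to zero along the directed set $I$ by the hypothesis that $Y^\iota_\cdot \to y_\cdot$ in probability in $L^\infty([0,1];\mathcal{X})$. Since $\eta>0$ was arbitrary, the desired convergence in probability follows.

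The only delicate point worth flagging is measurability: one must check that $\sup_{\theta\in\Theta}\|g(Y^\iota_\cdot,\theta)-g(y_\cdot,\theta)\|_{L^\infty([0,1])}$ is a bona fide random variable so that probabilities make sense. This is unproblematic under the paper's standing regularity on $\{g(\cdot,\theta)\}$ (equicontinuity yields separability of the supremum, so it reduces to a countable one over a dense subset of $\Theta$), and the bound above then controls $P(A^\iota_\eta)$ directly. Everything else is a direct invocation of Lemma \ref{lem:A.1}, so I do not anticipate a genuine technical obstacle.
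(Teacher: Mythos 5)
Your proof is correct and follows essentially the same route as the paper's: invoke the uniform-in-$\theta$ modulus of continuity at $y_\cdot$ from Lemma \ref{lem:A.1} to get a $\delta$ for each $\eta$, then bound $P\left(\sup_{\theta\in\Theta}\left\|g(Y^\iota_\cdot,\theta)-g(y_\cdot,\theta)\right\|_{L^\infty([0,1])}>\eta\right)$ by $P\left(\left\|Y^\iota_\cdot-y_\cdot\right\|_{L^\infty([0,1];\mathcal{X})}>\delta\right)$ and let the hypothesis finish the job. Your extra remark on measurability of the supremum is a reasonable addition that the paper leaves implicit.
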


    \begin{proof}
      Take an arbitrary $\eta>0$.
      It follows from Lemma \ref{lem:A.1} that there exists a sufficiently small $\delta>0$ such that if $\left\| \tilde{y}_\cdot-y_\cdot \right\|_{L^\infty([0,1];\mathcal{X})}<\delta$,
      then $\{g(\tilde{y}_\cdot,\theta)\}_{\theta\in\Theta}\subset L^\infty([0,1])$ and
      \begin{equation*}
        \sup_{\theta\in\Theta} \left\| g(\tilde{y}_\cdot,\theta) - g(y_\cdot,\theta) \right\|_{L^\infty([0,1])} < \eta,
      \end{equation*}
      and therefore,
      \begin{equation*}
        P \left ( \sup_{\theta\in\Theta} \left \| g(Y^\iota_\cdot,\theta)
                                               - g(y_\cdot,\theta) \right \|_{L^\infty([0,1])} > \eta \right )
        \leq P \left ( \left \| Y^\iota_\cdot - y_\cdot \right \|_{L^\infty([0,1];\mathcal{X})} > \delta \right ).
      \end{equation*}
      This implies the conclusion.
    \end{proof}

    \begin{remark}\label{rmk: continuous mapping}
      By the proof of Lemma \ref{lem:A.2}, it also follows that for any $C_1>0$,
      \begin{equation*}
        P \left ( \sup_{\theta\in\Theta} \left \| g(Y^\iota_\cdot,\theta)
                                               - g(y_\cdot,\theta) \right \|_{L^\infty([0,1])} > C_2 \right )
        \leq P \left ( \left \| Y^\iota_\cdot - y_\cdot \right \|_{L^\infty([0,1];\mathcal{X})} > C_1 \right ),
      \end{equation*}
      where $C_2$ depends only on $C_1$, $g$ and $\Image(y_\cdot)$.
    \end{remark}

    \begin{lemma}\label{lem:A.3}
      Suppose that $(\mathcal{X},\|\cdot\|)$ is a Banach space, $\{(n,\varepsilon)\}$ is a directed set and $\{\mathcal{G}^{n,\varepsilon}_i\}_{i}$ is a filtration for each $n,\varepsilon$. Let $\chi^{n,\varepsilon}_i$, $U$ be $\mathcal{X}$-valued $\mathcal{G}^{n,\varepsilon}_i$-measurable random variables.
      \begin{enumerate}[label=(\roman*)]
        \item \label{item:ConseqFromLenglart(i)}
              If for any $\eta>0$
              \begin{equation*}
                \lim_{n,\varepsilon} P \left( \sum_{i=1}^n E \left[ \| \chi^{n,\varepsilon}_i \| \, \middle| \, \mathcal{G}^{n,\varepsilon}_{i-1} \right] > \eta \right)
                =0,
              \end{equation*}
              then for any $\eta>0$
              \begin{equation*}
                \lim_{n,\varepsilon} P \left( \left\| \sum_{i=1}^n \chi^{n,\varepsilon}_i \right\| > \eta \right) =  0.
              \end{equation*}
        \item \label{item:ConseqFromLenglart(ii)}
              If
              \begin{equation*}
                \lim_{M\to\infty} \sup_{n,\varepsilon} P \left( \sum_{i=1}^n E \left[ \| \chi^{n,\varepsilon}_i \| \, \middle| \, \mathcal{G}^{n,\varepsilon}_{i-1} \right] > M \right)
                = 0,
              \end{equation*}
              then
              \begin{equation*}
                \lim_{M\to\infty} \sup_{n,\varepsilon} P \left( \left\| \sum_{i=1}^n \chi^{n,\varepsilon}_i \right\| > M \right) = 0.
              \end{equation*}
      \end{enumerate}
    \end{lemma}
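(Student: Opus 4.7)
The plan is to reduce the Banach-space statement to a scalar one by the triangle inequality and then to invoke a discrete-time version of Lenglart's domination inequality. Concretely, observe that
\begin{equation*}
  \left\| \sum_{i=1}^n \chi^{n,\varepsilon}_i \right\| \leq Y_n^{n,\varepsilon} := \sum_{i=1}^n \|\chi^{n,\varepsilon}_i\|,
\end{equation*}
so it suffices to bound $Y_n^{n,\varepsilon}$ in probability. Its Doob compensator
\begin{equation*}
  A_n^{n,\varepsilon} := \sum_{i=1}^n E\left[\|\chi^{n,\varepsilon}_i\| \,\middle|\, \mathcal{G}^{n,\varepsilon}_{i-1}\right]
\end{equation*}
is nondecreasing and $\{\mathcal{G}^{n,\varepsilon}_{i-1}\}$-predictable, and $Y^{n,\varepsilon}-A^{n,\varepsilon}$ is a mean-zero martingale with respect to $\{\mathcal{G}^{n,\varepsilon}_i\}$.

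Next I would establish the key inequality: for every $\eta,M>0$,
\begin{equation*}
  P\left(Y_n^{n,\varepsilon} > \eta\right)
  \leq \frac{M}{\eta} + P\left(A_n^{n,\varepsilon} > M\right).
\end{equation*}
To prove this, introduce the predictable stopping time $\tau_M := \inf\{i \geq 1 : A_{i+1}^{n,\varepsilon} > M\} \wedge n$. Since $A^{n,\varepsilon}$ is predictable and increasing, $A^{n,\varepsilon}_{\tau_M} \leq M$ almost surely, and the event $\{\tau_M < n\}$ is contained in $\{A_n^{n,\varepsilon} > M\}$. Optional stopping gives $E[Y^{n,\varepsilon}_{\tau_M}] = E[A^{n,\varepsilon}_{\tau_M}] \leq M$, whence Markov's inequality yields $P(Y^{n,\varepsilon}_{\tau_M} > \eta) \leq M/\eta$. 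Splitting on $\{\tau_M = n\}$ versus $\{\tau_M < n\}$ and using monotonicity of $Y^{n,\varepsilon}$ produces the claimed inequality.

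With this inequality in hand, both conclusions are immediate. For part \ref{item:ConseqFromLenglart(i)}, given $\eta,\delta>0$, fix any $M>0$ with $M/\eta<\delta/2$; by hypothesis $P(A_n^{n,\varepsilon}>M)\to 0$ along the net, so the right-hand side is eventually less than $\delta$. For part \ref{item:ConseqFromLenglart(ii)}, given $\delta>0$, use uniform tightness of $\{A_n^{n,\varepsilon}\}$ to fix $M_0$ with $\sup_{n,\varepsilon}P(A_n^{n,\varepsilon}>M_0)<\delta/2$, and then for every $M>2M_0/\delta$ we obtain $\sup_{n,\varepsilon}P(Y_n^{n,\varepsilon}>M)<\delta$.

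The only mildly delicate step is the Lenglart-type inequality itself, because the predictable stopping time $\tau_M$ must be constructed so that $A^{n,\varepsilon}_{\tau_M}\leq M$ pointwise; the asymmetry between $i$ and $i+1$ in the definition of $\tau_M$ is what makes this work, and predictability of $A^{n,\varepsilon}$ is essential. Everything else (the triangle inequality reduction, optional stopping for the bounded discrete martingale $Y^{n,\varepsilon}-A^{n,\varepsilon}$, and the Markov-type estimates) is routine. The directed-set convergence poses no extra difficulty, as convergence in probability along a net is characterized in the same quantitative way as along a sequence.
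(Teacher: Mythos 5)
Your proof is correct in substance and rests on the same two pillars as the paper's: the reduction $\left\| \sum_{i=1}^n \chi^{n,\varepsilon}_i \right\| \leq \sum_{i=1}^n \|\chi^{n,\varepsilon}_i\|$, which turns the Banach-valued sum into a real-valued increasing process, and the discrete Lenglart-type domination bound
\begin{equation*}
  P\left( \left\| \sum_{i=1}^n \chi^{n,\varepsilon}_i \right\| > \eta \right) \leq \frac{\eta'}{\eta} + P\left( A_n^{n,\varepsilon} > \eta' \right),
\end{equation*}
which is exactly the inequality the paper derives (with $\eta'$ playing the role of your $M$). Where you differ is in how this inequality is proved: the paper truncates directly, inserting the $\mathcal{G}^{n,\varepsilon}_{i-1}$-measurable indicators $1_{\{ \cdots \leq \eta'\}}$ term by term and using that $\|\chi^{n,\varepsilon}_i\| - E[\|\chi^{n,\varepsilon}_i\| \mid \mathcal{G}^{n,\varepsilon}_{i-1}]$ has zero conditional mean against a predictable indicator, whereas you stop the dominated process at a stopping time and apply optional stopping to the martingale $Y^{n,\varepsilon} - A^{n,\varepsilon}$. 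These are the two standard proofs of discrete-time Lenglart domination; yours is perhaps more transparent conceptually, the paper's avoids introducing stopping times at all. Both correctly sidestep the issue flagged in the paper's subsequent remark, namely that Lenglart's inequality is not available for the Banach-valued process itself.

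One small indexing point you should repair: with $\tau_M := \inf\{i \geq 1 : A_{i+1}^{n,\varepsilon} > M\} \wedge n$, the claim $A^{n,\varepsilon}_{\tau_M} \leq M$ fails on the event $\{A_1^{n,\varepsilon} > M\}$ (there $A_2^{n,\varepsilon} > M$ forces $\tau_M = 1$, yet $A^{n,\varepsilon}_{\tau_M} = A_1^{n,\varepsilon} > M$). Start the infimum at $i = 0$ with the conventions $A_0^{n,\varepsilon} = Y_0^{n,\varepsilon} = 0$; then $A^{n,\varepsilon}_{\tau_M} \leq M$ holds pointwise, $\{\tau_M < n\} \subseteq \{A_n^{n,\varepsilon} > M\}$ is preserved, and the rest of your argument, including the deduction of parts \ref{item:ConseqFromLenglart(i)} and \ref{item:ConseqFromLenglart(ii)} from the key inequality, goes through unchanged.
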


    \begin{proof}
      Since for any $\eta,\eta'>0$
      \begin{align*}
        &P \left( \left\| \sum_{i=1}^n \chi^{n,\varepsilon}_i \right\| > \eta, ~ \sum_{i=1}^n E \left[ \| \chi^{n,\varepsilon}_i \| \, \middle| \, \mathcal{G}^{n,\varepsilon}_{i-1} \right] \leq \eta' \right) \\
        &\quad
          \leq \frac{1}{\eta} E \left[ \sum_{i=1}^n \|\chi^{n,\varepsilon}_i\| ~ 1_{\left\{ \sum_{i=1}^n E \left[ \| \chi^{n,\varepsilon}_i \| \, \middle| \, \mathcal{G}^{n,\varepsilon}_{i-1} \right] \leq \eta' \right\}} \right] \\
        &\quad
          \leq \frac{1}{\eta} E \left[ \left( E \left[ \|\chi^{n,\varepsilon}_n\| \, \middle| \, \mathcal{G}^{n,\varepsilon}_{n-1} \right] + \sum_{i=1}^{n-1} \| \chi^{n,\varepsilon}_i \| \right) ~ 1_{\left\{ \sum_{i=1}^n E \left[ \| \chi^{n,\varepsilon}_i \| \, \middle| \, \mathcal{G}^{n,\varepsilon}_{i-1} \right] \leq \eta' \right\}} \right] \\
        &\quad
          \leq \frac{1}{\eta} E \left[ \eta' + \sum_{i=1}^{n-1} \left( \| \chi^{n,\varepsilon}_i \| - E \left[ \| \chi^{n,\varepsilon}_i \| \, \middle| \, \mathcal{G}^{n,\varepsilon}_{i-1} \right] \right) 1_{\left\{ E \left[ \| \chi^{n,\varepsilon}_i \| \, \middle| \, \mathcal{G}^{n,\varepsilon}_{i-1} \right] \leq \eta' \right\}} \right]
        < \frac{\eta'}{\eta},
      \end{align*}
      we obtain
      \begin{equation*}
        P \left( \left\| \sum_{i=1}^n \chi^{n,\varepsilon}_i \right\| > \eta \right)
        \leq \frac{\eta'}{\eta}
        + P \left( \sum_{i=1}^n E \left[ \| \chi^{n,\varepsilon}_i \| \, \middle| \, \mathcal{G}^{n,\varepsilon}_{i-1} \right] > \eta' \right).
      \end{equation*}
      Thus, the assertions \ref{item:ConseqFromLenglart(i)} and \ref{item:ConseqFromLenglart(ii)} follows.
    \end{proof}

    \begin{remark}
      When $\mathcal{X}=\mathbb{R}$,
      this lemma can be shown by the same argument in the proof of Lemma 9 in Genon-Catalot and Jacod \cite{genon-catalot1993ontheest}.
      However, the argument does not work in general,
      since we may not have Lenglart's inequality
      (e.g., Lemma 3.30 in Jacod and Shiryaev \cite{jacod2003limit})
      when $\mathcal{X}$ is a Banach space.
    \end{remark}

    \begin{remark}\label{rmk:ConseqFromLenglart2}
      We have an immediate consequence from this lemma that
      \begin{align*}
        \sum_{i=1}^n E \left[ \| \chi^{n,\varepsilon}_i \| \, \middle| \, \mathcal{G}^{n,\varepsilon}_{i-1} \right]
        =o_p(r_{n,\varepsilon})
        \quad &\Longrightarrow \quad
        \sum_{i=1}^n \chi^{n,\varepsilon}_i = o_p(r_{n,\varepsilon}), \\
        \sum_{i=1}^n E \left[ \| \chi^{n,\varepsilon}_i \| \, \middle| \, \mathcal{G}^{n,\varepsilon}_{i-1} \right]
        =O_p(r_{n,\varepsilon})
        \quad &\Longrightarrow \quad
        \sum_{i=1}^n \chi^{n,\varepsilon}_i = O_p(r_{n,\varepsilon}),
      \end{align*}
      where $r_{n,\varepsilon}\in\mathbb{R}$.
    \end{remark}

\end{appendix}

\noindent
\textbf{Acknowledgments.}
%
The second author was supported by JSPS KAKENHI Grant-in-Aid for Scientific Research (C) \#21K03358 and JST CREST \#PMJCR14D7, Japan.

\bibliographystyle{amsplain}
  \bibliography{KS-MLE_arxiv.bbl}

\end{document}